\documentclass[12pt]{article}
\usepackage{array}
\usepackage{amsthm,amsmath,amssymb,color}
\usepackage{fullpage}
\usepackage{graphicx}
\usepackage{float}
\usepackage{tikz}
\usetikzlibrary{shapes.geometric,calc}

\newtheorem{thm}{Theorem}[section]

\newtheorem{lem}[thm]{Lemma}
\newtheorem{prop}[thm]{Proposition}
\newtheorem{obs}[thm]{Observation}
\newtheorem{cor}[thm]{Corollary}

\newtheorem{cons}[thm]{Construction}

\theoremstyle{remark}
\newtheorem{rem}[thm]{Remark}

\theoremstyle{plain}

\newcommand{\HH}{\mathcal H}

\newcommand{\CC}{\mathcal C}

\newcommand{\R}{\mathbb R}
\newcommand{\shad}{\partial_2}

\begin{document}
	
	\title{The maximum spectral radius of uniform hypergraphs\\ whose shadow excludes a complete\\ or complete bipartite minor}
	\author{Pei Liu\thanks{Department of Mathematics, Sungkyunkwan University, Suwon, 16419, Republic of Korea. liupei2023@g.skku.edu. Research supported by the China Scholarship Council.} \and Suil O\thanks{Department of Applied Mathematics and Statistics, The State University of New York, Korea, Incheon, 21985, suil.o@sunykorea.ac.kr. Corresponding author. Research supported by the National Research Foundation of Korea (NRF) grant funded by the Korea government(MSIT) No. RS-2025-23523950.}}
	\date{\today}
	\maketitle
	
	\begin{abstract}
		For a $k$-uniform hypergraph $\HH$, the \emph{shadow} of $\HH$ is the graph whose edges are the pairs covered by a hyperedge. In this paper, for all sufficiently large $n$, we determine the $n$-vertex $k$-uniform hypergraphs of maximum adjacency-tensor spectral radius whose shadow has no $K_t$ minor, for every $t\ge k+1$, and those whose shadow has no $K_{s,t}$ minor, for every $2\le s\le t$ with $s+t\ge k+1$ and every residue of $n-s+1$ modulo $t$; outside these ranges the problems are trivial. In each case the extremal hypergraph is unique, and it is the $k$-clique hypergraph of the join of a clique with a graph that we call the light part. For $K_{s,t}$ the answer depends on $j=k-s+1$. When $j\le1$, the maximum has order $n^{(k-1)/k}$, and the light part is the one found by Zhai and Lin for the adjacency matrix, including its exceptional components. When $j\ge2$, a regime that does not occur for graphs, the maximum has order $n^{(s-1)/k}$ and $t$ enters its leading constant. The light part then consists of copies of $K_t$ and one smaller clique, with a single exception: for $(k,s,t)=(9,8,8)$ and $n-s+1\equiv2\pmod 8$, the complement of the Petersen graph appears. When the smaller clique has between $1$ and $j-1$ vertices, the extremal graph is not unique. In particular, for $t=8$, $4\le s\le7$, $k=s+1$ and $n-s+1\equiv2\pmod 8$, the clique hypergraph of the extremal graph of Zhai and Lin is not extremal. For $j\ge2$ the light part is determined by a weighted clique inequality, which for $j\ge3$ follows from a weighted form of the closed-neighborhood counting of Chao and Dong.
	\end{abstract}
	
	\bigskip
	\noindent
	\textbf{Keywords:} $k$-uniform hypergraph, spectral radius, adjacency tensor, clique tensor, graph minor, spectral extremal problem\\
	\textbf{AMS subject classification 2020:} 05C65, 05C50, 05C83
	
	\section{Introduction}\label{sec:intro}
	
	Extremal problems for the spectral radius of a graph have a long history; see~\cite{W} for terminology. A typical problem fixes a family $\mathcal G$ of $n$-vertex graphs and asks which member maximizes the largest eigenvalue $\lambda_1$ of the adjacency matrix. For graphs with an excluded minor the relevant constructions are classical. Mader~\cite{M} proved that a graph with no fixed minor has at most linearly many edges, and the natural $K_r$-minor-free and $K_{s,t}$-minor-free graphs of many edges are, respectively, the join of $K_{r-2}$ with an independent set and the join of $K_{s-1}$ with a disjoint union of copies of $K_t$; these are edge-extremal for small $r$ and $s$ but not in general~\cite{K,Th,KP}.
	
	For the spectral radius the picture is cleaner. Boots and Royle~\cite{BR}, and independently Cao and Vince~\cite{CV}, conjectured that the planar graph on $n\ge9$ vertices of maximum spectral radius is $K_2\vee P_{n-2}$, and Cvetkovi\'c and Rowlinson conjectured that the outerplanar graph of maximum spectral radius is $K_1\vee P_{n-1}$. Tait and Tobin~\cite{TT} proved both conjectures for all sufficiently large $n$. Tait~\cite{T} then showed, for all sufficiently large $n$, that the two natural minor-free constructions above are spectrally extremal for every $r$ and every $s\le t$, with the extra hypothesis $t\mid n-s+1$ in the complete bipartite case, and that the analogous statement holds for graphs of bounded Colin de Verdi\`ere parameter~\cite{CdV}. The residues $t\nmid n-s+1$ were settled by Zhai and Lin~\cite{ZL}, whose answer shows that the complete-block construction conjectured by Tait is not always extremal. Earlier results in this direction are due to Hong~\cite{H} and Nikiforov~\cite{N2}. The outerplanar and planar conjectures have since been resolved completely by Lin and Ning~\cite{LN} and by Liu, Ning and Wang~\cite{LNW}, respectively.
	
	For hypergraphs the spectral radius is taken from the adjacency tensor of Cooper and Dutle~\cite{CD}, whose Perron--Frobenius theory is developed in~\cite{FGH,N,Q,Lim}. Ellingham, Lu and Wang~\cite{ELW} initiated the study of topologically restricted uniform hypergraphs by determining the outerplanar $3$-uniform hypergraph of maximum spectral radius, and the outerplanar and planar problems have been settled for every $k$ in~\cite{LO}. A parallel line of work starts from the $k$-clique tensor of a graph $G$ introduced by Liu and Bu~\cite{LB}, whose entry indexed by $(i_1,\dots,i_k)$ is $1/(k-1)!$ when $\{i_1,\dots,i_k\}$ is a $k$-clique of $G$ and $0$ otherwise; Liu and Bu proved a clique-tensor version of Mantel's theorem, and Liu, Zhou and Bu~\cite{LZB} studied spectral extremal problems for this tensor with applications to Hadwiger's conjecture. The $k$-clique tensor of $G$ is exactly the Cooper--Dutle adjacency tensor of the hypergraph $\CC_k(G)$ defined below, and by Lemma~\ref{lem:reduce} the problem considered here is the same as an extremal problem for the $k$-clique tensor of a minor-free graph, so the two lines meet.
	
	All graphs are simple. Unless otherwise stated, $k\ge3$, all asymptotic statements are for fixed parameters as $n\to\infty$, and uniqueness is up to isomorphism. For a $k$-uniform hypergraph $\HH$ on $n$ vertices we use the adjacency tensor of Cooper and Dutle, whose spectral radius has the variational description
	\begin{equation}\label{eq:var}
		\rho(\HH)=\max\Bigl\{\,k\sum_{F\in E(\HH)}\ \prod_{v\in F}x_v\ :\ x\in\R^{V(\HH)}_{\ge0},\ \sum_v x_v^{k}=1\,\Bigr\},
	\end{equation}
	attained at a normalized \emph{Perron vector}, which is positive when $\HH$ is connected. The \emph{shadow} of $\HH$ is the graph $\shad\HH$ on $V(\HH)$ with $E(\shad\HH)=\{\,\{u,v\}:\{u,v\}\subseteq F\text{ for some }F\in E(\HH)\,\}$, and $\vee$ denotes the join.
	
	We work with the following classes. For a graph $M$ let
	\[
	\HH_M^{(k)}(n)=\{\,\HH:\ |V(\HH)|=n,\ \HH\ k\text{-uniform},\ \shad\HH\ \text{is }M\text{-minor-free}\,\},
	\]
	and for a graph $G$ let $\CC_k(G)$ be the $k$-uniform hypergraph whose edges are the $k$-cliques of $G$.
	
	Since a hyperedge spans a copy of $K_k$ in the shadow, and $K_k$ contains $K_t$ when $t\le k$ and $K_{s,t}$ when $s+t\le k$, the classes above are trivial unless $t\ge k+1$ in the complete case and $s+t\ge k+1$ in the complete bipartite case. Within those ranges our results are as follows. Throughout, $j:=k-s+1$.
	
	\begin{thm}\label{thm:introKr}
		Let $k\ge3$, $t\ge k+1$ and $a=t-2$, and assume that $n\ge a+1$. Let $B^{(k)}_{n,t}=\CC_k\bigl(K_a\vee\overline{K_{n-a}}\bigr)$. Then $B^{(k)}_{n,t}\in\HH^{(k)}_{K_t}(n)$, its spectral radius is the unique root in $\bigl(\binom{a-1}{k-1},\infty\bigr)$ of
		\[
		\lambda\Bigl(\lambda-\binom{a-1}{k-1}\Bigr)^{k-1}=\binom{a-1}{k-2}^{k-1}\binom{a}{k-1}(n-a)^{k-1},
		\]
		and $\rho\bigl(B^{(k)}_{n,t}\bigr)=\bigl(c_{k,t}+o(1)\bigr)n^{(k-1)/k}$ with $c_{k,t}=\binom{t-3}{k-2}^{(k-1)/k}\binom{t-2}{k-1}^{1/k}$. For $t=k+1$ the equation degenerates to $\lambda^{k}=(n-k+1)^{k-1}$. Moreover, for all sufficiently large $n$, $B^{(k)}_{n,t}$ is the unique hypergraph of maximum spectral radius in $\HH^{(k)}_{K_t}(n)$.
	\end{thm}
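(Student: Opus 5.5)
The plan has three parts. First I verify membership and compute $\rho$ exactly. Then I reduce the extremal problem to clique hypergraphs of $K_t$-minor-free graphs. Finally I run a stability argument on the Perron vector that forces a set of $a$ dominant vertices.

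\emph{Membership and the equation.} The shadow of $B^{(k)}_{n,t}$ is $K_a\vee\overline{K_{n-a}}$, since every edge lies in a $k$-clique when $a\ge k-1$. This graph has no $K_t$ minor. Indeed, any minor model of $K_t$ has $t$ disjoint connected branch sets, so at most $a$ of them meet $K_a$. The remaining branch sets are single isolated vertices of $\overline{K_{n-a}}$, hence pairwise nonadjacent. So at most one branch set avoids $K_a$, giving at most $a+1=t-1$ branch sets in total.

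For the spectral radius I use the symmetry of the Perron vector: it takes value $x$ on $K_a$ and $y$ on the independent side. The edges are the $k$-subsets of $K_a$, together with the $(k-1)$-subsets of $K_a$ joined to one outside vertex. The eigen-equations are
\[
\lambda x^{k-1}=\binom{a-1}{k-1}x^{k-1}+\binom{a-1}{k-2}(n-a)x^{k-2}y,\qquad \lambda y^{k-1}=\binom{a}{k-1}x^{k-1}.
\]
Eliminating $y$ gives the stated equation, and the asymptotics $c_{k,t}n^{(k-1)/k}$ follow immediately. When $t=k+1$ we have $a=k-1$, so the binomial $\binom{a-1}{k-1}$ vanishes and the equation degenerates as claimed.

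\emph{Reduction.} By Lemma~\ref{lem:reduce}, I may replace any $\HH\in\HH^{(k)}_{K_t}(n)$ by $\CC_k(\shad\HH)$. This does not decrease $\rho$ and keeps the shadow. So it suffices to maximize $\rho(\CC_k(G))$ over $K_t$-minor-free graphs $G$ on $n$ vertices, and then show that equality forces $\HH=\CC_k(G)$.

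\emph{Structure of an extremal graph.} Let $G$ be extremal, with Perron vector $x$ normalized so that $\max x_v=1$. By Mader, $G$ has $O(n)$ edges, and $K_t$-minor-freeness bounds the number of $k$-cliques by $O(n)$ as well. I then proceed in five steps.

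\begin{enumerate}
\item Using $\rho\ge(c_{k,t}-o(1))n^{(k-1)/k}$ and the eigen-equation $\rho x_v^{k-1}=\sum_{F\ni v}\prod_{u\in F\setminus v}x_u$, I show that the sum of $x_u$ over all $u$ is $O(n^{1/k})$-like. Concretely, this means at most a bounded number of vertices have $x_v\ge\epsilon$.
\item Iterating the eigen-equation twice, I show that every vertex with $x_v\ge\epsilon$ has degree $\Omega(n)$.
\item A set $L$ of vertices of linear degree has size at most $a$. Otherwise $a+1$ such vertices, together with their many common neighbours in a sparse minor-free graph, would yield a $K_t$ minor. This is the standard argument via $K_{a+1,\,\text{large}}$-type structures, as in Tait's proof.
\item Comparing $\rho$ with the lower bound then forces $|L|=a$, and forces almost all vertices to be adjacent to all of $L$.
\item A local switching argument finishes. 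Adding an edge inside $L$, or moving a low-weight vertex to become adjacent to all of $L$, strictly increases $\rho$. These operations preserve $K_t$-minor-freeness once $L$ is a clique of size $a$ and the rest induces a graph whose components each attach fully to $L$. Any edge in $G-L$ together with $L$ gives a $K_t$ minor, since contracting the edge yields a new vertex adjacent to all of $L$ plus another. Hence $G-L$ is independent and $G=K_a\vee\overline{K_{n-a}}$.
\end{enumerate}

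Uniqueness at the hypergraph level follows because $\rho$ is strictly monotone under adding hyperedges in a connected hypergraph.

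\emph{Main obstacle.} I expect the hardest part to be steps 2 to 3: showing that high Perron weight forces linear degree, and bounding the number of such vertices by exactly $a$. In the tensor setting a vertex's weight is governed by $(k-1)$-products rather than sums, so the eigenvalue bootstrap must track cliques through the heavy set. My first attempt would be to bound $\sum_{F\ni v}\prod_{u\in F\setminus v} x_u$ by splitting each clique according to how many of its vertices are heavy. Cliques with at most $k-3$ heavy vertices contribute lower order, which leaves cliques consisting of $k-1$ heavy vertices plus $v$, or $k-2$ heavy vertices plus $v$ and one light vertex.
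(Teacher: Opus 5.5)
Your membership argument and the eigen-equations match the paper. For uniqueness of the root, add that the first eigen-equation with $y>0$ gives $\lambda>\binom{a-1}{k-1}$, and the left side is strictly increasing on that interval. At $t=k+1$ you also need $\binom{a-1}{k-2}=\binom{a}{k-1}=1$. The extremality part, however, has a genuine gap: steps 1--4 are the entire stability argument, and as written they do not go through.

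\textbf{Step 1.} Under max-normalization, $\sum_u x_u$ is of order $n^{(k-1)/k}$ already for $B^{(k)}_{n,t}$ itself: hub coordinates equal $1$ and light coordinates are of order $n^{-1/k}$. So the bound you state is false for $k\ge3$; it is the $k=2$ exponent. The eigen-equation alone only shows that there are $O(n^{1/k})$ vertices with $x_v\ge\epsilon$, since each needs $\gtrsim\epsilon^{k-1}n^{(k-1)/k}$ of the $O(n)$ $k$-cliques. With the $k$-norm normalization of \eqref{eq:var}, the bound $|\{v:x_v>\epsilon\}|\le\epsilon^{-k}$ is immediate, which is why the paper uses that normalization.

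\textbf{Step 3.} This is false as stated. The disjoint union of $m$ stars $K_{1,n/m}$ is a forest with $m$ vertices of linear degree and no common neighbours. So linear degree neither produces common neighbours nor bounds $|L|$ by $a$; only degrees close to $n$ would, and step 2 does not give that. Worse, in the tensor setting the eigen-equation at a heavy vertex weights each light neighbour $w$ by $e_{k-2}$ of the heavy coordinates adjacent to both. Hence a degree bound for one heavy vertex is coupled to the weights of the others, and Tait's linear relation between $x_v$ and $d(v)$ is not available. This is exactly the obstacle you flag, and step 4 only asserts its outcome.

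\textbf{What the paper does instead of steps 2--4.} It uses a global bound that never mentions degrees. Cliques with at least two light vertices weigh $O(\epsilon n^{(k-1)/k})$ by degeneracy (Lemma~\ref{lem:tail}). Since $K_{t-1,t-1}$ has a $K_t$ minor, any $t-1$ vertices have at most $t-2$ common neighbours (Lemma~\ref{lem:Krcommon}). Hence all but $O_\epsilon(1)$ light vertices see at most $a$ heavy ones, and their heavy $(k-1)$-clique weight $q_v$ is at most $Q=e_{k-1}$ of the $a$ largest coordinates. The one-light-vertex part is therefore at most $Q\sum_{v\ \text{light}}x_v$. Maclaurin and H\"older then give $\rho/(kn^{(k-1)/k})\le\Psi(\beta)+O(\epsilon)$, where $\Psi$ has the unique maximizer $\beta=(k-1)/k$, with value $c_{k,t}/k$. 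Near-equality forces exactly $a$ asymptotically equal dominant coordinates, forming a set $A$. A vertex missing a hub has $q_v\le Q-\delta$, which gives $|R|=o(n)$ for $R=V(G)\setminus(A\cup N(A))$ (Proposition~\ref{prop:Krmaster}, Corollary~\ref{cor:Krstab}).

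\textbf{Step 5.} This also fails as written. First, the minor-freeness of adding an edge inside $L$ is justified only under the hypothesis that $L$ is already a clique, which is circular. Second, an edge of $G-L$ yields a $K_t$ minor only when both ends are joined to all of $L$. Attaching a pendant vertex to the independent side of $K_a\vee\overline{K_{n-a-1}}$ keeps the graph $K_t$-minor-free. So removing the vertices outside $N(L)$ is an extremality statement, not a minor statement.

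\textbf{What the paper does instead of step 5.} It proves, without assuming $A$ is a clique, that $T=N(A)$ is independent and each vertex of $R$ has at most one neighbour in $T$ (Lemma~\ref{lem:Krstruct}). It then compares $\CC_k(G)$ at the Perron vector with the copy of $B^{(k)}_{n,t}$ with core $A$. For each $r\in R$, that copy contains the sets $\{u,r\}\cup A''$ with $u\in A\setminus N(r)$. These are not cliques of $G$ and together are worth a constant times $x_r$. Meanwhile, the cliques of $G$ that are not edges of the copy weigh only $O(\eta x_r)$ per $r$, where $\eta=\max_{v\notin A}x_v\to0$. A perturbation handles the case $x_R=0$. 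Hence $R=\emptyset$, $\CC_k(G)\subseteq B^{(k)}_{n,t}$, and strict monotonicity finishes. Since $B^{(k)}_{n,t}$ is already known to be $K_t$-minor-free, no intermediate minor-freeness check is needed.
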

	
	The equation for the spectral radius is Proposition~\ref{prop:Kreq}, and the extremality statement is Theorem~\ref{thm:Kr}. The extremal graph
	$K_{t-2}\vee\overline{K_{n-t+2}}$ is the same for every $3\le k\le t-1$
	and agrees with the extremal graph in Tait's theorem for $k=2$. The reason is structural: the part of the graph outside the dominating clique is an independent set, so every hyperedge has at most one vertex there and $k$ never enters the interaction among those vertices.
	
	For complete bipartite minors the answer depends on $j$. We first record the order of the maximum and its leading constant.
	
	\begin{thm}\label{thm:introbip}
		Let $2\le s\le t$ and $t\ge j$, and put $h=n-s+1$ and $b=\lfloor h/t\rfloor$, so that $K_{s-1}\vee bK_t$ (Figure~\ref{fig:cand}) has $n-(h-bt)$ vertices. Then
		\[
		\rho\bigl(\CC_k(K_{s-1}\vee bK_t)\bigr)=\bigl(c+o(1)\bigr)\,n^{\frac{\min(s-1,\,k-1)}{k}},\qquad
		c=\begin{cases}
			\displaystyle\binom{s-1}{k-1}\Bigl(\frac{k-1}{s-1}\Bigr)^{\frac{k-1}{k}}, & j\le1,\\[2ex]
			\displaystyle\frac1t\binom tj\,j^{\,j/k}, & j\ge2,
		\end{cases}
		\]
		and this is the maximum of $\rho(\HH)$ over $\HH\in\HH^{(k)}_{K_{s,t}}(n)$ up to a factor $1+o(1)$. The mass carried by the $s-1$ dominating vertices in the Perron vector tends to $\frac{\min(s-1,k-1)}{k}$.
	\end{thm}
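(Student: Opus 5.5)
The plan is to prove the statement in two halves: a lower bound from an explicit test vector on $\CC_k(K_{s-1}\vee bK_t)$, and a matching upper bound valid for every $\HH\in\HH^{(k)}_{K_{s,t}}(n)$. The upper bound also covers the construction itself. Indeed $K_{s-1}\vee bK_t$ is $K_{s,t}$-minor-free, since every component of $bK_t$ has only $t$ vertices, and $\shad\CC_k(G)\subseteq G$. Combining the two halves then gives both the asymptotic formula and the near-optimality claim.

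\emph{Lower bound.} Let $S$ be the $s-1$ dominating vertices. The hyperedges of $\CC_k(K_{s-1}\vee bK_t)$ are $I\cup J$ with $I\subseteq S$, $|I|=i$, and $J$ a $(k-i)$-subset of a single block, where $1\le k-i\le t$. Put $x_v=\alpha$ on $S$ and $x_v=\beta$ elsewhere, with $(s-1)\alpha^k=\mu$ and $bt\beta^k=1-\mu$. Then the layer with $i$ vertices in $S$ contributes
\[
k\binom{s-1}{i}\,b\binom{t}{k-i}\,\alpha^i\beta^{k-i}=\Theta\bigl(n^{i/k}\bigr).
\]
The dominant layer is $i=\min(s-1,k-1)$: we need $i\le s-1$, and the layer $i=k$ contributes $O(1)$.

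For $j\le1$ the dominant layer is $i=k-1$. Using $bt\sim n$, its value is
\[
k\binom{s-1}{k-1}(s-1)^{-(k-1)/k}\mu^{(k-1)/k}(1-\mu)^{1/k}n^{(k-1)/k}.
\]
This is maximized at $\mu=(k-1)/k$, which gives the stated constant.

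For $j\ge2$ the dominant layer is $i=s-1$, using $t\ge j$. Using $b\sim n/t$, its value is
\[
\frac kt\binom tj(s-1)^{-(s-1)/k}\mu^{(s-1)/k}(1-\mu)^{j/k}n^{(s-1)/k},
\]
maximized at $\mu=(s-1)/k$, which yields $\frac1t\binom tj j^{j/k}$. In both cases the optimal $\mu$ is $\min(s-1,k-1)/k$. This is the claimed limiting mass on $S$, once the upper bound shows the Perron vector is near-optimal for this one-parameter family.

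\emph{Upper bound.} Take $\HH$ in the class, with $G=\shad\HH$ and Perron vector $x$. By Mader, $G$ has at most $Cn$ edges and is $C$-degenerate, so $\HH$ has $O(n)$ hyperedges, and every vertex lies in $O(\deg_G v)$ hyperedges times a constant. From the eigen-equation $\rho x_v^{k-1}=\sum_{F\ni v}\prod_{u\in F\setminus v}x_u$ and $\rho=\Omega(n^{\min(s-1,k-1)/k})$ (from the lower bound), I will show three things.
\begin{itemize}
\item[(i)] The set $L$ of vertices with $x_v\ge\varepsilon$ has bounded size, and each such vertex has degree $\Omega(n)$ in $G$.
\item[(ii)] $|L|\le s-1$. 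Otherwise $s$ heavy vertices have linearly many common neighbours, and a $K_{s,t}$ minor follows.
\item[(iii)] Hyperedges with fewer than $\min(s-1,k-1)$ vertices in $L$ contribute $o(n^{\min(s-1,k-1)/k})$ to the form. This follows from degeneracy and Hölder, since every light vertex has $x_v=O(n^{-1/k})$ up to a constant once the heavy part is removed.
\end{itemize}
What remains is the dominant layer. For $j\le1$ it consists of $(k-1)$-subsets of $L$ times single light vertices; the weight is bounded as in the lower bound computation, via $\sum_{v\notin L}x_v\le n^{(k-1)/k}(\sum x_v^k)^{1/k}$ and AM--GM on $L$. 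For $j\ge2$ it consists of $L=S$, $|S|=s-1$, times $j$-cliques in the common neighbourhood $N$ of $S$.

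\emph{Main obstacle.} The key step is the weighted clique inequality
\[
\sum_{J\text{ a }j\text{-clique in }G[N]}\ \prod_{v\in J}x_v\ \le\ \frac1t\binom tj\sum_{v\in N}x_v^{j}\,(1+o(1)).
\]
Any connected subgraph of $G[N]$ with at least $t$ outside neighbours inside $N$ can be contracted to give the $s$-th branch vertex of a $K_{s,t}$ minor. Hence, after deleting $o(n)$ exceptional vertices, the components of $G[N]$ have at most $t$ vertices. On a component of size $m\le t$, Maclaurin's inequality gives $\sum_J\prod x_v\le\frac{1}{m}\binom mj\sum x_v^j\le\frac1t\binom tj\sum x_v^j$, because $\binom mj/m$ is increasing in $m$. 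Hölder then gives $\sum x_v^j\le n^{1-j/k}(1-\mu)^{j/k}$, and the optimization over $\mu$ reproduces $c$.

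Making the component-size reduction rigorous is the hard part, because $N$ need not be exactly the common neighbourhood and large components are not literally excluded. I expect that controlling the contribution of big components, and of vertices adjacent to only part of $S$, through the degeneracy bound and the small Perron weights is where most of the work lies.
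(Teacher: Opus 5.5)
Your lower-bound computation is correct, but the upper bound has genuine gaps. Step (ii) cannot be derived from $\rho=\Omega(n^{i^*/k})$, with $i^*=\min(s-1,k-1)$, and its justification does not follow. For a counterexample, glue two copies of $K_{s-1}\vee bK_t$ at one light vertex; the result is $K_{s,t}$-minor-free by Lemma~\ref{lem:cliquesum}. Its $k$-clique hypergraph is connected and invariant under swapping the copies. So its Perron vector has $2(s-1)\ge s$ coordinates bounded away from $0$, all of linear degree, while $\rho=\Theta(n^{i^*/k})$. Linear degrees do not force common neighbourhoods, and by Lemma~\ref{lem:common} no $s$ vertices have $t$ common neighbours in any case.

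The missing idea is to bound the number of heavy neighbours of each light vertex, instead of the number of heavy vertices. By Lemma~\ref{lem:common}, all but $O_\varepsilon(1)$ vertices outside $L$ have at most $s-1$ neighbours in $L$.
\begin{itemize}
\item For $j\le1$, this bounds the coefficient of such a vertex by $e_{k-1}$ of the $s-1$ largest coordinates. Maclaurin and H\"older then finish, as in the proof of Theorem~\ref{thm:jle1stab}.
\item For $j\ge2$, it makes the sets $N(B)\setminus L$, $B\in\binom{L}{s-1}$, pairwise disjoint once those $O_\varepsilon(1)$ vertices are removed. Apply the clique bound inside each set, then H\"older over $B$, using that $\prod_{u\in B}x_u$ is at most the product of the $s-1$ largest coordinates. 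This gives the leading term (Proposition~\ref{prop:master}). That only $s-1$ coordinates stay large in a near-extremal graph is then an output of this bound, via~\eqref{eq:masterell}, not an input.
\end{itemize}
Steps (i) and (iii) are also unsupported as written. The eigenequation gives a heavy vertex degree only $\Omega_\varepsilon(\rho)$, not $\Omega(n)$. And $x_v<\varepsilon$ does not give $x_v=O(n^{-1/k})$. The cliques with too few heavy vertices should instead be controlled by charging each clique to a suitable earliest vertex in a degeneracy order. This yields weight $O(\varepsilon\,n^{i^*/k})+O_\varepsilon(1)$ from $x_v<\varepsilon$ alone (Lemma~\ref{lem:tail} for $j\le1$, Lemma~\ref{lem:low} for $j\ge2$).

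Your ``main obstacle'' is misdiagnosed, and the component-size reduction is false. $K_{1,t}$-minor-freeness of $G[N]$ does not bound the orders of its components: for $t\ge3$ a path has no $K_{1,t}$ minor, and $K_{s-1}\vee P_m$ is $K_{s,t}$-minor-free by Lemma~\ref{lem:join}. No deletion of $o(n)$ vertices repairs this. But the inequality you want needs no component structure. Your contraction argument gives $\Delta(G[N])\le t-1$ (Corollary~\ref{cor:deg}), so each $v$ lies in $m_v\le\binom{t-1}{j-1}$ $j$-cliques. The arithmetic--geometric mean inequality then gives $\sum_J\prod_{v\in J}x_v\le\frac1j\sum_v m_vx_v^j\le\frac1t\binom tj\sum_{v\in N}x_v^j$ exactly, with no $1+o(1)$ factor (Lemma~\ref{lem:sigma}).

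Finally, the mass statement concerns the candidate's own Perron vector and needs no upper bound. The automorphism group of $K_{s-1}\vee bK_t$ is transitive on the hubs and on the light vertices, and the positive Perron vector of this connected hypergraph is unique. So that vector lies in your two-parameter family. Hence $\rho$ equals the maximum of your one-variable function exactly, and the maximizing $\mu$ tends to $\min(s-1,k-1)/k$, as in Theorem~\ref{thm:bip}. Without this symmetry step, ``near-optimal for this one-parameter family'' has nothing to apply to, since the Perron vector is not known to lie in that family.
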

	
	When $j\ge2$ the order of the maximum drops from $n^{(k-1)/k}$ to $n^{(s-1)/k}$, because a $k$-clique can use at most $s-1$ dominating vertices and must therefore take at least $j$ vertices from a single block. The constant $\frac1t\binom tj$ is, by Lemma~\ref{lem:count}, the largest number of $j$-cliques per vertex in a graph of maximum degree $t-1$. When $j\le1$ the leading term comes from cliques with a single vertex outside the dominating clique, so $t$ does not enter it; for graphs this is why $t$ appears only in lower-order terms of the bound of Tait. Since $k=2$ forces $s\ge2=k$, the regime $j\ge2$ does not occur for graphs.
	
	For the extremal hypergraphs, write $h=n-s+1=bt+p$ with $0\le p<t$, and put $q=\lfloor\frac{t+1}{s+1}\rfloor$. For $q\ge2$ let $F_{s,t}$ be the complement of $(q-1)K_{1,s}\cup K_{1,t-(q-1)(s+1)}$, a graph on $t+1$ vertices; for $q=2$ let $D_{s,t}$ be obtained from $F_{s,t}$ by subdividing the edge joining the centers of the two stars; and let $\overline P$ be the complement of the Petersen graph.
	
	\begin{thm}\label{thm:introresidue}
		Let $3\le k\le s\le t$. For all sufficiently large $n$ the unique hypergraph of maximum spectral radius in $\HH^{(k)}_{K_{s,t}}(n)$ is $\CC_k(K_{s-1}\vee H^{*})$, and $K_{s-1}\vee H^*$ is the unique extremal graph, where
		\[
		H^{*}=\begin{cases}
			(b-1)K_8\cup\overline P, & q=1,\ t=8,\ p=2,\\
			(b-1)K_t\cup D_{s,t}, & q=2,\ p=2,\\
			(b-p)K_t\cup pF_{s,t}, & 1\le p\le2q-2,\ (q,p)\ne(2,2),\\
			bK_t\cup K_p, & \text{otherwise.}
		\end{cases}
		\]
	\end{thm}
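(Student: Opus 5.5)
The plan is to reduce the statement to a graph problem, pin down the global structure $K_{s-1}\vee H$, and then settle the light part $H$ by an expansion of the Perron equations. Here $3\le k\le s$ means $j=k-s+1\le1$.

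\textbf{Reduction and global structure.} By Lemma~\ref{lem:reduce} an extremal $\HH$ may be replaced by $\CC_k(G)$, where $G=\shad\HH$ is $K_{s,t}$-minor-free, and only the graph $G$ needs to be determined. Let $x$ be the Perron vector of $\CC_k(G)$. Theorem~\ref{thm:introbip} gives the lower bound $\rho\ge(c+o(1))n^{(k-1)/k}$, with $c$ the $j\le1$ constant. I would use it in the standard Tait--Tobin way:
\begin{itemize}
\item Since $G$ has $O(n)$ edges (Mader), it has $O(n)$ $k$-cliques.
\item The eigen-equations $\rho x_v^{k-1}=\sum_{F\ni v}\prod_{u\in F\setminus v}x_u$ then show that exactly $s-1$ vertices have weight bounded below by a constant. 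Call this set $S$.
\item Every other vertex has weight $O(n^{-1/k})$.
\item Vertices of $S$ have linear degree, and their common neighbourhood misses only $o(n)$ vertices.
\item A local switching argument then shows that $S$ is a clique dominating all of $V\setminus S$.
\end{itemize}
In this switching step one deletes the edges of a vertex outside $S$ and reattaches it to $S$. This keeps $G$ $K_{s,t}$-minor-free and strictly increases the Rayleigh quotient~\eqref{eq:var}.

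\textbf{The constraint on $H=G-S$.} Since $K_{s-1}\vee H$ contains no $K_{s,t}$ minor, no component of $H$ contains a $K_{1,t}$ minor. By the Zhai--Lin analysis, each component is then either on at most $t$ vertices or is one of the exceptional $(t+1)$-vertex graphs with bounded degree structure. These exceptions are the candidates $F_{s,t}$, $D_{s,t}$, $\overline P$.

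\textbf{Expansion of $\rho$.} For $v\notin S$ I would expand $\rho$ in terms of $H$. Because $j\le1$, the leading term comes from cliques $S'\cup\{v\}$ with $S'\subseteq S$ of size $k-1$, and it is independent of $H$. The first $H$-dependent term comes from cliques with two vertices in $H$. It equals $\binom{s-1}{k-2}$ times a quantity proportional to $e(H)$, times $x_v^2$ at scale $n^{-2/k}$ relative to the main term. Cliques with $i\ge3$ vertices in $H$ contribute at strictly smaller order $n^{-i/k}$. Hence $H$ must first maximize $e(H)$ under the component constraint, which forces all but $O(1)$ components to be $K_t$. Among the finitely many completions of the remainder with maximal $e(H)$, the tie is broken by the next-order terms. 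These are a degree-square term $\sum_v d_H(v)^2$ (from the second-order correction of $x_v$) together with a triangle count with coefficient $\binom{s-1}{k-3}$.

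\textbf{Main obstacle.} The hard step is this tie-break. I must verify that the hypergraph secondary functional ranks the candidates in the same order as Zhai--Lin's graph functional in each residue case $(q,p)$, including $D_{s,t}$ and the Petersen complement for $t=8,p=2$. I would first compute both terms exactly for each candidate. I would then check that the edge-maximal candidates are already separated by $\sum d^2$ at order $n^{-2/k}\cdot n^{-1/k}$, before the triangle term can matter. If that fails, I would compare the triangle counts directly; $\overline P$ is triangle-rich relative to $F_{s,t}$. Uniqueness then follows because every inequality in the switching and tie-break steps is strict.
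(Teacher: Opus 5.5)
Your architecture matches the paper's. You reduce to $G=K_{s-1}\vee H$, expand $y_v=\xi(1+d_H(v)\xi/\tau+O(\xi^2))$, and compare light parts first by $e(H)$ and then by a degree-square/triangle functional. The genuine gap is the step that produces your finite candidate list, which is where most of the paper's work lies.

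\textbf{The candidate list does not follow from the minor condition.} The condition on $H$ is full admissibility: no $K_{a,t+1-a}$ minor for $1\le a\le s$ (Lemma~\ref{lem:join}). $K_{1,t}$-minor-freeness alone is not enough. For odd $t$, $K_{t+1}$ minus a perfect matching is $K_{1,t}$-minor-free and has more than $\binom t2+q-1$ edges, the admissible maximum. More importantly, no minor condition bounds the order of a component: paths of every length are admissible when $t\ge3$.
\begin{itemize}
\item Bounded orders must come from extremality, via the Ding--Johnson--Seymour bound $e(J)\le\binom t2+|V(J)|-t$ and replacement by $b_0K_t\cup K_{p_0}$. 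For components of unbounded order this cannot run through your finite-type expansion. It needs a separate estimate at the old coordinates (the ratio bound $19/24$ in Lemma~\ref{lem:comporder}).
\item For $t=3$, i.e.\ $k=s=t=3$, edge maximization fails outright. A path of order $m\not\equiv0\pmod 3$ has $m-1$ edges, exactly as many as the optimal packing by triangles and one small clique. So one arbitrarily long path survives, and "all but $O(1)$ components are $K_t$" is false. The paper removes it by a bounded local move (delete $v_3v_4$, add $v_1v_3$, swap $y_1,y_4$), which gains a triangle at order $\xi^3$.
\end{itemize}
Even for bounded orders your list is not the right one. $D_{s,t}$ and $\overline P$ have $t+2$ vertices, not $t+1$. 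The admissible graphs of order $t+1$ with $\binom t2+q-1$ edges are all complements of forests with $q$ trees of order at least $s+1$. Those of order $t+2$ with $\binom t2+2$ edges include all $J_{0,e,f}$ and $J'_{a_1,a_2,a_3}$ besides $\overline P$ (Lemma~\ref{lem:tplus2}). Components of order $t+3$ tie with $K_t\cup K_3$ in edge count, and must be excluded using $\delta(J)\ge2$ (Lemma~\ref{lem:leaf}) and Lemma~\ref{lem:omega}. Zhai and Lin derive their classification from a different comparison (local edge maximality and degree-sequence majorization). You cannot import it; it has to be rederived with the hypergraph functional.

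\textbf{The tie-break is mis-ordered.} The degree-square correction and the triangle term both enter at order $\xi^3\asymp n^{-3/k}$; your own count $n^{-2/k}\cdot n^{-1/k}$ shows this. So $\sum_v d_H(v)^2$ never decides ``before the triangle term can matter'', and a lexicographic fallback to triangle counts has no justification. Expand at the critical point: the first-order terms in the corrections cancel and the optimal correction is $w_v=d_v/\tau$. This gives the single functional $\Omega=D_2+\theta c_3$, with $\theta=2(k-2)\tau/(\tau+1)$ and $\tau=s-k+1$. In the comparisons that actually arise the two parts agree, but this must be verified:
\begin{itemize}
\item $6c_3\le D_2-2e$, with equality exactly for unions of cliques;
\item convexity in $e$ for the family $J_{0,e,f}$;
\item $(D_2,c_3)=(388,44)$ against $(360,30)$ for $D_{3,8}$ versus $\overline P$.
\end{itemize}

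\textbf{Comparisons must be local.} They have to be replacements of boundedly many components, evaluated at the Perron vector so that the untouched part cancels exactly (Lemma~\ref{lem:replace}). A global maximization of $e(H)$ is not justified, because the $O(\xi^4)$ errors per vertex sum to far more than the gain from one edge.

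\textbf{Order of the structural steps.} Reattaching a vertex to $S$ preserves $K_{s,t}$-minor-freeness only as a clique-sum along $S$ (Lemma~\ref{lem:cliquesum}). Switching vertices outside $S$ cannot create edges inside $S$. So $S$ must first be shown to be a clique by a separate loss estimate, as in Lemma~\ref{lem:Aclique}.
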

	
	\begin{thm}\label{thm:introkgs}
		Let $2\le s\le\min(k-1,t)$ and $t\ge j$. For all sufficiently large $n$ the unique hypergraph of maximum spectral radius in $\HH^{(k)}_{K_{s,t}}(n)$ is $\CC_k(K_{s-1}\vee H^{*})$, where
		\[
		H^{*}=\begin{cases}
			(b-1)K_8\cup\overline P, & (k,s,t,p)=(9,8,8,2),\\
			bK_t\cup K_p, & \text{otherwise.}
		\end{cases}
		\]
		If $p=0$ or $p\ge j$, then $K_{s-1}\vee H^{*}$ is the unique extremal graph. If $0<p<j$, then in every extremal graph the $p$ vertices that lie neither in the dominating clique nor in a copy of $K_t$ belong to no $k$-clique, and the extremal graph is not unique.
	\end{thm}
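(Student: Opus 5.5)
The proof follows the Tait stability scheme, adapted to tensors. Throughout, $\HH$ is extremal in $\HH^{(k)}_{K_{s,t}}(n)$ and $x$ is its Perron vector.

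\textbf{Step 1: reduce to graphs and a lower bound.} By Lemma~\ref{lem:reduce} we may replace $\HH$ by $\CC_k(G)$, where $G=\shad\HH$ is $K_{s,t}$-minor-free; adding hyperedges only increases $\rho$. Theorem~\ref{thm:introbip} then gives $\rho\ge (c+o(1))n^{(s-1)/k}$ with $c=\frac1t\binom tj j^{j/k}$, since $j\ge2$ here.

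\textbf{Step 2: coarse structure.} Next we locate the dominating clique. From the eigen-equation $\rho x_v^{k-1}=\sum_{F\ni v}\prod_{u\in F\setminus v}x_u$ and Mader's linear edge bound for $G$ (the number of cliques of each size is also linear), we aim to show three things. First, there is a set $S$ of exactly $s-1$ vertices with $x_v=\Theta(\max x)$. Second, each vertex of $S$ is adjacent to all but $o(n)$ vertices. Third, every other vertex has $x_u=O(n^{-1/k}\cdot\text{small})$. The standard argument then shows $S$ is complete to everything: a vertex missing from some $v\in S$ can be rewired to $S$ while keeping the graph $K_{s,t}$-minor-free. With $s-1$ common neighbours, the graph $H=G-S$ has no vertex of degree $\ge t$ inside a component that would create a $K_{s,t}$ minor. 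More precisely, $K_{s-1}\vee H$ is $K_{s,t}$-minor-free iff every component of $H$ is $K_{1,t}$-minor-free, so every component has at most $t$ vertices or is a small exceptional graph. Here we invoke the characterization used by Zhai and Lin.

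\textbf{Step 3: reduce to a weighted clique inequality on $H$.} Since a $k$-clique uses at most $s-1$ vertices of $S$, it takes at least $j\ge2$ vertices in one component of $H$. The dominant contribution to $\rho$ is therefore governed by $\sum_{C}\#\{j\text{-cliques of }C\}$ weighted by the near-constant values of $x$ on $H$, with corrections from $(j+1)$-cliques at relative order $n^{-1/k}$. Expanding $\rho$ to second order in these perturbations shows that, for $n$ large, maximizing $\rho$ amounts to lexicographically maximizing the vector of clique counts $(N_j(H),N_{j+1}(H),\dots)$ over admissible $H$ on $h=n-s+1$ vertices. At the first order where two candidates differ, a Perron-weighted version of this comparison is needed.

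\textbf{Step 4 (main obstacle): the combinatorial optimization.} By Lemma~\ref{lem:count}, each component of at most $t$ vertices with maximum degree $\le t-1$ has at most $\frac1t\binom tj$ $j$-cliques per vertex, with equality only for $K_t$. This forces $bK_t$ plus a leftover of $p$ vertices, possibly merged into one component of order $t+p$ that is still $K_{1,t}$-minor-free. We would compare $K_t\cup K_p$ against every admissible graph on $t+p$ vertices using the weighted closed-neighbourhood counting (Chao--Dong) for $j\ge3$, and a direct edge/triangle count for $j=2$. The exception $(9,8,8)$, $p=2$, with $\overline P$ will appear as the unique case where a $10$-vertex $K_{1,8}$-minor-free graph beats $K_8\cup K_2$ in $j=2$-cliques, i.e.\ edges: $\overline P$ has $30$ edges versus $29$. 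I expect the verification that no other exception exists, across all $j,t,p$, to be the hardest step. The plan is to try degree-sum bounds first and to handle only $t+p\le 2t$ small cases by hand. Finally, if $0<p<j$, then $K_p$ contains no $j$-clique, so its vertices lie in no $k$-clique. Any edges among those $p$ vertices that create no $j$-clique leave $\CC_k$ unchanged, which gives the non-uniqueness of the extremal graph, while $\CC_k(K_{s-1}\vee H^*)$ remains the unique extremal hypergraph.
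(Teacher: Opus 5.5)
\textbf{Step~3: the comparison must be by capacity, not by clique counts.} This is the central gap. For $j\ge2$ the Perron vector is not asymptotically constant on the light part. To leading order, the eigenequation at a light vertex $v$ reads $\rho x_v^{k-1}\approx\pi_A\sum_Q\prod_{u\in Q}x_u$, summed over the $(j-1)$-cliques $Q$ of $N_H(v)$. On a block $K_m$ this gives $x_v^{s-1}\approx\pi_A\binom{m-1}{j-1}/\rho$. So typical light coordinates are of exact order $n^{-1/k}$ (not $o(n^{-1/k})$, as your Step~2 asserts), with constants that depend on the local structure. Light parts are therefore already separated at the leading order $n^{(s-1)/k}$, and what separates them is the optimally reweighted $j$-clique sum. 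This is the capacity $\mathcal E_j(H)=(L_j(H)/C_j)^{\sigma_j}$ with $\sigma_j=k/(k-j)$, not the count $N_j(H)$, which only corresponds to the limit $\sigma\to1$.

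This changes the answer. For $j=2$ (so $k=s+1$), $t=8$ and $p=2$, the graph $\overline P$ has $30$ edges against $29$ for $K_8\cup K_2$, whatever $s$ is. So your lexicographic criterion ranks $(b-1)K_8\cup\overline P$ above $bK_8\cup K_2$ for every $s$, contradicting the statement for $2\le s\le7$. In fact $\mathcal E_2(\overline P)=10(6/7)^{\sigma}$ and $\mathcal E_2(K_8\cup K_2)=8+2(1/7)^{\sigma}$ with $\sigma=(s+1)/(s-1)$, and the comparison flips exactly between $s=7$ and $s=8$ (Lemma~\ref{lem:capP}). An explanation of the exception by edge counts cannot detect this.

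Two ingredients are missing:
\begin{itemize}
\item an extremal inequality for $\mathcal E_j$ over admissible graphs on $h$ vertices, with a gap $\delta$ independent of $b$ (Theorem~\ref{thm:weighted3} and Corollary~\ref{cor:gap3} via a weighted Chao--Dong counting for $j\ge3$; Theorem~\ref{thm:weighted2}, which uses the full minor conditions, for $j=2$);
\item a spectral comparison (Lemma~\ref{lem:refined}) turning a capacity loss $\delta$ into a loss of order $n^{-j/k}$ in $\rho$, which dominates the $O(n^{-(j+1)/k})$ errors.
\end{itemize}
The averaging of Lemma~\ref{lem:count} in Step~4 provides neither such a uniform gap nor control of admissible components of unbounded order, such as long paths.

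\textbf{Step~2: the rewiring argument does not work for $s<k$.} Rewiring a vertex $r$ to the $s-1$ hubs gains nothing, because $A\cup\{r\}$ has only $s<k$ vertices, so $r$ then lies in no $k$-clique. Nor can $r$ be added to a $K_t$ block without violating Corollary~\ref{cor:deg}. The conclusion ``$S$ is complete to everything'' is in fact false for $0<p<j$: the statement itself yields extremal graphs whose leftover vertices miss the hubs. The paper proves only $|R|=O(1)$ and $W_R=O(n^{-1})$, in Propositions~\ref{prop:Rlarge}--\ref{prop:Rbdd}, using the termwise comparison of Proposition~\ref{prop:termwise} and a bound on the mass of semi-heavy vertices. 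It absorbs $R$ into the error term of Lemma~\ref{lem:refined}, and only afterwards deduces $R=\emptyset$ for $p=0$ or $p\ge j$ from $\CC_j(\widetilde H)\cong\CC_j(H^*)$.

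The concentration on exactly $s-1$ heavy vertices with $x_{(s)}\to0$ is also asserted rather than derived; the paper needs Lemma~\ref{lem:low} and Theorem~\ref{thm:exact} for it.

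\textbf{The join criterion.} Your criterion is incorrect. By Lemma~\ref{lem:join} one must exclude $K_{r+1,t-r}$ minors of $H$ for every $0\le r\le s-1$. $K_{1,t}$-minor-freeness alone would admit, for odd $t$, the $(t-1)$-regular graph $K_{t+1}$ minus a perfect matching, whose capacity $t+1$ exceeds $\mathcal E_2(K_t\cup K_1)=t$.

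\textbf{Non-uniqueness and the leftover vertices.} For $p=1$ there are no edges among the leftover vertices, so your source of non-uniqueness is empty; the paper instead deletes the edges at those vertices. The claim about \emph{every} extremal graph also needs an argument such as Lemma~\ref{lem:Kst}. The observation that $K_p$ has no $j$-clique is not enough.
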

	
	The graph $H^*$ of Theorem~\ref{thm:introresidue} is exactly the light part in the adjacency-matrix classification of Zhai and Lin~\cite[Theorem~1.2]{ZL}, so for $j\le1$ the extremal structure does not depend on $k$. Theorem~\ref{thm:introkgs} is different: when $j\ge2$ the exceptional components $F_{s,t}$ and $D_{s,t}$ never occur, and $\overline P$ occurs only for $(k,s,t)=(9,8,8)$. In particular, for $t=8$, $4\le s\le7$, $k=s+1$ and $p=2$, the clique hypergraph of the extremal graph of Zhai and Lin, $K_{s-1}\vee((b-1)K_8\cup\overline P)$, is not extremal.
	
	The two theorems rest on different comparisons of light parts. When $j\le1$, every $k$-clique of the join may use a single light vertex, the light coordinates of the Perron vector are asymptotically equal, and light parts of the same order are compared first by their numbers of edges and then by $D_2+\theta c_3$, a combination of the degree-square sum and the triangle count with a weight $\theta>0$ depending on $k$; this replaces the degree-sequence majorization used by Zhai and Lin, and the component analysis follows the outline of~\cite[Sections~3 and~4]{ZL}. When $j\ge2$, the light coordinates are no longer equal, and light parts are compared through a capacity $\mathcal E_j$, the suitably normalized maximum of the weighted $j$-clique sum. For $j\ge3$ we prove the extremal inequality for $\mathcal E_j$ under the maximum-degree condition alone, using a weighted form of the closed-neighborhood counting of Chao and Dong~\cite{ChD}; for $j=2$ the corresponding comparison fails, and the minor conditions and a classification of small connected light components are needed. Both inequalities have a gap that does not depend on the number of complete blocks, and this uniformity makes an exact spectral comparison possible for every residue.
	
	The structural input for $j\ge2$ is a stability theorem. The Perron vector of an extremal graph concentrates on $s-1$ dominating vertices, and all but boundedly many vertices are adjacent to all of them; Proposition~\ref{prop:Rempty} removes the remaining vertices in the divisible case by a comparison with the candidate at the Perron vector, and Proposition~\ref{prop:Rbdd} bounds their number for every residue. For graphs, Tait~\cite{T} identified the extremal graph using interlacing and the edge-density theorem of Chudnovsky, Reed and Seymour~\cite{CRS}; for $k\ge3$ the clique terms of every size up to $k$ make these tools unnecessary.
	
	Section~\ref{sec:tools} collects the tools. Section~\ref{sec:Kr} treats complete minors and the leading asymptotics when $j\le1$, and Section~\ref{sec:bip} evaluates the bipartite candidate. Section~\ref{sec:stab} contains the stability argument, the bound on the exceptional set, and the proof of Theorem~\ref{thm:introresidue}. Section~\ref{sec:weighted} proves the weighted clique inequalities, and Section~\ref{sec:kgs} proves Theorem~\ref{thm:introkgs}.

	\section{Definitions and tools}\label{sec:tools}

		Let $M$ and $G$ be graphs. An \emph{$M$-model} in $G$ is a family
		$\{B_v:v\in V(M)\}$ of pairwise disjoint nonempty subsets of $V(G)$
		such that each $G[B_v]$ is connected and, for every $uv\in E(M)$,
		there is an edge of $G$ with one endpoint in $B_u$ and the other
		in $B_v$. The sets $B_v$ are called the \emph{branch sets} of the
		model. Such a model exists if and only if $M$ is a \emph{minor}
		of $G$, meaning that $M$ can be obtained from $G$ by deleting
		vertices, deleting edges, and contracting edges. Edges between
		$B_u$ and $B_v$ are allowed even when $uv\notin E(M)$.

		We call the vertices of a designated \emph{core} set $A$
		the \emph{hubs}. In a join $K_a\vee H$, the core is
		$A=V(K_a)$, so every hub is adjacent to every other vertex
		of the join. In the stability arguments, the core is selected
		from the vertices with the largest coordinates of the vector
		under consideration. Calling these vertices hubs does not
		by itself assert that they form a clique or are adjacent
		to all remaining vertices.

	\begin{lem}\label{lem:reduce}
		Let \(n\ge1\) and \(k\ge2\), and let \(M\) be a graph for which
		there exists an \(n\)-vertex \(M\)-minor-free graph. Then
		\[
		\max_{\HH\in\HH^{(k)}_M(n)}\rho(\HH)\ =\ \max\{\,\rho(\CC_k(G))\ :\ G\ \text{an $n$-vertex $M$-minor-free graph}\,\},
		\]
		and some extremal hypergraph is of the form $\CC_k(G)$. Every extremal hypergraph $\HH$ has a completion $\CC_k(\shad\HH)$ with the same spectral radius; if the vertices covered by the edges of this completion span a connected subhypergraph, then $\HH=\CC_k(\shad\HH)$.
	\end{lem}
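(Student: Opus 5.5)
The plan rests on two elementary observations. First, $\HH\subseteq\CC_k(\shad\HH)$. Second, $\shad\CC_k(G)\subseteq G$.

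For the first, every hyperedge $F$ of $\HH$ has all its pairs in $\shad\HH$, so $F$ is a $k$-clique of $\shad\HH$. For the second, every pair inside a $k$-clique of $G$ is an edge of $G$. Hence if $G$ is $M$-minor-free, so is $\shad\CC_k(G)$, because minor-freeness is closed under subgraphs. This gives $\CC_k(G)\in\HH^{(k)}_M(n)$, so the right-hand side is at most the left-hand side. Conversely, for $\HH\in\HH^{(k)}_M(n)$, the graph $G=\shad\HH$ is an $n$-vertex $M$-minor-free graph. By \eqref{eq:var}, adding hyperedges cannot decrease $\rho$: evaluating the objective for $\CC_k(G)$ at the Perron vector of $\HH$ gives at least $\rho(\HH)$. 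So $\rho(\HH)\le\rho(\CC_k(\shad\HH))$ and the two maxima agree. Both maxima exist because the classes are finite and nonempty by the hypothesis on $M$. In particular, if $\HH$ is extremal then $\CC_k(\shad\HH)$ lies in the class and has the same spectral radius, which is the completion claim. Taking any extremal $\HH$ and replacing it by its completion produces an extremal hypergraph of the form $\CC_k(G)$.

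For the final assertion, set $\HH'=\CC_k(\shad\HH)\supseteq\HH$ with $\rho(\HH')=\rho(\HH)$. Suppose some $F\in E(\HH')\setminus E(\HH)$ exists. Let $W$ be the set of vertices covered by edges of $\HH'$, and assume $\HH'[W]$ is connected. Then $\rho(\HH')$ equals the spectral radius of the connected hypergraph $\HH'[W]$, since isolated vertices contribute nothing. By Perron--Frobenius theory for connected uniform hypergraphs \cite{FGH,CD}, $\HH'[W]$ has a Perron vector $y$ that is positive on $W$. Extend $y$ by zero to get a maximizer of \eqref{eq:var} for $\HH'$.

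Next, let $x$ be a Perron vector of $\HH$. Then $x$ is optimal for $\HH'$ as well, because
\[
\rho(\HH)\le k\sum_{E(\HH')}\prod x_v\le\rho(\HH')=\rho(\HH).
\]
This forces $\prod_{v\in F}x_v=0$ for each added edge $F$.

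It remains to show that $x$ is positive on $W$, which gives a contradiction. This is the step that needs the most care, since the maximizer of \eqref{eq:var} need not be unique when the hypergraph is disconnected. First, $x$ must vanish off $W$: moving any mass placed outside $W$ onto $W$ and rescaling strictly increases the objective, because $\rho>0$ (as $E(\HH)\neq\emptyset$ whenever some edge exists). So $x$ restricted to $W$ is a nonnegative maximizer of the Rayleigh-type quotient for the connected hypergraph $\HH'[W]$. For a connected hypergraph, every nonnegative maximizer satisfies the eigen-equations $\rho x_v^{k-1}=\sum_{F\ni v}\prod_{u\in F\setminus v}x_u$ on $W$. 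This follows from the Lagrange conditions, since on the support the equations hold, and off the support the right-hand side must be $0$. If $x$ vanished at some vertex of $W$, then by connectivity there is an edge $F'$ of $\HH'[W]$ containing both a zero vertex $v$ and a positive vertex. Let $S$ be the support of $x$ within $W$, so $S\neq\emptyset$ and $S\neq W$. The standard argument that nonnegative eigenvectors of weakly irreducible tensors are positive (\cite{FGH}) handles this case. To avoid that argument, I would use the cleaner strategy of first showing that the maximizer is unique for a connected hypergraph, again by \cite{FGH}. Uniqueness gives $x|_W=y$, which is positive. Then $\prod_{v\in F}x_v>0$ for the added edge $F$, a contradiction. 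Hence $E(\HH')=E(\HH)$.

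I expect the main obstacle to be this positivity/uniqueness step. I would cite the weakly irreducible Perron--Frobenius theorem in \cite{FGH} rather than reprove it.
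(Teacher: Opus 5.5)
Your proposal is correct and follows essentially the paper's own proof. The inclusions $\HH\subseteq\CC_k(\shad\HH)$ and $\shad\CC_k(G)\subseteq G$ give the equality of the maxima and the completion, and your final step is exactly Observation~\ref{obs:strict}: a Perron vector of $\HH$ is also a maximizer for the completion on the connected covered set, hence positive there, so any added edge would carry positive weight. (The positivity you defer to Perron--Frobenius theory, as the paper also does implicitly, can be checked directly: if a maximizer vanished on $q\in[1,k-1]$ vertices of an edge meeting its support, raising those coordinates to $\varepsilon$ would gain order $\varepsilon^{q}$ against a normalization cost of order $\varepsilon^{k}$.)
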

	
	\begin{proof}
		Let $\HH\in\HH^{(k)}_M(n)$ and put $G=\shad\HH$. Every hyperedge of $\HH$ spans a $k$-clique of $G$, so $\HH\subseteq\CC_k(G)$; moreover $\shad\CC_k(G)\subseteq G$, so $\CC_k(G)\in\HH^{(k)}_M(n)$. Adding hyperedges does not decrease the right side of~\eqref{eq:var}, so $\rho(\CC_k(G))\ge\rho(\HH)$. Conversely, every $M$-minor-free graph $G$ gives an admissible hypergraph $\CC_k(G)$, proving equality of the maxima. If $\HH$ is extremal then $\rho(\HH)=\rho(\CC_k(G))$, and under the stated connectivity condition $\HH=\CC_k(G)$ by Observation~\ref{obs:strict} below.
	\end{proof}
	
	The following observation is used repeatedly, and is the reason that comparisons of two hypergraphs on the same vertex set may be made at a single vector even when that vector has zero coordinates.
	
	\begin{obs}\label{obs:strict}
		Let $\HH\subsetneq\HH'$ be $k$-uniform hypergraphs on the same vertex set, and suppose that the vertices covered by the edges of $\HH'$ span a connected subhypergraph. Then $\rho(\HH')>\rho(\HH)$.
	\end{obs}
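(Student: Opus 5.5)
We argue via the Perron vector of $\HH$ and the variational formula~\eqref{eq:var}. Let $U\subseteq V(\HH')$ be the set of vertices covered by edges of $\HH'$. Since $\HH'$ restricted to $U$ is connected, its adjacency tensor restricted to $U$ is weakly irreducible. The Perron--Frobenius theory for nonnegative tensors~\cite{FGH,N,Q} therefore gives a Perron vector $y$ of $\HH'$ that attains the maximum in~\eqref{eq:var}, with $y_v>0$ for every $v\in U$. The coordinates outside $U$ do not affect the form $\sum_{F\in E(\HH')}\prod_{v\in F}y_v$, so we may set them to $0$.

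The main step is to show that $\HH'$ strictly beats $\HH$ at this vector. Pick an edge $F_0\in E(\HH')\setminus E(\HH)$. Then $F_0\subseteq U$, so $\prod_{v\in F_0}y_v>0$. Moreover $E(\HH)\subseteq E(\HH')$, and every term of the form is nonnegative, so
\[
\rho(\HH')=k\sum_{F\in E(\HH')}\prod_{v\in F}y_v\ \ge\ k\sum_{F\in E(\HH)}\prod_{v\in F}y_v+k\prod_{v\in F_0}y_v .
\]
This strict gain at $y$ does not yet give $\rho(\HH)<\rho(\HH')$, because $\rho(\HH)$ is a maximum over all vectors and need not be attained at $y$.

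To compare the two maxima properly, let $x$ be a nonnegative unit vector attaining $\rho(\HH)$ in~\eqref{eq:var}. We split into two cases according to the support of $x$.
\begin{itemize}
\item If $\rho(\HH)=0$, the claim is immediate, since $\rho(\HH')\ge k\prod_{v\in F_0}y_v>0$.
\item Otherwise every edge $F$ of $\HH$ with $\prod_{v\in F}x_v>0$ lies in $U$. Moving all the $\ell^k$-mass of $x$ onto $U$ can only increase the form, so we may assume $x$ is supported in $U$.
\end{itemize}

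Suppose now that $\rho(\HH')=\rho(\HH)$. Then
\[
\rho(\HH')\ \ge\ k\sum_{F\in E(\HH')}\prod_{v\in F}x_v\ \ge\ k\sum_{F\in E(\HH)}\prod_{v\in F}x_v=\rho(\HH)=\rho(\HH'),
\]
so $x$ is also a maximizer for $\HH'$. A maximizer of~\eqref{eq:var} satisfies the Lagrange (eigen-)equations, so $x$ is a nonnegative eigenvector of the weakly irreducible tensor of $\HH'$ on $U$ for the eigenvalue $\rho(\HH')$. By the Perron--Frobenius theorem for weakly irreducible tensors, such an eigenvector is positive on $U$; equivalently, if some coordinate $x_u$ with $u\in U$ vanished, the eigen-equation at $u$ together with connectivity would force a neighbour of $u$ to vanish, and propagation along $U$ would give $x=0$.

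Hence $x>0$ on $U$, and in particular $\prod_{v\in F_0}x_v>0$. But then the middle inequality in the chain above is strict, since the sum over $E(\HH')$ contains the additional positive term from $F_0$. This contradicts $\rho(\HH')=\rho(\HH)$, so $\rho(\HH')>\rho(\HH)$.

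The main obstacle is the positivity of $x$ on all of $U$. The argument handles it by first restricting $x$ to $U$ and then invoking weak irreducibility of $\HH'$ on $U$. Note that $x$ need not be positive on the vertices outside $U$, which is exactly why the observation is stated for vectors with zero coordinates.
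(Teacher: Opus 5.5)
Your argument is essentially the paper's: take a maximizer $x$ for $\HH$, note that $\rho(\HH')=\rho(\HH)$ would make $x$ a maximizer for $\HH'$ and hence positive on the covered vertices, so the extra edge $F_0$ would contribute strictly; the paper deletes the uncovered vertices where you move the mass of $x$ onto $U$, and your preliminary vector $y$ and the separate case $\rho(\HH)=0$ are not needed. One caution: your ``equivalently'' propagation argument for positivity is wrong as stated, because the eigen-equation at a zero vertex $u$ only forces each edge through $u$ to contain another zero vertex (the connected $3$-uniform hypergraph with edges $\{1,2,3\},\{3,4,5\}$ has the nonnegative eigenvector $(0,0,1,1,1)$, with eigenvalue $1<\rho$), so positivity must rest on the Perron--Frobenius statement for the eigenvalue $\rho(\HH')$, or directly on maximality: by connectivity some edge meets both the support and the zero set inside $U$, and raising its $q\le k-1$ zero coordinates to $\varepsilon$ gains order $\varepsilon^{q}$ at a normalization cost $O(\varepsilon^{k})$.
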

	
	\begin{proof}
		A vertex lying in no edge of $\HH'$ contributes to the normalization in~\eqref{eq:var} but not to the form, for $\HH'$ and for $\HH$ alike, so deleting all such vertices changes neither spectral radius. We may therefore assume that $\HH'$ is connected. Let $x$ be a Perron vector of $\HH$, so that $\rho(\HH')\ge kP_{\HH'}(x)\ge kP_{\HH}(x)=\rho(\HH)$, where $P_{\HH}(x)=\sum_{F\in E(\HH)}\prod_{v\in F}x_v$. If equality held then $x$ would attain the maximum in~\eqref{eq:var} for $\HH'$, hence be a Perron vector of $\HH'$ and therefore positive; but then $\prod_{v\in F}x_v>0$ for $F\in E(\HH')\setminus E(\HH)$, a contradiction.
	\end{proof}

	\begin{lem}\label{lem:common}
		Let $G$ be $K_{s,t}$-minor-free. Then any $s$ vertices of $G$ have at most $t-1$ common neighbors. In particular, if $A_1\ne A_2$ are $(s-1)$-subsets of $V(G)$, then $|N(A_1)\cap N(A_2)|\le t-1$, where $N(A)$ denotes the set of common neighbors of $A$.
	\end{lem}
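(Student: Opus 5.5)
The first claim is immediate from the definition of a minor, so the plan is to exhibit $K_{s,t}$ as a subgraph whenever the conclusion fails. Suppose $S\subseteq V(G)$ with $|S|=s$ has at least $t$ common neighbors, and choose $t$ of them, forming a set $T$. Since every vertex of $T$ is adjacent to every vertex of $S$, we have $S\cap T=\emptyset$: a vertex of $S$ is not its own neighbor, and $G$ is simple. Hence $G[S\cup T]$ contains $K_{s,t}$ as a spanning subgraph. In the language of models, take the singletons $\{v\}$ as branch sets. Each is trivially connected, and every required edge between a branch set on the $s$-side and one on the $t$-side is present. So $K_{s,t}$ is a minor of $G$, a contradiction. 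Thus any $s$ vertices have at most $t-1$ common neighbors.

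For the second claim, let $A_1\ne A_2$ be $(s-1)$-subsets. Then $A_1\cup A_2$ has at least $s$ vertices, and since $A_1\ne A_2$ we can pick $v\in A_2\setminus A_1$. Put $S=A_1\cup\{v\}$, an $s$-subset of $A_1\cup A_2$.

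Every vertex of $N(A_1)\cap N(A_2)$ is adjacent to all of $A_1$. It is also adjacent to $v$, because $v\in A_2$. Hence
\[
N(A_1)\cap N(A_2)\subseteq N(S),
\]
and the first part gives $|N(A_1)\cap N(A_2)|\le|N(S)|\le t-1$.

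The only point requiring care is the disjointness check, which is what makes the bipartite subgraph genuine. It holds because common neighbors of a set never lie in that set, since $G$ has no loops. There is no real obstacle in this argument.
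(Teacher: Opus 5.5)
Your proposal is correct and follows the paper's proof: a $K_{s,t}$ subgraph is a $K_{s,t}$ minor, and for the second claim you choose an $s$-subset of $A_1\cup A_2$ (you take $A_1\cup\{v\}$ with $v\in A_2\setminus A_1$) whose common neighborhood contains $N(A_1)\cap N(A_2)$. Your explicit check that common neighbors of a set lie outside it, since $G$ has no loops, is a detail the paper leaves implicit.
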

	
	\begin{proof}
		If $s$ vertices had $t$ common neighbors, those $s+t$ vertices would span a $K_{s,t}$ subgraph. For the second statement, $|A_1\cup A_2|\ge s$; choosing $B\subseteq A_1\cup A_2$ with $|B|=s$ gives $N(A_1)\cap N(A_2)\subseteq N(B)$.
	\end{proof}
	
	The second statement is what forces the set of hubs to be essentially unique: two different candidate hub sets cannot both have large common neighborhoods overlapping in more than a bounded set. The first statement, applied to a hub set enlarged by one of the vertices it dominates, bounds the degrees inside a common neighborhood.
	
	\begin{cor}\label{cor:deg}
		Let $G$ be $K_{s,t}$-minor-free and let $A$ be a set of $s-1$ vertices with common neighborhood $N(A)$. Then the graph $G[N(A)]$ has maximum degree at most $t-1$.
	\end{cor}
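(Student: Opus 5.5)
The plan is to apply the first statement of Lemma~\ref{lem:common} to a set of $s$ vertices obtained by adding one vertex of $N(A)$ to $A$. Fix $v\in N(A)$ and put $B=A\cup\{v\}$. Since $v$ is adjacent to every vertex of $A$, it does not belong to $A$, so $|B|=s$.

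First, we compare neighborhoods. Every neighbor $u$ of $v$ inside $G[N(A)]$ lies in $N(A)$ and is adjacent to $v$. Hence $u$ is adjacent to every vertex of $B$. Moreover, $u\notin B$: vertices of $N(A)$ are not in $A$, and $u\ne v$. Therefore
\[
N_{G[N(A)]}(v)\subseteq N(B).
\]

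Second, we invoke the lemma. By the first statement of Lemma~\ref{lem:common}, the $s$ vertices of $B$ have at most $t-1$ common neighbors. Thus $\deg_{G[N(A)]}(v)\le|N(B)|\le t-1$.

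Since $v\in N(A)$ was arbitrary, $G[N(A)]$ has maximum degree at most $t-1$. There is no real obstacle here. The only points to check are that $B$ has exactly $s$ elements and that the neighbors of $v$ in $G[N(A)]$ lie outside $B$, and both follow because $G$ is simple and $N(A)$ consists of vertices adjacent to all of $A$.
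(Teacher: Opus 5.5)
Your proposal is correct and is the paper's proof: add $v\in N(A)$ to $A$ to get $s$ vertices, note that the neighbors of $v$ in $G[N(A)]$ are common neighbors of these $s$ vertices, and apply the first statement of Lemma~\ref{lem:common}. Your extra checks that $|A\cup\{v\}|=s$ and that those neighbors lie outside $A\cup\{v\}$ are accurate; the paper leaves them implicit.
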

	
	\begin{proof}
		Let $v\in N(A)$. Its neighbors inside $N(A)$ are common neighbors of the $s$ vertices of $A\cup\{v\}$, so there are at most $t-1$ of them by Lemma~\ref{lem:common}.
	\end{proof}
	
	Corollary~\ref{cor:deg} is what makes the whole bipartite analysis go through: it supplies, for free, the degree hypothesis under which Lemmas~\ref{lem:count} and~\ref{lem:sigma} are sharp.

	Tait uses the implication that the part of the graph outside the hubs has no $K_{1,t}$ minor. That condition is necessary but not sufficient, and for the purpose of describing the extremal family we need the exact criterion. This criterion appears in Zhai and Lin~\cite{ZL}, following their Lemma~2.2; we include a self-contained proof.
	
	\begin{lem}\label{lem:join}
		Let $2\le s\le t$ and let $H$ be a graph. Then $K_{s-1}\vee H$ has a $K_{s,t}$ minor if and only if $H$ has a $K_{r+1,\,t-r}$ minor for some $r$ with $0\le r\le s-1$.
	\end{lem}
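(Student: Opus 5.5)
We prove both directions via models. Write $A=V(K_{s-1})=\{a_1,\dots,a_{s-1}\}$, and recall that every vertex of $A$ is adjacent to every other vertex of $K_{s-1}\vee H$.

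\emph{Sufficiency.} Suppose $H$ has a $K_{r+1,t-r}$-model with branch sets $X_1,\dots,X_{r+1}$ and $Y_1,\dots,Y_{t-r}$, where $0\le r\le s-1$. We build a $K_{s,t}$-model in $K_{s-1}\vee H$. The $s$ branch sets on the small side are $X_1,\dots,X_{r+1}$ together with the singletons $\{a_1\},\dots,\{a_{s-1-r}\}$. The $t$ branch sets on the large side are $Y_1,\dots,Y_{t-r}$ together with the singletons $\{a_{s-r}\},\dots,\{a_{s-1}\}$. This uses exactly $s-1$ hubs in total. Every required adjacency holds. Each $X_i$ meets each $Y_\ell$ by assumption. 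Any pair involving a hub singleton is adjacent because hubs are universal, and this covers hub--hub pairs as well, since $A$ is a clique. The branch sets are disjoint, nonempty and connected, so $K_{s,t}$ is a minor.

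\emph{Necessity.} Take a $K_{s,t}$-model in $K_{s-1}\vee H$ with branch sets $U_1,\dots,U_s$ and $W_1,\dots,W_t$. Since the branch sets are disjoint, at most $s-1$ of them contain a hub. We first normalize the model. If a branch set $B$ contains a hub $a$ together with other vertices, replace $B$ by $\{a\}$. The new set is connected, and $a$ is adjacent to every vertex outside it, so all required adjacencies survive. The only effect is that some vertices of $H$ are freed. After this step every branch set is either a single hub or a subset of $V(H)$. Moreover, a branch set $B\subseteq V(H)$ that is connected in $K_{s-1}\vee H$ is connected in $H$ itself, because the connecting edges inside $B$ do not involve hubs.

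Now let $\alpha$ be the number of small-side branch sets that are hub singletons, and $\beta$ the number of large-side ones, so $\alpha+\beta\le s-1$. The remaining $s-\alpha$ small-side sets and $t-\beta$ large-side sets lie in $V(H)$. Edges between two sets in $V(H)$ are edges of $H$, so these sets form a $K_{s-\alpha,\,t-\beta}$-model in $H$. Put $r=s-1-\alpha$, so that $0\le r\le s-1$ and $s-\alpha=r+1$. Then $t-\beta\ge t-(s-1-\alpha)=t-r$. Dropping large-side branch sets if necessary yields a $K_{r+1,t-r}$-model in $H$. Here $s-\alpha\ge1$ and $t-r\ge t-s+1\ge1$, so both sides are nonempty.

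The only step needing care is the normalization in the necessity direction, namely checking that shrinking a hub-containing branch set to the hub alone preserves both connectivity and all adjacencies. This holds because hubs are universal. The rest is counting.
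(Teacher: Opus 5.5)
Your proof is correct and follows the paper's argument. You shrink each hub-meeting branch set to a single hub, count the hub singletons on each side, and read off a complete bipartite model in $H$; the converse is obtained by padding with hubs exactly as you do. The only difference is cosmetic: the paper takes $r$ to be the number of hub singletons on the large side and discards surplus small-side branch sets, whereas you take $r=s-1-\alpha$ and discard surplus large-side ones.
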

	
	\begin{proof}
		Every vertex of the join $K_{s-1}$ is adjacent to all other vertices, so a branch set meeting $K_{s-1}$ is adjacent to every other branch set; replacing such a branch set by a single one of its hub vertices preserves connectedness and all adjacencies. We may therefore assume that the branch sets meeting the hubs are singletons, say $q\le s-1$ of them.
		
		Suppose a $K_{s,t}$ minor is given, with $r$ hub singletons on the side of size $t$ and $q-r$ on the side of size $s$. The remaining branch sets lie in $H$: there are $s-q+r$ of them on the first side and $t-r$ on the second, and all adjacencies between the two groups are edges of $H$. Hence $H$ has a $K_{s-q+r,\,t-r}$ minor, and $s-q+r\ge r+1$ because $q\le s-1$, so $H$ has a $K_{r+1,t-r}$ minor.
		
		Conversely, given a $K_{r+1,t-r}$ minor of $H$ with $0\le r\le s-1$, add $s-1-r$ hubs as branch sets on the first side and $r$ hubs on the second. The first side then has $(s-1-r)+(r+1)=s$ branch sets and the second has $r+(t-r)=t$, and all required adjacencies hold.
	\end{proof}
	
	Taking $r=0$ recovers the condition used by Tait~\cite{T}; the other cases are not implied by it, since the cycle $C_5$ has no $K_{1,3}$ minor while contracting one edge produces $K_{2,2}$, so $K_2\vee C_5$ does have a $K_{3,3}$ minor. An equivalent criterion is stated by Zhai and Lin~\cite{ZL} just after their Lemma~2.2, under the name of the $(s,t)$-property; we include a proof because we use the formulation with $K_{r+1,t-r}$ throughout.
	
	\begin{cor}\label{cor:small}
		If every component of $H$ has at most $t$ vertices, then $K_{s-1}\vee H$ is $K_{s,t}$-minor-free.
	\end{cor}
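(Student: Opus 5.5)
By Lemma~\ref{lem:join}, it suffices to show that $H$ has no $K_{r+1,\,t-r}$ minor for any $0\le r\le s-1$. The plan is a vertex count inside a single component.

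Suppose, for a contradiction, that $H$ contains a $K_{r+1,t-r}$-model with branch sets $B_1,\dots,B_{t+1}$. The graph $K_{r+1,t-r}$ is connected, since both sides are nonempty ($r+1\ge1$ and $t-r\ge t-s+1\ge1$ because $s\le t$). The branch sets are connected, and every edge of $K_{r+1,t-r}$ is realized by an edge of $H$ between the corresponding branch sets. Hence the union $B_1\cup\dots\cup B_{t+1}$ induces a connected subgraph of $H$, and therefore lies in a single component of $H$.

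The branch sets are pairwise disjoint and nonempty. Their union therefore has at least $(r+1)+(t-r)=t+1$ vertices. This contradicts the assumption that every component of $H$ has at most $t$ vertices. So no such minor exists for any $r$, and Lemma~\ref{lem:join} gives that $K_{s-1}\vee H$ is $K_{s,t}$-minor-free.

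There is no real obstacle here. The only point needing care is the connectivity of the union of the branch sets, which uses the connectivity of $K_{r+1,t-r}$. That in turn needs $t-r\ge1$, which holds because $r\le s-1\le t-1$.
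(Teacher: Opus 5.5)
Your proposal is correct and follows essentially the same route as the paper: reduce via Lemma~\ref{lem:join} to excluding a $K_{r+1,t-r}$ minor in $H$, and observe that its $t+1$ disjoint nonempty branch sets, being connected and joined across the two nonempty sides, must lie in a single component of at most $t$ vertices. You spell out the connectivity of the union and the check $t-r\ge1$ more explicitly than the paper does, but the argument is the same.
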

	
	\begin{proof}
		A $K_{r+1,t-r}$ minor with $r+1\ge1$ and $t-r\ge1$ needs $t+1$ branch sets, pairwise joined across the two sides, hence all inside one component.
	\end{proof}

	\begin{lem}\label{lem:cliquesum}
		Let $M$ be a $q$-connected graph. If $G_1,G_2$ are $M$-minor-free and their intersection is a clique $S$ of order less than $q$, with no edges between $V(G_1)\setminus S$ and $V(G_2)\setminus S$, then their union is $M$-minor-free.
	\end{lem}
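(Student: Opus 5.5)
We argue by contradiction from a minimal counterexample and use the standard separation argument for clique-sums. Write $G=G_1\cup G_2$ and suppose $G$ has an $M$-minor. Among all $M$-models $\{B_v:v\in V(M)\}$ in $G$, choose one minimizing the total size $\sum_v|B_v|$. Put $X_i=V(G_i)\setminus S$. Since $G_1$ and $G_2$ are $M$-minor-free, the model cannot lie entirely inside $G_1$ or entirely inside $G_2$. We will show that this is impossible when $|S|<q$.

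The first step partitions the branch sets. Every branch set $B_v$ induces a connected subgraph. There are no edges between $X_1$ and $X_2$, so a branch set meeting both $X_1$ and $X_2$ must meet $S$. Let $T=\{v:B_v\cap S\ne\emptyset\}$. The sets $B_v$ are pairwise disjoint, so $|T|\le|S|<q$. Let $U_i$ be the set of $v\notin T$ with $B_v\subseteq X_i$. Then $V(M)=T\sqcup U_1\sqcup U_2$. A branch set in $U_1$ and one in $U_2$ are never joined by an edge of $G$. Hence $M$ has no edges between $U_1$ and $U_2$, and $T$ separates $U_1$ from $U_2$ in $M$. The graph $M$ is $q$-connected and $|T|<q$, so $M-T$ is connected. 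Therefore one of $U_1,U_2$ is empty; say $U_2=\emptyset$. (If $|V(M)|\le q$ the convention that $q$-connected graphs have more than $q$ vertices makes $M-T$ nonempty; this is the degenerate case to check.)

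The second step pushes the model into $G_1$. For each $v\in T$, replace $B_v$ by $B'_v=B_v\cap V(G_1)=B_v\setminus X_2$. We must check two things. The first is that $G_1[B'_v]$ is connected. Every component of $G[B_v]-X_2$ is connected in $G[B_v]$ only through paths that leave into $X_2$. Such a path enters and exits $X_2$ through $S$, and $S$ is a clique. So any two vertices of $B_v\cap S$ are adjacent directly, and any excursion into $X_2$ can be shortcut by a single edge of $S$. This shows $G[B'_v]$ is connected, and $B'_v\ne\emptyset$ since it contains a vertex of $S$. The second is that adjacencies are preserved. Take $uv\in E(M)$ with both endpoints in $T\cup U_1$. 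If the witnessing edge has an endpoint in $X_2$, then both branch sets meet $S$, because $U_2=\emptyset$ and branch sets of $U_1$ avoid $X_2$. In that case the edge can be replaced by an edge of the clique $S$ between $B'_u\cap S$ and $B'_v\cap S$. The result is an $M$-model in $G_1$, contradicting $M$-minor-freeness of $G_1$.

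The main obstacle is the connectivity of $B'_v$ in the second step, which needs the clique hypothesis on $S$. The other delicate point is the degenerate situation in which $M-T$ is empty. The minimality of the model is not needed once the shortcut argument is in place.
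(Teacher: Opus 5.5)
Your proof is correct and follows essentially the same route as the paper. Fewer than $q$ branch sets meet $S$, so $q$-connectivity of $M$ puts all remaining branch sets on one side, and intersecting each branch set that meets $S$ with $V(G_1)$ keeps it connected and keeps these sets pairwise adjacent through the clique $S$. As you note, minimality of the model is not needed, and the degenerate case is harmless: if $M-T$ were empty, then both $U_1$ and $U_2$ are empty and the argument goes through unchanged.
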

	
	\begin{proof}
		Suppose a model of $M$ exists in the union. At most $|S|$ branch sets meet $S$. Delete the corresponding vertices of $M$; the remaining graph is connected because $|S|<q$. Every remaining branch set lies in one side, and no edge joins different sides, so all of them lie in the same side, say $G_1-S$. For each branch set that meets $S$, keep its intersection with $G_1$. This intersection is connected using the edges of the clique $S$, since every component of the intersection meets $S$. Adjacencies to branch sets in $G_1-S$ are retained, and any two retained branch sets meeting $S$ are adjacent through the clique. This gives an $M$-model in $G_1$, a contradiction.
	\end{proof}

	\begin{lem}\label{lem:Kst}
		Let $G$ be $K_{s,t}$-minor-free and let $B\subseteq V(G)$ induce $K_{s+t-1}$. Then every component of $G-B$ has at most $s-1$ neighbors in $B$.
	\end{lem}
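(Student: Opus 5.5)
Suppose, for a contradiction, that some component $C$ of $G-B$ has at least $s$ neighbors in $B$. I will build a $K_{s,t}$-model in $G$ directly, using $C$ as one branch set on the side of size $s$ and the vertices of the clique $B$ as singleton branch sets for everything else. This contradicts the hypothesis that $G$ is $K_{s,t}$-minor-free.

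First choose the vertices. Let $u_1,\dots,u_s$ be distinct vertices of $B$ that have a neighbor in $C$. Since $|B|=s+t-1$, removing one of these leaves $s+t-2$ vertices of $B$ besides $u_1$. Now assign branch sets as follows:
\begin{itemize}
\item the side of size $s$ gets $C$ together with the singletons $\{u_2\},\dots,\{u_s\}$;
\item the side of size $t$ gets $\{u_1\}$ together with $t-1$ further singletons $\{w_1\},\dots,\{w_{t-1}\}$, chosen from $B\setminus\{u_1,\dots,u_s\}$.
\end{itemize}
There are enough vertices for this, since $|B\setminus\{u_1,\dots,u_s\}|=t-1$, so the choice of the $w_i$ is exactly this set.

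Next check the adjacencies. Any two singletons in $B$ are adjacent because $B$ is a clique, so every pair of singleton branch sets across the two sides is fine. The only remaining requirement is that $C$ be adjacent to every branch set on the $t$-side, namely $\{u_1\}$ and every $w_i$. The vertex $u_1$ has a neighbor in $C$ by choice, but the $w_i$ need not.

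This is the main obstacle. The fix is to not pin the $t$-side down in advance and instead absorb into $C$ the neighbors it fails to reach. Take $C'=C\cup\{u_2,\dots,u_s\}$, which is connected because each $u_i$ has a neighbor in $C$. Since $C'$ contains a vertex of the clique $B$, it is adjacent to every other vertex of $B$. The rebalanced model is then:
\begin{itemize}
\item the side of size $s$ is $C'$ together with singletons $\{u_1'\},\dots$ drawn from $B$;
\item the side of size $t$ is made of singletons from $B$;
\item every branch set other than $C'$ is a singleton of $B$, so all required adjacencies come from the clique together with the fact that $C'$ meets $B$.
\end{itemize}

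Counting, this needs $1+(s-1)+t=s+t$ branch sets and uses $s+t-1-(s-1)=t$ remaining vertices of $B$ outside $C'$. That leaves only $t$ vertices of $B$ for $s-1+t$ singletons, which is not enough when $s\ge2$. So the absorption must be done more economically. I would instead absorb only $u_2$, giving $C'=C\cup\{u_2\}$. The vertices $u_1,u_3,\dots,u_s$ each have a neighbor in $C$ and are therefore adjacent to $C'$, and $C'$ is adjacent to every vertex of $B$ through $u_2$. Then:
\begin{itemize}
\item the side of size $s$ is $C'$ plus $s-1$ singletons;
\item the side of size $t$ is $t$ singletons;
\item these singletons use $s+t-1$ vertices, all of $B\setminus\{u_2\}$ together with one more vertex.
\end{itemize}
This is still one vertex short. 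To get the last branch set I would use the component itself: split $C$ into two connected pieces $C_1\ni$ (neighbor of $u_1$) and the rest, via a spanning tree of $C$.

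The honest plan, then, is to look for a $K_{s,t}$-model with exactly two branch sets outside $B$ formed from $C$ and the $s$ attachment vertices. I expect this counting and splitting to be the delicate step. The cleanest version is probably to contract $C$ onto the attachments so as to produce a $K_{s+t}$-minus-a-matching structure containing $K_{s,t}$.
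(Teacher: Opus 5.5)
Your proposal has a genuine gap: it ends with the splitting/contraction step explicitly unresolved, and neither of the repairs you try can work. Absorbing $u_2$ into $C$ spends a vertex of $B$ without creating a new branch set, so, as you computed yourself, you end up one branch set short. Splitting $C$ into two connected pieces is impossible when $C$ is a single vertex, a case the lemma must cover. Even when $C$ is larger, the split gives you no control over which branch sets each piece is adjacent to.

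The missing idea is simply to put $C$ on the other side. Your first assignment already had exactly the right budget: the $s$ attachment vertices, the $t-1$ vertices $w_1,\dots,w_{t-1}$ of $B\setminus\{u_1,\dots,u_s\}$, and $C$ itself give $s+t$ branch sets. Take $\{u_1\},\dots,\{u_s\}$ as the side of size $s$, and take $C,\{w_1\},\dots,\{w_{t-1}\}$ as the side of size $t$.

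Then $C$ only needs to be adjacent to $u_1,\dots,u_s$, which holds by the choice of the $u_i$. Every other cross pair $u_iw_\ell$ is an edge of the clique $B$. A model of $K_{s,t}$ imposes no adjacency requirement within a side, so this is a $K_{s,t}$-model, a contradiction. This is exactly the paper's proof.

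In your contraction language: contracting $C$ to a single vertex $c$ yields $K_{s+t-1}$ plus a vertex adjacent to at least $s$ of its vertices. That graph contains $K_{s,t}$ as a subgraph, with $c$ on the side of size $t$ rather than the side of size $s$.
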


	\begin{proof}
		If a component $D$ of $G-B$ had $s$ neighbors in $B$, these $s$ vertices as singletons on one side, and $D$ together with the other $t-1$ vertices of $B$ as singletons on the other, would form a $K_{s,t}$-model.
	\end{proof}

	\begin{lem}\label{lem:count}
		Let $H$ satisfy $\Delta(H)\le t-1$, which holds in particular when $H$ is $K_{1,t}$-minor-free, and let $j\ge2$. Then
		\[
		|E(\CC_j(H))|\ \le\ \frac1t\binom tj\,|V(H)| .
		\]
		For $3\le j\le t$ equality holds if and only if every component of $H$ is $K_t$. For $j=2$ equality holds if and only if $H$ is $(t-1)$-regular. For $j>t$ both sides are zero.
	\end{lem}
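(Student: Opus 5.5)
The plan is a double count of pairs (vertex, $j$-clique containing it), bounding the number of $j$-cliques through each vertex by the number of $(j-1)$-subsets of its neighborhood. For a vertex $v$ with degree $d(v)\le t-1$, the $j$-cliques containing $v$ correspond to $(j-1)$-cliques in $H[N(v)]$. Hence
\[
j\,|E(\CC_j(H))|=\sum_{v}\#\{j\text{-cliques}\ni v\}\le\sum_v\binom{d(v)}{j-1}\le |V(H)|\binom{t-1}{j-1}.
\]
Since $\frac{1}{j}\binom{t-1}{j-1}=\frac1t\binom tj$, this gives the inequality. The parenthetical claim is immediate: a vertex of degree $\ge t$ together with $t$ of its neighbours is a $K_{1,t}$ subgraph, hence a minor. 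When $j>t$ we have $\binom{t-1}{j-1}=0$, and $H$ has no $j$-clique because a $j$-clique forces degree $j-1\ge t$; so both sides vanish.

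For the equality cases, equality forces equality in both inequalities at every vertex: $d(v)=t-1$ whenever $\binom{t-1}{j-1}>0$ (true for $j\le t$), and every $(j-1)$-subset of $N(v)$ is a clique. For $j=2$ the second condition is vacuous, since $1$-subsets are trivially cliques, so equality holds exactly when $H$ is $(t-1)$-regular. For $3\le j\le t$ we have $2\le j-1\le t-1=d(v)$, so every pair inside $N(v)$ lies in some $(j-1)$-subset that is a clique. Hence $N(v)$ is a clique, and $N[v]$ is a clique of size $t$. If $u\in N(v)$, then $N[u]\supseteq N[v]$ and both have size $t$, so $N[u]=N[v]$. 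Thus $N[v]$ is closed under neighbours, which makes it a component isomorphic to $K_t$. Conversely, disjoint copies of $K_t$ satisfy all equalities.

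The only point needing care is the equality case for $j\ge3$, specifically checking that $j-1\ge2$ is exactly what makes $N(v)$ complete. This is also why the case $j=2$ behaves differently.
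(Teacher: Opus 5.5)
Your proposal is correct and follows the paper's proof: the same double count of vertex--clique incidences via $\binom{d(v)}{j-1}\le\binom{t-1}{j-1}$, with the equality cases read off vertex by vertex. Your argument that $N[u]=N[v]$ for $u\in N(v)$ just spells out the step the paper states in one line, namely that complete neighborhoods of full degree make every component a $K_t$.
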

	
	\begin{proof}
		A vertex of degree at least $t$ together with $t$ of its neighbors is a $K_{1,t}$ subgraph, so the second hypothesis implies the first. If $j>t$ both sides vanish, so assume $j\le t$. The number of $j$-cliques containing a fixed $v$ is the number of $(j-1)$-cliques of $H[N(v)]$, which is at most $\binom{d(v)}{j-1}\le\binom{t-1}{j-1}$, with equality if and only if every $(j-1)$-subset of $N(v)$ is a clique and $d(v)=t-1$. Counting incidences, $j|E(\CC_j(H))|\le|V(H)|\binom{t-1}{j-1}$, and $\frac1j\binom{t-1}{j-1}=\frac1t\binom tj$. For $j\ge3$ the equality condition says that $H[N(v)]$ is complete for every $v$, so each component is $K_t$; for $j=2$ it says only that $d(v)=t-1$.
	\end{proof}
	
	\begin{lem}\label{lem:sigma}
		Let $F$ be a graph with nonnegative vertex weights $x$, let
		$W=\sum_v x_v^k$, and let $1\le j\le k-1$.
		For $v\in V(F)$ put
		$m_v=|\{Q\in E(\CC_j(F)):v\in Q\}|$,
		the number of $j$-cliques of $F$ containing $v$. Then
		\[
		\sum_{Q\in E(\CC_j(F))}
		\prod_{v\in Q}x_v
		\le
		\frac1j\,W^{j/k}
		\Bigl(\sum_v m_v^{\frac{k}{k-j}}\Bigr)^{\frac{k-j}{k}}.
		\]
		If moreover $\Delta(F)\le t-1$, then
		$m_v\le\binom{t-1}{j-1}$ and the right side is at most
		\[
		\frac1t\binom tj\,W^{j/k}\,
		|V(F)|^{\frac{k-j}{k}}.
		\]
	\end{lem}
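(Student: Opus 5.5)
The natural route is Hölder's inequality applied vertex by vertex. Each $j$-clique is counted once in the sum, and we distribute it among its $j$ vertices.

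\emph{Step 1: an AM--GM bound on each clique.} For a $j$-clique $Q$, AM--GM gives
\[
\prod_{v\in Q}x_v\le \frac1j\sum_{v\in Q}\prod_{u\in Q\setminus\{v\}}x_u\,x_v \quad\text{(trivially an identity)},
\]
so this is not yet useful. Instead we use $\prod_{v\in Q}x_v\le \frac1j\sum_{v\in Q}x_v^{j}$. Summing over $Q$ gives
\[
\sum_Q\prod_{v\in Q}x_v\le\frac1j\sum_v m_vx_v^{j},
\]
since each vertex $v$ appears in exactly $m_v$ cliques.

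\emph{Step 2: Hölder.} Apply Hölder with exponents $p=k/j$ on $x_v^j$ and $p'=k/(k-j)$ on $m_v$; here $1\le j\le k-1$ ensures $p'$ is finite. This yields
\[
\sum_v m_vx_v^j\le\Bigl(\sum_v x_v^k\Bigr)^{j/k}\Bigl(\sum_v m_v^{k/(k-j)}\Bigr)^{(k-j)/k}=W^{j/k}\Bigl(\sum_v m_v^{\frac{k}{k-j}}\Bigr)^{\frac{k-j}{k}},
\]
which is the first inequality.

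\emph{Step 3: the degree bound.} If $\Delta(F)\le t-1$, the cliques through $v$ correspond to $(j-1)$-subsets of $N(v)$ that span cliques. Hence $m_v\le\binom{d(v)}{j-1}\le\binom{t-1}{j-1}$, exactly as in the proof of Lemma~\ref{lem:count}. For $j=1$ this reads $m_v=1=\binom{t-1}{0}$. Substituting,
\[
\Bigl(\sum_v m_v^{k/(k-j)}\Bigr)^{(k-j)/k}\le\binom{t-1}{j-1}|V(F)|^{(k-j)/k}.
\]
Finally, $\frac1j\binom{t-1}{j-1}=\frac1t\binom tj$. For $j>t$ we have $m_v=0$, both sides vanish, and the claim holds trivially.

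No step is a real obstacle. The only point needing care is to choose the AM--GM splitting $\prod_{v\in Q} x_v\le\frac1j\sum_{v\in Q} x_v^j$, which places the whole weight of each vertex on the $j$th power so that Hölder against $W=\sum_v x_v^k$ produces the exponent $j/k$. One could alternatively bound $\sum_v m_vx_v^j$ by maximizing over $x$ subject to $\sum_v x_v^k=W$, but Hölder gives this directly.
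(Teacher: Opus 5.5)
Your proof is correct and is the same as the paper's: the arithmetic--geometric mean bound $\prod_{v\in Q}x_v\le\frac1j\sum_{v\in Q}x_v^j$, then H\"older with exponents $k/j$ and $k/(k-j)$, then $m_v\le\binom{t-1}{j-1}$ together with $\frac1j\binom{t-1}{j-1}=\frac1t\binom tj$. Two remarks can be deleted: the abandoned identity at the start of Step~1, and the separate treatment of $j>t$, which the general argument already covers because $\binom{t-1}{j-1}=0$ there.
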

	
	\begin{proof}
		By the arithmetic-geometric mean inequality,
		$\prod_{v\in Q}x_v\le\frac1j\sum_{v\in Q}x_v^j$,
		so the left side is at most $\frac1j\sum_v m_vx_v^j$.
		Apply H\"older with exponents $k/j$ and $k/(k-j)$.
		For $j=1$, $m_v=1$ for every $v$, and
		the statement reads
		$\sum_vx_v\le W^{1/k}|V(F)|^{(k-1)/k}$,
		which is H\"older.
	\end{proof}
	
	The next proposition compares an arbitrary light part with a union of copies of $K_t$ term by term, at the same vector; it is the basic comparison in the stability argument of Section~\ref{sec:stab}.
	
	\begin{prop}\label{prop:termwise}
		Let $2\le s\le t$, and let
		$H$ be a graph on $h>0$ vertices with
		$\Delta(H)\le t-1$. Let $y$ be a nonnegative vector on
		$V(K_{s-1}\vee H)$, and let $A=V(K_{s-1})$.
			Define $u\ge0$ and $W$ by
		\[
		u^k=\frac1{s-1}\sum_{a\in A}y_a^k,
		\qquad
		W=\sum_{v\in V(H)}y_v^k.
		\]
		Then
		\begin{equation}\label{eq:termwise}
			\begin{aligned}
				\sum_{F\in E(\CC_k(K_{s-1}\vee H))}
				\prod_{v\in F}y_v
				\le{}&
				\binom{s-1}{k}u^k+\sum_{r=1}^{k}\binom{s-1}{k-r}u^{k-r}
				\frac1t\binom tr\,W^{r/k}h^{(k-r)/k}.
			\end{aligned}
		\end{equation}
		If $t\mid h$, the right side is the value of the left side
		for $H'=\frac htK_t$ and the vector $y'$ equal to $u$ on
		the dominating vertices and to $(W/h)^{1/k}$ on $V(H')$,
		and $y'$ has the same $k$-norm as $y$.
		If $k\ge3$, $t\ge k-s+1$, $u>0$, and $W>0$,
		equality holds in~\eqref{eq:termwise} only if $y$ is constant
		on the dominating vertices, every component of $H$ is a copy
		of $K_t$, and $y$ is constant on $V(H)$.
	\end{prop}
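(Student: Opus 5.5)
Every $k$-clique of $K_{s-1}\vee H$ is $S\cup Q$ with $S\subseteq A$, $Q$ a clique of $H$, and $|S|+|Q|=k$. We sort the cliques by $r=|Q|$ and bound each class separately.

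\textbf{Summing over the hub parts.} Fix $r$ and let $e_m(z)$ denote the $m$-th elementary symmetric polynomial. The class $r$ contributes $e_{k-r}(y_A)\cdot\sum_{Q\in E(\CC_r(H))}\prod_{v\in Q}y_v$, where $\CC_r(H)$ for $r=1$ is the vertex set. By Maclaurin's inequality, $e_m(z)\le\binom{s-1}{m}\bigl(\frac{1}{s-1}\sum z_a\bigr)^m$. Power-mean then gives $\frac1{s-1}\sum_a y_a\le u$. Together these yield $e_{k-r}(y_A)\le\binom{s-1}{k-r}u^{k-r}$, and this also covers the term $r=0$.

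\textbf{Summing over the light parts.} For $r\ge1$ we bound the $H$-factor. For $1\le r\le k-1$, Lemma~\ref{lem:sigma} with $j=r$ and $\Delta(H)\le t-1$ gives $\frac1t\binom tr W^{r/k}h^{(k-r)/k}$. For $r=1$ this is Hölder, since $\frac1t\binom t1=1$. For $r=k$ we use AM--GM, $\prod_{v\in Q}y_v\le\frac1k\sum_{v\in Q} y_v^k$, together with $m_v\le\binom{t-1}{k-1}$. This gives at most $\frac1k\binom{t-1}{k-1}W=\frac1t\binom tk W$, which matches the formula with $h^0$. Multiplying the two factors and summing over $r$ gives \eqref{eq:termwise}.

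\textbf{The divisible case.} Suppose $t\mid h$. Evaluate at $y'$ on $H'=\frac htK_t$. The class $r$ contributes $\binom{s-1}{k-r}u^{k-r}\cdot\frac ht\binom tr (W/h)^{r/k}$, and this simplifies to the stated term. The norm is $(s-1)u^k+h\cdot W/h$, which equals $\sum y_v^k$.

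\textbf{Equality.} This step is the main obstacle. Assume $u,W>0$, $k\ge3$ and $t\ge j=k-s+1$, and suppose equality holds. Then equality must hold in every term whose coefficient is nonzero. The job is to choose terms that force the three conclusions.

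First take $r=1$. Its coefficient $\binom{s-1}{k-1}u^{k-1}$ is positive provided $s\ge k$. In that case equality in Hölder forces $y$ to be constant on $V(H)$. Equality in power-mean and Maclaurin (with $k-1\ge2$ and $k-1\le s-1$) forces $y$ to be constant on $A$.

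When $s<k$, the smallest admissible $r$ is $j\ge2$, and $\binom{s-1}{k-j}=1$ with $k-j=s-1$. Then the hub factor is $\prod_a y_a$, and equality with $u^{s-1}$ forces $y$ constant on $A$ by AM--GM. The $H$-factor must meet the bound of Lemma~\ref{lem:sigma} with $j$. Equality in AM--GM for each clique with a positive product forces equal coordinates along each $j$-clique. Equality in Hölder forces $y_v^j$ to be proportional to $m_v^{k/(k-j)}\cdot(\cdot)$, and $m_v=\binom{t-1}{j-1}$ wherever $y_v>0$. I expect to need care with zero coordinates here. The plan is to use Hölder equality, which requires $y_v^k\propto m_v^{k/(k-j)}$, to show that every $v$ has $m_v=\binom{t-1}{j-1}>0$ (this uses $t\ge j$ and $W>0$). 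That makes all $y_v$ equal and positive.

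In either case, $y$ is constant and positive on $V(H)$, and the term with $r=j'=\max(2,j)$ (for $s\ge k$, take $r=3\le k$ if possible, or else $r=2$) is tight. Hence $|E(\CC_{r}(H))|=\frac1t\binom tr h$. For $r\ge3$, Lemma~\ref{lem:count} then shows every component is $K_t$.

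The remaining case is when only $r=2$ is available. Since $k\ge3$, the term $r=3\le k$ is always present when its coefficient $\binom{s-1}{k-3}$ is nonzero, that is when $k-3\le s-1$. This holds because $r\ge j$ is the admissible range and $j\le3$ can be arranged. If $j\ge3$, use $r=j$ directly. So some $r\ge3$ with a positive coefficient is always tight, and Lemma~\ref{lem:count} applies.
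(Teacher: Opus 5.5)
Your route is the paper's: you split by $r=|Q|$, bound the hub factor by Maclaurin and the power mean, and bound the light factor by Lemma~\ref{lem:sigma} (AM--GM alone for $r=k$). For equality you use the term $r=j$ when $j\ge2$ and the term $r=1$ when $s\ge k$, followed by a $3$-clique term. As written, however, your last step fails in one admissible case. If $t=2$ then $s=2$, and $t\ge j$ with $k\ge3$ forces $(k,s,t)=(3,2,2)$ and $j=2$. Your closing claim that some term with $r\ge3$ and positive coefficient is tight, so that Lemma~\ref{lem:count} applies, is then vacuous. The term $r=3$ has coefficient $1$, but $\frac1t\binom t3=0$ and $H$ has no triangles because $\Delta(H)\le1$. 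So both sides of that term are $0$, and Lemma~\ref{lem:count} (the case $j>t$) says nothing about $H$. The repair is what the paper does. The tight term $r=j=2$ already gives $m_v=t-1=1$ for every $v$, so $H$ is a perfect matching, that is, $\frac h2K_2=\frac htK_t$. So you should conclude from the term $r=j$ alone when $j\ge3$ or when $j=2$ and $t=2$. Use the term $r=3$ only when $j=2$ and $t\ge3$, where $\binom{s-1}{k-3}=k-2>0$ and $\binom t3>0$, or when $j\le1$.

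Your handling of zero coordinates in the case $j\ge2$ is also muddled. Hölder equality, $y_v^k\propto m_v^{k/(k-j)}$, cannot by itself give $m_v=\binom{t-1}{j-1}$ at every vertex. In fact no special care is needed. In the chain of Lemma~\ref{lem:sigma}, the last inequality $\sum_v m_v^{k/(k-j)}\le h\binom{t-1}{j-1}^{k/(k-j)}$ carries the factor $W^{j/k}>0$. Equality in the term $r=j$ therefore forces $m_v=\binom{t-1}{j-1}$ for every $v\in V(H)$, independently of $y$. This needs both factor bounds positive, which is where $u>0$, $W>0$ and $t\ge j$ enter. Only then does Hölder equality make $y$ constant on $V(H)$, and positive because $W>0$. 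With these two points fixed, your proof coincides with the paper's.
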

	
	\begin{proof}
		A $k$-clique of $K_{s-1}\vee H$ consists of $k-r$ dominating
		vertices and an $r$-clique of $H$, for some $0\le r\le k$.
		Separating the cliques lying entirely in $A$,
			the left side equals
		\[
			e_k(y_A)+\sum_{r=1}^{k}e_{k-r}(y_A)
			\sum_{Q\in E(\CC_r(H))}\prod_{v\in Q}y_v,
		\]
		where $e_m(y_A)$ is the elementary symmetric polynomial of
		degree $m$ in the coordinates on the dominating vertices,
		with $e_0(y_A)=1$ and $e_m(y_A)=0$ for
			$m>s-1$.
		For $1\le m\le s-1$, Maclaurin's inequality gives
		\[
		e_m(y_A)\le
		\binom{s-1}{m}
		\left(\frac{e_1(y_A)}{s-1}\right)^m,
		\]
		and the power mean inequality gives $e_1(y_A)/(s-1)\le u$.
		Hence
		\[
		e_{k-r}(y_A)\le\binom{s-1}{k-r}u^{k-r},
		\]
		with equality for $1\le k-r\le s-1$ only if $y$ is constant
		on the dominating vertices.
		
		For $1\le r\le k-1$, Lemma~\ref{lem:sigma} bounds the inner
		sum by
		\[
		\frac1t\binom tr W^{r/k}h^{(k-r)/k},
		\]
		since $\Delta(H)\le t-1$. For $r=k$, the
		arithmetic-geometric mean step of that lemma alone gives
		\[
		\begin{aligned}
			\sum_{Q\in E(\CC_k(H))}
			\prod_{v\in Q}y_v
			&\le \frac1k\sum_{v\in V(H)}m_vy_v^k\\
			&\le \frac1k\binom{t-1}{k-1}W
			=\frac1t\binom tkW,
		\end{aligned}
		\]
		where here $m_v$ denotes the number of
			$k$-cliques of $H$ containing $v$.
		These estimates prove~\eqref{eq:termwise}.
		
		For $H'=\frac htK_t$ and the vector $y'$, one has
		for every $1\le r\le k$
		\[
		|E(\CC_r(H'))|
		=\frac ht\binom tr,
		\qquad
			\sum_{Q\in E(\CC_r(H'))}\prod_{v\in Q}y'_v
		=\frac ht\binom tr(W/h)^{r/k}.
		\]
		Together with the contribution $\binom{s-1}{k}u^k$ from the
		core, this gives the stated value. Moreover,
		\[
			\sum_{v\in V(K_{s-1}\vee H')}(y'_v)^k
			=(s-1)u^k+W
			=\sum_{v\in V(K_{s-1}\vee H)}y_v^k.
		\]
		
		For equality, assume $k\ge3$,
			$t\ge k-s+1$, and $u,W>0$. Put $j=k-s+1$.
			If $j\ge2$, consider the term $r=j$,
		whose coefficient is $\binom{s-1}{s-1}=1$ and for which
		$1\le k-j=s-1$. Equality there forces $y$ to be constant on
		the dominating vertices. In Lemma~\ref{lem:sigma}, applied
		with this $j\le k-1$, write
			$m_v=|\{Q\in E(\CC_j(H)):v\in Q\}|$.
			Equality in
		\[
			\sum_{v\in V(H)}m_v^{k/(k-j)}
			\le h\binom{t-1}{j-1}^{k/(k-j)}
		\]
		forces $m_v=\binom{t-1}{j-1}$ for every $v$.
		Equality in the H\"older step then gives that $y_v^k$
		is proportional to $m_v^{k/(k-j)}$, hence constant on
		$V(H)$.
		
		If $j\ge3$, or $j=2$ and $t=2$, Lemma~\ref{lem:count}
		now makes every component of $H$ a copy of $K_t$.
		If $j=2$ and $t\ge3$, so that $s=k-1$, the lemma gives
		only that $H$ is $(t-1)$-regular. But then the term $r=3$
		is present with coefficient
		$\binom{s-1}{k-3}=k-2>0$, and since $y$ is constant on
		$V(H)$, its inner sum equals
		$|E(\CC_3(H))|(W/h)^{3/k}$.
		Thus equality in~\eqref{eq:termwise} requires
		\[
		|E(\CC_3(H))|
		=\frac ht\binom t3,
		\]
		and Lemma~\ref{lem:count} with $j=3$ identifies the
		components as copies of $K_t$.
		
		If $j\le1$, so that $s\ge k$, consider instead the term
		$r=1$, whose coefficient $\binom{s-1}{k-1}$ is positive
		and for which $2\le k-1\le s-1$. Equality there forces
		$y$ to be constant on the dominating vertices, and
		Lemma~\ref{lem:sigma} with $j=1$ is H\"older's inequality,
		whose equality case gives that $y$ is constant on $V(H)$.
		The terms $r=2$ and $r=3$ are then present with coefficients
		$\binom{s-1}{k-2}$ and $\binom{s-1}{k-3}$, both positive
		since $k\ge3$ and $s\ge k$. The same argument applied first
		to $r=2$ and then to $r=3$ makes $H$ $(t-1)$-regular and
		then a disjoint union of copies of $K_t$.
	\end{proof}
	
	\begin{rem}\label{rem:sharp}
		Both inequalities in Lemma~\ref{lem:sigma} are equalities when
		$F$ is a disjoint union of copies of $K_t$ and $x$ is constant
		on $V(F)$. If $j\le t$, the arithmetic-geometric mean step is
		an equality because $x$ is constant on every clique, and the
		H\"older step is an equality because
		$m_v=\binom{t-1}{j-1}>0$ is constant; the second inequality is
		an equality for the same reason. If $j>t$, both sides of both
		inequalities are zero. In the complete bipartite problem,
		applying the lemma with clique size $\max\{1,k-s+1\}$ gives
		the exponent $\min(s-1,k-1)/k$ appearing in
		Theorem~\ref{thm:bip}.
	\end{rem}
	
	\section{Complete minors, and the regime $j\le1$}\label{sec:Kr}
	
	Fix $t\ge k+1$ and put $a=t-2$. Since a hyperedge spans $K_k$ in the shadow and $K_k$ contains $K_t$ whenever $t\le k$, the class $\HH^{(k)}_{K_t}(n)$ is trivial unless $t\ge k+1$; equivalently $a\ge k-1$.
	
	\begin{cons}\label{cons:Kr}
		Assume that $n\ge a+1$. Let $A$ be a set of $a=t-2$ vertices and $B$ a set of $n-a$ vertices, and let $B^{(k)}_{n,t}$ be the $k$-uniform hypergraph whose edges are the $k$-subsets of $A\cup B$ meeting $A$ in at least $k-1$ vertices. Then $B^{(k)}_{n,t}=\CC_k(K_a\vee\overline{K_{n-a}})$, and its shadow $K_a\vee\overline{K_{n-a}}$ is $K_t$-minor-free: at most $a$ branch sets of a minor model meet $A$, and every other branch set is a single vertex of the independent set $B$, so at most one of them can occur in a $K_t$ model, which therefore has at most $a+1=t-1$ branch sets.
	\end{cons}
	
	\begin{prop}\label{prop:Kreq}
		For $n\ge a+1$, $\lambda=\rho(B^{(k)}_{n,t})$ is the unique root in $\bigl(\binom{a-1}{k-1},\infty\bigr)$ of
		\begin{equation}\label{eq:Kr}
			\lambda\Bigl(\lambda-\binom{a-1}{k-1}\Bigr)^{k-1}=\binom{a-1}{k-2}^{k-1}\binom{a}{k-1}(n-a)^{k-1} .
		\end{equation}
		Consequently
		\[
		\rho(B^{(k)}_{n,t})=\bigl(c_{k,t}+o(1)\bigr)n^{\frac{k-1}{k}},\qquad
		c_{k,t}=\binom{t-3}{k-2}^{\frac{k-1}{k}}\binom{t-2}{k-1}^{\frac1k},
		\]
		and the mass carried by $A$ tends to $\frac{k-1}{k}$.
	\end{prop}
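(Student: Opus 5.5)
The plan is to use the symmetry of $B^{(k)}_{n,t}$ to reduce the eigen-equations to two unknowns. The hypergraph is connected, since $n\ge a+1$ and $a\ge k-1$, so its Perron vector is positive and unique. The automorphism group acts transitively on $A$ and on $B$, so the Perron vector takes one value $x$ on $A$ and one value $y$ on $B$. The eigen-equations $\lambda x_v^{k-1}=\sum_{F\ni v}\prod_{w\in F\setminus v}x_w$ then reduce to two equations. For a vertex of $A$, the edges containing it either lie in $A$ or meet $B$ in exactly one vertex. There are $\binom{a-1}{k-1}$ edges of the first kind and $\binom{a-1}{k-2}(n-a)$ of the second, giving
\[
\lambda x^{k-1}=\binom{a-1}{k-1}x^{k-1}+\binom{a-1}{k-2}(n-a)\,x^{k-2}y .
\]
For a vertex of $B$, every edge containing it consists of that vertex and $k-1$ vertices of $A$, giving
\[
\lambda y^{k-1}=\binom{a}{k-1}x^{k-1}.
\]

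Next, eliminate the ratio $z=y/x>0$. The first equation gives $\lambda-\binom{a-1}{k-1}=\binom{a-1}{k-2}(n-a)z$, and the second gives $\lambda z^{k-1}=\binom{a}{k-1}$. Raising the first to the power $k-1$ and multiplying by $\lambda$ yields \eqref{eq:Kr}. The root lies in the stated interval because $z>0$ forces $\lambda>\binom{a-1}{k-1}$.

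For uniqueness, observe that the left side of \eqref{eq:Kr} is strictly increasing on $\bigl(\binom{a-1}{k-1},\infty\bigr)$. It vanishes at the left endpoint and tends to infinity, while the right side is a positive constant, so there is exactly one root there. By Perron--Frobenius for connected hypergraphs \cite{FGH,CD}, the spectral radius is the eigenvalue with a positive eigenvector. Any positive solution $(\lambda,z)$ of the two reduced equations gives such an eigenvector, so $\rho$ is this root. When $t=k+1$ we have $a=k-1$, so $\binom{a-1}{k-1}=0$ and the two binomial factors on the right equal $1$; this recovers $\lambda^k=(n-k+1)^{k-1}$.

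For the asymptotics, $\lambda\to\infty$ as $n\to\infty$, so $\lambda-\binom{a-1}{k-1}=(1+o(1))\lambda$. Hence $\lambda^k=(1+o(1))\binom{a-1}{k-2}^{k-1}\binom{a}{k-1}n^{k-1}$. Taking $k$-th roots with $a=t-2$ gives $c_{k,t}$.

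For the mass, from $\lambda z^{k-1}=\binom{a}{k-1}$ we get $z^{k}=\binom{a}{k-1}^{k/(k-1)}\lambda^{-k/(k-1)}$. Substituting $\lambda^k\sim\binom{a-1}{k-2}^{k-1}\binom{a}{k-1}n^{k-1}$ gives
\[
(n-a)z^k\sim\frac{\binom{a}{k-1}}{\binom{a-1}{k-2}}=\frac{a}{k-1}.
\]
The mass on $A$ is $ax^k/(ax^k+(n-a)y^k)=a/(a+(n-a)z^k)$, which therefore tends to $a/(a+a/(k-1))=(k-1)/k$. The only delicate point in this argument is justifying that the symmetric positive solution is the Perron pair, and that is standard.
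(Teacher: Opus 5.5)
Your proposal is correct and follows the paper's proof. The steps match: the symmetric Perron vector, the two reduced eigenequations, elimination of $y/x$ to get \eqref{eq:Kr}, monotonicity of the left side for uniqueness, and division by $\lambda^k$ for the asymptotics. The only difference is in the mass computation. The paper multiplies the two eigenequations to obtain the exact identity $(n-a)z^{k}=\frac{\binom{a}{k-1}}{\binom{a-1}{k-2}}\bigl(1-\binom{a-1}{k-1}/\lambda\bigr)$, whereas you substitute the asymptotic form of $\lambda$; both give the limit $\frac{k-1}{k}$.
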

	
	\begin{proof}
		The hypergraph is connected and vertex-transitive on $A$ and on $B$, so the Perron vector has a common value $u$ on $A$ and $v$ on $B$. A vertex of $A$ lies in $\binom{a-1}{k-1}$ edges inside $A$ and in $\binom{a-1}{k-2}(n-a)$ edges meeting $B$; a vertex of $B$ lies in $\binom{a}{k-1}$ edges. The eigenequations are
		\[
		\lambda u^{k-1}=\binom{a-1}{k-1}u^{k-1}+\binom{a-1}{k-2}(n-a)u^{k-2}v,\qquad
		\lambda v^{k-1}=\binom{a}{k-1}u^{k-1}.
		\]
		Eliminating $v/u$ gives~\eqref{eq:Kr}. The left side is strictly increasing from $0$ to $\infty$ on the stated interval, which gives uniqueness, and dividing by $\lambda^{k}$ gives the asymptotics.
		
		The same eigenequations give
		\[
		(n-a)(v/u)^k=\frac{\binom a{k-1}}{\binom{a-1}{k-2}}
		\left(1-\frac{\binom{a-1}{k-1}}{\lambda}\right)
		\longrightarrow\frac a{k-1}.
		\]
		Since $au^k+(n-a)v^k=1$, it follows that
		$au^k=a/[a+(n-a)(v/u)^k]\to(k-1)/k$.
		
	\end{proof}
	
	For $k=3$ equation~\eqref{eq:Kr} reads $\lambda(\lambda-\binom{a-1}{2})^{2}=\frac{a(a-1)^{3}}{2}(n-a)^{2}$, which is the equation of the $3$-uniform case. At the boundary $t=k+1$ one has $a=k-1$, both binomial coefficients on the right equal $1$, and the constant term vanishes, so
	\begin{equation}\label{eq:boundary}
		\rho\bigl(B^{(k)}_{n,k+1}\bigr)^{k}=(n-k+1)^{k-1}\quad\text{exactly.}
	\end{equation}
	
	We now prove that this construction is extremal. Throughout this section $G$ is $K_t$-minor-free, $\HH=\CC_k(G)$, and $x$ is a Perron vector of $\HH$ normalized by $\sum_vx_v^{k}=1$. Since $K_{t-1,t-1}$ contains a $K_t$ minor, obtained by contracting a matching of size $t-2$ and taking the two unmatched vertices as singletons, we record the following two consequences of minor-freeness.
	
	\begin{lem}\label{lem:Krcommon}
		Any $t-1$ vertices of $G$ have at most $t-2$ common neighbors.
	\end{lem}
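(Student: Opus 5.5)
The plan is to argue by contradiction: if some $t-1$ vertices had $t-1$ common neighbors, this would produce a $K_{t-1,t-1}$ subgraph of $G$. The paper has just noted that $K_{t-1,t-1}$ has a $K_t$ minor, which contradicts the assumption that $G$ is $K_t$-minor-free.

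The first step is to verify the minor claim explicitly, since it is the only content. Let the sides be $\{a_1,\dots,a_{t-1}\}$ and $\{b_1,\dots,b_{t-1}\}$. Take the branch sets $B_i=\{a_i,b_i\}$ for $1\le i\le t-2$, each connected through the edge $a_ib_i$. Add the two singletons $\{a_{t-1}\}$ and $\{b_{t-1}\}$. This gives $t$ branch sets, and we check adjacency case by case:
\begin{itemize}
\item For $i\ne i'$, the sets $B_i$ and $B_{i'}$ are joined by the edge $a_ib_{i'}$.
\item $B_i$ and $\{a_{t-1}\}$ are joined by the edge $b_ia_{t-1}$.
\item $B_i$ and $\{b_{t-1}\}$ are joined by the edge $a_ib_{t-1}$.
\item The two singletons are joined by the edge $a_{t-1}b_{t-1}$.
\end{itemize}
This requires $t-2\ge 0$, which holds since $t\ge k+1\ge4$.

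The second step is the application. Suppose $S$ is a set of $t-1$ vertices with common neighborhood containing a set $T$ of $t-1$ vertices. Then $S\cap T=\emptyset$, because a vertex is not its own neighbor. Every vertex of $S$ is adjacent to every vertex of $T$, so $G[S\cup T]$ contains $K_{t-1,t-1}$ as a subgraph. Hence $G$ has a $K_t$ minor, a contradiction.

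There is no real obstacle here. The only point requiring care is the disjointness $S\cap T=\emptyset$, which holds because $G$ is simple, so that the $2(t-1)$ vertices are distinct and the model's branch sets are pairwise disjoint.
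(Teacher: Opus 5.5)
Your proposal is correct and matches the paper's approach: you find a $K_{t-1,t-1}$ subgraph, then use the same $K_t$ model that the paper sketches just before the lemma (contract a matching of size $t-2$ and keep the two unmatched vertices as singletons). Your explicit adjacency check and the remark that the $t-1$ common neighbors must lie outside the chosen $t-1$ vertices fill in details the paper leaves implicit.
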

	
	\begin{proof}
		Otherwise $t-1$ vertices and $t-1$ common neighbors span a $K_{t-1,t-1}$ subgraph.
	\end{proof}
	
	\begin{lem}\label{lem:Krstruct}
		Let $A\subseteq V(G)$ with $|A|=a=t-2$, let $T$ be the set of common neighbors of $A$, and suppose $|T|\ge a+1$. Then $T$ is independent, and every vertex outside $A\cup T$ has at most one neighbor in $T$.
	\end{lem}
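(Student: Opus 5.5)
We will build explicit $K_t$-models. Throughout, $G$ is $K_t$-minor-free, $A$ consists of $a=t-2$ vertices, and $T=N(A)$ has $|T|\ge a+1=t-1$. The structural idea is the same in both parts: the $a$ vertices of $A$ serve as singleton branch sets that are automatically adjacent to each other only if we also handle the adjacencies inside $A$. Since $A$ need not be a clique, we will let some vertices of $T$ act as connectors between non-adjacent vertices of $A$.

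\emph{Independence of $T$.} Suppose $u,w\in T$ are adjacent. We aim for a $K_t$-model with $t$ branch sets:
\begin{itemize}
\item the two singletons $\{u\}$ and $\{w\}$;
\item $a$ branch sets, one for each vertex of $A$.
\end{itemize}
Each vertex $a_i\in A$ is adjacent to both $u$ and $w$, and $u\sim w$, so only the adjacencies among $A$ are missing. To supply them, enlarge the branch sets using the other vertices of $T$. Since $|T\setminus\{u,w\}|\ge a-1$, choose distinct $z_2,\dots,z_a\in T\setminus\{u,w\}$ and set $B_1=\{a_1\}$ and $B_i=\{a_i,z_i\}$ for $i\ge2$. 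Each $B_i$ is connected because $z_i\sim a_i$. The sets $B_i$ and $B_j$ with $i<j$ are adjacent because $z_j\sim a_i$. Every $B_i$ is adjacent to $u$ and to $w$. This gives a $K_t$-model, a contradiction. The count needed is only $|T|\ge 2+(a-1)=a+1$, which is exactly the hypothesis.

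\emph{At most one neighbor in $T$.} Let $y\notin A\cup T$ have two neighbors $u,w\in T$. Take the branch set $\{u,y,w\}$, which is connected, together with the singleton $\{y'\}$ for some further $y'\in T$. The trouble is that $\{u,y,w\}$ is not adjacent to $y'$, since $T$ is independent and $y\notin A$ need not be adjacent to $y'$. So this choice fails, and we instead use one merged set for the non-hub side and spend $T$-vertices on connecting $A$, as before.

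The model we aim for has $t$ branch sets:
\begin{itemize}
\item $X=\{u,y,w\}$, which is adjacent to every vertex of $A$;
\item $a$ branch sets built on $A$;
\item one more branch set $Y$ adjacent to $X$ and to all the sets built on $A$.
\end{itemize}
Here $Y$ must come from a vertex adjacent to $X$. Take $Y=\{a_1\}$ itself, and split $A$ differently. The clean approach is to contract the edges $uy$ and $yw$: this merges $u$ and $w$ into a single vertex $\tilde u$ adjacent to all of $A$. The contracted graph is still $K_t$-minor-free, and in it $A$ has at least $|T|-1\ge a$ common neighbors. That is short by one of what the first part needs.

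So we need a sharper use of the merged vertex. The plan is to use $\tilde u$ and one more $T$-vertex $v$ as the two extra singletons, connected through $A$. Take branch sets $\{\tilde u\}$, $\{v, a_1\}$, and $\{a_i,z_i\}$ for $i=2,\dots,a$, using distinct $z_i\in T\setminus\{u,w,v\}$. This requires $|T|\ge a+2$, which we do not have.

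The remaining case, $|T|=a+1$ exactly, is the step I expect to be the main obstacle. There I would first try a counting argument: pick $u,w$ as two of the $T$-neighbors of $y$, so that after contraction exactly $a$ common neighbors remain, namely $\tilde u$ and $T\setminus\{u,w\}$. Then build a $K_t$-model in which $a_1$ together with $\tilde u$ forms one side, and the $a-1$ remaining $T$-vertices connect the other hubs. If this falls short, I would fall back on the weaker statement actually needed later, obtained by treating small $|T|$ separately in the stability argument.
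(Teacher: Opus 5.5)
\textbf{The second assertion is not proved; the first is fine.} Your proof of the first assertion is correct and is the paper's construction: the branch sets $\{a_1\}$, $\{a_i,z_i\}$ for $i\ge2$, together with $\{u\}$ and $\{w\}$.

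For the second assertion you stop at $|T|=a+1$ and suggest retreating to a weaker statement "needed later." That is not an argument for the lemma as stated. The last model you propose, $\{\tilde u\},\{v,a_1\},\{a_i,z_i\}$ for $2\le i\le a$, also has only $a+1=t-1$ branch sets. So it would not give a $K_t$ minor even if you had $|T|\ge a+2$.

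\textbf{The missing idea.} Do not put $u$ and $w$ into the same branch set. Let $y$ join only one of them, and take $\{u,y\}$ and $\{w\}$ in place of the two adjacent singletons $\{u\},\{w\}$ from your first part.
\begin{itemize}
\item The set $\{u,y\}$ is connected by the edge $uy$.
\item It is adjacent to $\{w\}$ through the edge $yw$.
\item Both sets are adjacent to every branch set built on $A$, because $u,w\in T$.
\end{itemize}
The branch sets on $A$ are exactly those of your first part, built from $a-1$ vertices of $T\setminus\{u,w\}$. So $|T|\ge a+1$ suffices, and you get $a+2=t$ branch sets. This is the paper's proof.

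\textbf{The same fix in contraction language.} Contract only the edge $uy$, not both $uy$ and $yw$. The image of $u$ is still a common neighbor of $A$, so $A$ still has at least $|T|\ge a+1$ common neighbors in $G/uy$. Two of them, the image of $u$ and $w$, are now adjacent. Since $G/uy$ is $K_t$-minor-free, your first part applied to it gives the contradiction. Contracting both edges identified $u$ with $w$, and that lost exactly the one common neighbor you were short of.
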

	
	\begin{proof}
		Write $A=\{u_1,\dots,u_a\}$. If $v,w\in T$ were adjacent, choose $z_1,\dots,z_{a-1}$ in $T\setminus\{v,w\}$, which is possible since $|T|\ge a+1$. The sets $\{u_1,z_1\},\dots,\{u_{a-1},z_{a-1}\},\{u_a\},\{v\},\{w\}$ are disjoint and connected, and any two of them are joined by an edge because every vertex of $T$ is adjacent to every vertex of $A$ and $vw\in E(G)$. These $a+2=t$ sets form a $K_t$ minor. If instead $r\notin A\cup T$ had two neighbors $v,w\in T$, replace the last two singletons by $\{v,r\}$ and $\{w\}$, which are joined by the edge $rw$.
	\end{proof}
	
	\subsection*{Stability for complete minors}
	
	Fix $\varepsilon\in(0,1)$, let $U=\{v:x_v>\varepsilon\}$ and $S=V(G)\setminus U$, call the vertices of $S$ \emph{light}, and put $\beta=\sum_{u\in U}x_u^{k}$, so that $|U|\le\varepsilon^{-k}$. Write $P=\rho(\HH)/k$ and let $P_m$ be the part of $P$ coming from the $k$-cliques with exactly $m$ light vertices, so that $P=\sum_{m=0}^{k}P_m$ and $P_0\le\binom{|U|}{k}=O_\varepsilon(1)$. By the theorem of Mader~\cite{M} there is a constant
	$d=d(t)$ such that every $K_t$-minor-free graph is
	$d$-degenerate.
	Fix an ordering of $V(G)$ in which
		every vertex has at most $d$ neighbors appearing later,
		and let $N^{+}(w)$ denote the set of those later
		neighbors of $w$. We use the same notation for each
		degeneracy ordering below.
	Every clique of $G$ is then contained in
	$\{w\}\cup N^{+}(w)$ for its earliest vertex $w$,
	and at most $\binom{d}{k-1}$ of the $k$-cliques
	have the same earliest vertex. The cliques with two or more light vertices are then negligible, for a reason that has nothing to do with the excluded minor.
	
	\begin{lem}\label{lem:tail}
		Let $G$ be $d$-degenerate. Then $\displaystyle\sum_{m\ge2}P_m\ \le\ \binom{d}{k-1}\,\varepsilon\,n^{\frac{k-1}{k}}$.
	\end{lem}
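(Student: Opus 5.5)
We charge each $k$-clique with at least two light vertices to its earliest vertex in the fixed degeneracy ordering. Fix such a clique $F$ with $m\ge2$ light vertices. Since $x_v\le1$ for every $v$ (the normalization is $\sum_v x_v^k=1$), we can drop all factors except two light ones. Choosing two light vertices $v,w\in F$ gives
\[
\prod_{z\in F}x_z\le x_vx_w\le \varepsilon\,\max(x_v,x_w)\le\varepsilon\,\max_{z\in F\cap S}x_z .
\]
We need only one factor of $\varepsilon$, so it is simpler to take the bound $\prod_{z\in F}x_z\le \varepsilon\, x_w$, where $w$ is one chosen light vertex of $F$ and the factor $\varepsilon$ comes from another light vertex.

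The difficulty is that charging to an arbitrary light vertex $w$ of $F$ could count $w$ in many cliques, since light vertices may have large degree. So the charge must be made to the earliest vertex $w_F$ of $F$, which lies in at most $\binom{d}{k-1}$ such cliques. The price is that we then need $\prod_{z\in F} x_z\le \varepsilon\, x_{w_F}$. Among the $m\ge2$ light vertices of $F$, at least one differs from $w_F$; call it $\ell$. Then
\[
\prod_{z\in F}x_z\le x_{w_F}x_\ell\le \varepsilon\, x_{w_F},
\]
because every other factor is at most $1$ and $x_\ell\le\varepsilon$. This works whether or not $w_F$ itself is light, and it is exactly why the hypothesis $m\ge2$ is used.

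Summing over all such cliques and grouping by their earliest vertex gives
\[
\sum_{m\ge2}P_m\le \varepsilon\sum_{w}\#\{F: w_F=w\}\,x_w\le \binom{d}{k-1}\varepsilon\sum_{w\in V(G)}x_w .
\]
By H\"older (or the power mean inequality) with $\sum_w x_w^k=1$,
\[
\sum_w x_w\le n^{(k-1)/k}.
\]
This yields the claim.

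The only step that needs care is choosing the charged vertex to be the earliest one in the ordering, so that the degeneracy count applies, while still extracting a factor $\varepsilon$ from a different light vertex. The hypothesis $m\ge 2$ guarantees that such a different vertex exists. No minor-freeness beyond $d$-degeneracy is used.
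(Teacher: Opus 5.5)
Your proof is correct and is essentially the paper's argument. You charge each clique to its earliest vertex $w$ in the degeneracy ordering and extract $\varepsilon$ from a light vertex of $F\setminus\{w\}$, which exists because $m\ge2$; this gives the bound $\binom{d}{k-1}\varepsilon\, x_w$ per vertex, and H\"older's inequality $\sum_w x_w\le n^{(k-1)/k}$ completes it.
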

	
	\begin{proof}
		Let $F$ be a $k$-clique with at least two light vertices and let $w$ be its earliest vertex. Then $F\setminus\{w\}$ still contains a light vertex, and the coordinates on the other $k-2$ vertices are at most $1$, so $\prod_{v\in F}x_v\le\varepsilon\,x_w$; and $F\setminus\{w\}\subseteq N^{+}(w)$, so at most $\binom{d}{k-1}$ such cliques have the same $w$. Summing over $w$ and using $\sum_wx_w\le n^{(k-1)/k}$, which is H\"older's inequality, gives the bound.
	\end{proof}
	
	For $v\in S$ set $q_v=\sum_{A'}\prod_{u\in A'}x_u$, the sum being over those $(k-1)$-subsets $A'$ of $U\cap N(v)$ that induce cliques, so that $P_1=\sum_{v\in S}x_vq_v$. Call $v$ \emph{ambiguous} if $|U\cap N(v)|\ge t-1$.
	
	\begin{lem}\label{lem:Kramb}
		The number of ambiguous vertices is at most $(t-2)\binom{|U|}{t-1}=O_\varepsilon(1)$, and for every other $v\in S$ we have $q_v\le Q$, where $Q=e_{k-1}(z_1,\dots,z_a)$ is the elementary symmetric polynomial of degree $k-1$ in the $a$ largest coordinates of $x$ restricted to $U$, padded with zeros if $|U|<a$.
	\end{lem}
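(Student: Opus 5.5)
Both parts are finite counting statements combined with a monotonicity property of elementary symmetric polynomials, and I would prove them in that order.

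\emph{Bound on ambiguous vertices.} An ambiguous vertex $v\in S$ has at least $t-1$ neighbors in $U$. Choose for each such $v$ one $(t-1)$-subset $T_v\subseteq U\cap N(v)$. There are at most $\binom{|U|}{t-1}$ possible values of $T_v$. By Lemma~\ref{lem:Krcommon}, any $t-1$ vertices have at most $t-2$ common neighbors, so each $(t-1)$-subset of $U$ equals $T_v$ for at most $t-2$ vertices $v$. Hence there are at most $(t-2)\binom{|U|}{t-1}$ ambiguous vertices. Since $|U|\le\varepsilon^{-k}$, this is $O_\varepsilon(1)$.

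\emph{Bound on $q_v$.} Let $v\in S$ be non-ambiguous, so $m:=|U\cap N(v)|\le t-2=a$. By definition $q_v$ sums $\prod_{u\in A'}x_u$ over only those $(k-1)$-subsets $A'$ of $U\cap N(v)$ that are cliques. Since all terms are nonnegative, dropping the clique restriction gives
\[
q_v\le e_{k-1}\bigl(x_u:u\in U\cap N(v)\bigr).
\]
This is an elementary symmetric polynomial in $m\le a$ nonnegative variables. It is monotone nondecreasing in each variable, and adding extra nonnegative variables does not decrease it.

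Now list the coordinates $x_u$ with $u\in U\cap N(v)$ in decreasing order. The $i$-th largest of them is at most the $i$-th largest coordinate $z_i$ of $x$ on $U$. Therefore
\[
e_{k-1}\bigl(x_u:u\in U\cap N(v)\bigr)\le e_{k-1}(z_1,\dots,z_m)\le e_{k-1}(z_1,\dots,z_a)=Q,
\]
where we pad with zeros if $|U|<a$. The only point needing care is this termwise domination of sorted sequences. It follows by matching the sorted neighbor coordinates to $z_1,\dots,z_m$ and applying monotonicity one coordinate at a time. I expect no real obstacle here.
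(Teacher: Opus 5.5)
Your proposal is correct and follows the paper's proof: you count ambiguous vertices through Lemma~\ref{lem:Krcommon} applied to $(t-1)$-subsets of $U$, and you bound $q_v$ by dropping the clique restriction and comparing the sorted neighbor coordinates termwise with $z_1,\dots,z_a$. The only difference is that you make the choice of $T_v$ and the termwise domination explicit, which the paper states in a single line.
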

	
	\begin{proof}
		Each $(t-1)$-subset of $U$ has at most $t-2$ common neighbors by Lemma~\ref{lem:Krcommon}, and every ambiguous vertex is a common neighbor of some such subset. If $v$ is not ambiguous then $|U\cap N(v)|\le t-2=a$, and $q_v$ is a sum of products over $(k-1)$-subsets of a set of at most $a$ coordinates, each of which is at most the corresponding coordinate among the $a$ largest; since all terms are nonnegative, $q_v\le Q$.
	\end{proof}
	
	\begin{lem}\label{lem:Krmac}
		$\displaystyle Q\le\binom{a}{k-1}a^{-\frac{k-1}{k}}\beta^{\frac{k-1}{k}}$, with equality only if $z_1=\dots=z_a$ and $\beta=\sum_iz_i^{k}$.
	\end{lem}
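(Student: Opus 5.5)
The inequality is a Maclaurin step followed by a power-mean step, done in the same way as in the proof of Proposition~\ref{prop:termwise}. Let $z_1\ge\dots\ge z_a\ge0$ be the $a$ largest coordinates of $x$ on $U$, padded with zeros if $|U|<a$. Since $a=t-2\ge k-1$, Maclaurin's inequality gives
\[
Q=e_{k-1}(z_1,\dots,z_a)\le\binom{a}{k-1}\Bigl(\frac{e_1(z)}{a}\Bigr)^{k-1},
\]
and equality holds only if all $z_i$ are equal. The only case to note separately is $k-1=1$, but that is excluded because $k\ge3$.

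Next, the power mean inequality gives $e_1(z)/a\le\bigl(\frac1a\sum_iz_i^k\bigr)^{1/k}$, again with equality only if the $z_i$ are equal. Each $z_i$ is a coordinate at a distinct vertex of $U$, so $\sum_iz_i^k\le\sum_{u\in U}x_u^k=\beta$.

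Combining the three estimates,
\[
Q\le\binom{a}{k-1}\Bigl(\frac{\beta}{a}\Bigr)^{\frac{k-1}{k}}=\binom{a}{k-1}a^{-\frac{k-1}{k}}\beta^{\frac{k-1}{k}}.
\]
If equality holds, then each of the three steps is tight. The Maclaurin and power-mean steps force $z_1=\dots=z_a$, and the last step forces $\sum_iz_i^k=\beta$. Since $\binom{a}{k-1}>0$ and the final bound is monotone in $\beta$, each step can be checked separately in this way.

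The proof is routine, and the only point needing care is the padded case $|U|<a$. There the padding zeros make $\sum_iz_i^k=\beta$ together with equal $z_i$ possible only when $\beta=0$, and in that case both sides vanish. This is consistent with the stated equality condition, so no separate argument is required.
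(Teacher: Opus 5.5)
Your proof is correct and is essentially the paper's argument: Maclaurin's inequality for $e_{k-1}$, then the power-mean bound $e_1\le a^{(k-1)/k}\bigl(\sum_i z_i^k\bigr)^{1/k}$ (which the paper phrases as H\"older), then $\sum_i z_i^k\le\beta$, with the equality case read off from the tightness of each step. Your remarks that $k\ge3$ makes Maclaurin's equality case restrictive, and on the padded case, are accurate additions that the paper leaves implicit.
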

	
	\begin{proof}
		By Maclaurin's inequality $\bigl(e_{k-1}/\binom{a}{k-1}\bigr)^{1/(k-1)}\le e_1/a$, with equality only if all $z_i$ are equal, and by H\"older $e_1=\sum_iz_i\le a^{(k-1)/k}\bigl(\sum_iz_i^{k}\bigr)^{1/k}\le a^{(k-1)/k}\beta^{1/k}$. Hence $e_{k-1}\le\binom{a}{k-1}\bigl(a^{-1/k}\beta^{1/k}\bigr)^{k-1}$.
	\end{proof}
	
	\begin{prop}\label{prop:Krmaster}
		Put $\Psi(\beta)=\binom{a}{k-1}a^{-\frac{k-1}{k}}\beta^{\frac{k-1}{k}}(1-\beta)^{\frac1k}$. With $\varepsilon$ fixed and $n\to\infty$,
		\begin{equation}\label{eq:Krmaster}
			\frac{\rho(\HH)}{k\,n^{(k-1)/k}}\ \le\ \Psi(\beta)+\binom{d}{k-1}\varepsilon+o_\varepsilon(1).
		\end{equation}
		Consequently $\rho(\HH)\le(c_{k,t}+o(1))n^{(k-1)/k}$ for every $K_t$-minor-free $G$; and if $\rho(\HH)\ge(c_{k,t}-o(1))n^{(k-1)/k}$, one may choose $\varepsilon=\varepsilon_n\downarrow0$ sufficiently slowly so that $\beta\to\frac{k-1}{k}$ and the inequalities in the proof are asymptotically sharp.
	\end{prop}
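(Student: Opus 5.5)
We split $P=\rho(\HH)/k$ as $P_0+P_1+\sum_{m\ge2}P_m$ and bound each piece separately. The term $P_0$ is $O_\varepsilon(1)$, so after division by $n^{(k-1)/k}$ it is $o_\varepsilon(1)$. By Lemma~\ref{lem:tail} the sum $\sum_{m\ge2}P_m$ contributes at most $\binom{d}{k-1}\varepsilon$.

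The term $P_1=\sum_{v\in S}x_vq_v$ is split according to whether $v$ is ambiguous. By Lemma~\ref{lem:Kramb} there are $O_\varepsilon(1)$ ambiguous vertices. Each has $x_v\le\varepsilon\le1$, and its $q_v$ is at most $\binom{|U|}{k-1}=O_\varepsilon(1)$, so their total contribution is $o_\varepsilon(1)\,n^{(k-1)/k}$. For the remaining light vertices, Lemmas~\ref{lem:Kramb} and~\ref{lem:Krmac} give $q_v\le Q\le\binom{a}{k-1}a^{-(k-1)/k}\beta^{(k-1)/k}$. Hölder's inequality over $S$ gives
\[
\sum_{v\in S}x_v\le|S|^{(k-1)/k}\Bigl(\sum_{v\in S}x_v^k\Bigr)^{1/k}\le n^{(k-1)/k}(1-\beta)^{1/k}.
\]
Multiplying the two bounds yields $\Psi(\beta)n^{(k-1)/k}$, and adding the three pieces gives~\eqref{eq:Krmaster}.

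For the consequences, we maximize $\Psi$ on $[0,1]$. The function $\beta^{(k-1)/k}(1-\beta)^{1/k}$ is maximized at $\beta=\frac{k-1}{k}$, where it equals $(k-1)^{(k-1)/k}/k$. Hence
\[
k\max\Psi=\binom{a}{k-1}a^{-(k-1)/k}(k-1)^{(k-1)/k}.
\]
Since $\binom{a}{k-1}(k-1)/a=\binom{a-1}{k-2}$, this equals $\binom{a-1}{k-2}^{(k-1)/k}\binom{a}{k-1}^{1/k}=c_{k,t}$.

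Letting $n\to\infty$ and then $\varepsilon\to0$ gives $\rho(\HH)\le(c_{k,t}+o(1))n^{(k-1)/k}$. To obtain a single $o(1)$ we choose $\varepsilon=\varepsilon_n\to0$ slowly enough that the $o_\varepsilon(1)$ terms, which come from the $O(\varepsilon^{-k}\cdot\text{const})$ bounds, still vanish.

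If $\rho(\HH)\ge(c_{k,t}-o(1))n^{(k-1)/k}$, then with this choice of $\varepsilon_n$ we get $\Psi(\beta)\ge\max\Psi-o(1)$. The function $\Psi$ is continuous with a unique maximizer, so $\beta\to\frac{k-1}{k}$. Every inequality used above (the Hölder step, the Maclaurin step and the bound $q_v\le Q$ in aggregate) must then hold up to a $1+o(1)$ factor. This is the asserted asymptotic sharpness.

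The main obstacle is the bookkeeping of letting $\varepsilon$ depend on $n$. Both $|U|\le\varepsilon^{-k}$ and the ambiguous count grow as $\varepsilon\to0$, so the $o_\varepsilon(1)$ terms must be made quantitative, for instance as $O(\varepsilon^{-k(t-1)})n^{-(k-1)/k}$, before $\varepsilon_n$ can be chosen.
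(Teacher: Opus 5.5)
Your proposal is correct and follows the paper's proof essentially step for step. It uses the same split $P=P_0+P_1+\sum_{m\ge2}P_m$, discards the $O_\varepsilon(1)$ ambiguous vertices, bounds $q_v\le Q$ via Lemmas~\ref{lem:Kramb} and~\ref{lem:Krmac}, applies H\"older over $S$, evaluates $k\max\Psi=c_{k,t}$ in the same way, and chooses $\varepsilon_n$ slowly so that uniqueness of the maximizer forces $\beta\to\frac{k-1}{k}$.

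One small slip: your sample error bound $O(\varepsilon^{-k(t-1)})n^{-(k-1)/k}$ leaves out the factor $\binom{|U|}{k-1}=O(\varepsilon^{-k(k-1)})$ that bounds $q_v$ at the ambiguous vertices. This does not matter, since any error bound depending on $\varepsilon$ alone suffices to choose $\varepsilon_n$.
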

	
	\begin{proof}
		Split $P=P_1+P_0+\sum_{m\ge2}P_m$. The middle term is $O_\varepsilon(1)$ and the last is bounded by Lemma~\ref{lem:tail}. For $P_1$, discard the ambiguous vertices, whose contribution is at most $\binom{|U|}{k-1}(t-2)\binom{|U|}{t-1}=O_\varepsilon(1)$, and apply Lemma~\ref{lem:Kramb} and then Lemma~\ref{lem:sigma} with $j=1$:
		\[
		P_1\le Q\sum_{v\in S}x_v+O_\varepsilon(1)\le Q\,(1-\beta)^{1/k}n^{(k-1)/k}+O_\varepsilon(1).
		\]
		Inserting Lemma~\ref{lem:Krmac} gives~\eqref{eq:Krmaster}. The function $\beta\mapsto\beta^{(k-1)/k}(1-\beta)^{1/k}$ attains its maximum $\bigl(\frac{k-1}{k}\bigr)^{(k-1)/k}k^{-1/k}$ only at $\beta=\frac{k-1}{k}$, and
		\[
		k\binom{a}{k-1}a^{-\frac{k-1}{k}}\Bigl(\frac{k-1}{k}\Bigr)^{\frac{k-1}{k}}k^{-\frac1k}
		=\binom{a}{k-1}\Bigl(\frac{k-1}{a}\Bigr)^{\frac{k-1}{k}}
		=\binom{a-1}{k-2}\Bigl(\frac{a}{k-1}\Bigr)^{\frac1k}=c_{k,t},
		\]
		using $\binom{a}{k-1}=\frac{a}{k-1}\binom{a-1}{k-2}$ in the middle and $\binom{a-1}{k-2}^{(k-1)/k}\binom{a}{k-1}^{1/k}=\binom{a-1}{k-2}\bigl(\frac{a}{k-1}\bigr)^{1/k}$ at the end. Hence $k\Psi\le c_{k,t}$ on $[0,1]$, and~\eqref{eq:Krmaster} gives $\rho(\HH)\le\bigl(c_{k,t}+k\binom{d}{k-1}\varepsilon+o_\varepsilon(1)\bigr)n^{(k-1)/k}$; letting $n\to\infty$ and then $\varepsilon\to0$ proves the second assertion. For a near-extremal sequence, choose $\varepsilon_n\downarrow0$ sufficiently slowly that the terms $O_{\varepsilon_n}(1)/n^{(k-1)/k}$ tend to zero. Then $\Psi(\beta)\to c_{k,t}/k$, and the unique maximum of $\Psi$ forces $\beta\to(k-1)/k$ and asymptotic equality in the preceding estimates.
	\end{proof}
	
	\begin{cor}\label{cor:Krstab}
		Let $G_n$ be $K_t$-minor-free with $\rho(\CC_k(G_n))\ge(c_{k,t}-o(1))n^{(k-1)/k}$. Then $G_n$ contains a set $A_n$ of $a=t-2$ vertices whose coordinates all tend to $(\frac{k-1}{ka})^{1/k}$, all other coordinates tend to $0$, and all but $o(n)$ vertices are common neighbors of $A_n$.
	\end{cor}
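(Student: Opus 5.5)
The plan is to extract the structure from Proposition~\ref{prop:Krmaster} and then read off the three conclusions from the equality cases of the inequalities used there. Let $\varepsilon_n\downarrow0$ be chosen slowly, as allowed by that proposition, so that $\beta\to\frac{k-1}{k}$ and
\[
P_1=\bigl(c_{k,t}/k-o(1)\bigr)n^{(k-1)/k}.
\]
Asymptotic sharpness in $P_1\le Q\sum_{v\in S}x_v+O(1)$, together with Lemma~\ref{lem:Krmac}, forces three things. First, $Q=\bigl(\binom{a}{k-1}a^{-(k-1)/k}\beta^{(k-1)/k}\bigr)(1-o(1))$. Second, $\sum_i z_i^k=\beta-o(1)$. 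Third, the $z_i$ are asymptotically equal. For the third point I will use a quantitative form of Maclaurin and H\"older, which is available because all quantities are bounded; a compactness argument on the vector $(z_1,\dots,z_a)$ makes this routine. Let $A_n$ be the set of vertices carrying $z_1,\dots,z_a$. Then each coordinate on $A_n$ tends to $(\beta/a)^{1/k}\to\bigl(\frac{k-1}{ka}\bigr)^{1/k}$.

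Next I show that all other coordinates tend to $0$. The mass in $U$ outside $A_n$ is $\beta-\sum z_i^k=o(1)$, and with $|U|\le\varepsilon_n^{-k}$ this already gives each such coordinate $o(1)$. Vertices in $S$ have $x_v\le\varepsilon_n\to0$. This settles the coordinate statement.

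The main step is to show that all but $o(n)$ vertices are common neighbours of $A_n$. For a non-ambiguous $v\in S$, bound $q_v$ more finely. If $v$ misses some vertex of $A_n$, then the $(k-1)$-subsets of $U\cap N(v)$ use at most $a-1$ vertices of $A_n$, plus possibly vertices of $U\setminus A_n$ whose coordinates are $o(1)$. Hence
\[
q_v\le e_{k-1}(\text{$a-1$ of the $z_i$})+o(1)\le(1-\delta)Q
\]
for a fixed $\delta>0$ depending only on $a$ and $k$. This uses $a\ge k-1$, so that dropping one $z_i$ loses a positive fraction of $e_{k-1}$ when the $z_i$ are asymptotically equal and bounded below. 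Let $S'$ be the set of such vertices. Then
\[
P_1\le Q\sum_{v\in S}x_v-\delta Q\sum_{v\in S'}x_v+O(1).
\]
Sharpness forces $\sum_{v\in S'}x_v=o(n^{(k-1)/k})$, but this bounds the weight of $S'$, not its cardinality. To convert, I use the eigenequation for $v\in S$:
\[
\rho\,x_v^{k-1}\ge\prod_{u\in A_n}\cdots
\]
More simply, asymptotic equality in H\"older, $\sum_{v\in S}x_v\le(1-\beta)^{1/k}|S|^{(k-1)/k}$, forces the light coordinates to be nearly equal to $((1-\beta)/n)^{1/k}$ on all but $o(n)$ vertices of $S$. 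Quantitatively, if a set of $\eta n$ vertices had $x_v\le c\,n^{-1/k}$ with $c$ small, H\"older on the complement would lose a constant factor. So all but $o(n)$ vertices of $S$ have $x_v=\Theta(n^{-1/k})$. Combined with $\sum_{v\in S'}x_v=o(n^{(k-1)/k})$, this gives $|S'|=o(n)$. Ambiguous vertices and $U$ are $O_{\varepsilon_n}(1)=o(n)$ when $\varepsilon_n$ decreases slowly enough.

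I expect this last conversion, from weight to count, to be the delicate point. It requires making the stability of H\"older quantitative; I would phrase it as a two-level H\"older split to keep it clean. I also need to check that dropping a hub reduces $e_{k-1}$ by a constant factor even in the boundary case $a=k-1$. There it is clear, since $e_{k-1}$ then vanishes entirely.
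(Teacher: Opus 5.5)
Your proof is correct. Up to the last step it is the paper's argument: the equality cases of Lemma~\ref{lem:Krmac} plus compactness give $A_n$ with coordinates tending to $(\frac{k-1}{ka})^{1/k}$ and $\beta-\sum_i z_i^k=o(1)$, so every other coordinate is $o(1)$. The bound $|U|\le\varepsilon_n^{-k}$ is not needed there, since a single $x_v^k$ is at most the leftover mass.

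The routes differ in proving that $o(n)$ vertices miss a hub. You first show that the \emph{weight} $\sum_{v\in S'}x_v$ is $o(n^{\alpha})$, with $\alpha=\frac{k-1}{k}$, and then convert weight to count by a stability form of H\"older. That conversion works as you sketch. Suppose $\gamma n$ vertices of $S$ had $x_v\le c\,n^{-1/k}$. Then
\[
\bigl(k^{-1/k}-o(1)\bigr)n^{\alpha}\le\sum_{v\in S}x_v\le\Bigl((1-\beta)^{1/k}(1-\gamma)^{\alpha}+c\gamma\Bigr)n^{\alpha},
\]
and since $(1-\gamma)^{\alpha}\le1-\alpha\gamma$, this forces $\gamma\to0$ once $c<\alpha k^{-1/k}$. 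Markov then gives $|S'|\le o(n)+c^{-1}n^{1/k}\sum_{v\in S'}x_v=o(n)$. You were right to drop the eigenequation idea: a vertex missing a hub may lie in no $k$-clique with $k-1$ hubs (for instance when $a=k-1$), so it gives no lower bound on $x_v$.

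The paper avoids this conversion altogether. It drops the $U$/$S$ split and the ambiguous vertices, and lets $q_v$ count only $(k-1)$-cliques inside $A_n\cap N(v)$. Every clique with two or more vertices outside $A_n$ goes to the tail bound of Lemma~\ref{lem:tail}, with threshold $\max_{v\notin A_n}x_v$. It then applies H\"older with the coefficients $q_v$ kept inside the sum,
\[
\sum_{v\notin A_n}x_vq_v\le W^{1/k}\bigl(|T|Q^{1/\alpha}+|R|(Q-\delta)^{1/\alpha}\bigr)^{\alpha},
\]
where $T=N(A_n)$, $R$ is the rest, $W$ is the mass outside $A_n$, $Q=e_{k-1}(x_{A_n})$, and $\delta>0$ is the fixed loss from a missing hub. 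The cardinality $|R|$ therefore appears directly, and any positive limit of $|R|/n$ contradicts the lower bound on $\rho$.

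The paper's version is one step and needs no bookkeeping for ambiguous vertices or for $U\setminus A_n$. Yours takes an extra step but yields, as a by-product, that all but $o(n)$ light coordinates are of order $n^{-1/k}$.
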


	\begin{proof}
		Let $x$ be a normalized Perron vector of $\CC_k(G_n)$, and put
		$\alpha=(k-1)/k$ and $z_0=(\alpha/a)^{1/k}$.
		Choose $\varepsilon_n\downarrow0$ as in
		Proposition~\ref{prop:Krmaster}, and write
		$U_n=\{v:x_v>\varepsilon_n\}$ and
		$\beta_n=\sum_{v\in U_n}x_v^k$.
		By asymptotic sharpness and the equality conditions in
		Lemma~\ref{lem:Krmac}, compactness implies that the $a$ largest
		coordinates tend to $z_0$ and their $k$th powers sum to
		$\beta_n-o(1)=\alpha+o(1)$.
		Let $A=A_n$ be the corresponding vertices.
		Since $z_0>0$, we have $A\subseteq U_n$ for all sufficiently
		large $n$, and hence
		\[
		\eta^k:=\max_{v\notin A}x_v^k
		\le
		\max\left\{\varepsilon_n^k,\,
		\beta_n-\sum_{u\in A}x_u^k\right\}
		=o(1).
		\]
		
		Put $T=N(A)$, $R=V(G_n)\setminus(A\cup T)$,
		$W=\sum_{v\notin A}x_v^k$, and $Q=e_{k-1}(x_A)$.
		For $v\notin A$, let $q_v$ be the total product weight of
		the $(k-1)$-cliques in $G_n[A\cap N(v)]$.
		Then $q_v\le Q$ for $v\in T$.
		Every $v\in R$ misses some $u\in A$, so, for all sufficiently
		large $n$,
		\[
		q_v
		\le Q-x_u e_{k-2}(x_{A\setminus\{u\}})
		\le Q-\delta,
		\qquad
		\delta=\frac12\binom{a-1}{k-2}z_0^{k-1}>0.
		\]
		By the proof of Lemma~\ref{lem:tail}, the total weight of
		$k$-cliques with at least two vertices outside $A$ is
		$O(\eta n^\alpha)=o(n^\alpha)$.
		The cliques entirely inside $A$ contribute $O(1)$.
		Consequently, H\"older's inequality gives
		\[
		\frac{\rho(\CC_k(G_n))}{k n^\alpha}
		\le
		W^{1/k}
		\left(
		\frac{|T|}{n}Q^{1/\alpha}
		+\frac{|R|}{n}(Q-\delta)^{1/\alpha}
		\right)^\alpha
		+o(1).
		\]
		Now $W\to1/k$, $Q\to Q_0:=\binom{a}{k-1}z_0^{k-1}$,
		and $(|T|+|R|)/n\to1$.
		By the hypothesis and Proposition~\ref{prop:Krmaster}, the
		left side tends to $c_{k,t}/k=Q_0k^{-1/k}$.
		Since $\delta>0$, any positive limit point of $|R|/n$
		would make the right side strictly smaller than this limit.
		Thus $|R|=o(n)$, proving the final assertion.
	\end{proof}

	\begin{thm}\label{thm:Kr}
		Let $k\ge3$ and $t\ge k+1$. For all sufficiently large $n$, $B^{(k)}_{n,t}$ is the unique hypergraph of maximum spectral radius in $\HH^{(k)}_{K_t}(n)$.
	\end{thm}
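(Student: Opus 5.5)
By Lemma~\ref{lem:reduce} it suffices to show that every $n$-vertex $K_t$-minor-free graph $G$ maximizing $\rho(\CC_k(G))$ is $K_a\vee\overline{K_{n-a}}$. Since this graph is connected as a clique hypergraph, the uniqueness of the extremal hypergraph then follows from Observation~\ref{obs:strict}.

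Let $G$ be extremal, so that $\rho(\CC_k(G))\ge\rho(B^{(k)}_{n,t})=(c_{k,t}+o(1))n^{(k-1)/k}$ by Proposition~\ref{prop:Kreq}. Corollary~\ref{cor:Krstab} gives a set $A$ of $a$ hubs whose coordinates tend to $z_0=(\frac{k-1}{ka})^{1/k}$, with all other coordinates $o(1)$ and with $T=N(A)$ of size $n-o(n)$. By Lemma~\ref{lem:Krstruct}, $T$ is independent, and every vertex of $R=V(G)\setminus(A\cup T)$ has at most one neighbor in $T$. The goal is to prove $R=\emptyset$ and that $A$ is a clique. Once that holds, $G\subseteq K_a\vee\overline{K_{n-a}}$, and maximality together with Observation~\ref{obs:strict} forces equality.

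The argument proceeds in four steps, in this order.

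First, I would sharpen the coordinate estimates using eigenequations. For $v\in T$, the equation $\rho x_v^{k-1}=\sum_{F\ni v}\prod_{w\in F\setminus v}x_w$ receives contributions only from cliques with at most one vertex outside $A$ plus cliques through at most one $R$-neighbor. This gives $x_v=\Theta(n^{-1/k})$. For a vertex $r\in R$, its degree into $A\cup T$ is at most $(a-1)+1$ plus edges within $R$. Using degeneracy (Mader) and $x_w\le\eta=o(1)$, I would show $\rho x_r^{k-1}\le (Q-\delta)+o(1)$ in the notation of the corollary. Hence $x_r\le(1-\delta')x_v$ for every $v\in T$, up to lower-order terms; the slack comes from $r$ missing some hub.

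Second, I would show that $A$ is a clique. If a hub pair were nonadjacent, adding the edge keeps $G$ $K_t$-minor-free: contracting the edge gives a minor of a graph with fewer hubs, and one can check this directly with Lemma~\ref{lem:Krstruct}. Adding the edge also creates new $k$-cliques, so $\rho$ strictly increases, contradicting maximality.

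Third, I would show that $R=\emptyset$ by a rewiring argument. Delete all edges incident to $R$ and join each vertex of $R$ to exactly the hubs $A$; the resulting graph is $K_a\vee\overline{K_{n-a}}$, which is $K_t$-minor-free. Compare the forms at the Perron vector $x$ of $G$. Each $r\in R$ gains at least $Q\,x_r$ from cliques of type $A'\cup\{r\}$. It loses at most $(Q-\delta)x_r$ plus the weight of cliques containing $r$ and other non-hub vertices. That latter weight is at most $O(\eta)\sum$ over the degeneracy-bounded cliques, which I must bound by $o(1)\cdot x_r$. This is the delicate point: a clique through $r$ may use vertices of $R$ whose coordinates are larger than $x_r$.

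To handle it, I would charge each such clique to its earliest vertex in the degeneracy order, exactly as in Lemma~\ref{lem:tail}. Summing over all of $R$ then bounds the total loss by $(Q-\delta)\sum_R x_r+O(\eta)\sum_R x_r$, while the gain is $Q\sum_R x_r$. Cliques with one vertex in $R$ and one vertex in $T$ are also charged to $R$; each $r$ has at most one $T$-neighbor, and $x_T=O(n^{-1/k})$, so these contribute $o(1)\sum_R x_r$. Hence, if $R\ne\emptyset$, the new graph has a strictly larger form value at $x$, and therefore a strictly larger $\rho$, a contradiction.

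The main obstacle is making this global charging rigorous when $R$ may have edges inside it with uneven coordinates. If the direct comparison proves too weak, I would first use the eigenequation to show that $\max_R x_r=O(n^{-1/k})$ as well. The route is to bound $|R|$ by a constant via the first step and the near-sharpness of Proposition~\ref{prop:Krmaster}, and then conclude with the rewiring inequality above.
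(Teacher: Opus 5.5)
Your Step~3 is the paper's argument. With $B$ the copy of $B^{(k)}_{n,t}$ on $V(G)$ with core $A$, you compare $P_B(x)$ and $P_{\CC_k(G)}(x)$ at the Perron vector of $\CC_k(G)$.

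The gain comes from the hub missed by each $r\in R$ and is at least $\binom{a-1}{k-2}m^{k-1}X_R$, where $m=\min_{u\in A}x_u$ and $X_R=\sum_{r\in R}x_r$. Every edge of $\CC_k(G)$ not in $B$ meets $R$ and has at most $k-2$ hubs. Its non-hub part is therefore a clique $Q$ of $G-A$ with $|Q|\ge2$, and its weight is at most $\eta x_r$ for any $r\in Q\cap R$.

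Charge such an edge to the earliest vertex $w$ of $Q$ in a degeneracy ordering of $G-A$. An ordering of all of $G$, as in Lemma~\ref{lem:tail}, could make $w$ a hub, so the ordering must be of $G-A$. When $w\in T$, charge instead to some $r\in Q\setminus\{w\}$, which determines $w$ as its unique $T$-neighbor. This bounds the total loss by $C_0\eta X_R$.

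Since $\eta\to0$ is already part of Corollary~\ref{cor:Krstab}, this direct comparison is strong enough. Step~1 is not needed. Its bound $x_v=O(n^{-1/k})$ on $T$ is also not justified, because a vertex of $T$ may have many neighbors in $R$. The fallback of bounding $|R|$ by a constant is not needed either.

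Two points need repair.

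First, Step~3 as you wrote it relies on Step~2 to identify the rewired graph with $K_a\vee\overline{K_{n-a}}$, and Step~2 is not justified. You assert that $G+uu'$ is $K_t$-minor-free without proof. Moreover, for $k\ge4$ a new $k$-clique through $uu'$ needs $k-2$ pairwise adjacent common neighbors of $u$ and $u'$, at most one of them in the independent set $T$. Nothing you have proved about $G[A]$ supplies them.

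The step is also unnecessary. Compare with $B$ itself, whose shadow is $K_t$-minor-free by Construction~\ref{cons:Kr}, rather than with a rewiring of $G$. Then $A$ need not be a clique. Once $R=\emptyset$, independence of $T$ gives $\CC_k(G)\subseteq B$, and Observation~\ref{obs:strict} forces equality.

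If you do want $A$ to be a clique, it follows from the sharpness in Corollary~\ref{cor:Krstab}. A non-edge $uu'$ in $A$ lowers every $q_v$ by $x_ux_{u'}e_{k-3}(x_{A\setminus\{u,u'\}})$, which has a positive limit because $a\ge k-1$. This is the argument of Lemma~\ref{lem:Aclique}.

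Second, the strict inequality $P_B(x)>P_{\CC_k(G)}(x)$ requires $X_R>0$. Since $\CC_k(G)$ need not be connected, the Perron vector may vanish on all of $R$, and then you only get $P_B(x)\ge P_{\CC_k(G)}(x)$. In that case perturb to $z=x+\varepsilon\mathbf 1_R$. The form of $B$ gains at least $\binom{a-1}{k-2}m^{k-1}|R|\varepsilon$, while $\|z\|_k^k$ grows only by $|R|\varepsilon^k$. Hence $\rho(B)>\rho(\CC_k(G))$ for small $\varepsilon$, which again contradicts extremality.
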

	
	\begin{proof}
		By Lemma~\ref{lem:reduce}, the completion of an extremal hypergraph is $\CC_k(G)$ for some $K_t$-minor-free $G$ and is also extremal, and $\rho\ge\rho(B^{(k)}_{n,t})$, so Corollary~\ref{cor:Krstab} applies. Let $A=A_n$, let $T$ be the set of common neighbors of $A$ and $R=V(G)\setminus(A\cup T)$, so $|R|=o(n)$ and $|T|\ge a+1$ for large $n$. By Lemma~\ref{lem:Krstruct}, $T$ is independent and every vertex of $R$ has at most one neighbor in $T$.
		
		Let $B$ be the copy of $B^{(k)}_{n,t}$ on $V(G)$ with core $A$ and compare $P_B(x)$ with $P_{\CC_k(G)}(x)$ at the same vector, where $P_{\HH}(x)=\sum_{F\in E(\HH)}\prod_{v\in F}x_v$, as in Observation~\ref{obs:strict}. Each $r\in R$ misses some $u\in A$, so none of the $\binom{a-1}{k-2}$ sets $\{u,r\}\cup A''$ with $A''\in\binom{A\setminus\{u\}}{k-2}$ is an edge of $\CC_k(G)$, while all of them are edges of $B$; these sets are distinct for distinct $r$. Writing $m=\min_{u\in A}x_u$ and $X_R=\sum_{r\in R}x_r$,
		\[
		\sum_{e\in E(B)\setminus E(\CC_k(G))}\ \prod_{v\in e}x_v\ \ge\ \binom{a-1}{k-2}m^{k-1}X_R .
		\]
		Conversely an edge of $\CC_k(G)$ not in $B$ is a $k$-clique meeting $R$, since $T$ is independent and any $k$-clique inside $A\cup T$ with at most one vertex of $T$ is already an edge of $B$. Such a clique has at most one vertex in $T$, by independence, and its vertices outside $A$ have coordinates at most $\eta=\max_{v\notin A}x_v$. Since it has at most $k-2$ vertices in $A$, its vertices outside $A$ form a clique $Q$ of $G-A$ with $|Q|\ge2$ and $Q\cap R\ne\emptyset$, so the product of its coordinates is at most $\eta\,x_r$ for any $r\in Q\cap R$. The graph $G-A$ is $d$-degenerate, so we may fix a degeneracy ordering of it and let $w$ be the earliest vertex of $Q$, so that $Q\subseteq\{w\}\cup N^{+}(w)$ and at most $2^{d+a}$ cliques of $\CC_k(G)$ have the same $w$. If $w\in R$, the product is at most $\eta x_w$. If $w\in T$, then $Q\setminus\{w\}\subseteq R$ and $w$ is the unique neighbor in $T$ of each $r\in Q\setminus\{w\}$, so $w$ is determined by any such $r$ and the product is at most $\eta x_r$. In either case the total weight of the $k$-cliques of $G$ that are not edges of $B$ is at most $C_0\,\eta X_R$ for a constant $C_0=C_0(k,t)$. Since $\eta\to0$ and $m$ tends to a positive limit by Corollary~\ref{cor:Krstab}, we get $P_B(x)>P_{\CC_k(G)}(x)$ as soon as $X_R>0$. If $X_R=0$, that is $x_r=0$ for every $r\in R$, then $P_B(x)\ge P_{\CC_k(G)}(x)$ by the same two estimates, and the vector $z=x+\varepsilon\mathbf 1_R$ satisfies
		\[
		P_B(z)\ \ge\ P_{\CC_k(G)}(x)+\binom{a-1}{k-2}m^{k-1}|R|\,\varepsilon,\qquad \|z\|_k^{k}=1+|R|\varepsilon^{k},
		\]
		so that $\rho(B)\ge kP_B(z)/\|z\|_k^{k}>\rho(\CC_k(G))$ for small $\varepsilon$, because $k\ge2$. Either way extremality is contradicted, so $R=\emptyset$.
		
		Now every $k$-clique of $G$ has at most one vertex in $T$, so $\CC_k(G)\subseteq B$, and if the inclusion were proper then $\rho(B)>\rho(\CC_k(G))$ by Observation~\ref{obs:strict}, since $B$ is connected. Therefore $\CC_k(G)=B\cong B^{(k)}_{n,t}$. Since this completion is connected, Lemma~\ref{lem:reduce}
		also forces the original extremal hypergraph to equal it.
	\end{proof}
	
	\subsection*{The regime $j\le1$ of the bipartite problem}
	
	The argument above is not specific to complete minors. Suppose $2\le s\le t$ and $s\ge k$, so that $j\le1$, let $G$ be $K_{s,t}$-minor-free and $\HH=\CC_k(G)$, and keep the notation $U,S,\beta,P,P_m$ of this section.
	
	\begin{thm}\label{thm:jle1stab}
		Let $G_n$ be $K_{s,t}$-minor-free with $s\ge k$ and
		\[
		\rho(\CC_k(G_n))\ \ge\ \bigl(c-o(1)\bigr)n^{\frac{k-1}{k}},\qquad c=\binom{s-1}{k-1}\Bigl(\frac{k-1}{s-1}\Bigr)^{\frac{k-1}{k}} .
		\]
		Then $\rho(\CC_k(G_n))=(c+o(1))n^{(k-1)/k}$, and $G_n$ contains a set $A_n$ of $s-1$ vertices whose coordinates all tend to $\bigl(\frac{k-1}{k(s-1)}\bigr)^{1/k}$, while every other coordinate tends to $0$, all but $o(n)$ vertices are common neighbors of $A_n$, and the vertices outside $A_n\cup N(A_n)$ carry total mass $o(1)$. Moreover the graph induced on those common neighbors has maximum degree at most $t-1$.
	\end{thm}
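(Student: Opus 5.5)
The plan is to repeat the argument of Proposition~\ref{prop:Krmaster} and Corollary~\ref{cor:Krstab} with $a$ replaced by $s-1$, with Lemma~\ref{lem:common} and Corollary~\ref{cor:deg} taking the place of Lemmas~\ref{lem:Krcommon} and~\ref{lem:Krstruct}. By Mader's theorem, $G_n$ is $d$-degenerate for some $d=d(s,t)$. Fix $\varepsilon$ and define $U,S,\beta,P_m$ as before. Lemma~\ref{lem:tail} holds verbatim, so $\sum_{m\ge2}P_m\le\binom{d}{k-1}\varepsilon n^{(k-1)/k}$, and $P_0=O_\varepsilon(1)$.

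For $P_1$, I will call a light vertex $v$ \emph{ambiguous} if $|U\cap N(v)|\ge s$. Each $s$-subset of $U$ has at most $t-1$ common neighbors by Lemma~\ref{lem:common}, so there are at most $(t-1)\binom{|U|}{s}=O_\varepsilon(1)$ ambiguous vertices. Every other light vertex satisfies $q_v\le Q:=e_{k-1}$ of the $s-1$ largest coordinates of $x$ on $U$. The analogue of Lemma~\ref{lem:Krmac} with $a=s-1$ gives $Q\le\binom{s-1}{k-1}(s-1)^{-(k-1)/k}\beta^{(k-1)/k}$; this requires $s-1\ge k-1$, which is exactly where $j\le1$ is used. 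Hölder then yields
\[
\frac{\rho}{kn^{(k-1)/k}}\le\binom{s-1}{k-1}(s-1)^{-\frac{k-1}{k}}\beta^{\frac{k-1}{k}}(1-\beta)^{\frac1k}+\binom{d}{k-1}\varepsilon+o_\varepsilon(1).
\]
Maximizing at $\beta=(k-1)/k$ gives the same computation as in Proposition~\ref{prop:Krmaster}, now with constant $c$. This proves the upper bound $\rho\le(c+o(1))n^{(k-1)/k}$, and together with the hypothesis it gives the asymptotic equality. Choosing $\varepsilon_n\downarrow0$ slowly then forces $\beta\to(k-1)/k$ and asymptotic equality in Maclaurin and Hölder.

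The structural conclusions follow as in Corollary~\ref{cor:Krstab}. Equality in Maclaurin means the $s-1$ largest coordinates tend to $z_0=((k-1)/(k(s-1)))^{1/k}$ and carry mass $\beta-o(1)$, so all other coordinates are $o(1)$. Let $A_n$ be these vertices, $T=N(A_n)$ and $R$ the rest. A vertex $v\in R$ misses some $u\in A_n$, so $q_v\le Q-\delta$ with $\delta=\frac12\binom{s-2}{k-2}z_0^{k-1}>0$, which is positive since $s\ge k$. The weighted Hölder bound of Corollary~\ref{cor:Krstab} then applies, here with the factor $\bigl(\sum_v x_v\,(q_v/Q)\bigr)$, so $|R|=o(n)$. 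For the mass statement I will not bound only $|R|/n$; instead I apply Hölder in the form $\sum_{v\in T}x_vQ+\sum_{v\in R}x_v(Q-\delta)$ and write $W_R=\sum_{v\in R}x_v^k$. Asymptotic equality in Hölder and the strict deficit $\delta$ then force $\sum_{v\in R}x_v=o(n^{(k-1)/k})$. Converting this to $W_R=o(1)$ needs care, since small mass in $\ell_1$ does not give small mass in $\ell_k$ in general. I will use the equality case quantitatively. With $W$ fixed, the sum $\sum_v x_v$ over light vertices is near its Hölder maximum $W^{1/k}n^{(k-1)/k}$ only if $x$ is nearly constant at $(W/n)^{1/k}$ in the $\ell_k$ sense, via a stability form of the power-mean inequality. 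Then $W_R\approx|R|W/n=o(1)$. The last claim, $\Delta(G_n[T])\le t-1$, is Corollary~\ref{cor:deg}.

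The main obstacle is this last mass step: upgrading $|R|=o(n)$ and the $\ell_1$ deficit to $\ell_k$-mass $o(1)$ on $R$. I will first try the direct route above. I will split $R$ into vertices with $x_v\le\eta$ and the rest, use the tail Lemma~\ref{lem:tail} to control the rest, and compare the Hölder bound with separate weights $W_T,W_R$. That comparison gives the bound $Q\bigl(W_T^{1/k}|T|^{\alpha}\bigr)+(Q-\delta)W_R^{1/k}|R|^{\alpha}$, where $\alpha=(k-1)/k$. Since $|R|^{\alpha}=o(n^{\alpha})$, the second term is negligible, and the maximum then requires $W_T\to1/k$, hence $W_R\to0$.
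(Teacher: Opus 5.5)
Your proposal is correct and follows the paper's proof. It uses the same bound on $P_1$: ambiguous vertices controlled by Lemma~\ref{lem:common}, then Maclaurin with $a=s-1$, then H\"older. It transfers Corollary~\ref{cor:Krstab} to get concentration and $|R|=o(n)$. For the mass it takes exactly the paper's final step, applying H\"older separately on $T$ and $R$ so that $|R|^{(k-1)/k}=o(n^{(k-1)/k})$ forces $W-W_R\to1/k$.

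The detours you considered are unnecessary. You do not need a stability form of the power-mean inequality. Splitting $R$ at $\eta$ is vacuous, since every vertex of $R$ already has coordinate at most $\eta$.
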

	
	\begin{proof}
		By the theorem of Mader~\cite{M} every $K_{s,t}$-minor-free graph is $d$-degenerate for some $d=d(s,t)$, so Lemma~\ref{lem:tail} and the bound $P_0\le\binom{|U|}{k}$ apply verbatim and again only $P_1$ matters. Call $v\in S$ ambiguous if $|U\cap N(v)|\ge s$. By Lemma~\ref{lem:common} every $s$-subset of $U$ has at most $t-1$ common neighbors, so there are at most $(t-1)\binom{|U|}{s}=O_\varepsilon(1)$ ambiguous vertices, and their contribution to $P_1$ is $O_\varepsilon(1)$. For every other $v$ we have $|U\cap N(v)|\le s-1$, and since $k-1\le s-1$ the quantity $q_v$ is a sum of products over $(k-1)$-subsets of at most $s-1$ coordinates, whence $q_v\le Q:=e_{k-1}(z_1,\dots,z_{s-1})$ for the $s-1$ largest coordinates on $U$, padded with zeros if necessary. Lemma~\ref{lem:Krmac} with $a$ replaced by $s-1$ gives $Q\le\binom{s-1}{k-1}(s-1)^{-(k-1)/k}\beta^{(k-1)/k}$, and Lemma~\ref{lem:sigma} with $j=1$ gives $\sum_{v\in S}x_v\le(1-\beta)^{1/k}n^{(k-1)/k}$. Exactly as in Proposition~\ref{prop:Krmaster},
		\[
		k\binom{s-1}{k-1}(s-1)^{-\frac{k-1}{k}}\Bigl(\frac{k-1}{k}\Bigr)^{\frac{k-1}{k}}k^{-\frac1k}=\binom{s-1}{k-1}\Bigl(\frac{k-1}{s-1}\Bigr)^{\frac{k-1}{k}}=c,
		\]
		so that
		\[
		\frac{\rho(\CC_k(G_n))}{k\,n^{(k-1)/k}}\ \le\ \binom{s-1}{k-1}(s-1)^{-\frac{k-1}{k}}\beta^{\frac{k-1}{k}}(1-\beta)^{\frac1k}+\binom{d}{k-1}\varepsilon+o_\varepsilon(1)\ \le\ \frac ck+\binom{d}{k-1}\varepsilon+o_\varepsilon(1).
		\]
		Letting $n\to\infty$ and then $\varepsilon\to0$ gives
		$\rho(\CC_k(G_n))=(c+o(1))n^{(k-1)/k}$.
		
			Choose $\varepsilon_n\downarrow0$ sufficiently slowly as in
			the proof of Corollary~\ref{cor:Krstab}.
			The same argument, applied to the preceding estimates with
			$a=s-1$, gives the stated coordinate concentration and
			$|V(G_n)\setminus(A\cup N(A))|=o(n)$, where $A$ consists
			of the vertices with the $s-1$ largest coordinates.
			For the mass, put $T=N(A)$, $R=V(G_n)\setminus(A\cup T)$, $W=\sum_{v\notin A}x_v^k$ and $W_R=\sum_{v\in R}x_v^k$, and let $q_v$ and $Q=e_{k-1}(x_A)$ be as in that proof. Applying H\"older's inequality to $T$ and to $R$ separately,
			\[
			\sum_{v\notin A}x_vq_v\ \le\ Q\Bigl((W-W_R)^{\frac1k}|T|^{\frac{k-1}{k}}+W_R^{\frac1k}|R|^{\frac{k-1}{k}}\Bigr)\ \le\ Q\,(W-W_R)^{\frac1k}n^{\frac{k-1}{k}}+o\bigl(n^{\frac{k-1}{k}}\bigr),
			\]
			since $|R|=o(n)$. The cliques with at least two vertices outside $A$, or none, have weight $o(n^{(k-1)/k})$ as in that proof. As $W\to1/k$ and $Q\to\binom{s-1}{k-1}z_0^{k-1}$, where $z_0$ is the limit of the coordinates on $A$, the lower bound on $\rho(\CC_k(G_n))$ forces $\liminf(W-W_R)\ge1/k$, that is, $W_R\to0$.
		
		The last assertion is Corollary~\ref{cor:deg}.
	\end{proof}
	
	\begin{rem}\label{rem:jle1}
		Theorem~\ref{thm:jle1stab} determines the leading
		asymptotic constant and the concentration of the
		Perron vector on $s-1$ vertices. The leading
		contribution comes from $k$-cliques with exactly
		one light vertex, which explains why the constant
		does not depend on $t$.

	\end{rem}
	
	\section{Complete bipartite minors}\label{sec:bip}
	
	Let $2\le s\le t$ and $j=k-s+1$. A hyperedge spans $K_k$, and $K_k$ contains $K_{s,t}$ when $k\ge s+t$, so $\HH^{(k)}_{K_{s,t}}(n)$ is trivial unless $s+t\ge k+1$; equivalently $t\ge j$.

	\begin{figure}[H]
		\centering
		\begin{tikzpicture}[scale=0.95,
			hub/.style={circle,fill=black,inner sep=2.2pt},
			blk/.style={circle,draw=black,fill=white,inner sep=1.5pt}]
			
			\node[hub] (a1) at (-0.9,2.1) {};
			\node[above] at (a1) {$a_1$};
			\node[hub] (a2) at (0.9,2.1) {};
			\node[above] at (a2) {$a_2$};
			\draw (a1)--(a2);
			
			\foreach \b/\x in {1/-3.6, 2/-1.2, 3/1.2, 4/3.6} {
				\node[blk] (p\b) at (\x-0.55,0) {};
				\node[blk] (q\b) at (\x+0.55,0) {};
				\node[blk] (r\b) at (\x,-0.95) {};
				\draw (p\b)--(q\b)--(r\b)--(p\b);
				\foreach \v in {p,q,r} {
					\draw[gray!70] (a1)--(\v\b);
					\draw[gray!70] (a2)--(\v\b);
				}
			}
			
			\node at (0,-1.75) {$K_{s-1}\vee bK_t$};
		\end{tikzpicture}
		
		\caption{
				The graph $K_{s-1}\vee bK_t$ of Theorem~\ref{thm:introbip},
				with $s=3$, $t=3$ and $b=4$. Triangles are the largest admissible
				complete blocks: a complete block on four vertices, together
				with the two hubs, would contain a $K_{3,3}$ subgraph.
		}
		\label{fig:cand}
	\end{figure}
	
	For the normalized Perron vector of
	$\CC_k(K_{s-1}\vee bK_t)$, a hub carries mass of order $1$
	and a light vertex mass of order $1/n$.
	Thus the cliques meeting the light part and containing
	$i$ hubs contribute at order $n^{i/k}$ whenever this family
	is nonempty, and the dominant family is the one with
	$i^*=\min(s-1,k-1)$ hubs. The all-hub term contributes
	only $O(1)$. The proof below makes this exact.
	
	\begin{thm}\label{thm:bip}
		Let $2\le s\le t$ and $t\ge j$, put $i^{*}=\min(s-1,k-1)$, and let $h=bt=n-s+1$. Then
		\[
		\rho\bigl(\CC_k(K_{s-1}\vee bK_t)\bigr)=c\,h^{\frac{i^{*}}{k}}+O\bigl(h^{\frac{i^{*}-1}{k}}\bigr)=\bigl(c+o(1)\bigr)n^{\frac{i^{*}}{k}},
		\]
		where
		\[
		c=\begin{cases}
			\displaystyle\binom{s-1}{k-1}\Bigl(\frac{k-1}{s-1}\Bigr)^{\frac{k-1}{k}}, & s\ge k\quad(j\le1),\\[2ex]
			\displaystyle\frac1t\binom tj\,j^{\,j/k}, & 2\le s\le k-1\quad(j\ge2),
		\end{cases}
		\]
		and the mass carried by the hubs in the Perron vector tends to $\frac{i^{*}}{k}$ as $b\to\infty$.
	\end{thm}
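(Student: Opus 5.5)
Since $K_{s-1}\vee bK_t$ has $(s-1)+bt$ vertices, all hubs are equivalent and all light vertices are equivalent. The Perron vector of the connected hypergraph $\CC_k(K_{s-1}\vee bK_t)$ therefore takes a common value $u$ on the hubs and $v$ on the light part. I would reduce the problem to two eigenequations in $u,v$, exactly as in Proposition~\ref{prop:Kreq}. A $k$-clique with $k-r$ hubs and $r$ light vertices uses an $r$-clique inside one block, so the clique counts are explicit.

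\emph{Eigenequations.} A hub lies in
\[
N_A(r)=\binom{s-2}{k-r-1}\frac ht\binom tr
\]
cliques with $r$ light vertices. A light vertex lies in
\[
N_B(r)=\binom{s-1}{k-r}\binom{t-1}{r-1}
\]
such cliques. The equations become
\[
\lambda u^{k-1}=\sum_{r}N_A(r)u^{k-r-1}v^{r},\qquad \lambda v^{k-1}=\sum_{r\ge1}N_B(r)u^{k-r}v^{r-1}.
\]
Set $\tau=v/u$ and divide both equations by $u^{k-1}$, giving
\[
\lambda=\sum_r N_A(r)\tau^r \quad\text{and}\quad \lambda\tau^{k-1}=\sum_{r\ge1}N_B(r)\tau^{r-1}.
\]
The only terms present are those with $r\ge r_0:=\max(1,j)$ and $k-r\le s-1$. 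I expect $\tau\asymp h^{-1/k}$, and I will confirm this by showing that the dominant balance is consistent.

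\emph{Dominant balance.} Here $N_A(r)\tau^r\asymp h^{1-r/k}$, so in the first equation the smallest admissible $r=r_0$ dominates. This gives
\[
\lambda=N_A(r_0)\tau^{r_0}\bigl(1+O(h^{-1/k})\bigr)+O(1),
\]
where the $O(1)$ accounts for the $r=0$ term, present only when $s\ge k+1$. In the second equation the lowest power of $\tau$ on the right is again $\tau^{r_0-1}$. Hence
\[
\lambda\tau^{k-1}=N_B(r_0)\tau^{r_0-1}\bigl(1+O(h^{-1/k})\bigr).
\]
Dividing the two relations gives
\[
\tau^{k}=\frac{N_B(r_0)}{N_A(r_0)}\bigl(1+O(h^{-1/k})\bigr)=\Theta(1/h),
\]
which is consistent with the assumed scaling. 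Then $\lambda=N_A(r_0)^{(k-r_0)/k}N_B(r_0)^{r_0/k}\bigl(1+O(h^{-1/k})\bigr)$. This has order $h^{(k-r_0)/k}=h^{i^*/k}$, since $k-r_0=\min(k-1,s-1)=i^*$.

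The constant $c$ is then read off from the two cases.
\begin{itemize}
\item For $j\ge2$, take $r_0=j$. Then $N_A=\frac ht\binom tj$ and $N_B=\binom{t-1}{j-1}=\frac jt\binom tj$, which gives
\[
c=\Bigl(\tfrac1t\tbinom tj\Bigr)^{(s-1)/k}\Bigl(\tfrac jt\tbinom tj\Bigr)^{j/k}=\tfrac1t\tbinom tj\, j^{j/k}.
\]
\item For $j\le1$, take $r_0=1$. Then $N_A=\binom{s-2}{k-2}h$ and $N_B=\binom{s-1}{k-1}$. Using $\binom{s-2}{k-2}=\frac{k-1}{s-1}\binom{s-1}{k-1}$, this gives
\[
c=\tbinom{s-1}{k-1}\Bigl(\tfrac{k-1}{s-1}\Bigr)^{(k-1)/k}.
\]
\end{itemize}

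\emph{Uniqueness of the root.} To make the argument rigorous rather than heuristic, I would eliminate $\lambda$ to get a single equation $f(\tau)=0$:
\[
\tau^{k-1}\sum_r N_A(r)\tau^r=\sum_{r\ge1}N_B(r)\tau^{r-1}.
\]
I would show that the positive root is unique. Dividing by $\tau^{r_0-1}$ turns the left side into an increasing function of $\tau$. The right side becomes a polynomial with nonnegative coefficients in $\tau$, positive constant term $N_B(r_0)$, and degree at most $k-r_0$, while the left side has degree $\ge k$. A cleaner route is to note that the Perron pair is the unique positive solution by Perron--Frobenius, so it suffices to exhibit a solution in the predicted window. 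I would do this with the intermediate value theorem, evaluating $f$ at $\tau=(1\pm\delta)\bigl(N_B(r_0)/N_A(r_0)\bigr)^{1/k}$. The main care is in the error bookkeeping: every correction term is smaller by a factor $\tau\asymp h^{-1/k}$ relative to the main term, apart from the additive $O(1)$ from $r=0$. This yields the error $O(h^{(i^*-1)/k})$, and replacing $h$ by $n$ changes only lower-order terms because $h=n-s+1$.

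\emph{Hub mass.} The hub mass is
\[
\frac{(s-1)u^k}{(s-1)u^k+h v^k}=\frac{s-1}{s-1+h\tau^k}.
\]
Here $h\tau^k\to hN_B(r_0)/N_A(r_0)$. For $j\ge2$ this equals $j$, giving mass $(s-1)/(s-1+j)=(s-1)/k$. For $j\le1$ it equals $(s-1)/(k-1)$, giving mass $(k-1)/k$. In both cases the limit is $i^*/k$.

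The main obstacle is the rigorous justification that no other term rivals the $r_0$ term, together with locating the root. I would address this by the explicit sandwich $f((1-\delta)\tau_0)<0<f((1+\delta)\tau_0)$ for $\delta\asymp h^{-1/k}$.
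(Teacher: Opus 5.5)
Your argument is correct, but it takes a different route from the paper's proof of this theorem.

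\textbf{Comparison with the paper.} The paper never writes down the eigenequations here. It restricts the variational formula~\eqref{eq:var} to vectors that are constant on the hubs and on the light part, and parametrizes them by the hub mass $\alpha=(s-1)u^k$. It then writes the objective as $\sum_{i\le i^*}m^{i/k}g_i(\alpha)$ plus a constant, with every $g_i$ bounded on $[0,1]$. From this, $\rho=m^{i^*/k}\max g_{i^*}+O(m^{(i^*-1)/k})$ follows without locating any root. The hub mass converges to the unique maximizer $i^*/k$ of $g_{i^*}$ by compactness, but no rate is obtained.

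Your version is the bipartite analogue of Proposition~\ref{prop:Kreq}. It gives an exact scalar equation for $\tau=v/u$. Through your sandwich with $\delta\asymp h^{-1/k}$, it also gives the quantitative statement $h\tau^k=\bigl(hN_B(r_0)/N_A(r_0)\bigr)\bigl(1+O(h^{-1/k})\bigr)$. Hence you get the rate $i^*/k+O(h^{-1/k})$ for the hub mass as well. The price is the term-by-term error bookkeeping and the Perron--Frobenius fact that a positive eigenvector of a connected hypergraph has eigenvalue $\rho$. You handle both correctly: each correction is smaller by a factor $\tau$, and the only exception is the constant all-hub term, which is relatively $O(h^{-(k-1)/k})$.

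\textbf{Two small repairs.}

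First, your formula $N_A(r)=\binom{s-2}{k-r-1}\frac ht\binom tr$ is wrong at $r=0$. A hub lies in $\binom{s-2}{k-1}$ all-hub cliques, not $b\binom{s-2}{k-1}$. Read literally, the latter is of order $h$ and would swamp the $r_0=1$ term when $s\ge k+1$. Your text does treat this term as $O(1)$, which is correct, so only the displayed formula needs fixing.

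Second, your first uniqueness attempt fails as stated, because after dividing by $\tau^{r_0-1}$ both sides are increasing. Divide by $\tau^{k-1}$ instead. This gives $\sum_rN_A(r)\tau^r=\sum_{r\ge1}N_B(r)\tau^{r-k}$. The left side is strictly increasing, since $N_A(r_0)>0$ with $r_0\ge1$. The right side is strictly decreasing, since $N_B(r_0)>0$ and $r_0\le k-1$. So the positive root is unique by elementary means, and the window argument is then needed only to locate it.
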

	
	\begin{proof}
		Write $\HH=\CC_k(K_{s-1}\vee bK_t)$ and $m=h=bt$. The automorphism group of $K_{s-1}\vee bK_t$ is transitive on the hubs and on the light vertices, and the positive Perron vector of the connected hypergraph $\HH$ is unique, so it takes a common value $u$ on the hubs and a common value $v$ on the light vertices. Consequently $\rho(\HH)$ is the maximum of the function in~\eqref{eq:var} over the two-parameter family of such vectors:
		\begin{equation}\label{eq:twovar}
			\rho(\HH)=\max\Bigl\{\,\widehat P(u,v)\ :\ u,v\ge0,\ (s-1)u^{k}+mv^{k}=1\,\Bigr\},
		\end{equation}
		since the maximum over all unit vectors is attained at a vector of this family. A $k$-clique of $K_{s-1}\vee bK_t$ consists of $i$ hubs and a $(k-i)$-clique of a single block, so
		\[
		\widehat P(u,v)=k\binom{s-1}{k}u^{k}+k\sum_{i=0}^{i^{*}}\binom{s-1}{i}\,b\binom{t}{k-i}\,u^{i}v^{k-i},
		\]
		where the first term is absent unless $s-1\ge k$. Parametrize the constraint by the hub mass $\alpha=(s-1)u^{k}\in[0,1]$, so that $mv^{k}=1-\alpha$ and
		\[
		b\,u^{i}v^{k-i}=\frac mt\Bigl(\frac{\alpha}{s-1}\Bigr)^{\frac ik}\Bigl(\frac{1-\alpha}{m}\Bigr)^{\frac{k-i}{k}}
		=\frac1t\Bigl(\frac{\alpha}{s-1}\Bigr)^{\frac ik}(1-\alpha)^{\frac{k-i}{k}}\,m^{\frac ik}.
		\]
		Hence
		\begin{equation}\label{eq:Pexp}
			\widehat P=\sum_{i=0}^{i^{*}}m^{\frac ik}g_i(\alpha)+k\binom{s-1}{k}\frac{\alpha}{s-1},\qquad
			g_i(\alpha)=\frac kt\binom{s-1}{i}\binom{t}{k-i}\Bigl(\frac{\alpha}{s-1}\Bigr)^{\frac ik}(1-\alpha)^{\frac{k-i}{k}} .
		\end{equation}
		Each $g_i$ is continuous on $[0,1]$ and bounded by a constant depending only on $k,s,t$, so for every $\alpha$,
		\[
		\bigl|\widehat P-m^{\frac{i^{*}}{k}}g_{i^{*}}(\alpha)\bigr|\ \le\ C\,m^{\frac{i^{*}-1}{k}},
		\]
		where the constant term of~\eqref{eq:Pexp} is absorbed because $i^{*}\ge1$. Taking the maximum over $\alpha$ in~\eqref{eq:twovar},
		\begin{equation}\label{eq:rhoexp}
			\rho(\HH)=m^{\frac{i^{*}}{k}}\max_{0\le\alpha\le1}g_{i^{*}}(\alpha)+O\bigl(m^{\frac{i^{*}-1}{k}}\bigr).
		\end{equation}
		The function $g_{i^{*}}$ is a constant multiple of $\alpha^{i^{*}/k}(1-\alpha)^{(k-i^{*})/k}$, whose unique maximum on $[0,1]$ is at $\alpha=i^{*}/k$, where it equals $(i^{*}/k)^{i^{*}/k}\bigl((k-i^{*})/k\bigr)^{(k-i^{*})/k}$.
		
		If $j\ge2$ then $i^{*}=s-1$ and $k-i^{*}=j$, and
		\[
		\max g_{i^{*}}=\frac kt\binom tj(s-1)^{-\frac{s-1}{k}}\Bigl(\frac{s-1}{k}\Bigr)^{\frac{s-1}{k}}\Bigl(\frac jk\Bigr)^{\frac jk}
		=\frac1t\binom tj\,j^{\frac jk},
		\]
		since $k\cdot k^{-(s-1)/k-j/k}=1$. If $j\le1$ then $i^{*}=k-1$ and $k-i^{*}=1$, and
		\[
		\max g_{i^{*}}=k\binom{s-1}{k-1}(s-1)^{-\frac{k-1}{k}}\Bigl(\frac{k-1}{k}\Bigr)^{\frac{k-1}{k}}\Bigl(\frac1k\Bigr)^{\frac1k}
		=\binom{s-1}{k-1}\Bigl(\frac{k-1}{s-1}\Bigr)^{\frac{k-1}{k}} .
		\]
		In both cases $\max g_{i^{*}}=c$, and~\eqref{eq:rhoexp} is the first assertion; the second form follows from $h=n-s+1$.
		
		For the mass, let $\alpha_b$ be the hub mass of the Perron vector, which by~\eqref{eq:twovar} maximizes $\widehat P$. Dividing by $m^{i^{*}/k}$, $\alpha_b$ maximizes $g_{i^{*}}+\varepsilon_b$ where $\sup_\alpha|\varepsilon_b(\alpha)|\le Cm^{-1/k}\to0$. Since $g_{i^{*}}$ is continuous on the compact interval $[0,1]$ with the unique maximizer $i^{*}/k$, every limit point of $(\alpha_b)$ is a maximizer of $g_{i^{*}}$, so $\alpha_b\to i^{*}/k$.
	\end{proof}
	
	Although Theorem~\ref{thm:bip} is stated for \(t\mid h\), it supplies
	the same asymptotic lower bound for every residue class.
	
	\begin{cor}\label{cor:lb}
		Let $2\le s\le t$ and $t\ge j$, and put $i^{*}=\min(s-1,k-1)$. Then, as \(n\to\infty\),
		\[
		\max_{\HH\in\HH^{(k)}_{K_{s,t}}(n)}\rho(\HH)\ \ge\ c\,n^{\frac{i^{*}}{k}}+O\bigl(n^{\frac{i^{*}-1}{k}}\bigr),
		\]
		with $c$ as in Theorem~\ref{thm:bip}.
	\end{cor}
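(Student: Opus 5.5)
The plan is to compare with the divisible case by building, for every $n$, a $K_{s,t}$-minor-free graph whose clique hypergraph has a large spectral radius. Write $h=n-s+1=bt+p$ with $0\le p<t$. Take $G_n=K_{s-1}\vee(bK_t\cup\overline{K_p})$, that is, the candidate of Theorem~\ref{thm:bip} on $s-1+bt$ vertices together with $p$ additional vertices joined to the hubs only. Every component of the light part has at most $t$ vertices, so $G_n$ is $K_{s,t}$-minor-free by Corollary~\ref{cor:small}. Then $\CC_k(G_n)\in\HH^{(k)}_{K_{s,t}}(n)$. (If one wants to avoid adding edges, isolated vertices would also do, but joining them to the hubs is harmless and keeps the graph connected.)

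Next, lower bound $\rho(\CC_k(G_n))$ by $\rho(\CC_k(K_{s-1}\vee bK_t))$. Pad the Perron vector of the smaller hypergraph with zeros on the $p$ extra vertices. This keeps the $k$-norm equal to $1$ and does not decrease the form in~\eqref{eq:var}, because every hyperedge of the smaller hypergraph remains a hyperedge. By~\eqref{eq:var} this gives $\rho(\CC_k(G_n))\ge\rho(\CC_k(K_{s-1}\vee bK_t))$.

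Now apply Theorem~\ref{thm:bip} with $h'=bt=h-p$. It gives $\rho\ge c\,(h-p)^{i^*/k}+O\bigl((h-p)^{(i^*-1)/k}\bigr)$. The only point needing care is converting this to $n$. Since $0\le p<t$ and $s$ are fixed, $h-p=n-O(1)$, so the mean value theorem gives $(n-O(1))^{i^*/k}=n^{i^*/k}+O(n^{i^*/k-1})$. The error $O(n^{i^*/k-1})$ is absorbed into $O(n^{(i^*-1)/k})$, because $i^*/k-1\le(i^*-1)/k$ holds as $k\ge1$. The maximum over the class is at least this value, which gives the claim. Nothing here is a real obstacle; the step that most needs checking is the implicit uniformity of the $O$-constant in Theorem~\ref{thm:bip}, which depends only on $k,s,t$ by its proof.
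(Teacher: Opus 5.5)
Your proposal is correct and follows essentially the paper's proof: the same candidate $K_{s-1}\vee bK_t$ padded to $n$ vertices, the same appeal to Theorem~\ref{thm:bip} using $bt=n-O(1)$, and the same absorption of the $O(n^{i^{*}/k-1})$ term into $O(n^{(i^{*}-1)/k})$. The only differences are cosmetic. You join the $p$ extra vertices to the hubs instead of leaving them isolated, which Corollary~\ref{cor:small} covers just as well, and you justify the spectral comparison by zero-padding the Perron vector.
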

	
	\begin{proof}
		Write $h=n-s+1=bt+p$ with $0\le p<t$ and let $G=K_{s-1}\vee bK_t$ together with $p$ isolated vertices, which lies in the class by Corollary~\ref{cor:small}. By Theorem~\ref{thm:bip}, $\rho(\CC_k(G))=c(bt)^{i^{*}/k}+O\bigl((bt)^{(i^{*}-1)/k}\bigr)$, and $bt=h-p=n-s+1-p$ differs from $n$ by a constant, so $c(bt)^{i^{*}/k}=cn^{i^{*}/k}+O\bigl(n^{i^{*}/k-1}\bigr)$. Finally $\frac{i^{*}}{k}-1<\frac{i^{*}-1}{k}$, so the second error term is absorbed by the first.
	\end{proof}
	
	\begin{rem}\label{rem:trich}
		The three regimes are governed by $j$ alone:
		\begin{itemize}
			\item $j\le1$: order $n^{(k-1)/k}$, the constant does not
			involve $t$, and
			the leading term does not depend on edges
				within the light part. The exponent agrees with that in
				Section~\ref{sec:Kr} and in the graph case $k=2$.
			
			\item $j=2$: order $n^{(s-1)/k}$, the constant involves $t$,
			and equality in
			Lemma~\ref{lem:count} gives only $(t-1)$-regularity.
			For $t\ge3$, equality in the next term
				identifies the components as copies of $K_t$; when $t=2$,
				regularity already gives this conclusion.
			
			\item $j\ge3$: order $n^{(s-1)/k}$, the constant involves $t$,
			and equality in
			Lemma~\ref{lem:count} forces the components of the light
			part to be $K_t$ at once.
		\end{itemize}
	\end{rem}
	
	\section{Stability, and all residues when $k\le s$}\label{sec:stab}
	
	Fix $k\ge3$. Until the final subsection, let $G$ be $K_{s,t}$-minor-free with $2\le s\le\min(k,t)$ and $t\ge j$, on $n$ vertices, and let $d=d(s,t)$ be such that every $K_{s,t}$-minor-free graph is $d$-degenerate, which exists by the theorem of Mader~\cite{M}. Throughout this section $x$ is a nonnegative vector on $V(G)$ with $\sum_vx_v^{k}=1$; it is the Perron vector of $\CC_k(G)$ only where this is said. Write $P_G(x)=\sum_{F\in E(\CC_k(G))}\prod_{v\in F}x_v$, and abbreviate this to $P$ when $G,x$ are fixed. An extremal graph means an $n$-vertex $K_{s,t}$-minor-free graph maximizing $\rho(\CC_k(G))$. Write $x_{(1)}\ge x_{(2)}\ge\cdots\ge x_{(n)}$ for the coordinates in nonincreasing order, fix an ordering of $V(G)$ realizing it, let $A^{(\ell)}$ be the set of the first $\ell$ vertices, and put
	\[
	\beta_\ell=\sum_{u\in A^{(\ell)}}x_u^{k},\qquad A=A^{(s-1)},\qquad \pi_A=\prod_{u\in A}x_u,\qquad K=\frac1t\binom tj,\qquad c=K\,j^{\,j/k}.
	\]
	The set $A$ of the $s-1$ largest coordinates plays the role of the hubs, and everything below is stated in terms of the sorted coordinates; no threshold is needed. The case $j=1$, that is $s=k$, is included: there Theorems~\ref{thm:exact} and~\ref{thm:stab} below, which are stated for $j\ge2$, are replaced by Theorem~\ref{thm:bip} and Theorem~\ref{thm:jle1stab}, which supply everything that the final argument needs, namely $\rho=(c+o(1))n^{(s-1)/k}$, the convergence of the hub coordinates to $k^{-1/k}$, and $\max_{v\notin A}x_v\to0$.

	The first lemma is the heart of the section. It says that the $k$-cliques containing at most $s-2$ of the $\ell$ heaviest vertices carry weight of order $x_{(\ell+1)}n^{(s-1)/k}$, one factor of the next coordinate below the leading order. It holds for every vector $x$, and its proof uses only the two structural facts available in a $K_{s,t}$-minor-free graph: $s$ vertices have at most $t-1$ common neighbors, and the graph is $d$-degenerate.
	
	\begin{lem}\label{lem:low}
		Let $s-1\le\ell<n$. The $k$-cliques $F$ of $G$ with $|F\cap A^{(\ell)}|\le s-2$ satisfy
		\[
		\sum_{F}\ \prod_{v\in F}x_v\ \le\ C_1\,x_{(\ell+1)}\,n^{\frac{s-1}{k}}+C_1\,\ell\,x_{(\ell+1)}^{\,j+1},\qquad C_1=\binom{d}{s-1}\binom{t-1}{j-1}.
		\]
	\end{lem}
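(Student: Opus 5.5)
The plan is to split the cliques in question according to their earliest vertex in a degeneracy ordering, bound the number of cliques with a given earliest vertex by $C_1$, and charge each clique either to a factor $x_{(\ell+1)}^{j+1}$ (when the earliest vertex is heavy) or to a Hölder sum of order $n^{(s-1)/k}$ (when it is light). Write $y=x_{(\ell+1)}$, so every vertex outside $A^{(\ell)}$ has coordinate at most $y$. If $|F\cap A^{(\ell)}|\le s-2$, then $F$ has at least $k-(s-2)=j+1$ vertices outside $A^{(\ell)}$, and each of them contributes a factor at most $y$; the coordinates on $F\cap A^{(\ell)}$ are at most $1$.

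\emph{Counting step.} Fix a degeneracy ordering of $G$, in which every vertex has at most $d$ later neighbors, and list $F=\{f_1,\dots,f_k\}$ in this order. Then $\{f_2,\dots,f_s\}\subseteq N^{+}(f_1)$, which gives at most $\binom{d}{s-1}$ choices once $f_1$ is fixed. The remaining $j-1$ vertices $f_{s+1},\dots,f_k$ are common neighbors of the $s$ vertices $f_1,\dots,f_s$. By Lemma~\ref{lem:common} there are at most $t-1$ such common neighbors, hence at most $\binom{t-1}{j-1}$ choices. So each vertex $w$ is the earliest vertex of at most $C_1=\binom{d}{s-1}\binom{t-1}{j-1}$ cliques of $G$. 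This is exactly the constant in the statement, which suggests that this charging scheme is the intended one.

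\emph{Case $f_1\in A^{(\ell)}$.} There are at most $\ell$ choices of $f_1$ and at most $C_1$ cliques for each. Each such clique has at least $j+1$ vertices outside $A^{(\ell)}$, so its product is at most $y^{j+1}$. This gives the second term $C_1\,\ell\,y^{j+1}$.

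\emph{Case $f_1=w\notin A^{(\ell)}$.} This must produce the term $C_1\,y\,n^{(s-1)/k}$. The power available from Hölder is $j$, through
\[
\sum_vx_v^{j}\le n^{(k-j)/k}=n^{(s-1)/k},
\]
which is Lemma~\ref{lem:sigma} with $m_v=1$ and exponent $j$. So the goal is to bound the product of each such clique by $y\,x_w^{\,j}$, up to the bounded multiplicity $C_1$, and then sum over $w$.

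This reduction is the step I expect to be the main obstacle. The direct estimate uses only that $F$ has at least $j+1$ light vertices with $w$ among them, and gives a product at most $x_w\,y^{j}$. That is weaker than $y\,x_w^{\,j}$ when $x_w$ is much smaller than $y$.

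To repair this, I would first try changing which vertex receives the charge. Among the light vertices of $F$, take $w'$ to be the one with the \emph{smallest} coordinate. The product of the light coordinates is then at most $y^{|L|-j}\,x_{w'}^{\,j}$, so the product of $F$ is at most $y\,x_{w'}^{\,j}$. The multiplicity must then be controlled for the charged vertex $w'$ rather than for $f_1$. For this I would redo the counting step relative to $w'$: use the degeneracy ordering of $G$ restricted to cliques through $w'$, together with Lemma~\ref{lem:common} for the $s$-sets containing $w'$, to show that at most $C_1$ of the relevant cliques are charged to each $w'$.

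If that recount does not close, I would fall back on applying the arithmetic-geometric mean inequality to the $j$ light vertices of $F$ nearest to $w$ in the ordering. These all lie in $N^{+}(w)$, so their contributions can be charged back to $w$ with multiplicity $O(C_1)$.

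Summing $C_1\,y\,x_{w'}^{\,j}$ over the charged vertices then gives at most $C_1\,y\,n^{(s-1)/k}$, and adding the two cases yields the bound of Lemma~\ref{lem:low}.
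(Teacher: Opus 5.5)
Your counting step and the case $f_1\in A^{(\ell)}$ are correct. The case $f_1=w\notin A^{(\ell)}$ carries the whole content of the lemma, and it is not established; neither of your repairs closes it.

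Charging to the \emph{lightest} light vertex $w'$ gets the inequality backwards. Every other vertex of $F$ is at least as heavy as $w'$, so you only obtain $\prod_{v\in F}x_v\le x_{w'}\,y^{|L|-1}$, not $y^{|L|-j}x_{w'}^{\,j}$. For $j\ge2$, take one light coordinate equal to $\epsilon$ and the others equal to $y$ to see that the claimed bound fails. To extract $x_{w'}^{\,j}$, the charged vertex must be at least as heavy as $j-1$ other vertices of $F$.

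More seriously, a vertex selected by weight has no relation to the degeneracy ordering, so nothing bounds the number of cliques charged to it. The lemma holds for arbitrary $x$: a dominating vertex of $K_{s-1}\vee bK_t$ given a tiny coordinate is the lightest vertex of linearly many $k$-cliques.

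The fallback fails for the same reason in the other direction. The vertices of $N^{+}(w)$ may be much heavier than $w$, so the terms $x_u^{\,j}$ produced by the arithmetic--geometric mean inequality cannot be bounded by $x_w^{\,j}$. And a fixed $u$ lies in $N^{+}(w)$ for unboundedly many $w$, so these terms cannot be summed at $u$ with bounded multiplicity either.

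The missing idea is to separate the two roles: choose the $s$-set by weight, and the charged vertex by the degeneracy ordering inside that set.
\begin{itemize}
\item Let $Y=\{v_1,\dots,v_s\}$ be the $s$ heaviest vertices of $F$, with $x_{v_1}\ge\dots\ge x_{v_s}$ and vertices of $A^{(\ell)}$ listed first.
\item The other $j-1$ vertices of $F$ are common neighbors of $Y$. So by Lemma~\ref{lem:common} each $Y$ lies in at most $\binom{t-1}{j-1}$ cliques, and $\prod_{v\in F}x_v\le x_{v_1}\cdots x_{v_{s-1}}x_{v_s}^{\,j}$.
\item Charge $F$ to the earliest vertex $w=v_m$ of $Y$ in the degeneracy ordering. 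Then $Y\setminus\{w\}\subseteq N^{+}(w)$, so each $w$ receives at most $C_1$ cliques.
\item Since $w\in Y$, it is at least as heavy as $v_s$ and as the remaining $j-1$ vertices, which supplies the factor $x_w^{\,j}$.
\item The hypothesis $|F\cap A^{(\ell)}|\le s-2$ forces $v_{s-1},v_s\notin A^{(\ell)}$. If $w\notin A^{(\ell)}$, this gives one more factor at most $x_{(\ell+1)}$: a further copy of $x_w$ when $m\le s-1$, and $x_{v_{s-1}}$ when $m=s$. The product is then at most $x_{(\ell+1)}x_w^{\,j}$.
\item If $w\in A^{(\ell)}$, then directly $x_{v_{s-1}}x_{v_s}^{\,j}\le x_{(\ell+1)}^{\,j+1}$, and there are at most $\ell$ such $w$.
\end{itemize}
After this, your H\"older step finishes the proof as you intended.
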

	
	\begin{proof}
		Let $F$ be such a clique, and let $Y$ consist of its $s$ heaviest vertices, listed as $v_1,\dots,v_s$ with $x_{v_1}\ge\cdots\ge x_{v_s}$, where vertices of $A^{(\ell)}$ are listed first; this is consistent with the ordering, since every vertex of $A^{(\ell)}$ is at least as heavy as every vertex outside it. Since $|F\cap A^{(\ell)}|\le s-2$, the set $Y$ contains $F\cap A^{(\ell)}$ and at least two further vertices, so $v_{s-1},v_s\notin A^{(\ell)}$, and in particular $x_{v_{s-1}}\le x_{(\ell+1)}$. The remaining $k-s=j-1$ vertices of $F$ are common neighbors of $Y$ and have coordinates at most $x_{v_s}$; hence
		\[
		\prod_{v\in F}x_v\ \le\ x_{v_1}\cdots x_{v_{s-1}}\,x_{v_s}^{\,j},
		\]
		and by Lemma~\ref{lem:common} at most $\binom{t-1}{j-1}$ cliques $F$ have the same $Y$. Fix a degeneracy ordering of $G$ and charge $Y$ to its earliest vertex $w$ in it; then $Y\setminus\{w\}\subseteq N^{+}(w)$, so at most $\binom{d}{s-1}$ sets $Y$ are charged to a given $w$.
		
		If $w\in A^{(\ell)}$, then $v_{s-1}$ and $v_s$ lie outside $A^{(\ell)}$, so $x_{v_{s-1}}\le x_{(\ell+1)}$ and $x_{v_s}\le x_{(\ell+1)}$, and the product $x_{v_1}\cdots x_{v_{s-1}}x_{v_s}^{\,j}$ is at most $x_{(\ell+1)}^{\,j+1}$; there are at most $\ell$ such $w$, which accounts for the term $C_1\ell x_{(\ell+1)}^{\,j+1}$. If $w\notin A^{(\ell)}$, let $w=v_m$. Every $v_i$ with $i>m$ satisfies $x_{v_i}\le x_w$, and every $v_i$ with $i<m$ satisfies $x_{v_i}\le1$. If $m\le s-1$ this gives $x_{v_1}\cdots x_{v_{s-1}}x_{v_s}^{\,j}\le x_w^{\,s-m+j}=x_w^{\,k+1-m}\le x_{(\ell+1)}\,x_w^{\,j}$, using $x_w\le x_{(\ell+1)}$ and $k-m\ge j$. If $m=s$, then $x_{v_{s-1}}\le x_{(\ell+1)}$ gives the same bound $x_{(\ell+1)}x_w^{\,j}$. Summing over $w$ and using H\"older's inequality with $\sum_wx_w^{k}=1$,
		\[
		\sum_{w\notin A^{(\ell)}}x_{(\ell+1)}\,x_w^{\,j}\ \le\ x_{(\ell+1)}\,n^{1-\frac jk}=x_{(\ell+1)}\,n^{\frac{s-1}{k}},
		\]
		and the lemma follows.
	\end{proof}

	\begin{prop}\label{prop:master}
		For every $s-1\le\ell<n$ there is a constant $C_2(\ell)$,
		depending only on $k,s,t,\ell$, such that
		\begin{equation}\label{eq:master}
			\sum_{F\in E(\CC_k(G))}\prod_{v\in F}x_v\ \le\ K\,\pi_A\,(1-\beta_\ell)^{\frac jk}\,n^{\frac{s-1}{k}}+C_1\,x_{(\ell+1)}\,n^{\frac{s-1}{k}}+C_2(\ell).
		\end{equation}
		For $\ell=s-1$ the first term can be replaced by $K\pi_A\,W_A^{j/k}\,n_A^{(s-1)/k}$, where $W_A$ and $n_A$ are the mass and the size of $N(A)$.
	\end{prop}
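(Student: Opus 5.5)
Split the $k$-cliques $F$ of $G$ by $|F\cap A^{(\ell)}|$. Those with $|F\cap A^{(\ell)}|\le s-2$ are handled by Lemma~\ref{lem:low}: they contribute at most $C_1x_{(\ell+1)}n^{(s-1)/k}+C_1\ell x_{(\ell+1)}^{j+1}$, and since $x_{(\ell+1)}\le1$ the second term is a constant depending on $\ell$ and goes into $C_2(\ell)$. Cliques entirely inside $A^{(\ell)}$ number at most $\binom{\ell}{k}$ and have weight at most one each, so they also go into $C_2(\ell)$. This leaves the cliques with $|F\cap A^{(\ell)}|\ge s-1$ and at least one vertex outside $A^{(\ell)}$.

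For such an $F$, write $B=F\cap A^{(\ell)}$ and $Q=F\setminus A^{(\ell)}$, so $|B|\ge s-1$ and $|Q|\le j$. First suppose $|B|\ge s$. Then every vertex of $Q$ is a common neighbor of some $s$-subset of $A^{(\ell)}$. By Lemma~\ref{lem:common}, each of the at most $\binom{\ell}{s}$ such subsets has at most $t-1$ common neighbors, so $Q$ ranges over a set of size $O_\ell(1)$. The total weight of these cliques is therefore $O_\ell(1)$ and goes into $C_2(\ell)$. Next suppose $|B|=s-1$ and $B\neq A$. Since $x_u\le x_{(s-1)}\le\dots$ are sorted, for any $(s-1)$-subset $B$ of $A^{(\ell)}$ we have $\prod_{u\in B}x_u\le\pi_A$. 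This case is handled together with the main case in the next step.

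The main case is $|B|=s-1$ and $|Q|=j$ exactly, since the remaining $k-(s-1)=j$ vertices lie outside $A^{(\ell)}$. Then $Q$ is a $j$-clique in $G[N(B)\setminus A^{(\ell)}]$, and this graph has maximum degree at most $t-1$ by Corollary~\ref{cor:deg}. For fixed $B$, Lemma~\ref{lem:sigma} gives
\[
\sum_Q\prod_{v\in Q}x_v\le K\,W_B^{j/k}|N(B)|^{(k-j)/k},
\]
where $W_B$ is the mass of $N(B)\setminus A^{(\ell)}$. For $B=A$ this yields $K\pi_A(1-\beta_\ell)^{j/k}n^{(s-1)/k}$, because $W_A\le 1-\beta_\ell$ and $k-j=s-1$. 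It also yields the refinement for $\ell=s-1$ directly, with $W_A$ and $n_A$ in place of $1-\beta_\ell$ and $n$.

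For $B\ne A$ we must avoid an extra additive term of order $n^{(s-1)/k}$. This is the main obstacle. I would split $N(B)$ according to membership in $N(A)$. By Lemma~\ref{lem:common}, $N(B)\cap N(A)$ has at most $t-1$ vertices for each of the $\binom{\ell}{s-1}$ choices of $B$, so cliques using those vertices are $O_\ell(1)$ or can be reassigned to $A$. For the vertices of $N(B)\setminus N(A)$, I would apply a single Hölder/Lemma~\ref{lem:sigma} bound over the disjoint union. The sets $N(B)\setminus A^{(\ell)}$ for distinct $B$ pairwise overlap in at most $t-1$ vertices. Hence, up to $O_\ell(1)$ vertices, the quantity $\sum_B\pi_B\sum_Q$ is at most
\[
\pi_A\,K\Bigl(\sum_B W_B\Bigr)^{j/k}\Bigl(\sum_B|N(B)|\Bigr)^{(s-1)/k}\le K\pi_A(1-\beta_\ell)^{j/k}n^{(s-1)/k}.
\]
Here I use Hölder across the blocks together with $\sum_B W_B\le1-\beta_\ell+O_\ell(\max x)$ and $\sum_B|N(B)|\le n+O_\ell(1)$. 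The overlap corrections are of the form $O_\ell(1)\cdot x_{(\ell+1)}^{j}$ times hub factors, which are absorbed into $C_2(\ell)$. This gives \eqref{eq:master}.
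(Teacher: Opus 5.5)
Your proposal is correct and follows the paper's proof: the same split by $|F\cap A^{(\ell)}|$, Lemma~\ref{lem:low} for at most $s-2$, a bounded count via Lemma~\ref{lem:common} for at least $s$, and for $|F\cap A^{(\ell)}|=s-1$ the deletion of a bounded exceptional set so that the sets $N(B)\setminus A^{(\ell)}$ become disjoint, then Lemma~\ref{lem:sigma} on each block and H\"older across blocks using $\prod_{u\in B}x_u\le\pi_A$ (the paper's exceptional set is $Z=\{v\notin A^{(\ell)}:|N(v)\cap A^{(\ell)}|\ge s\}$, which contains your overlap vertices). One caution: after the deletion, use the exact bounds $\sum_B W'_B\le 1-\beta_\ell$ and $\sum_B n'_B\le n$, and count the cliques through deleted vertices as a separate $O_\ell(1)$ term; an additive error such as your $O_\ell(\max x)$ inside $(\cdot)^{j/k}$ would leave an extra term of order $n^{(s-1)/k}$ times a power of $x_{(\ell+1)}$ with an $\ell$-dependent constant, which the stated bound with the fixed $C_1$ does not allow.
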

	
	\begin{proof}
		Classify the $k$-cliques by $i=|F\cap A^{(\ell)}|$. Those with $i\le s-2$ are covered by Lemma~\ref{lem:low}. Those with $i\ge s$ contain $s$ vertices of $A^{(\ell)}$, whose common neighborhood has at most $t-1$ vertices by Lemma~\ref{lem:common}; there are at most $\binom{\ell}{s}\binom{t-1}{j-1}$ of them and each has weight at most $1$. A clique with $i=s-1$ consists of a set $B\in\binom{A^{(\ell)}}{s-1}$ and a $j$-clique of $G[N(B)\setminus A^{(\ell)}]$. Let $Z=\{v\notin A^{(\ell)}:|N(v)\cap A^{(\ell)}|\ge s\}$. Every $v\in Z$ is a common neighbor of some $s$-subset of $A^{(\ell)}$, so $|Z|\le(t-1)\binom{\ell}{s}$ by Lemma~\ref{lem:common}; and every $v\notin A^{(\ell)}\cup Z$ lies in $N(B)$ for at most one $B$, since $|N(v)\cap A^{(\ell)}|\le s-1$. Hence the sets $N(B)\setminus(A^{(\ell)}\cup Z)$ are pairwise disjoint, with no further argument. A clique $B\cup Q$ with $Q$ meeting $Z$ is determined by $B$, a vertex $z\in Q\cap Z$ and the remaining $j-1$ vertices of $Q$, which are common neighbors of the $s$ vertices of $B\cup\{z\}$ and hence at most $t-1$ in number; its weight is at most $\pi_Ax_{(\ell+1)}^{\,j}$, because $Q$ avoids $A^{(\ell)}$. So the cliques whose light part meets $Z$ have total weight at most $(t-1)\binom{\ell}{s}\binom{\ell}{s-1}\binom{t-1}{j-1}\pi_A\,x_{(\ell+1)}^{\,j}$. For the rest, write $W'_B$ and $n'_B$ for the mass and size of $N(B)\setminus(A^{(\ell)}\cup Z)$; these sets are pairwise disjoint, so $\sum_BW'_B\le1-\beta_\ell$ and $\sum_Bn'_B\le n$. Lemma~\ref{lem:sigma} bounds the weight of the $j$-cliques of $G[N(B)\setminus(A^{(\ell)}\cup Z)]$ by $K\,W_B'^{\,j/k}n_B'^{\,(s-1)/k}$, and $\prod_{u\in B}x_u\le\pi_A$ because $A$ consists of the $s-1$ largest coordinates. Since $\frac jk+\frac{s-1}{k}=1$, H\"older's inequality gives
		\[
		\sum_{B}\prod_{u\in B}x_u\cdot K\,W_B'^{\,\frac jk}n_B'^{\,\frac{s-1}{k}}\ \le\ K\pi_A\Bigl(\sum_BW'_B\Bigr)^{\frac jk}\Bigl(\sum_Bn'_B\Bigr)^{\frac{s-1}{k}}\ \le\ K\pi_A(1-\beta_\ell)^{\frac jk}n^{\frac{s-1}{k}} .
		\]
		Collecting the terms gives~\eqref{eq:master}. For $\ell=s-1$ there is a single $B=A$, no set $Z$, and the same computation gives the refined first term.
	\end{proof}
	
	Two consequences are immediate. First, taking $\ell=s-1$ in~\eqref{eq:master} and using the arithmetic-geometric mean inequality $\pi_A\le(\beta_{s-1}/(s-1))^{(s-1)/k}$,
	\begin{equation}\label{eq:masterPhi}
		\sum_{F\in E(\CC_k(G))}\prod_{v\in F}x_v\ \le\ \Phi(\beta_{s-1})\,n^{\frac{s-1}{k}}+C_1\,x_{(s)}\,n^{\frac{s-1}{k}}+C_2(s-1),\qquad
		\Phi(\beta)=K\Bigl(\frac{\beta}{s-1}\Bigr)^{\frac{s-1}{k}}(1-\beta)^{\frac jk},
	\end{equation}
	where $\max_{[0,1]}\Phi=\Phi(\beta_0)=c/k$ with $\beta_0=\frac{s-1}{k}$, attained only there because $\log\Phi$ is strictly concave, and $k\cdot k^{-(s-1)/k-j/k}=1$. Applied to the Perron vector this is an explicit upper bound on the spectral radius,
	\begin{equation}\label{eq:explicit}
		\rho(\CC_k(G))\ \le\ \bigl(c+kC_1\,x_{(s)}\bigr)\,n^{\frac{s-1}{k}}+kC_2(s-1),
	\end{equation}
	valid for every $K_{s,t}$-minor-free $G$, where $x_{(s)}$ is the $s$-th largest Perron coordinate. Second, the terms of~\eqref{eq:master} coming from the hub mass improve as $\ell$ grows: since $\pi_A\le(\beta_{s-1}/(s-1))^{(s-1)/k}$ and $1-\beta_\ell\le1-\beta_{s-1}$,
	\begin{equation}\label{eq:masterell}
		K\pi_A(1-\beta_\ell)^{\frac jk}\ \le\ \Phi(\beta_{s-1})\Bigl(\frac{1-\beta_\ell}{1-\beta_{s-1}}\Bigr)^{\frac jk}\ \le\ \frac ck\Bigl(1-\frac jk\bigl(\beta_\ell-\beta_{s-1}\bigr)\Bigr),
	\end{equation}
	by $(1-z)^{j/k}\le1-\frac jkz$ and $1-\beta_{s-1}\le1$. If $\beta_{s-1}=1$, the left side is zero and the bound holds directly, without using the quotient.
	
	The asymptotic and stability arguments below use Proposition~\ref{prop:master} only with $\ell$ fixed independently of $n$.
	
	\begin{thm}\label{thm:exact}
		Let $2\le s\le\min(k-1,t)$ and $t\ge j$. Then
		\[
		\max_{\HH\in\HH^{(k)}_{K_{s,t}}(n)}\rho(\HH)=\Bigl(\frac1t\binom tj\,j^{\,j/k}+o(1)\Bigr)n^{\frac{s-1}{k}} .
		\]
	\end{thm}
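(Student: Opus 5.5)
The lower bound is Corollary~\ref{cor:lb}. With $i^{*}=s-1$ in the present range $s\le k-1$, it gives $\max\rho\ge c\,n^{(s-1)/k}+O(n^{(s-2)/k})$ with $c=K j^{j/k}$. So only the matching upper bound $\rho(\CC_k(G))\le(c+o(1))n^{(s-1)/k}$ is needed, uniformly over $n$-vertex $K_{s,t}$-minor-free graphs $G$. By Lemma~\ref{lem:reduce} it suffices to treat $\HH=\CC_k(G)$. We take $x$ to be a normalized Perron vector, so $\rho=kP_G(x)$.

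The tool is Proposition~\ref{prop:master}, applied with a fixed $\ell$ and combined with \eqref{eq:masterell}:
\[
\frac{\rho}{k\,n^{(s-1)/k}}\le\frac ck\Bigl(1-\frac jk(\beta_\ell-\beta_{s-1})\Bigr)+C_1x_{(\ell+1)}+\frac{C_2(\ell)}{n^{(s-1)/k}} .
\]
The only obstacle is the middle term $C_1x_{(\ell+1)}$, since $x_{(\ell+1)}$ need not be small a priori. Bounding the bracket by $1$ is therefore not enough, and we must trade the loss $\frac jk(\beta_\ell-\beta_{s-1})$ against $x_{(\ell+1)}$.

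Fix $\delta>0$ and choose $\ell$ large, depending on $\delta$ and $k,s,t$. Since the sorted coordinates are nonincreasing, $\beta_\ell-\beta_{s-1}\ge(\ell-s+1)x_{(\ell+1)}^k$, and also $x_{(\ell+1)}^k\le1/(\ell+1)$. Consider two cases.
\begin{itemize}
\item If $x_{(\ell+1)}\le\delta$, the error is at most $C_1\delta$.
\item If $x_{(\ell+1)}>\delta$, then $\beta_\ell-\beta_{s-1}\ge(\ell-s+1)\delta^k$, which exceeds $k/j$ once $\ell>s-1+k/(j\delta^k)$. This contradicts $\beta_\ell\le1$, so the case is impossible.
\end{itemize}
Thus for $\ell=\ell(\delta)$ we get $\rho\le(c+kC_1\delta)n^{(s-1)/k}+kC_2(\ell(\delta))$. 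Letting $n\to\infty$ and then $\delta\to0$ gives the upper bound.

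This pigeonhole on a fixed number of heavy coordinates is the step needing care. A cleaner version simply notes $x_{(\ell+1)}\le(\ell+1)^{-1/k}$, so choosing $\ell$ with $C_1(\ell+1)^{-1/k}\le\delta$ already suffices and makes the case split unnecessary. I would present that version: bound the first term of \eqref{eq:master} by $\Phi(\beta_{s-1})\le c/k$ via \eqref{eq:masterPhi}-type AM--GM, use $x_{(\ell+1)}\le(\ell+1)^{-1/k}$ for the second term, and note that $C_2(\ell)$ is independent of $n$.
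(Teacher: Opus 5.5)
Your proof is correct, and your final ``cleaner'' version takes a shorter route than the paper's.

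Both arguments rest on Proposition~\ref{prop:master} with a fixed $\ell$. The difference is in how the term $C_1x_{(\ell+1)}n^{(s-1)/k}$ is handled.

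\textbf{The paper's route.} It works with an extremal $G$. It uses the lower bound $y\ge c/k-o(1)$ together with the gain $\frac jk(\beta_\ell-\beta_{s-1})$ in \eqref{eq:masterell} to obtain $\beta_\ell-\beta_{s-1}\le\frac{k^2C_1}{cj}x_{(\ell+1)}+o_\ell(1)$. From this it deduces $x_{(s)}\to0$, and only then closes with \eqref{eq:masterPhi} at $\ell=s-1$.

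\textbf{Your route.} You discard that gain and bound the two terms directly: $K\pi_A(1-\beta_\ell)^{j/k}\le\Phi(\beta_{s-1})\le c/k$, and $x_{(\ell+1)}\le(\ell+1)^{-1/k}$. This gives
\[
\rho(\CC_k(G))\le\bigl(c+kC_1(\ell+1)^{-1/k}\bigr)n^{(s-1)/k}+kC_2(\ell)
\]
for every $K_{s,t}$-minor-free $G$ and every fixed $\ell$. It needs no appeal to extremality or to the lower bound, and it is an explicit inequality for all $n>\ell$.

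\textbf{What the paper's detour buys.} It yields the stability fact that $x_{(s)}\to0$ along any near-extremal sequence. Equivalently, the cliques containing at most $s-2$ hubs carry weight $o(n^{(s-1)/k})$. This is exactly the starting point of Theorem~\ref{thm:stab}, and it is reused in Proposition~\ref{prop:Rlarge} and Lemma~\ref{lem:Acliquelow}. With your proof in place of the paper's, that fact would have to be derived separately, though the paper's computation does it in two lines once the upper bound is known.

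\textbf{On your first version.} The case-split argument is valid but redundant. Also, the contradiction in the second case only needs $(\ell-s+1)\delta^k>1$, not $(\ell-s+1)\delta^k>k/j$.
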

	
	\begin{proof}
		The lower bound is Theorem~\ref{thm:bip} applied to $\CC_k(K_{s-1}\vee bK_t)$, together with the remaining isolated vertices, which lies in the class by Corollary~\ref{cor:small}. For the upper bound, Lemma~\ref{lem:reduce} allows us to choose a maximizing $K_{s,t}$-minor-free graph $G$ and put $\HH=\CC_k(G)$; let $x$ be its Perron vector and $y=\rho(\HH)/(kn^{(s-1)/k})$. By the lower bound, $y\ge c/k-o(1)$. Fix an integer $\ell\ge s$ independently of $n$. Combining~\eqref{eq:master} with~\eqref{eq:masterell},
		\[
		\frac ck-o(1)\ \le\ y\ \le\ \frac ck\Bigl(1-\frac jk\bigl(\beta_\ell-\beta_{s-1}\bigr)\Bigr)+C_1x_{(\ell+1)}+o_\ell(1),
		\]
		so that $\beta_\ell-\beta_{s-1}\le\frac{k^{2}C_1}{cj}\,x_{(\ell+1)}+o_\ell(1)$. Since $x_{(\ell+1)}^{k}\le\beta_{\ell+1}/(\ell+1)\le1/(\ell+1)$ and $x_{(s)}^{k}\le\beta_\ell-\beta_{s-1}$,
		\[
		x_{(s)}^{k}\ \le\ \frac{k^{2}C_1}{cj}\,(\ell+1)^{-1/k}+o_\ell(1)
		\]
		for every fixed $\ell\ge s$, and letting $n\to\infty$ and then $\ell\to\infty$ gives $x_{(s)}\to0$. Now~\eqref{eq:masterPhi} gives $y\le\Phi(\beta_{s-1})+C_1x_{(s)}+o(1)\le c/k+o(1)$.
	\end{proof}
	
	The proof shows slightly more, which is the starting point of the stability argument: for every $K_{s,t}$-minor-free sequence with $\rho(\CC_k(G_n))\ge(c-o(1))n^{(s-1)/k}$, the $s$-th largest Perron coordinate tends to zero, and the cliques containing at most $s-2$ of the $s-1$ heaviest vertices have weight $o(n^{(s-1)/k})$.
	
	\begin{thm}\label{thm:stab}
		Let $2\le s\le\min(k-1,t)$ and $t\ge j$, and let $G_n$ be $K_{s,t}$-minor-free with $\rho(\CC_k(G_n))\ge(c-o(1))n^{(s-1)/k}$. Let $A_n$ be the set of the $s-1$ largest coordinates of the Perron vector $x=x^{(n)}$. Then
		\[
		\sum_{u\in A_n}x_u^{k}\longrightarrow\frac{s-1}{k},\qquad x_u\longrightarrow k^{-1/k}\ (u\in A_n),\qquad \max_{v\notin A_n}x_v\longrightarrow0,
		\]
		all but $o(n)$ vertices lie in $N(A_n)$, and all but $o(n)$ vertices of $N(A_n)$ have degree exactly $t-1$ in $G[N(A_n)]$. If $j\ge3$, all but $o(n)$ vertices of $N(A_n)$ lie in components of $G[N(A_n)]$ isomorphic to $K_t$.
	\end{thm}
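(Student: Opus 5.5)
Everything is read off from the chain of inequalities in the proof of Theorem~\ref{thm:exact}, applied to the Perron vector $x$ of $\CC_k(G_n)$ and combined with the matching lower bound. That proof already gives $x_{(s)}\to0$, and hence $\max_{v\notin A_n}x_v\to0$. With this, \eqref{eq:masterPhi} reads $c/k-o(1)\le y\le\Phi(\beta_{s-1})+o(1)$. Since $\Phi$ is continuous on $[0,1]$ with unique maximizer $\beta_0=(s-1)/k$, compactness gives $\beta_{s-1}\to(s-1)/k$. The same chain is sharp in the arithmetic--geometric mean step $\pi_A\le(\beta_{s-1}/(s-1))^{(s-1)/k}$, so $\pi_A\to k^{-(s-1)/k}$. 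Together with $\sum_{u\in A}x_u^k\to(s-1)/k$, the equality case of AM--GM (again via compactness) forces every $x_u$ with $u\in A_n$ to tend to $k^{-1/k}$.

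For the structural assertions I would use the refined form of Proposition~\ref{prop:master} with $\ell=s-1$. This bounds the total weight by $K\pi_A W_A^{j/k}n_A^{(s-1)/k}+o(n^{(s-1)/k})$, where $W_A\le1-\beta_{s-1}\to j/k$. Asymptotic sharpness then forces $n_A/n\to1$, so all but $o(n)$ vertices lie in $N(A_n)$. For the degree statement I would reopen Lemma~\ref{lem:sigma} on $F=G[N(A_n)]$, using $m_v$ there in place of $\binom{t-1}{j-1}$. This gives
\[
\text{weight}\le K\pi_A\,W_A^{j/k}\Bigl(\binom{t-1}{j-1}^{-k/(k-j)}\sum_{v\in N(A)}m_v^{k/(k-j)}\Bigr)^{(k-j)/k}.
\]
Every $v$ has $m_v\le\binom{t-1}{j-1}$ by Corollary~\ref{cor:deg}, and $m_v\le\binom{t-2}{j-1}$ whenever $\deg_{N(A)}v\le t-2$. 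So if $\delta n$ vertices had degree below $t-1$, the bracket would be at most $n(1-\delta\gamma)$ for a fixed $\gamma>0$, and the upper bound would drop by a constant factor, contradicting $y\to c/k$. Hence only $o(n)$ vertices of $N(A_n)$ have degree different from $t-1$. One subtlety is that if $j-1>t-2$, i.e.\ $j=t$, then $m_v=0$ for low-degree $v$ and the argument is even easier.

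For $j\ge3$ I would run the same argument with the finer count: $m_v=\binom{t-1}{j-1}$ only when $N(v)$ is a clique of size $t-1$, and otherwise $m_v\le\binom{t-1}{j-1}-1$. So all but $o(n)$ vertices $v$ have $N[v]$ inducing $K_t$. Call $v$ \emph{good} if this holds and every neighbor of $v$ is also of this type. Each bad vertex has at most $t-1$ neighbors (by Corollary~\ref{cor:deg}), so the number of vertices that fail to be good is still $o(n)$. For good $v$, each $w\in N[v]$ has $N[w]$ equal to a $t$-clique containing $N[v]$, hence $N[w]=N[v]$, and the component of $v$ is exactly $N[v]\cong K_t$.

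The main obstacle I expect is making the ``asymptotic sharpness'' step rigorous. The Hölder step must be done with explicit slack rather than by invoking equality cases, and the $o(n^{(s-1)/k})$ error terms from cliques meeting $A$ in fewer than $s-1$ vertices (Lemma~\ref{lem:low}, now $o$ because $x_{(s)}\to0$) must be kept uniformly controlled. A further point is that the vertices outside $N(A_n)$ contribute only through these error terms; the bound $K\pi_AW_A^{j/k}n_A^{(s-1)/k}$ must be compared with $c/k\cdot n^{(s-1)/k}$ after confirming that $W_A\to j/k$ as well, which follows from $\beta_{s-1}\to(s-1)/k$ and sharpness.
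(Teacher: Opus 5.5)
Your proposal is correct and takes essentially the paper's route. The tight chain from the refined form of Proposition~\ref{prop:master} with $\ell=s-1$, using $x_{(s)}\to0$ from the proof of Theorem~\ref{thm:exact}, gives $\beta_{s-1}\to\frac{s-1}{k}$, the hub limits via the AM--GM equality case, and $n_A/n\to1$. The H\"older step of Lemma~\ref{lem:sigma} with the integer counts $m_v\le\binom{t-1}{j-1}$ from Corollary~\ref{cor:deg} then leaves only $o(n)$ deficient vertices, and discarding these together with their at most $t-1$ neighbors gives the $K_t$ components when $j\ge3$. The only cosmetic difference is that the paper treats the degree statement and the $K_t$ statement in one pass, using $m_v\le\binom{t-1}{j-1}-1$ whenever $m_v$ is below the maximum, whereas you first use $\binom{t-2}{j-1}$ for low-degree vertices and then repeat with the finer count.
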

	
	\begin{proof}
		Write $A=A_n$, $\beta=\beta_{s-1}$, $y=\rho(\CC_k(G_n))/(kn^{(s-1)/k})$, and let $W_A$, $n_A$ be the mass and size of $N(A)$. By the proof of Theorem~\ref{thm:exact}, $x_{(s)}=\max_{v\notin A}x_v\to0$ and $y\to c/k$.
		
		\emph{Step 1: the hubs.} By Proposition~\ref{prop:master} with $\ell=s-1$, in its refined form,
		\begin{equation}\label{eq:stab1}
			\frac ck-o(1)\ \le\ y\ \le\ K\pi_AW_A^{\frac jk}\Bigl(\frac{n_A}{n}\Bigr)^{\frac{s-1}{k}}+o(1)\ \le\ K\pi_A(1-\beta)^{\frac jk}+o(1)\ \le\ \Phi(\beta)+o(1)\ \le\ \frac ck+o(1).
		\end{equation}
		Every inequality is therefore tight up to $o(1)$. Since $\Phi$ is continuous with the unique maximizer $\beta_0$, we get $\beta\to\beta_0$. The fourth inequality is the arithmetic-geometric mean inequality for the $s-1$ numbers $x_u^{k}$, $u\in A$, whose sum tends to $\beta_0>0$; by its equality case and compactness, tightness forces $x_u^{k}-\beta/(s-1)\to0$ for every $u\in A$, hence $x_u\to k^{-1/k}$.
		
		\emph{Step 2: concentration on $N(A)$.} Since $K\pi_A(1-\beta)^{j/k}\le c/k$, the second inequality in~\eqref{eq:stab1} gives $(W_A/(1-\beta))^{j/k}(n_A/n)^{(s-1)/k}\ge1-o(1)$, hence $W_A\ge(1-o(1))(1-\beta)$ and $n_A\ge(1-o(1))n$: all but $o(n)$ vertices lie in $N(A)$, and the mass outside $A\cup N(A)$ is $o(1)$.
		
		\emph{Step 3: the structure of $G[N(A)]$.} Let $H=G[N(A)]$, and for $v\in V(H)$ let $m_v$ be the number of $j$-cliques of $H$ containing $v$; put $M=\binom{t-1}{j-1}$ and $\sigma=k/(s-1)$, so that $M/j=K$. By Step~2 and~\eqref{eq:stab1}, the $j$-cliques of $H$ have weight at least $(1-o(1))KW_A^{j/k}n_A^{(s-1)/k}$, while the two inequalities of Lemma~\ref{lem:sigma} bound it by $\frac1jW_A^{j/k}(\sum_vm_v^{\sigma})^{1/\sigma}\le\frac1jW_A^{j/k}Mn_A^{1/\sigma}$. Hence $\sum_vm_v^{\sigma}\ge(1-o(1))M^{\sigma}n_A$. Since every $m_v$ is an integer with $m_v\le M$, each vertex with $m_v<M$ contributes at most $(M-1)^{\sigma}$, so the number of such vertices is $o(n)$. If $m_v=M$ then $v$ has at least $t-1$ neighbors in $H$, while its degree in $G[N(A)]$ is at most $t-1$ by Corollary~\ref{cor:deg}; so $v$ has degree exactly $t-1$ in $G[N(A)]$, all its neighbors lie in $H$, and every $(j-1)$-subset of its neighborhood is a clique. Call the other vertices of $N(A)$ \emph{defective}; there are $o(n)$ of them, which proves the degree statement. If $j\ge3$, the neighborhood of a nondefective vertex $v$ is a clique of size $t-1$; if moreover no neighbor of $v$ is defective, each neighbor has degree $t-1$ and all its neighbors lie in $N[v]$, so $N[v]$ is a component of $G[N(A)]$ isomorphic to $K_t$. At most $t\cdot o(n)$ vertices have a defective vertex in their closed neighborhood, which proves the last statement.
	\end{proof}

	Let $A$ be a set of $s-1$ vertices, let $T=N(A)$ be its common neighborhood and $R=V(G)\setminus(A\cup T)$, and let
	\[
	\Xi_R=\sum_{\substack{F\in E(\CC_k(G))\\ F\cap R\ne\emptyset}}\ \prod_{v\in F}x_v
	\]
	be the weight of the $k$-cliques meeting $R$.
	
	We first show that $R=\emptyset$ for an extremal graph in the divisible case. Recall that $j=k-s+1$, and let $\eta=\max_{v\notin A}x_v=x_{(s)}$, $W_R=\sum_{r\in R}x_r^{k}$ and $\mu=|R|$. The next proposition describes the only way $R\ne\emptyset$ can happen: the average mass per vertex of $R$ is then at least a constant multiple of $n^{-1}$, and some vertex outside $A$ is much heavier than a typical light vertex and has a large neighborhood in $R$.
	
	\begin{prop}\label{prop:Rlarge}
		Let $2\le s\le\min(k,t)$ and $t\ge j$, let $t$ divide $h=n-s+1$, and let $G=G_n$ be an $n$-vertex $K_{s,t}$-minor-free graph maximizing $\rho(\CC_k(G))$, with Perron vector $x$. Let $A$ be the set of the $s-1$ largest coordinates, $T=N(A)$ and $R=V(G)\setminus(A\cup T)$. There is a constant $c_{*}=c_{*}(k,s,t)>0$ such that, for all sufficiently large $n$, if $R\ne\emptyset$ then
		\[
		W_R\ \ge\ c_{*}\,\frac{\mu}{n},\qquad
		\eta\ \ge\ c_{*}\,n^{-\frac{j-1}{jk}},\qquad
		\deg_R(v)\ \ge\ c_{*}\,n^{\frac{k-1}{jk}}\ \text{ for some vertex } v\notin A,
		\]
		where $\deg_R(v)$ is the number of neighbors of $v$ in $R$.
	\end{prop}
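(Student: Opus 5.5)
The plan is to compare $G$ with the candidate $G'=K_{s-1}\vee\frac htK_t$ on the same vertex set, built with core $A$, using the Perron vector $x$ of $\CC_k(G)$ as a test vector. I will first lower-bound the gain $P_{G'}(x')-P_G(x)$ forced by $R$, and then upper-bound the weight $\Xi_R$ of the cliques of $G$ that meet $R$. The three conclusions will come from balancing these two estimates.

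\emph{Gain from $R$.} Every $r\in R$ misses some hub $u\in A$. By Theorem~\ref{thm:stab}, or Theorem~\ref{thm:jle1stab} when $j=1$, the hub coordinates tend to $k^{-1/k}$ and $\eta\to0$. Apply Proposition~\ref{prop:termwise} to $G-R$ with $R$ moved into the light part. This says that replacing the light part by complete blocks, and spreading the mass evenly, does not decrease the form. The clique weight lost at vertices of $R$, because of the missing hub adjacencies, must then be recovered inside $G$ by cliques meeting $R$.

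Concretely, I will rebuild $x$ so that it has hub value $u$ and light value $(W/h)^{1/k}$. In $G'$ each former $R$-vertex then sits in about $K$ cliques per vertex, each using all $s-1$ hubs. This gives
\[
P_{G'}(x')-P_G(x)\ \ge\ \bigl(\text{gain over }T\bigr)+c\,\mu\, n^{-j/k}\cdot n^{(s-1)/k}/n-\Xi_R ,
\]
which I will normalize so that extremality forces $\Xi_R\gtrsim \mu\, n^{-1}\cdot n^{(s-1)/k}\cdot n^{-(s-1)/k}$. More carefully, I will compare the two sides after a first-order perturbation, in the spirit of the $X_R=0$ trick in Theorem~\ref{thm:Kr}. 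The goal is the inequality $\Xi_R\ge c'\mu/n$, up to the scale at which one light vertex contributes.

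\emph{Upper bound on $\Xi_R$.} A clique meeting $R$ uses at most $s-2$ hubs. Otherwise, if it used $s-1$ hubs, the $R$-vertex would be adjacent to all of $A$. Hence the clique has at least $j+1$ non-hub vertices. I will repeat the charging argument of Lemma~\ref{lem:low}: charge to an earliest vertex in a degeneracy order, so that at most $\binom{t-1}{j-1}\binom d{s-1}$ cliques share a charge. Each product is then at most $\eta\, x_r x_w^{j-1}$ or of a similar form.

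Summing, by AM--GM and Hölder, gives two bounds. One is $\Xi_R\lesssim \eta\,W_R^{1/k}\mu^{(k-1)/k}$-type terms. The other is $\Xi_R\lesssim \sum_{v\notin A}x_v\,\deg_R(v)\,\eta^{j-1}\cdots$, which bounds it through the maximum $R$-degree.

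Comparing $\Xi_R\ge c'\mu/n$ with the Hölder form $\Xi_R\le C\,W_R^{j/k}\mu^{(s-1)/k}\cdots$ will give $W_R\ge c_*\mu/n$. The bound with an explicit $\eta^{j}$ factor against $\mu$ light vertices of mass about $n^{-1/k}$ gives $\eta^{j}\gtrsim n^{-(j-1)/k}$, which is the stated $\eta\ge c_*n^{-(j-1)/(jk)}$. For the degree bound, pigeonhole over the $O(1)$-degenerate charging: the heavy vertex must dominate $\Xi_R$, and $\deg_R(v)\cdot x_v\cdot n^{-(j)/k}$ must reach the gain. Using $x_v\le\eta$ and $\eta\to0$, with the exponents tuned, this yields $\deg_R(v)\ge c_*n^{(k-1)/(jk)}$.

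\emph{Main obstacle.} I expect the hardest step to be the gain estimate. Proposition~\ref{prop:termwise} compares at a vector with the same $k$-norm but moves the mass, so I must show that the loss due to $R$ appears as an additive term of order $\mu/n$ times the Perron scale, rather than being absorbed into $o(1)$ errors. The first thing I would try is to use the Perron eigenequations at $r\in R$ directly: $\rho x_r^{k-1}$ equals the clique weight at $r$. Together with $\rho\ge c\,n^{(s-1)/k}$, this converts $\Xi_R$ into $\sum_r x_r^k$-level quantities and gives $W_R$ and $\mu$ on a common scale.
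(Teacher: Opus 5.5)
Your outline has the right ingredients (comparison with $K_{s-1}\vee\frac htK_t$, eigenequations on $R$, degeneracy charging). However, you leave the step you call the main obstacle unresolved, and the routes you propose for the three conclusions do not work as stated.

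For the gain, you need neither to rebuild $x$ on $G$ nor to split off a ``gain over $T$''. Write $\Xi(h',W)$ for the right side of~\eqref{eq:termwise} at the hub average $u$ of $x$. Apply Proposition~\ref{prop:termwise} to $H=G[T]$, which is legitimate since $\Delta(G[T])\le t-1$ by Corollary~\ref{cor:deg}. This graph has only $h-\mu$ vertices and mass $W_T=W_0-W_R$, so the cliques of $G$ inside $A\cup T$ have weight at most $\Xi(h-\mu,W_T)$. On the other side, $\Xi(h,W_0)$ is the value of the candidate at a unit vector, so extremality gives $P\ge\Xi(h,W_0)$. Since $\Xi$ is explicit, increasing and concave in each variable,
$\Xi_R\ge\Xi(h,W_0)-\Xi(h-\mu,W_T)\ge(1-o(1))\Xi_0W_R+(1-o(1))\frac{s-1}{k}\Xi_0\frac{\mu}{h}$,
where $\Xi_0\asymp n^{(s-1)/k}$ is the term $r=j$. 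Nothing is lost in error terms. Note that the $\mu$-part is of order $\mu n^{-j/k}$, not the $\mu/n$ of your displayed target.

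\emph{First assertion.} This does not come from a H\"older form $\Xi_R\le CW_R^{j/k}\mu^{(s-1)/k}$, which is not available here. The charged vertex of a clique meeting $R$ may lie in $T$, and bounding the mass of the $T$-neighbours of $R$ by $W_R$ is the hard part of Proposition~\ref{prop:Rempty}, which itself uses this first assertion. What works is the idea in your last paragraph. Summing $x_r$ times the eigenequation over $R$ gives $\Xi_R\le\rho W_R$. Combined with the \emph{upper} bound $\rho\le(1+o(1))k\Xi_0$ (not the lower bound you cite), this yields $(k-1+o(1))W_R\ge(1-o(1))\frac{s-1}{k}\frac{\mu}{h}$.

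\emph{The bound on $\eta$.} You must bound $\rho W_R$ itself by $C\eta^{j}\sum_{r\in R}x_r$, with the full power $\eta^j$; a single factor $\eta$ only gives $\eta\gtrsim n^{-(j-1)/k}$. This uses two facts: $r$ has at most $t-1$ neighbours in $T$ (Lemma~\ref{lem:common} applied to $A\cup\{r\}$), and $G[R]$ is degenerate, which handles the $R$--$R$ edges. H\"older's inequality and the first assertion then give $\eta^j\gtrsim\rho n^{-(k-1)/k}$.

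\emph{The degree bound.} Your pigeonhole with $x_v\le\eta\to0$ gives at best an unquantified, $\mu$-dependent lower bound. The claim that one heavy vertex dominates $\Xi_R$ is also unjustified. The argument is local. Take $v\notin A$ with $x_v=\eta$. The cliques through $v$ containing all of $A$ contribute at most $\binom{t-1}{j-1}\eta^{j-1}$, since $|N(A)\cap N(v)|\le t-1$. The other cliques contribute at most $C\eta^{j-1}(\eta+\eta\deg_R(v))$. Hence $\rho\eta^{s-1}\le C(1+\eta\deg_R(v))$. By the second assertion, $\rho\eta^{s-1}\gtrsim n^{(s-1)/(jk)}\to\infty$, so $\deg_R(v)\gtrsim\rho\eta^{s-2}$. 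It is the \emph{lower} bound on $\eta$, not $\eta\to0$, that then gives the exponent $\frac{k-1}{jk}$.
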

	
	\begin{proof}
		Throughout, $C_1,C_2,\dots$ and $c_1,c_2,\dots$ are positive constants depending only on $k,s,t$. Put $u^{k}=\frac1{s-1}\sum_{a\in A}x_a^{k}$, $W_0=1-(s-1)u^{k}$ and $W_T=\sum_{v\in T}x_v^{k}$, so that $W_T+W_R=W_0$, and write $\Xi(h',W)$ for the right side of~\eqref{eq:termwise} with this $u$; let $\Xi_0=u^{s-1}KW_0^{j/k}h^{(s-1)/k}$ be its term $r=j$ at $(h,W_0)$. By Theorem~\ref{thm:stab}, or Theorem~\ref{thm:jle1stab} when $j=1$, $u^{k}\to\frac1k$, $W_0\to\frac jk$, $W_R\to0$ and $\eta\to0$, so that $\Xi_0=(1+o(1))\frac ckn^{(s-1)/k}$, and by Theorem~\ref{thm:exact}, or Theorem~\ref{thm:jle1stab} when $j=1$, $\rho(\CC_k(G))=kP\le(1+o(1))k\,\Xi_0$. We use three facts about $R$. Since $r\in R$ misses a vertex of $A$, every $k$-clique through $r$ contains at most $s-2$ vertices of $A$. Since $A\cup\{r\}$ has $s$ vertices, $r$ has at most $t-1$ neighbors in $T$ by Lemma~\ref{lem:common}. And $G-A$ is $K_{s,t}$-minor-free, hence $d$-degenerate; we fix a degeneracy ordering of $G-A$, so that every clique $Q$ of $G-A$ satisfies $Q\subseteq\{w\}\cup N^{+}(w)$ for its earliest vertex $w$, and at most $2^{d}$ cliques share the same earliest vertex.
		
		\emph{Step 1: comparison with the candidate.} The $k$-cliques inside $A\cup T$ consist of $k-r$ vertices of $A$ and an $r$-clique of $G[T]$, and $\Delta(G[T])\le t-1$ by Corollary~\ref{cor:deg}, so Proposition~\ref{prop:termwise} with $H=G[T]$ bounds their weight by $\Xi(h-\mu,W_T)$; the remaining cliques meet $R$, and we write $\Xi_R$ for their weight. On the other hand $\Xi(h,W_0)$ is the value of $\CC_k(K_{s-1}\vee\frac htK_t)$ at a unit vector, so by extremality $\Xi(h,W_0)\le\rho(\CC_k(K_{s-1}\vee\frac htK_t))/k\le P$. Hence
		\[
		\Xi_R\ \ge\ \Xi(h,W_0)-\Xi(h-\mu,W_T)\ \ge\ \frac{\partial\Xi}{\partial W}(h,W_0)\,W_R+\frac{\partial\Xi}{\partial h'}(h,W_T)\,\mu ,
		\]
		since $\Xi$ is increasing and concave in each variable separately. Keeping only the term $r=j$ in each derivative,
		\begin{equation}\label{eq:Rgain}
			\Xi_R\ \ge\ \frac{j}{kW_0}\,\Xi_0\,W_R+\frac{s-1}{kh}\Bigl(\frac{W_T}{W_0}\Bigr)^{\frac jk}\Xi_0\,\mu\ \ge\ (1-o(1))\,\Xi_0\,W_R+(1-o(1))\frac{s-1}{k}\,\Xi_0\,\frac{\mu}{h},
		\end{equation}
		using $W_0\to\frac jk$ and $W_T/W_0\to1$.
		
		\emph{Step 2: the eigenequation on $R$.} Summing the eigenequation $\rho x_r^{k}=x_r\sum_{F\ni r}\prod_{v\in F\setminus\{r\}}x_v$ over $r\in R$ counts every clique meeting $R$ at least once, so
		\begin{equation}\label{eq:XiR}
			\Xi_R\ \le\ \rho\,W_R\ \le\ (1+o(1))\,k\,\Xi_0\,W_R .
		\end{equation}
		Combining~\eqref{eq:Rgain} and~\eqref{eq:XiR} and dividing by $\Xi_0$,
		\[
		(k-1+o(1))\,W_R\ \ge\ (1-o(1))\frac{s-1}{k}\,\frac{\mu}{h},
		\]
		which gives $W_R\ge c_1\mu/n$ with $c_1=\frac{s-1}{2k(k-1)}$ for large $n$. This is the first assertion, and we may take $c_{*}\le c_1$.
		
		\emph{Step 3: an upper bound on the mass of $R$.} Let $r\in R$ and let $F\ni r$ be a $k$-clique. Its vertices outside $A\cup\{r\}$ form a clique $Q'$ of $G-A$ with $|Q'|\ge j$, contained in $N(r)\setminus A$, and $\prod_{v\in F\setminus\{r\}}x_v\le x_w\eta^{|Q'|-1}\le x_w\eta^{j-1}$ for the earliest vertex $w$ of $Q'$, the coordinates on $A$ being at most $1$. At most $2^{d+s-1}$ cliques $F\ni r$ share the same $w$, and $N(r)\setminus A$ consists of at most $t-1$ vertices of $T$, each of coordinate at most $\eta$, and the $R$-neighbors of $r$. Hence
		\[
		\rho\,x_r^{k-1}\ \le\ 2^{d+s-1}\eta^{j-1}\Bigl((t-1)\eta+\sum_{w\in N(r)\cap R}x_w\Bigr).
		\]
		Multiplying by $x_r$ and summing over $R$, the last sum contributes $2\sum_{rw\in E(G[R])}x_rx_w\le2\sum_rx_r\sum_{w\in N^{+}(r)\cap R}x_w\le2d\eta\sum_{r\in R}x_r$ by degeneracy, so that
		\[
		\rho\,W_R\ \le\ C_2\,\eta^{j}\sum_{r\in R}x_r\ \le\ C_2\,\eta^{j}\,\mu^{\frac{k-1}{k}}W_R^{\frac1k}
		\]
		by H\"older's inequality, that is, $W_R\le(C_2\eta^{j}/\rho)^{k/(k-1)}\mu$. Together with Step~2 this gives $(C_2\eta^{j}/\rho)^{k/(k-1)}\ge c_1/n$. Since $\rho\ge c_3n^{(s-1)/k}$ by Theorem~\ref{thm:bip},
		\[
		\eta^{j}\ \ge\ c_4\,\rho\,n^{-\frac{k-1}{k}}\ \ge\ c_5\,n^{\frac{s-1}{k}-\frac{k-1}{k}}=c_5\,n^{-\frac{j-1}{k}},
		\]
		which is the second assertion.
		
		\emph{Step 4: the eigenequation at a heaviest vertex outside $A$.} Let $v\notin A$ satisfy $x_v=\eta$. A clique $F\ni v$ containing all of $A$ has its remaining $j-1$ vertices in $N(A)\cap N(v)$, a set of at most $t-1$ vertices by Lemma~\ref{lem:common}, so these cliques contribute at most $\binom{t-1}{j-1}\eta^{j-1}$ to $\rho\eta^{k-1}$. The cliques with at most $s-2$ vertices of $A$ contribute, exactly as in Step~3, at most $2^{d+s-1}\eta^{j-1}\bigl((t-1)\eta+\eta\deg_R(v)\bigr)$. Dividing by $\eta^{j-1}$,
		\[
		\rho\,\eta^{s-1}\ \le\ C_6\bigl(1+\eta\deg_R(v)\bigr).
		\]
		By Step~3, $\rho\eta^{s-1}\ge c_3c_5^{(s-1)/j}n^{\frac{s-1}{k}-\frac{(j-1)(s-1)}{jk}}=c_7\,n^{\frac{s-1}{jk}}\to\infty$, so for large $n$ the term $1$ is negligible and $\deg_R(v)\ge\rho\eta^{s-2}/(2C_6)$. Using $\rho\ge c_3n^{(s-1)/k}$ and $\eta\ge c_5^{1/j}n^{-(j-1)/(jk)}$ once more,
		\[
		\deg_R(v)\ \ge\ c_8\,n^{\frac{s-1}{k}-\frac{(j-1)(s-2)}{jk}}=c_8\,n^{\frac{j(s-1)-(j-1)(s-2)}{jk}}=c_8\,n^{\frac{k-1}{jk}},
		\]
		using $j+s-2=k-1$. This is the third assertion.
	\end{proof}
	
	Proposition~\ref{prop:Rlarge} says that if an extremal graph has $R\ne\emptyset$, then some vertex $v$ outside $A$ carries a coordinate far above the typical light value $n^{-1/k}$ and has a neighborhood of polynomial size in $R$. We exclude this possibility by controlling the mass on $R$ and its neighbors.
	
	Fix $\lambda\ge1$ and call a vertex $q\notin A$ \emph{semi-heavy} if $x_q>\lambda n^{-1/k}$ and \emph{light} otherwise; let $Q^{*}$ be the set of semi-heavy vertices. In the candidate every vertex outside the hubs has coordinate $(1+o(1))(j/(kh))^{1/k}$, so for $\lambda$ large there are none. The following is the last ingredient.
	
	\begin{prop}\label{prop:Rempty}
		Let $2\le s\le\min(k,t)$ and $t\ge j$, let $t$ divide $h=n-s+1$, and let $G$ be an extremal $K_{s,t}$-minor-free graph with $A,T,R$ as in Proposition~\ref{prop:Rlarge}. Then $R=\emptyset$ for all sufficiently large $n$.
	\end{prop}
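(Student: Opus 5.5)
We argue by contradiction: assume $R\ne\emptyset$, so that Proposition~\ref{prop:Rlarge} supplies a vertex $v\notin A$ with $x_v=\eta\ge c_{*}n^{-(j-1)/(jk)}$ and $\deg_R(v)\ge c_{*}n^{(k-1)/(jk)}$. The plan is to show that such a vertex cannot exist. Its large coordinate would have to be fed by a large mass of cliques through $v$. Those cliques use at most $s-2$ hubs together with vertices of $R$ and of $T$, and we will show their total weight is too small. For $j\ge2$ the exponent $-(j-1)/(jk)$ is strictly larger than $-1/k$, so $v$ is semi-heavy for every fixed $\lambda$ once $n$ is large. The proof therefore reduces to bounding the total mass of the semi-heavy set $Q^{*}$ and the number of $R$-neighbours of any semi-heavy vertex. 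For $j=1$ the exponent is $0$, so $\eta$ is bounded below by a constant. This contradicts $\eta\to0$ from Theorem~\ref{thm:jle1stab}, and we treat that case at once.

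\emph{Step 1.} We first bound $|Q^{*}|$. Since $\sum_q x_q^k\le1$, we have $|Q^{*}|\le\lambda^{-k}n$. A sharper bound comes from Theorem~\ref{thm:stab}, whose equality analysis (H\"older tightness) forces the coordinates on $T$ to be asymptotically constant. This should make the mass on $Q^{*}\cap T$ equal to $o(1)$ for large $\lambda$.

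\emph{Step 2.} Next we refine Step~3 of Proposition~\ref{prop:Rlarge}, splitting the neighbours $w\in N(r)\cap R$ into semi-heavy and light ones. The light neighbours contribute at most $\lambda n^{-1/k}$ each. Using degeneracy to charge each edge of $G[R]$ to its earlier endpoint, we obtain
\[
\rho W_R\le C\eta^{j-1}\Bigl(\lambda n^{-1/k}+\eta\,\mathbf 1\Bigr)\sum_{r\in R}x_r ,
\]
where the second term counts only those $r$ that are adjacent to semi-heavy vertices.

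\emph{Step 3.} This is the key step. We apply the eigenequation at a semi-heavy $q$ with many $R$-neighbours and combine it with Lemma~\ref{lem:low}. Cliques through $q$ contain at most $s-2$ hubs unless $q\in T$. In the case $q\in T$, the $R$-neighbours of $q$ contribute cliques with at most $s-2$ hubs, and Lemma~\ref{lem:common} caps the rest. From this we bound $\rho\eta^{s-1}$ by a quantity that grows more slowly than $n^{(s-1)/(jk)}$. The route is to insert $\sum_{w\in N(q)\cap R}x_w\le \deg_R(q)^{(k-1)/k}W_R^{1/k}$ together with $W_R=O(\mu/n)$. The required upper bound $W_R=O(\mu/n)$ comes from combining Step~2 with Step~1 of Proposition~\ref{prop:Rlarge}.

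\emph{Step 4.} If this bound is still insufficient, the fallback is a direct comparison. We would delete all edges from $R$ to $V\setminus A$, join $R$ to $A$, and rearrange $R$ into $K_t$ blocks, keeping track of the effect at $x$. The loss is at most $\Xi_R$, and Step~3 of Proposition~\ref{prop:Rlarge} already bounds $\Xi_R$ by $C\eta^j\sum_R x_r$. The gain is at least $c\,n^{(s-1)/k}\mu/n$, by~\eqref{eq:Rgain}. We expect the main obstacle to be closing this inequality, because $\eta$ may be as large as a power of $n$ times $n^{-1/k}$.
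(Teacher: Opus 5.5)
There is a genuine gap. You treat the case $j=1$ correctly, as the paper does. For $j\ge2$, however, the argument never closes, and you acknowledge this in Step~4. The difficulty is structural, not a matter of constants. Every estimate you propose for the cliques through $R$ has the form $\eta^{j}\sum_{r\in R}x_r$ or $\eta\,\deg_R(v)$: your Step~2, the loss in your Step~4, and the eigenequation at $v$ in your Step~3. These are exactly Steps~3 and~4 of Proposition~\ref{prop:Rlarge}. Combined with $\Xi_R\ge(1-o(1))\frac{\rho}{k}W_R$ and $W_R\ge c_*\mu/n$, they are self-consistent whenever $\eta\ge c\,n^{-(j-1)/(jk)}$ and $\deg_R(v)\ge c\,n^{(k-1)/(jk)}$. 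So they can only reproduce the lower bounds of that proposition and can never contradict them.

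Concretely, in Step~3 the H\"older bound $\sum_{w\in N(v)\cap R}x_w\le\mu^{(k-1)/k}W_R^{1/k}$ with $W_R=O(\mu/n)$ gives $O(\mu n^{-1/k})$. This is below $n^{(s-1)/(jk)}$ only if $\mu=o(n^{1/j})$, whereas only $\mu=o(n)$ is known. Moreover, $W_R=O(\mu/n)$ does not follow from your Step~2 unless the term coming from $r$ adjacent to semi-heavy vertices is already controlled, and that is the whole difficulty. Your Step~1 does not supply this control: Theorem~\ref{thm:stab} gives at best a qualitative $o(1)$ bound on the mass of semi-heavy vertices. What is needed is a bound relative to $W_R$, which may be as small as $c_*\mu/n$. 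Finally, in Step~4, \eqref{eq:Rgain} is a consequence of extremality, not the gain of your particular modification evaluated at $x$.

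The missing idea is to bound $\Xi_R$ with only \emph{one} factor of $\eta$, placing the $j$-th power on a vertex whose mass is comparable with $W_R$.
\begin{itemize}
\item For a clique $F$ meeting $R$, list its $s$ heaviest vertices with hubs first. Since $F$ has at most $s-2$ hubs, the last two listed vertices are non-hubs.
\item Charge $F$ to the earliest non-hub $w$ among them in a degeneracy order of $G-A$. As in Lemma~\ref{lem:low}, the weight of $F$ is at most $\eta\,x_w^{j}$, and boundedly many cliques are charged to each $w$.
\item The vertex $w$ lies in $S=R\cup\bigcup_{r\in R}(N(r)\cap T)$, and $|S|\le t\mu$. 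H\"older then gives $\Xi_R\le C\eta\,(t\mu)^{(s-1)/k}W(S)^{j/k}$.
\end{itemize}

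Next one proves $W(S)\le C_\lambda W_R$.
\begin{itemize}
\item Light vertices of $S\setminus R$ are handled by $W_R\ge c_*\mu/n$.
\item For a semi-heavy $g$, split $\rho x_g^{k-1}$ by clique type. If the cliques meeting $R$ dominate, these $g$ carry total mass at most $3k\Xi_R/\rho\le 3kW_R$.
\item If the cliques inside $A\cup T$ dominate, the eigenequation forces $x_g^{k}\le C\lambda^{-(s-1)}x_{g'}^{k}$ for some semi-heavy neighbour $g'\in T$, reached with bounded multiplicity.
\item Summing gives $W(Q^*)\le 3kW_R+C\lambda^{-(s-1)}W(Q^*)$, which closes for $\lambda$ large.
\end{itemize}

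Combined with the lower bound $\Xi_R\ge(1-o(1))\frac{\rho}{k}W_R$ from Step~1 of Proposition~\ref{prop:Rlarge}, this yields $W_R\le C\eta^{k/(s-1)}\mu/n$. This contradicts $W_R\ge c_*\mu/n$ simply because $\eta\to0$. No rate for $\eta$ and no bound on $\mu$ is needed, and the second and third assertions of Proposition~\ref{prop:Rlarge}, on which your plan is built, play no role.
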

	
	\begin{proof}
		If $j=1$, Proposition~\ref{prop:Rlarge} gives $\eta\ge c_*>0$ whenever $R\ne\emptyset$, contradicting Theorem~\ref{thm:jle1stab}. We may therefore assume $j\ge2$.
		Suppose $R\ne\emptyset$, and let
		\[
		S=R\cup\bigcup_{r\in R}\bigl(N_G(r)\cap T\bigr).
		\]  A vertex of $R$ has at most $t-1$ neighbors in $T$ by Lemma~\ref{lem:common}, so $|S|\le t\mu$. Let $\lambda$ be a large constant to be fixed, let $Q^{*}$ be the set of semi-heavy vertices, and put $W(X)=\sum_{v\in X}x_v^{k}$. We bound $\Xi_R$ from above and below.
		
		\emph{Upper bound.} Let $F$ be a $k$-clique meeting $R$; it contains at most $s-2$ hubs. Let $v_1,\dots,v_s$ be its $s$ heaviest vertices in nonincreasing order, hubs first, so that $v_{s-1},v_s\notin A$, and let $w$ be the earliest vertex of $\{v_1,\dots,v_s\}\setminus A$ in a fixed degeneracy ordering of $G-A$, which is $d$-degenerate. As in the proof of Lemma~\ref{lem:low}, with $x_{(\ell+1)}$ replaced by $\eta$, the weight of $F$ is at most $\eta\,x_w^{j}$; and $F$ is determined by $w$, the hubs in $F$, the other non-hub vertices among $v_1,\dots,v_s$, which form a clique of $G-A$ lying in $N^{+}(w)$, and the remaining $j-1$ vertices, which are common neighbors of the $s$ vertices $v_1,\dots,v_s$. Hence at most $2^{s-1}2^{d}\binom{t-1}{j-1}$ cliques are charged to a given $w$. Since $F$ contains a vertex of $R$ and $w\in F\setminus A$, we have $w\in S$. By H\"older's inequality,
		\[
		\Xi_R\ \le\ C\,\eta\sum_{w\in S}x_w^{j}\ \le\ C\,\eta\,|S|^{\frac{s-1}{k}}\,W(S)^{\frac jk}\ \le\ C\,\eta\,(t\mu)^{\frac{s-1}{k}}\,W(S)^{\frac jk}.
		\]
		
		\emph{The mass of $S$.} We claim that $W(S)\le C_\lambda W_R$. The light vertices of $S\setminus R$ contribute at most $t\mu\lambda^{k}n^{-1}\le(t\lambda^{k}/c_*)W_R$, by the first assertion of Proposition~\ref{prop:Rlarge}. For the semi-heavy vertices, let $g\in Q^{*}$ and write $\rho x_g^{k-1}=a_g+b^{T}_g+b^{R}_g$, where $a_g$ comes from the cliques through $g$ containing all of $A$, $b^{T}_g$ from those with at most $s-2$ hubs contained in $A\cup T$, and $b^{R}_g$ from those meeting $R$; one of the three is at least $\frac13\rho x_g^{k-1}$. If $b^{R}_g\ge\frac13\rho x_g^{k-1}$, then $\rho x_g^{k}\le3x_gb^{R}_g$, and $\sum_gx_gb^{R}_g$ counts each clique meeting $R$ at most $k$ times, so the total mass of these $g$ is at most $3k\,\Xi_R/\rho\le3kW_R$. If $a_g\ge\frac13\rho x_g^{k-1}$, then $g\in T$ and $a_g>0$, so $N(A)\cap N(g)\ne\emptyset$. Put $\theta_g=\max\{x_{g'}:g'\in N(A)\cap N(g)\}$, a maximum over at most $t-1$ vertices by Lemma~\ref{lem:common}, so that $a_g\le\pi_A\binom{t-1}{j-1}\theta_g^{j-1}$. If $\theta_g\le x_g$ this gives $x_g^{s-1}\le C/\rho\le C'n^{-(s-1)/k}$ and hence $x_g\le C''n^{-1/k}$, excluded for $\lambda$ large; so $\theta_g=x_{g'}$ for a semi-heavy $g'\in N(A)\cap N(g)$, and $x_g^{k}\le\frac C\rho x_{g'}^{j}\le\frac{C}{\lambda^{s-1}}x_{g'}^{k}$, using $x_{g'}^{j}=x_{g'}^{k}x_{g'}^{-(s-1)}$, $x_{g'}>\lambda n^{-1/k}$ and $\rho\ge c_3n^{(s-1)/k}$. If $b^{T}_g\ge\frac13\rho x_g^{k-1}$, then $g\in T$, and the cliques in question are $\{g\}\cup I\cup Q'$ with $I\subseteq A$, $|I|\le s-2$, and $Q'$ a clique of $G[N(g)\cap T]$ with $|Q'|\ge j$; since $|N(g)\cap T|\le t-1$ by Corollary~\ref{cor:deg}, there are boundedly many of them, and with $\theta_g=\max_{g'\in N(g)\cap T}x_{g'}$ we get $\rho x_g^{k-1}\le C\theta_g^{j}$. If $\theta_g\le\lambda n^{-1/k}$ this gives $x_g^{k-1}\le C\lambda^{j}n^{-j/k}/\rho\le C'\lambda^{j}n^{-1}$, using $j+s-1=k$, hence $x_g\le C''\lambda^{j/(k-1)}n^{-1/(k-1)}<\lambda n^{-1/k}$ for large $n$, a contradiction; so $\theta_g=x_{g'}$ for a semi-heavy $g'\in N(g)\cap T$, and $x_g^{k}\le\eta\,\frac C\rho x_{g'}^{j}\le\frac{C\eta}{\lambda^{s-1}}x_{g'}^{k}$ as before. In both of the last two cases $g,g'\in T$, and a vertex $g'$ arises from at most $2(t-1)$ vertices $g$, since $g\in N(A)\cap N(g')$ or $g\in N(g')\cap T$. Summing,
		\[
		W(Q^{*})\ \le\ 3kW_R+\frac{C''}{\lambda^{s-1}}\,W(Q^{*}),
		\]
		so $W(Q^{*})\le6kW_R$ once $\lambda^{s-1}\ge2C''$, which fixes $\lambda$. Hence $W(S)\le W_R+(t\lambda^{k}/c_*)W_R+6kW_R=C_\lambda W_R$.
		
		\emph{Lower bound.} By Step~1 of the proof of Proposition~\ref{prop:Rlarge}, which uses only extremality and Proposition~\ref{prop:termwise}, $\Xi_R\ge(1-o(1))\Xi_0W_R\ge(1-o(1))\frac\rho kW_R$.
		
		\emph{Conclusion.} Combining the three estimates,
		\[
		\frac{\rho}{2k}\,W_R\ \le\ C\,\eta\,(t\mu)^{\frac{s-1}{k}}\bigl(C_\lambda W_R\bigr)^{\frac jk},
		\qquad\text{so}\qquad
		W_R\ \le\ \Bigl(\frac{C'\eta}{\rho}\Bigr)^{\frac{k}{s-1}}\mu\ \le\ C''\,\eta^{\frac{k}{s-1}}\,\frac{\mu}{n},
		\]
		using $\rho^{k}\ge c\,n^{s-1}$. Since $W_R\ge c_*\mu/n$ by Proposition~\ref{prop:Rlarge} and $\eta\to0$ by Theorem~\ref{thm:stab}, or by Theorem~\ref{thm:jle1stab} when $j=1$, this is impossible for large $n$.
	\end{proof}
	
	The proof uses only Proposition~\ref{prop:termwise}, Lemma~\ref{lem:common}, and the first assertion and Step~1 of Proposition~\ref{prop:Rlarge}. The hypothesis $t\mid h$ enters at one point only, in Step~1, where extremality is compared with $\CC_k\bigl(K_{s-1}\vee\frac htK_t\bigr)$. For a general residue that graph does not exist and the comparison must be made with $\CC_k\bigl(K_{s-1}\vee(bK_t\cup K_p)\bigr)$, which is smaller; the loss is a constant multiple of one unit of $\mu$, and this costs exactly a constant in the conclusion.
	
	\begin{prop}\label{prop:Rbdd}
		Let $2\le s\le\min(k,t)$ and $t\ge j$, write
		$h=n-s+1=bt+p$ with $0\le p<t$, and put
		\[
		\kappa_p=p-\frac{t\binom pj}{\binom tj}\in[0,p],
		\]
		which equals $p$ exactly when $p<j$.
		If $G$ is an $n$-vertex $K_{s,t}$-minor-free
			graph maximizing $\rho(\CC_k(G))$, and $A,T,R$ are as in
			Proposition~\ref{prop:Rlarge}, then, for all sufficiently
		large $n$,
		\[
		|R|\le\frac{k}{s-1}\kappa_p<\frac{kt}{s-1}.
		\]
		Moreover, for a normalized Perron vector \(x\),
		\[
		W_R=\sum_{r\in R}x_r^k=O(n^{-1}),
		\]
		where the implicit constant depends only on \(k,s,t\). In particular $R=\emptyset$ when $t\mid h$, which is
		Proposition~\ref{prop:Rempty}.
	\end{prop}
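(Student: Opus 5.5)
The plan is to rerun Step~1 of Proposition~\ref{prop:Rlarge} with a different comparison graph. Since $t$ need not divide $h$, I would compare with $\CC_k\bigl(K_{s-1}\vee(bK_t\cup K_p)\bigr)$, which lies in the class by Corollary~\ref{cor:small}. I would then bound $\Xi_R$ from above by the argument of Proposition~\ref{prop:Rempty}, but arranged so that it no longer relies on the first assertion of Proposition~\ref{prop:Rlarge}; that assertion used $t\mid h$. As there, put $u^k=\frac1{s-1}\sum_{a\in A}x_a^k$, $W_0=1-(s-1)u^k$, $W_T+W_R=W_0$, $\mu=|R|$ and $\Xi_0=u^{s-1}KW_0^{j/k}h^{(s-1)/k}$. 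By Theorem~\ref{thm:stab}, or Theorem~\ref{thm:jle1stab} when $j=1$, we still have $u^k\to1/k$, $W_0\to j/k$, $W_R\to0$ and $\eta\to0$.

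\emph{Lower bound.} Evaluate the comparison hypergraph at the vector equal to $u$ on the hubs and to $(W_0/h)^{1/k}$ on the $h$ light vertices. Its $r=j$ term is
\[
u^{s-1}(W_0/h)^{j/k}\Bigl(bt\,K+\tbinom pj\Bigr)=\Xi_0\bigl(1-\kappa_p/h\bigr),
\]
since $bt\,K+\binom pj=K(h-\kappa_p)$. The other terms $r\ne j$ are of lower order and match those of $\Xi(h,W_0)$ up to $O(1/h)$ relative corrections. These corrections can be absorbed by comparing the two full expressions term by term rather than only the $r=j$ term; this is exactly the place where $\kappa_p$ has to be tracked precisely. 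On the other side, the cliques inside $A\cup T$ are bounded by Proposition~\ref{prop:termwise} with $H=G[T]$, giving $\Xi(h-\mu,W_T)$. Extremality of $G$ then yields
\[
\Xi_R\ \ge\ \Xi_0\Bigl[(1-o(1))\,W_R+\Bigl(\tfrac{s-1}{k}\mu-\kappa_p\Bigr)\tfrac1h\Bigr]-o\bigl(\Xi_0\,\mu/h\bigr),
\]
which is the analogue of~\eqref{eq:Rgain} with the loss $\kappa_p$ inserted.

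\emph{Upper bound.} I would reuse the charging argument of Proposition~\ref{prop:Rempty}: $\Xi_R\le C\eta\,|S|^{(s-1)/k}W(S)^{j/k}$ with $|S|\le t\mu$. For the semi-heavy estimate, $W(Q^*)\le 6kW_R$, the proof there does not use $W_R\ge c_*\mu/n$. The light part of $S$ has mass at most $t\lambda^k\mu/n$. Hence
\[
W(S)\le C_\lambda\bigl(W_R+\mu/n\bigr).
\]
Weighted AM--GM with weights $\frac{s-1}k$ and $\frac jk$ gives $\mu^{(s-1)/k}X^{j/k}\le n^{(s-1)/k}\bigl(\frac{s-1}k\frac\mu n+\frac jkX\bigr)$. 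Since $\eta\to0$ and $\Xi_0\asymp n^{(s-1)/k}$, this yields
\[
\Xi_R\le o(1)\,\Xi_0\bigl(W_R+\mu/n\bigr).
\]

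\emph{Conclusion.} Comparing the two bounds gives
\[
(1-o(1))\,nW_R+\tfrac{s-1}{k}\mu-\kappa_p\ \le\ o(\mu).
\]
Both conclusions follow at once. First, $\mu\le\frac{k}{s-1}\kappa_p(1+o(1))$; since $\mu$ is a bounded integer, for large $n$ this gives $\mu\le\frac{k}{s-1}\kappa_p$. Second, $nW_R\le\kappa_p+o(1)$, so $W_R=O(n^{-1})$. The bound $\kappa_p\le p<t$ gives the strict inequality, and $\kappa_0=0$ recovers $R=\emptyset$.

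The main obstacle is the lower bound. One has to verify that the lower-order terms $r\ne j$ of the comparison graph and of $\Xi(h-\mu,W_T)$ lose at most $o(\Xi_0\mu/h)$ against each other, uniformly in $\mu$. I would handle this using the concavity of $\Xi$ in $h'$ and $W$ as in Step~1, together with the exact counts $|E(\CC_r(K_p))|=\binom pr$.
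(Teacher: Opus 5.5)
Your proposal is correct. The lower bound is the paper's. You compare with $\CC_k(K_{s-1}\vee(bK_t\cup K_p))$ at the uniform light vector, whose loss $\delta_p$ has $r=j$ term exactly $\kappa_p\Xi_0/h$, and then apply the concavity step of Proposition~\ref{prop:Rlarge}. The step you flag as the main obstacle is immediate. The terms $r>j$ of $\delta_p$ are $O(h^{-(j+1)/k})=o(\Xi_0/h)$ independently of $\mu$. Every term of $\Xi$ is increasing in both variables, so the terms $r\ne j$ contribute nonnegatively to $\Xi(h,W_0)-\Xi(h-\mu,W_T)$ and may be dropped.

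Where you differ from the paper is the upper bound and the way the two bounds are combined. The paper argues by contradiction. If $|R|>\frac{k}{s-1}\kappa_p$, the integer gap $\gamma_p$ turns the comparison into $W_R\ge c_1\mu/n$. That is exactly what the proof of Proposition~\ref{prop:Rempty} needs to bound the mass of the light vertices of $S\setminus R$ by a multiple of $W_R$, and that proof then gives $W_R\le C\eta^{k/(s-1)}\mu/n$, a contradiction.

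You instead keep that mass as the separate term $t\lambda^k\mu/n$ and absorb it by weighted AM--GM. As a result, $\Xi_R\le o(1)\,\Xi_0(W_R+\mu/n)$ needs no lower bound on $W_R$, and the single inequality $(1-o(1))nW_R+\frac{s-1}{k}\mu-\kappa_p\le o(\mu)$ gives both conclusions. It also replaces the paper's separate proof of $W_R=O(n^{-1})$. That proof bounds $\Delta(G-A)$ once $|R|$ is bounded and uses the eigenequation at a heaviest non-hub vertex. The paper's route gives the stronger pointwise bound $\max_{v\notin A}x_v=O(n^{-1/k})$, the form recorded in Lemma~\ref{lem:struct}. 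Yours gives only the mass bound, but in the sharper form $nW_R\le\kappa_p-\frac{s-1}{k}|R|+o(1)$ and with no extra argument.

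There are two small omissions.
\begin{itemize}
\item The statement also asserts $\kappa_p\ge0$, which you do not prove. It follows from convexity of $m\mapsto\binom mj$ together with $\binom 0j=0$, which give $\binom pj\le\frac pt\binom tj$.
\item The charging and semi-heavy estimates in the proof of Proposition~\ref{prop:Rempty} are written only for $j\ge2$. For $j=1$ they still go through: in the case $a_g\ge\frac13\rho x_g^{k-1}$ one has $a_g\le\pi_A\le 1$, which gives $x_g=O(n^{-1/k})$ at once. Alternatively, use Step~3 of Proposition~\ref{prop:Rlarge}, which gives $\Xi_R\le\rho W_R\le C\eta\,\mu^{(k-1)/k}W_R^{1/k}$, followed by the same AM--GM step.
\end{itemize}
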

	
	\begin{proof}
		Use $u,W_0,W_T$ as in Step~1 of
			Proposition~\ref{prop:Rlarge}, and put $\mu=|R|$.
		Write $\Xi(h',W)$ for the right side of~\eqref{eq:termwise}
		and $\Xi_j$ for its term of index $r=j$,
		evaluated at $(h,W_0)$.
		Evaluating the clique sum for $bK_t\cup K_p$ at the uniform
		light vector gives $\Xi(h,W_0)-\delta_p$, where
		\[
		\delta_p
		=\sum_{r\ge j}\binom{s-1}{k-r}u^{k-r}
		\left(\frac pt\binom tr-\binom pr\right)
		\left(\frac{W_0}h\right)^{r/k}
		=\bigl(\kappa_p+o(1)\bigr)\frac{\Xi_j}{h}.
		\]
		Indeed, the term $r=j$ gives $\kappa_p\Xi_j/h$, and the
		remaining terms are of smaller order.

		By stability,
		$W_0\to j/k$, $W_T/W_0\to1$, $\eta\to0$ and
		$\rho=(1+o(1))k\Xi_j$.
		The comparison and concavity calculation in Step~1 of
		Proposition~\ref{prop:Rlarge} therefore give, with
		$\alpha=(s-1)/k$,
		\[
		\Xi_R
		\ge \Xi(h,W_0)-\delta_p-\Xi(h-\mu,W_T)
		\ge \frac{j}{kW_0}\Xi_jW_R
		+\left[
		\alpha\left(\frac{W_T}{W_0}\right)^{j/k}\mu
		-\kappa_p-o(1)
		\right]\frac{\Xi_j}{h}.
		\]
		Suppose that $\mu>\kappa_p/\alpha$. Since $\mu$ is an integer,
		\[
		\mu\ge m_p:=\left\lfloor\frac{\kappa_p}{\alpha}\right\rfloor+1,
		\qquad
		\alpha\mu-\kappa_p\ge\gamma_p\mu,
		\qquad
		\gamma_p:=\alpha-\frac{\kappa_p}{m_p}>0.
		\]
		Thus, for all sufficiently large $n$, the errors in the
		comparison can be absorbed to yield
		\[
		\Xi_R\ge\frac12\Xi_jW_R
		+\frac{\gamma_p}{2}\frac{\Xi_j\mu}{h}.
		\]
		The eigenequations on $R$ give $\Xi_R\le\rho W_R$, so
		\[
		W_R\ge c_1\frac{\mu}{n},
		\qquad
		\Xi_R\ge c_2\rho W_R
		\]
		for fixed $c_1,c_2>0$.
		The upper bound and mass estimate in the proof of
		Proposition~\ref{prop:Rempty} for $j\ge2$, and Step~3 of
		Proposition~\ref{prop:Rlarge} for $j=1$, now give
		\[
		W_R\le C\eta^{k/(s-1)}\frac{\mu}{n}.
		\]
		These estimates do not require divisibility once the two
		lower bounds above are available. Since $\eta\to0$, this
		contradicts $W_R\ge c_1\mu/n$. Hence
		$\mu\le k\kappa_p/(s-1)$.

		Finally, $m\mapsto\binom mj$ is convex on the nonnegative
		integers and vanishes at $m=0$, so
		$\binom pj\le\frac pt\binom tj$ for $0\le p\le t$.
		Hence $\kappa_p\ge0$, with $\kappa_0=0$, while
		$\kappa_p\le p\le t-1<t$ gives the second inequality
		of the statement.
		
		It remains to bound \(W_R\).
		The bound just proved gives \(|R|\le M\) for a constant
		\(M\) depending only on \(k,s,t\).
		Every vertex of \(T\) has at most \(t-1\) neighbors in \(T\)
		by Corollary~\ref{cor:deg}, and at most \(M\) neighbors in \(R\).
		Every vertex of \(R\) has at most \(t-1\) neighbors in \(T\)
		by Lemma~\ref{lem:common}, and at most \(M-1\) neighbors in \(R\).
		Consequently,
		\[
		\Delta(G-A)\le t-1+M.
		\]
		
		Put \(\eta=\max_{v\notin A}x_v\).
		If \(\eta=0\), then \(W_R=0\).
		Otherwise, choose \(v\notin A\) with \(x_v=\eta\).
		The degree of \(v\) in \(G\) is bounded in terms of \(k,s,t\),
		so only boundedly many \(k\)-cliques contain \(v\).
		Every such clique has at least \(j-1\) further vertices
		outside \(A\). Since the coordinates outside \(A\) are at most
		\(\eta\), and all coordinates are at most \(1\), the
		eigenequation at \(v\) gives
		\[
		\rho\,\eta^{k-1}\le C\eta^{j-1}.
		\]
		Here \(\rho=\rho(C_k(G))\), and \(C\) depends only on \(k,s,t\).
		Using \(k-j=s-1\) and
		\(\rho=\Theta(n^{(s-1)/k})\), we obtain
		\[
		\eta=O(n^{-1/k}),
		\qquad
		W_R\le |R|\eta^k=O(n^{-1}),
		\]
		as required.
	\end{proof}

	\begin{lem}\label{lem:Acliquelow}
		Let $2\le s\le\min(k-1,t)$ and $t\ge j$, and let $G$ be an extremal $K_{s,t}$-minor-free graph with $A$ as in Proposition~\ref{prop:Rlarge}. For all sufficiently large $n$, the set $A$ induces a clique.
	\end{lem}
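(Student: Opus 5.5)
We argue by contradiction: suppose two hubs $a,a'\in A$ are not adjacent. Let $G^{+}=G+aa'$. Adding this single edge does not increase the spectral radius, since $G$ is extremal, so the new edge must either create a $K_{s,t}$ minor or add nothing of weight. We show that it adds a lot of weight at the Perron vector $x$, and then we rule out the minor.

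\emph{The gain.} By Theorem~\ref{thm:stab}, $x_u\to k^{-1/k}$ for every $u\in A$. By Proposition~\ref{prop:Rbdd}, all but at most $kt/(s-1)$ vertices of $G-A$ lie in $T=N(A)$. Also, all but $o(n)$ vertices of $T$ lie in $j$-cliques of $G[T]$ with the full count $\binom{t-1}{j-1}$. Each $j$-clique $Q$ of $G[T]$ together with $A$ forms a $k$-clique of $G^{+}$ that is not a $k$-clique of $G$. The total weight of these new cliques is $\pi_A\sum_Q\prod_{v\in Q}x_v$. By the tight chain~\eqref{eq:stab1}, this weight is $(1-o(1))\frac ck n^{(s-1)/k}$, which is essentially all of $P$. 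Conversely, in $G$ every $k$-clique contains at most $s-2$ hubs, and by Lemma~\ref{lem:low} with $\ell=s-1$ these cliques weigh at most $C_1\eta n^{(s-1)/k}+o(1)=o(n^{(s-1)/k})$. This contradicts $\rho\ge(c-o(1))n^{(s-1)/k}$ already at the asymptotic level. So the cleaner route is not to add the edge at all. Instead we note that if $A$ is not a clique, then no $k$-clique of $G$ contains $A$. Since $s-1\le k-1$, every $k$-clique meeting the light part contains at most $s-2$ vertices of $A$. Lemma~\ref{lem:low} then gives $P\le C_1x_{(s)}n^{(s-1)/k}+C_1(s-1)x_{(s)}^{j+1}+O(1)$, where the $O(1)$ term covers cliques inside $A$, of which there are none of size $k$ since $|A|<k$.

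\emph{The contradiction.} Since $x_{(s)}=\eta\to0$ by Theorem~\ref{thm:stab}, the bound gives $\rho=kP=o(n^{(s-1)/k})$. This contradicts the lower bound $\rho\ge(c-o(1))n^{(s-1)/k}$ from Theorem~\ref{thm:bip}, or Corollary~\ref{cor:lb} for general residues. Hence $A$ is a clique for large $n$.

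The step needing care is checking that Lemma~\ref{lem:low} really captures every $k$-clique once $A$ is not a clique. A $k$-clique $F$ has $|F\cap A|\le s-1$ because $|A|=s-1$. Equality would force $A\subseteq F$, which is impossible because $F$ is a clique and $A$ is not. So $|F\cap A|\le s-2$ for every $F$, and the lemma applies to the whole clique sum. Here the hypothesis $s\le k-1$ is used only to ensure $j\ge2$, so that Theorem~\ref{thm:stab} supplies $\eta\to0$ together with the order $n^{(s-1)/k}$. The whole argument is therefore short, and the main obstacle is only to quote the stability statements in the right form.
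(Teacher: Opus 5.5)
Your proposal is correct and, once the edge-addition detour in the first paragraph is dropped (you abandon it yourself, and it is not needed), it is exactly the paper's argument. If $A$ is not a clique, then every $k$-clique meets $A$ in at most $s-2$ vertices. Lemma~\ref{lem:low} with $\ell=s-1$ then gives $\rho(\CC_k(G))\le kC_1x_{(s)}n^{(s-1)/k}+O(1)$, and $x_{(s)}\to0$ from Theorem~\ref{thm:stab} contradicts the lower bound of Corollary~\ref{cor:lb}.
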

	
	\begin{proof}
		Otherwise every $k$-clique contains at most $s-2$ vertices of $A$, and Lemma~\ref{lem:low} with $\ell=s-1$ gives $\rho(\CC_k(G))\le kC_1x_{(s)}n^{(s-1)/k}+kC_1(s-1)$. By Theorem~\ref{thm:stab}, $x_{(s)}\to0$, so $\rho(\CC_k(G))=o(n^{(s-1)/k})$, contradicting Corollary~\ref{cor:lb}.
	\end{proof}
	
	By Proposition~\ref{prop:Rbdd} and Lemma~\ref{lem:Acliquelow}, for all large $n$ and every residue, an extremal graph $G$ with $2\le s\le k-1$ satisfies $G-R=K_{s-1}\vee G[T]$ with $|R|<kt/(s-1)$. The light part is determined in Section~\ref{sec:kgs}.
	
	\subsection*{The regime $s\ge k$}
	
	For $s\ge k$ the parameter $j=k-s+1$ is at most $1$ and, when $s>k$, the argument in Lemma~\ref{lem:low} sorting the $s$ heaviest vertices of a $k$-clique is unavailable. On the other hand the extremal structure is easier to reach, because a vertex outside the common neighborhood of the dominating set loses a constant factor rather than a lower-order term. Throughout this subsection $k\ge3$, $k\le s\le t$, $G$ is an extremal $K_{s,t}$-minor-free graph, $x$ is its Perron vector, $A$ is the set of the $s-1$ largest coordinates, $T=N(A)$, $R=V(G)\setminus(A\cup T)$ and $\eta=\max_{v\notin A}x_v$. By Theorem~\ref{thm:jle1stab} the coordinates on $A$ tend to $\bigl(\frac{k-1}{k(s-1)}\bigr)^{1/k}$, $\eta\to0$ and $|R|=o(n)$.
	
	\begin{lem}\label{lem:Aclique}
		For all sufficiently large $n$, $A$ induces a clique.
	\end{lem}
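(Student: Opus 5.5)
Suppose, for a contradiction, that some pair $u_1,u_2\in A$ is nonadjacent, and compare the total clique weight at the Perron vector with the limiting value $c/k$ from Theorem~\ref{thm:jle1stab}. The leading term of $\rho(\CC_k(G))/(kn^{(k-1)/k})$ comes from $k$-cliques with exactly one vertex outside $A$. For such a clique $F$ with outside vertex $v\in T$, the set $F\cap A$ is a $(k-1)$-subset of $A$ that spans a clique of $G$. If $u_1u_2\notin E(G)$, then every $(k-1)$-subset containing both $u_1$ and $u_2$ is excluded. These subsets exist because $k-1\ge2$ and $s-1\ge k-1$. For $v\in T$ this gives
\[
q_v\le e_{k-1}(x_A)-x_{u_1}x_{u_2}e_{k-3}\bigl(x_{A\setminus\{u_1,u_2\}}\bigr)\le Q-\delta,
\]
where $\delta>0$ is a constant, since all coordinates on $A$ tend to the positive limit $\bigl(\frac{k-1}{k(s-1)}\bigr)^{1/k}$. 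For $v\in R$ we still have $q_v\le Q$, except for the boundedly many ambiguous vertices, which contribute $O(1)$.

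The main step repeats the H\"older computation from the proof of Corollary~\ref{cor:Krstab} and Theorem~\ref{thm:jle1stab}, with $a$ replaced by $s-1$. The weight of cliques with at least two vertices outside $A$ is $O(\eta n^{(k-1)/k})=o(n^{(k-1)/k})$ by the degeneracy argument of Lemma~\ref{lem:tail}, and cliques inside $A$ contribute $O(1)$. Hence
\[
\frac{\rho(\CC_k(G))}{kn^{(k-1)/k}}\le W^{1/k}\Bigl(\frac{|T|}{n}(Q-\delta)^{k/(k-1)}+\frac{|R|}{n}Q^{k/(k-1)}\Bigr)^{(k-1)/k}+o(1),
\]
with $W\to1/k$ and $Q\to Q_0$. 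Since $|R|=o(n)$, the right side tends to at most $W^{1/k}(Q_0-\delta)<c/k$. This contradicts the lower bound $\rho\ge(c-o(1))n^{(k-1)/k}$, which holds by extremality and Corollary~\ref{cor:lb}.

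The only delicate point is to check that $\delta$ really is a positive constant independent of $n$. This needs $e_{k-3}$ of the remaining $s-3$ coordinates to be bounded below. That holds because $k-3\le s-3$ and those coordinates converge to a positive limit; in the case $k=3$ we have $e_0=1$. One also has to make sure the $o(1)$ error terms are uniform, but they are already supplied by Theorem~\ref{thm:jle1stab}. So no real obstacle remains beyond bookkeeping.
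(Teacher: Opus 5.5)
Your proof is correct and follows the paper's own argument. The missing edge $u_1u_2$ removes the uniformly positive amount $x_{u_1}x_{u_2}e_{k-3}(x_{A\setminus\{u_1,u_2\}})$ from the coefficients $q_v$, and H\"older together with Lemma~\ref{lem:tail} (at threshold $\eta$) then pushes $\rho/(kn^{(k-1)/k})$ strictly below $c/k$. The only difference is cosmetic: the excluded $(k-1)$-sets lie inside $A$, so the paper applies the deficit to every $v\notin A$, including $v\in R$; this makes your split into $T$ and $R$, the appeal to $|R|=o(n)$, and the discussion of ambiguous vertices unnecessary (ambiguous vertices do not arise once the hub set is exactly $A$).
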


	\begin{proof}
		Suppose, along an infinite sequence of orders, that $a,a'\in A$
		are not adjacent.  Put $\alpha=(k-1)/k$,
		$Q=e_{k-1}(x_A)$, and $W=\sum_{v\notin A}x_v^k$.
		By Theorem~\ref{thm:jle1stab}, all coordinates on $A$ tend to
		$z_0=((k-1)/(k(s-1)))^{1/k}>0$, and
		$\eta=\max_{v\notin A}x_v\to0$.
		The coefficient $q_v$ for a clique with its unique light vertex at
		$v\notin A$ is the sum over $(k-1)$-cliques of $G[A\cap N(v)]$.
		No such clique contains both $a$ and $a'$, and therefore, for every
		$v\notin A$,
		\[
		q_v\le Q-\omega_n,
		\qquad
		\omega_n=x_a x_{a'}e_{k-3}(x_{A\setminus\{a,a'\}})
		\longrightarrow \binom{s-3}{k-3}z_0^{k-1}>0.
		\]
		Here $e_0=1$, covering $k=3$.
		The cliques inside $A$ have bounded total weight, and
		Lemma~\ref{lem:tail}, with threshold $\eta$, makes the total weight
		of cliques with at least two vertices outside $A$ equal to $o(n^\alpha)$.
		Consequently H\"older's inequality gives
		\[
		\frac{\rho(\CC_k(G))}{k n^\alpha}
		\le (Q-\omega_n)W^{1/k}+o(1).
		\]
		But $W\to1/k$ and
		$Q\to\binom{s-1}{k-1}z_0^{k-1}$, so
		$QW^{1/k}\to c/k$, where $c$ is the sharp leading constant in
		Theorem~\ref{thm:jle1stab}.
		The positive limit of $\omega_n$ contradicts the lower bound
		$\rho(\CC_k(G))\ge(c-o(1))n^\alpha$ for an extremal graph.
		Thus $A$ induces a clique for all sufficiently large $n$.
	\end{proof}

	\begin{prop}\label{prop:Rempty2}
		$R=\emptyset$ for all sufficiently large $n$.
	\end{prop}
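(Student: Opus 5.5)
We argue as in the proof of Theorem~\ref{thm:Kr}, comparing $\CC_k(G)$ with a modified graph at the same Perron vector $x$. This regime is easier than $j\ge2$ because a vertex $r\in R$ loses a constant amount of leading-order weight. The vertex $r$ misses some $u\in A$, so every $k$-clique through $r$ with exactly one light vertex avoids $u$. Hence $q_r\le Q-\delta$ with $\delta=\tfrac12\binom{s-2}{k-2}z_0^{k-1}>0$ for large $n$, where $Q=e_{k-1}(x_A)$ and $z_0$ is the limit of the hub coordinates. Lemma~\ref{lem:Aclique} shows that $A$ is a clique, so every $(k-1)$-subset of $A$ together with a vertex of $T$ is a clique.

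\emph{Step 1: the modified graph.} Let $G'$ be obtained from $G$ by deleting all edges at the vertices of $R$. If $|R|\le t-1$, make $R$ into a set of isolated vertices, or add $R$ to a small component of $G[T]$, so that every component has at most $t$ vertices. Then join $R$ to $A$. Precisely: take $G'=K_{s-1}\vee(G[T]\cup\overline{K_{|R|}})$ on the same vertex set. We must check that $G'$ is $K_{s,t}$-minor-free. By Lemma~\ref{lem:join}, this reduces to the condition that $G[T]\cup\overline{K_{|R|}}$ has no $K_{r+1,t-r}$ minor for $0\le r\le s-1$. Adding isolated vertices cannot create such a minor, and $K_{s-1}\vee G[T]=G[A\cup T]$ is a subgraph of $G$, so $G[T]$ has none. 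Hence $G'$ is admissible. This is the main reason for choosing isolated new light vertices rather than attaching them to blocks.

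\emph{Step 2: weight comparison at $x$.} \emph{Gain.} Each $r\in R$ lies in the $\binom{s-1}{k-1}$ cliques $\{r\}\cup A'$ with $A'\in\binom{A}{k-1}$ in $G'$. Its old one-light-vertex weight was at most $Q-\delta$. So $G'$ gains at least $\delta\sum_{r\in R}x_r$ on these cliques, up to a correction from the cliques inside $A$, which are unchanged.

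\emph{Loss.} We lose the cliques meeting $R$ that contain at least two vertices outside $A$. As in the proof of Theorem~\ref{thm:Kr}, charge each such clique to a vertex of $R$ via a degeneracy ordering of $G-A$. We use three facts: each $r\in R$ has at most $t-1$ neighbors in $T$ by Lemma~\ref{lem:common}, its $R$-neighbours are handled by degeneracy, and coordinates outside $A$ are at most $\eta$. These give a loss of at most $C_0\eta\sum_{r\in R}x_r$. The case $w\in T$ needs care: there the clique is charged to an $R$-vertex of $Q\setminus\{w\}$. This works since $w$ is among the at most $t-1$ $T$-neighbours of that vertex.

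\emph{Conclusion.} Since $\eta\to0$, we get $P_{G'}(x)>P_G(x)$ whenever $\sum_Rx_r>0$, which contradicts extremality. If $x_R\equiv0$ but $R\ne\emptyset$, perturb to $x+\varepsilon\mathbf 1_R$ exactly as in Theorem~\ref{thm:Kr}. The gain is linear in $\varepsilon$ while the norm increase is $O(\varepsilon^k)$, which again gives $\rho(\CC_k(G'))>\rho(\CC_k(G))$.

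The main obstacle is the loss estimate, specifically bounding cliques that meet $R$ but have their earliest vertex in $T$ or in $A$. Here the uniform bound $\eta$ must be combined with a bounded multiplicity per vertex of $R$. Unlike the $K_t$ case, $T$ is not independent, but $\Delta(G[T])\le t-1$ by Corollary~\ref{cor:deg}. This still keeps the number of cliques charged to each $r\in R$ through $T$ bounded by a constant depending on $k,s,t,d$, which is what the argument needs.
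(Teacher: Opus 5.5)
Your argument is correct, but it is organized differently from the paper's proof.

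\emph{The paper's route.} It operates on a single vertex. Using degeneracy of $G[R]$, it picks $r\in R$ with $|N(r)\cap R|\le d$, so that $|N(r)\setminus A|\le t-1+d$, and rewires only $r$ to $A$. Minor-freeness comes from the clique-sum Lemma~\ref{lem:cliquesum}, gluing $G-r$ and $K_s$ along the clique $A$. The loss is then at most $\binom{s+t+d}{k-1}\eta\,x_r$, because only boundedly many cliques pass through $r$.

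\emph{Your route.} You replace all of $R$ at once by $K_{s-1}\vee(G[T]\cup\overline{K_{|R|}})$. Your minor-freeness check through Lemma~\ref{lem:join} is sound. Once $A$ is a clique, $G[A\cup T]=K_{s-1}\vee G[T]$, and isolated vertices cannot enter a model of the connected graph $K_{r+1,t-r}$. You then need the global charging estimate of Theorem~\ref{thm:Kr}, adapted so that a clique whose earliest vertex $w$ lies in $T$ is charged to an $R$-vertex adjacent to $w$. This charging is valid with multiplicity at most $(t-1)2^{d+s-1}$ per vertex of $R$. It uses only $|N(r)\cap T|\le t-1$ and $|N^{+}(w)|\le d$, so Corollary~\ref{cor:deg} is not actually needed. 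Since you order $G-A$, the earliest vertex is never in $A$, so the case you flag in $A$ does not arise.

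\emph{Comparison.} The paper's route is shorter, because the bounded-degree choice of $r$ makes the loss estimate trivial. Yours costs the charging argument. In exchange, it shows in one step that replacing all of $R$ gains at least $(\delta-C_0\eta)\sum_{r\in R}x_r$.

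\emph{One correction.} Drop the hedged sentence about $|R|\le t-1$ and adding $R$ to a small component of $G[T]$. The explicit $G'$ you then define is admissible for every $|R|$, whereas that alternative is not justified.
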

	
	\begin{proof}
		Suppose $R\ne\emptyset$. Since $G$ is $d$-degenerate, $G[R]$ has a vertex $r$ with $|N(r)\cap R|\le d$; and $|N(r)\cap T|\le t-1$ by Lemma~\ref{lem:common} applied to the $s$ vertices of $A\cup\{r\}$. Let $G'$ be obtained from $G$ by deleting all edges at $r$ and joining $r$ to every vertex of $A$. By Lemma~\ref{lem:Aclique} the graph $G'$ is the clique-sum of $G-r$ and $K_s$ along the clique $A$, whose order $s-1$ is smaller than the connectivity $s$ of $K_{s,t}$. Both parts are $K_{s,t}$-minor-free, so $G'$ is $K_{s,t}$-minor-free by Lemma~\ref{lem:cliquesum}.
		
		The $k$-cliques avoiding $r$ are the same in $G$ and $G'$. Those containing $r$ have total weight $x_r\Sigma_r$ in $G$ and $x_r\,e_{k-1}(x_A)$ in $G'$, where $\Sigma_r=\sum_{F\ni r}\prod_{u\in F\setminus\{r\}}x_u$. Now
		\[
		\Sigma_r
		\le e_{k-1}\bigl(x_{A\cap N(r)}\bigr)
		+\binom{s+t+d}{k-1}\eta.
		\]
		since a $k$-clique through $r$ that is not contained in $A\cup\{r\}$ has a vertex of $N(r)\setminus A$, and $|N(r)\setminus A|\le t-1+d$. As $r\notin T$ it misses some $a_0\in A$, so
		\[
		e_{k-1}(x_A)-e_{k-1}\bigl(x_{A\cap N(r)}\bigr)\ \ge\ x_{a_0}e_{k-2}\bigl(x_{A\setminus\{a_0\}}\bigr)\ \ge\ c_0>0,
		\]
		because $\binom{s-2}{k-2}>0$ and the coordinates on $A$ tend to a positive limit. Hence $e_{k-1}(x_A)-\Sigma_r\ge c_0-C\eta>0$ for large $n$.
		
		If $x_r>0$ this gives $P_{G'}(x)>P_{G}(x)$ and therefore $\rho(\CC_k(G'))>\rho(\CC_k(G))$. If $x_r=0$, put $z=x+\varepsilon\mathbf 1_{\{r\}}$; then $P_{G'}(z)\ge P_G(x)+\varepsilon\,e_{k-1}(x_A)$ while $\|z\|_k^{k}=1+\varepsilon^{k}$, so that $\rho(\CC_k(G'))\ge k\bigl(P_G(x)+\varepsilon c_0\bigr)/(1+\varepsilon^{k})>\rho(\CC_k(G))$ for small $\varepsilon$, since $k\ge2$. Either way extremality is contradicted.
	\end{proof}
	
	For the remainder of this subsection we determine the extremal hypergraph for \emph{every} residue of $h$ modulo $t$. Write $\mathcal A_{s,t}$ for the class of admissible graphs, that is, of graphs $H$ for which $K_{s-1}\vee H$ is $K_{s,t}$-minor-free. By Lemma~\ref{lem:join}, $\mathcal A_{s,t}$ consists of the graphs with no $K_{a,t+1-a}$ minor for $1\le a\le s$; it is closed under minors and disjoint unions, and by the case $a=1$ every member has maximum degree at most $t-1$. Put $q=\lfloor\frac{t+1}{s+1}\rfloor$ and $E=\binom t2$.

	At order $t+1$ admissibility is completely explicit. The next two statements are Lemmas~3.1 and~3.2 of~\cite{ZL}, where the criterion is stated with $\gamma=\min\{s,\lfloor\frac{t+1}{2}\rfloor\}$ in place of $s$; the two forms are equivalent because $K_{a,b}\cong K_{b,a}$. We include the short proofs in the form used here.
	
	\begin{lem}[Zhai--Lin~\cite{ZL}]\label{lem:tplus1}
		Let $|F|=t+1$. Then $F\in\mathcal A_{s,t}$ if and only if every component of the complement $\overline F$ has at least $s+1$ vertices.
	\end{lem}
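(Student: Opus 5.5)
We use the description of $\mathcal A_{s,t}$ given just before the statement: by Lemma~\ref{lem:join}, $F\in\mathcal A_{s,t}$ if and only if $F$ has no $K_{a,t+1-a}$ minor for any $1\le a\le s$. Since $|F|=t+1$, such a minor uses exactly $t+1$ branch sets. So every branch set is a single vertex, and no contraction is possible. Thus a $K_{a,t+1-a}$ minor of $F$ is the same as a spanning $K_{a,t+1-a}$ subgraph. Equivalently, $V(F)$ splits into parts $X,Y$ with $|X|=a$, $|Y|=t+1-a$ and every $X$--$Y$ pair adjacent in $F$. The plan is to translate this into a statement about $\overline F$.

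\emph{Translation.} All $X$--$Y$ pairs are edges of $F$ exactly when $\overline F$ has no edge between $X$ and $Y$, that is, when $X$ is a union of components of $\overline F$. Hence $F\notin\mathcal A_{s,t}$ if and only if some union of components of $\overline F$ has size $a$ with $1\le a\le s$ and complement size $t+1-a\ge1$. Since $s\le t$, the complement condition is automatic.

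\emph{Sufficiency.} Suppose every component of $\overline F$ has at least $s+1$ vertices. Then any nonempty union of components has size at least $s+1$, so no $a\in[1,s]$ is realized, and $F$ is admissible.

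\emph{Necessity.} Suppose some component $D$ of $\overline F$ has $|D|\le s$. Take $X=D$ and $a=|D|$. Then $1\le a\le s\le t<t+1$, so $Y=V(F)\setminus D$ is nonempty. This gives a spanning $K_{a,t+1-a}$ in $F$, so $F\notin\mathcal A_{s,t}$.

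The only step needing care is the claim that a minor on exactly $|V(F)|$ branch sets must use singletons. This holds because the branch sets are disjoint, nonempty, and there are $t+1=|V(F)|$ of them. The same point explains why the equivalent $\gamma$-form of the criterion holds: one may swap $X$ and $Y$ and use $K_{a,b}\cong K_{b,a}$.
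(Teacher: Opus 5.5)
Your proof is correct and follows essentially the same route as the paper. You reduce via Lemma~\ref{lem:join} to the forbidden minors $K_{a,t+1-a}$ with $1\le a\le s$, note that on $t+1$ vertices such a minor is a spanning subgraph, and translate this into some union of components of $\overline F$ having exactly $a$ vertices.
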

	
	\begin{proof}
		Each forbidden minor $K_{a,t+1-a}$ has exactly $t+1$ vertices, so a model of it in $F$ uses every vertex in a singleton branch set; that is, $F$ has a $K_{a,t+1-a}$ minor if and only if it has one as a spanning subgraph. A spanning $K_{a,t+1-a}\subseteq F$ is the same as $\overline F\subseteq K_a\cup K_{t+1-a}$, that is, a partition of $V(F)$ into parts of sizes $a$ and $t+1-a$ with no edge of $\overline F$ between them; such a partition exists if and only if some union of components of $\overline F$ has size $a$. Ranging over $1\le a\le s$ gives the statement.
	\end{proof}
	
	\begin{prop}[Zhai--Lin~\cite{ZL}]\label{prop:tplus1}
		Let $|F|=t+1$ and $F\in\mathcal A_{s,t}$. Then
		\[
		e(F)\ \le\ \binom t2+\Bigl\lfloor\frac{t+1}{s+1}\Bigr\rfloor-1,
		\]
		with equality when $\overline F$ is a disjoint union of $\lfloor\frac{t+1}{s+1}\rfloor$ stars, each on at least $s+1$ vertices. Consequently $e(F)>e(K_t\cup K_1)=\binom t2$ is possible if and only if $t\ge2s+1$.
	\end{prop}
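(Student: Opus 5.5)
The plan is to count edges through the complement. Since $|F|=t+1$, we have $e(F)=\binom{t+1}{2}-e(\overline F)=\binom t2+t-e(\overline F)$. So the bound is equivalent to
\[
e(\overline F)\ \ge\ t+1-\Bigl\lfloor\frac{t+1}{s+1}\Bigr\rfloor .
\]
By Lemma~\ref{lem:tplus1}, every component of $\overline F$ has at least $s+1$ vertices. Let $c$ be the number of components of $\overline F$. Each component is connected, so a component on $n_i$ vertices has at least $n_i-1$ edges, and therefore $e(\overline F)\ge(t+1)-c$. Since the components are disjoint, each has at least $s+1$ vertices, and they cover all $t+1$ vertices, we get $c(s+1)\le t+1$, that is, $c\le\lfloor\frac{t+1}{s+1}\rfloor$. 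This gives the displayed inequality and hence the upper bound on $e(F)$.

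For equality, we need $c=\lfloor\frac{t+1}{s+1}\rfloor$ with each component a tree on at least $s+1$ vertices. A disjoint union of $q$ stars, each on at least $s+1$ vertices and together covering $t+1$ vertices, is one such configuration, and it exists because $q(s+1)\le t+1$. Lemma~\ref{lem:tplus1} shows that $F$ is admissible in this case, and its edge count is $\binom t2+t-(t+1-q)=\binom t2+q-1$. The statement only claims equality for star complements, so this direction is enough; any spanning forest with $q$ components of size at least $s+1$ would work equally well.

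For the consequence, note that $e(F)>\binom t2$ is possible if and only if $q-1\ge1$, that is, $\lfloor\frac{t+1}{s+1}\rfloor\ge2$, which is equivalent to $t+1\ge2s+2$, i.e.\ $t\ge2s+1$. When this holds, the star construction attains $\binom t2+q-1>\binom t2$. Conversely, when $t\le2s$ we have $q=1$, so $e(F)\le\binom t2$.

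There is no real obstacle here. The only points needing care are the direction of the floor inequality and the fact that $K_t\cup K_1$ has complement $K_{1,t}$, a single star on $t+1\ge s+1$ vertices, so it is admissible and reaches the bound when $q=1$.
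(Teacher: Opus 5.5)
Your proposal is correct and follows the paper's proof essentially step for step. Both arguments use Lemma~\ref{lem:tplus1} to get at most $q=\lfloor\frac{t+1}{s+1}\rfloor$ components in $\overline F$, hence at least $t+1-q$ complement edges. Both then use a star forest for the equality case and the observation that $q\ge2$ if and only if $t\ge2s+1$.
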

	
	\begin{proof}
		By Lemma~\ref{lem:tplus1} every component of $\overline F$ has at least $s+1$ vertices, so $\overline F$ has at most $q=\lfloor\frac{t+1}{s+1}\rfloor$ components and hence at least $(t+1)-q$ edges. Thus $e(F)=\binom{t+1}2-e(\overline F)\le\binom{t+1}2-(t+1)+q=\binom t2+q-1$, and a disjoint union of $q$ stars with parts of size at least $s+1$ attains it. Finally $q\ge2$ if and only if $t+1\ge2(s+1)$.
	\end{proof}
	
	We also use two results on connected graphs with no $K_{1,t}$ minor. The first is an edge bound for connected graphs with no $K_{1,t}$ minor, due to Ding, Johnson and Seymour~\cite{DJS}; recall from Lemma~\ref{lem:join} that admissibility implies $K_{1,t}$-minor-freeness.

	\begin{lem}[Ding--Johnson--Seymour]\label{lem:djs}
		If $J$ is connected, has no $K_{1,t}$ minor and $|V(J)|\ge t+2$, then $e(J)\le E+|V(J)|-t$, where $E=\binom t2$.
	\end{lem}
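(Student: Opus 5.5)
The statement is the edge bound of Ding, Johnson and Seymour~\cite{DJS}, and in the paper it is enough to cite it. If I had to prove it directly, I would use the standard reformulation: a connected graph $J$ has a $K_{1,t}$ minor if and only if some connected vertex set $X$ has $|N(X)\setminus X|\ge t$, equivalently if and only if $J$ has a subtree with $t$ leaves. So the hypothesis says that every connected set has at most $t-1$ outside neighbours; taking $X=\{v\}$ gives $\Delta(J)\le t-1$. The plan is induction on $n=|V(J)|$, contracting edges. Contraction preserves connectivity and $K_{1,t}$-minor-freeness, and contracting an edge $uv$ removes exactly $1+|N(u)\cap N(v)|$ edges. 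So if $n\ge t+3$ and $J$ has an edge lying in no triangle, contracting it gives a graph on $n-1\ge t+2$ vertices, and the induction hypothesis gives $e(J)\le E+(n-1)-t+1=E+n-t$, as required.

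It remains to treat two situations: every edge of $J$ lies in a triangle, and the base case $n=t+2$. For both, I would take a connected set $X$ with $|N(X)\setminus X|$ as large as possible, and among those choose $X$ with $|X|$ as large as possible. Put $Y=N(X)\setminus X$, so $|Y|\le t-1$.

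\emph{Claim:} every vertex outside $X\cup Y$ has at most one neighbour in $Y$, and every $y\in Y$ has at most one neighbour outside $X\cup Y$. \emph{Reason:} otherwise, adding to $X$ a suitable vertex of $Y$ keeps $X$ connected and does not decrease the number of outside neighbours. This contradicts the choice of $X$. These are the same exchange arguments as in Lemma~\ref{lem:Krstruct}.

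From the claim, the edges split into three parts. First, the edges inside $X\cup Y$ minus those inside $X$; I would bound these by comparing with $\binom{|Y|+1}{2}$ after contracting $X$ to a single vertex. Second, the edges inside $X$. Third, a sparse remainder attached to $Y$ through single edges, which contributes at most one edge per vertex beyond a bounded part. For the remainder and for $X$ I would apply induction to the pieces. When every edge lies in a triangle, the claim forces the remainder to be empty, which pins $J$ down as a graph on at most $t+1$ vertices. That contradicts $n\ge t+2$ unless the count is already within the bound.

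I expect the main obstacle to be the base case $n=t+2$ together with the triangle-rich case. There the bound $E+2$ has to be extracted from $\Delta\le t-1$ and the leaf condition alone, in the spirit of Proposition~\ref{prop:tplus1}. My first attempt would be to look at the complement $\overline J$. The minor-freeness condition on $t+2$ vertices says that for every vertex $v$ and every neighbour $w$ of $v$, the closed neighbourhood $N[\{v,w\}]$ misses at least two vertices, and this gives lower bounds on $e(\overline J)$. I would check the target $e(\overline J)\ge\binom{t+2}{2}-E-2=2t-1$ by a case analysis on the minimum degree of $\overline J$. If this case analysis becomes unwieldy, I would fall back on citing~\cite{DJS}, as the paper does.
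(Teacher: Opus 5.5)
Citing Ding--Johnson--Seymour is exactly what the paper does. Lemma~\ref{lem:djs} is quoted with attribution and no proof, so your first sentence is all that is needed.

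The self-contained sketch, however, does not work as written. Contracting an edge that lies in no triangle is a sound reduction. The two remaining cases carry all of the difficulty, though, and the steps you propose for them fail.

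In the triangle-rich case, the conclusion that $J$ has at most $t+1$ vertices is false.
\begin{itemize}
\item Take the $4\times4$ rook's graph (cells of a $4\times4$ board, adjacent when in the same row or column) with $t=13$. It is connected, every edge lies in a row or column $K_4$, and it has $16=t+3$ vertices. Any three cells leave some row and some column uncovered, so every connected set has at most $12$ outside neighbours.
\item At order $t+2$, the paper's own $\overline P$ with $t=8$ is triangle-rich (it is the line graph of $K_5$) and attains $E+2$.
\end{itemize}

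In fact your exchange argument proves more than your Claim. A vertex of $Y$ with any neighbour outside $X\cup Y$ can be added to $X$ without decreasing $|N(X)\setminus X|$. So your choice of $X$ always gives $V(J)=X\cup Y$, already for a long path. The ``remainder'' is therefore always empty and says nothing about $n$.

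Everything then sits in $J[X]$ and the edges from $X$ to $Y$, with $|X|$ unbounded. Inducting on $J[X]$ and then adding the edges at $Y$ pays for a $\binom{|Y|}{2}$-type term on top of $E$. Contracting an edge that lies in $c\ge1$ triangles instead loses $1+c$ edges, which leaves the induction short by $c$. Handling this case is the real content of the theorem you cite.

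In the base case $n=t+2$, the condition is misstated. $K_{1,t}$-minor-freeness says that $N[v]$ misses at least two vertices, but $N[\{v,w\}]$ need only miss one. In $\overline P$ with $t=8$ it misses exactly one, as computed in the proof of Lemma~\ref{lem:leaf}.

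With the correct conditions, $\overline J$ has minimum degree at least $2$, and any two nonadjacent vertices of $\overline J$ have a common neighbour. The target $e(\overline J)\ge 2t-1=2|V(J)|-5$ is then the classical Erd\H{o}s--R\'enyi bound for diameter-two graphs with no dominating vertex; the Petersen graph is among its extremal graphs. That bound is true, but it is a theorem in its own right, not a short case analysis on $\delta(\overline J)$.

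So your proposal is sound only through the citation.
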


	The second is the equality case of Lemma~\ref{lem:djs} at order $t+2$, which extends~\cite[Lemma~3.4]{ZL}; there it is proved under the assumption $q\le2$, which we do not impose. For nonnegative integers $e,f$ with $e+f=t-1$, let $J_{0,e,f}$ be obtained from a $(t-1)$-clique partitioned into sets of orders $e$ and $f$ by adding a path $uwv$ and joining $u$ to the first set and $v$ to the second, with no other edges between the path and the clique. For nonnegative integers $a_1,a_2,a_3$ with $a_1+a_2+a_3=t-2$, let $J'_{a_1,a_2,a_3}$ be obtained from a $(t-2)$-clique partitioned into sets $B_1,B_2,B_3$ of these orders by adding a claw with center $z$ and leaves $z_1,z_2,z_3$, and joining every vertex of $B_i$ to the two leaves other than $z_i$. Let $\overline P$ denote the complement of the Petersen graph $P$, that is, the line graph of $K_5$; it is $6$-regular with ten vertices, thirty edges and thirty triangles.

	\begin{lem}\label{lem:tplus2}
		Let $F$ be connected and admissible with $|V(F)|=t+2$ and $e(F)=E+2$. Then $F$ is isomorphic to $J_{0,e,f}$ with $e,f\ge s$, to $J'_{a_1,a_2,a_3}$ with $a_1,a_2,a_3\ge s$, or to $\overline P$ with $t=8$.
	\end{lem}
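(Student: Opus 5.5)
Since $F$ is admissible, it has no $K_{1,t}$ minor, so $\Delta(F)\le t-1$. Put $n_F=t+2$. Then $e(F)=\binom t2+2$ gives $\sum_v d(v)=t^2-t+4$. The complement $\overline F$ has $\binom{t+2}2-\binom t2-2=2t-1$ edges, and every vertex has degree at least $2$ in $\overline F$. So the degree sum of $\overline F$ is $4t-2=2(t+2)-6$, and the total excess $\sum_v(\bar d(v)-2)$ is only $2$. Hence $\overline F$ has one of three degree sequences: one vertex of degree $4$ and the rest of degree $2$; two vertices of degree $3$ and the rest of degree $2$; or all degrees $2$ apart from a constant-size exception. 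I would verify the third option by recomputing, since with $\bar d\ge2$ it collapses into the first two. The plan is to enumerate the possible graphs $\overline F$ and test each against admissibility via Lemma~\ref{lem:join}, that is, against $K_{a,t+1-a}$ minors with $1\le a\le s$.

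\emph{Step 1 (contraction reduction).} The key tool is that contracting an edge $xy$ of $F$ gives a graph $F/xy$ on $t+1$ vertices, which is again admissible because $\mathcal A_{s,t}$ is minor-closed. Lemma~\ref{lem:tplus1} then requires every component of $\overline{F/xy}$ to have at least $s+1$ vertices. In $\overline{F/xy}$ the merged vertex is adjacent to $w$ exactly when $w$ is a common non-neighbour of $x,y$ in $F$, so $\overline{F/xy}$ equals $\overline F-\{x,y\}$ plus a vertex joined to $N_{\overline F}(x)\cap N_{\overline F}(y)$. Likewise, deleting a vertex $v$ gives $F-v$ with complement $\overline F-v$, and it too must be admissible. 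This forces every component of $\overline F-v$ to have at least $s+1$ vertices for every $v$. These constraints, applied to all edges $xy$ of $F$, should rule out most configurations of $\overline F$.

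\emph{Step 2 (case analysis on $\overline F$).} $\overline F$ is a graph of minimum degree $2$ with excess $2$. Structurally it is a union of cycles together with either one vertex of degree $4$ (a figure-eight, or a cycle through it) or two vertices of degree $3$ (a theta graph, a dumbbell, or a handcuff), with the other components being cycles. First I would show $\overline F$ is connected: with a cycle component $C$, contracting an edge $xy$ with $x\in C$ and $y$ in another component (such an edge is present in $F$) merges into a vertex adjacent only to $N_{\overline F}(x)\cap N_{\overline F}(y)=\emptyset$, an isolated vertex of $\overline{F/xy}$, contradicting Lemma~\ref{lem:tplus1}. In general any edge $xy$ of $F$ with no common $\overline F$-neighbour is forbidden. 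So every pair of vertices of $\overline F$ must be either adjacent or have a common neighbour, meaning $\overline F$ has diameter at most $2$. A diameter-$2$ graph with minimum degree $2$ and only $2t-1$ edges is very restricted.
\begin{itemize}
\item Theta graphs and dumbbells of diameter $2$ are tiny.
\item The figure-eight needs both cycles of length at most $5$ through the centre.
\item The candidates $J_{0,e,f}$ and $J'$ correspond to complements in which the path or claw vertices play these roles.
\end{itemize}
I expect $\overline F$ to be a small diameter-$2$ graph plus suitable structure, and matching against the complements of $J_{0,e,f}$ (a $C_?$-like structure with two high-degree vertices) and $J'$ is routine. The sizes $e,f\ge s$ and $a_i\ge s$ come from applying Lemma~\ref{lem:tplus1} to $F-v$ or $F/xy$ for the appropriate $v$ or edge. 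Petersen, with $t=8$ and $10$ vertices, appears as the unique diameter-$2$ exceptional case among the $3$-regular-like options. Since Petersen is $3$-regular with $15$ edges and $2t-1=15$, I would recheck that the degree-sum bookkeeping permits it. That suggests my excess count is off, so I must redo it carefully: with $\bar d\ge2$ only for non-dominating vertices in $F$, $\bar d(v)=t+1-d(v)\ge2$, and the excess is $2t-1-(t+2)=t-3$, not $2$.

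\emph{Main obstacle.} Correcting this, the excess is $t-3$, so $\overline F$ is a graph on $t+2$ vertices with $2t-1$ edges, minimum degree at least $2$, and diameter at most $2$ by Step~1. Classifying such graphs is the real work. I would use the Moore-type bound that a diameter-$2$ graph with $m\le 2n-5$ edges is very constrained; Erdős–Rényi type results give the minimum number of edges of a diameter-$2$ graph with no dominating vertex as $2n-5$, with extremal graphs including the Petersen graph. I would adapt the proof of that classification, via the stronger condition from contraction that common neighbourhoods must make each $\overline{F/xy}$ have large components, to pin down exactly the three families.
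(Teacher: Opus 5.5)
Your reduction is correct and matches the paper's first step. If two vertices nonadjacent in $Y=\overline F$ had no common neighbour in $Y$, contracting the corresponding edge of $F$ would leave a vertex isolated in $\overline{F/xy}$ (equivalently, a $K_{1,t}$ minor), so $Y$ has diameter at most $2$. Since $F$ is connected, $Y$ has no dominating vertex. With $n=t+2$ you then correctly note that $e(Y)=2t-1=2n-5$ is exactly the Erd\H{o}s--R\'enyi minimum.

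From this point on, however, the proposal contains no argument. The classification of such $Y$, which you yourself call ``the real work'', is replaced by ``I would adapt the proof of that classification''. The bound $2n-5$ by itself says nothing about which graphs attain it, so nothing you have written forces $Y$ to be the complement of $J_{0,e,f}$, of $J'_{a_1,a_2,a_3}$, or the Petersen graph. The Step~2 enumeration of theta graphs, figure-eights and dumbbells rests on your erroneous excess count and must be discarded. The true excess $\sum_v(d_Y(v)-2)=2t-6$ grows with $t$, so no finite list of degree patterns is available. If you want to cite a published characterization of diameter-two graphs with no dominating vertex and $2n-5$ edges, you have to state it exactly and match its members to these three complements.

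The paper supplies the missing classification with a vertex-cut and edge-counting argument.
\begin{itemize}
\item A cut vertex of a graph of diameter two is adjacent to all other vertices, so it would be isolated in $F$. Hence a minimum vertex cut $S$ of $Y$ satisfies $2\le|S|\le\delta(Y)\le2e(Y)/|V(Y)|<4$.
\item Counting the edges inside the components of $Y-S$ and from them to $S$ against the budget $2t-1$ leaves no slack.
\item For $|S|=2$, this makes one component a tree each of whose vertices has exactly one neighbour in $S$, and every other component a single vertex joined to both vertices of $S$. The diameter condition then forces the tree to be a star or a double star, which gives $J_{0,e,f}$ and $J'_{a_1,a_2,a_3}$.
\item For $|S|=3$, it makes one component a cycle whose vertices each have one neighbour in $S$, together with one vertex joined to all of $S$. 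The diameter condition forces a $6$-cycle whose opposite vertices share their neighbour in $S$. That is the Petersen graph, so $t=8$.
\end{itemize}

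Your remark that $e,f\ge s$ and $a_i\ge s$ follow from Lemma~\ref{lem:tplus1} applied to a contraction is right (contract an edge of the path $uwv$ or an edge $zz_i$). It can only be used, though, once the three families are in hand.
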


	\begin{proof}
		Let $Y=\overline F$, so $|V(Y)|=t+2$ and $e(Y)=2t-1$. If two nonadjacent vertices of $Y$ had no common neighbor in $Y$, they would be adjacent in $F$ and together adjacent in $F$ to all other $t$ vertices, and contracting their edge would give a $K_{1,t}$ minor. Hence $Y$ is connected with diameter at most $2$. A cut vertex of a graph of diameter two is adjacent to all other vertices, and would be isolated in $F$; so a minimum vertex cut $S$ of $Y$ has $|S|\ge2$, and $|S|\le\delta(Y)\le2e(Y)/|V(Y)|<4$. Put $W=V(Y)\setminus S$ and let $T_1,\dots,T_c$ be the components of $Y-S$. By the diameter condition every vertex of $W$ has a neighbor in $S$, and by minimality every $T_i$ has a neighbor at every vertex of $S$.

		Let $W_1$ be the set of vertices of $W$ with exactly one neighbor in $S$. These lie in a single component: if two of them lay in different components, their common neighbor $u\in S$ would be adjacent to all of $W$ by the diameter condition, and deleting $u$ (when $|S|=2$), or $u$ and another vertex of $S$ of degree at least three (when $|S|=3$), would leave too few edges for the remaining graph to be connected, contradicting the minimality of $S$. Index the components so that $W_1\subseteq V(T_1)$.

		Suppose $|S|=2$, $S=\{u_1,u_2\}$. Then $|W|=t$ and counting edges inside the components and from $W$ to $S$ gives $e(Y)\ge(|T_1|-1)+|T_1|+2(t-|T_1|)=2t-1$. Equality forces $u_1u_2\notin E(Y)$, makes $T_1$ a tree all of whose vertices have exactly one neighbor in $S$, and makes every other component a single vertex adjacent to both $u_1$ and $u_2$. Label each vertex of $T_1$ by its neighbor in $S$; both labels occur. Vertices of $T_1$ at distance at least three in $T_1$ have the same label, while every vertex has a neighbor in $T_1$ with the other label, by the diameter condition applied to it and the other vertex of $S$. A path of length at least four in $T_1$ would therefore start with two vertices of the same label, the first being a leaf, a contradiction. So $T_1$ is a star or a double star. If $T_1$ is a star, its center and leaves have opposite labels, and $F\cong J_{0,e,f}$, where the two parts of the clique are the leaves of $T_1$ and the other components of $Y-S$. If $T_1$ is a double star, the leaves at its two centers have the same label and the centers have the other one, and $F\cong J'_{a_1,a_2,a_3}$.

		Suppose $|S|=3$. Then $|W|=t-1$ and $e(Y)=2|W|+1$. The edges inside $T_1$ and from $T_1$ to $S$ number at least $2|T_1|$, with equality only if $T_1$ is a cycle all of whose vertices lie in $W_1$: a tree of order at least two has leaves outside $W_1$, and a single vertex has degree at least three. Every other component of order $m$ contributes at least $3m-1$ such edges if $m\ge2$ and $3$ if $m=1$, that is, at least one more than $2m$, and at least two more if $m\ge3$. The budget $2|W|+1$ therefore forces $T_1$ to be a cycle with all its vertices in $W_1$ and exactly one other component, of order one or two. The second possibility is excluded: its two vertices would have distinct pairs of neighbors in $S$ covering $S$, and by the diameter condition every vertex of $T_1$ would have its unique neighbor in their intersection, so $T_1$ would miss a vertex of $S$. Thus the other component is a single vertex adjacent to all of $S$, and $S$ is independent. Each vertex of $S$ has at least two neighbors on the cycle, so its length is at least six, and vertices at cyclic distance at least three must have the same neighbor in $S$. For length at least seven the graph joining such pairs is connected (use steps of length three, and of length four when the length exceeds seven), which would give all cycle vertices the same neighbor. So the cycle has length six, opposite vertices have the same neighbor in $S$, and the three pairs have distinct neighbors. This graph is $P$, so $t=8$ and $F\cong\overline P$.

		Finally, contracting an edge of the path $uwv$ in $J_{0,e,f}$ gives a graph on $t+1$ vertices whose complement consists of two stars with $e+1$ and $f+1$ vertices, and contracting an edge $zz_i$ in $J'_{a_1,a_2,a_3}$ gives a graph whose complement has a component with $a_i+1$ vertices. As $F$ is admissible, so are these minors, and Lemma~\ref{lem:tplus1} gives $e,f\ge s$ and $a_i\ge s$.
	\end{proof}

	\begin{lem}\label{lem:leaf}
		Let $F$ be $K_t$, or a graph on $t+1$ vertices with $E+1$ edges, or a connected admissible graph on $t+2$ vertices with $E+2$ edges. Then every vertex of $F$ lies in a connected set with at least $t-1$ neighbors outside it. Consequently, adding a new vertex of degree one to $F$ creates a $K_{1,t}$ minor.
	\end{lem}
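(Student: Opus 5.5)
Every graph in the statement has at most $t+2$ vertices, so a connected set $X\ni v$ with $|N(X)\setminus X|\ge t-1$ forces $|X|\le 3$. I will therefore look for $X$ of size one or two in the first two cases and of size at most three in the third.

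Once such an $X$ is found, the ``consequently'' part is immediate. Let $\ell$ be a new vertex joined only to $v$. The set $X$ is connected and has at least $t-1$ neighbors outside it, and now also has $\ell$ as a neighbor. Contracting $X$ to one vertex gives a vertex adjacent to at least $t$ singleton branch sets, which is a $K_{1,t}$ minor.

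\textbf{Case $F=K_t$.} Take $X=\{v\}$; then $v$ has $t-1$ neighbors.

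\textbf{Case $|V(F)|=t+1$, $e(F)=E+1$.} Here $e(\overline F)=\binom{t+1}2-\binom t2-1=t-1$. If $\deg_F(v)\ge t-1$, take $X=\{v\}$. Otherwise $v$ has at least two non-neighbors in $F$, so $\deg_{\overline F}(v)\ge2$. I will find a neighbor $w$ of $v$ in $F$ such that $X=\{v,w\}$ works. The set $X$ misses only vertices outside $X$ that are non-adjacent in $F$ to both $v$ and $w$, that is, common $\overline F$-neighbors of $v$ and $w$. So I need $|N_{\overline F}(v)\cap N_{\overline F}(w)|\le 0$ once $|V(F)\setminus X|=t-1$ is taken into account; precisely, it suffices that $v$ and $w$ have no common $\overline F$-neighbor. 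Since $\overline F$ has only $t-1$ edges on $t+1$ vertices, I will pick $w$ as a vertex at $\overline F$-distance at least $3$ from $v$, or an isolated vertex of $\overline F$. If no such $w$ exists, every vertex lies within $\overline F$-distance two of $v$. This counting is the main point to check in this case. With only $t-1$ edges on $t+1$ vertices, $\overline F$ has at least two components, so some component avoids $v$. Any $w$ in that component is non-adjacent to $v$ in $\overline F$, hence adjacent to $v$ in $F$, and has no common $\overline F$-neighbor with $v$. This settles the case.

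\textbf{Case $|V(F)|=t+2$, connected and admissible, $e(F)=E+2$.} By Lemma~\ref{lem:tplus2}, $F$ is $J_{0,e,f}$, $J'_{a_1,a_2,a_3}$, or $\overline P$ with $t=8$, and I will check each family directly. For $\overline P$, every vertex has degree $6=t-2$. Take an adjacent pair $\{v,w\}$ in $L(K_5)$; these are two edges of $K_5$ sharing a vertex. Their union misses only the single edge of $K_5$ disjoint from both, so $|N(X)\setminus X|=7=t-1$. For $J_{0,e,f}$, a clique vertex has degree at least $t-2+1=t-1$, so $\{v\}$ works. For $u$, take $X=\{u,w\}$, whose neighbors are the first clique part and $v$; that gives only $e+1$, so instead use $X=\{u,w,v\}$, whose neighbors are the whole $(t-1)$-clique. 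The vertices $w$ and $v$ are handled by the same triple. For $J'$, a vertex of $B_i$ is adjacent to $t-3$ clique vertices and two leaves, giving degree $t-1$. For $z$ or a leaf $z_i$, take $X=\{z,z_i,z_{i'}\}$. Its neighbors are the remaining leaf and all of $B_1\cup B_2\cup B_3$, since every block is adjacent to at least one of two distinct leaves. That totals $1+t-2=t-1$. The main obstacle I expect is verifying these neighbor counts carefully for the $t+1$ case and the two path/claw families, and making sure connectivity of $X$ holds in each choice.
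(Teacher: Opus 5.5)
Your proof is correct and follows essentially the same route as the paper: a single vertex in $K_t$, the pair $\{v,w\}$ with $w$ in another component of $\overline F$ in the $(t+1)$-vertex case, and a check of the three families of Lemma~\ref{lem:tplus2} in the $(t+2)$-vertex case, with the ``consequently'' part obtained by contracting the connected set. The differences are minor: you use singletons for the clique vertices of $J_{0,e,f}$ and $J'_{a_1,a_2,a_3}$, which already have degree $t-1$, and the line-graph description of $\overline P$. The abandoned distance-three detour in the $(t+1)$-vertex case should simply be deleted.
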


	\begin{proof}
		For $K_t$ take a single vertex. If $|V(F)|=t+1$ and $e(F)=E+1$, then $\overline F$ has $t-1$ edges and is disconnected; for a prescribed vertex $x$ choose $w$ in another component of $\overline F$, so that $xw\in E(F)$ and every other vertex is adjacent in $F$ to $x$ or $w$. In the last case we use Lemma~\ref{lem:tplus2}. In $J_{0,e,f}$ the path $uwv$ has all $t-1$ clique vertices as neighbors, and a clique vertex, together with its neighbor on the path and $w$, has the other $t-2$ clique vertices and the other end of the path as neighbors. In $J'_{a_1,a_2,a_3}$ a set consisting of $z$ and two of its leaves has all $t-2$ clique vertices and the third leaf as neighbors, and such a set can be chosen to contain any prescribed vertex of the claw; and a vertex of $B_i$, together with a leaf $z_{i'}$ adjacent to it and $z$, has the other $t-3$ clique vertices and the other two leaves as neighbors. In $\overline P$, the two ends of any edge have exactly seven neighbors outside, since the corresponding nonadjacent pair of $P$ has exactly one common neighbor. In each case the new vertex of degree one adds a $t$-th outside neighbor to a connected set containing its neighbor.
	\end{proof}

	For $q\ge2$ let
	\[
	F_{s,t}=\overline{(q-1)K_{1,s}\cup K_{1,\,t-(q-1)(s+1)}},
	\]
	which by Lemma~\ref{lem:tplus1} and Proposition~\ref{prop:tplus1} is admissible on $t+1$ vertices and has the maximum number $E+q-1$ of edges. When $q=2$ the two centers of the complementary stars are adjacent in $F_{s,t}$; let $D_{s,t}$ be obtained by subdividing that edge once.

	\begin{lem}\label{lem:exadm}
		If $q=2$, then $D_{s,t}$ is admissible and has $t+2$ vertices and $E+2$ edges. If $t=8$, then $\overline P$ is admissible for every $s\le8$.
	\end{lem}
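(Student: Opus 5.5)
The plan is to verify both claims by direct computation: count vertices and edges, then check admissibility through the criterion of Lemma~\ref{lem:join}, which says there is no $K_{a,t+1-a}$ minor for $1\le a\le s$. The counting for $D_{s,t}$ is immediate. $F_{s,t}$ has $t+1$ vertices and $E+q-1=E+1$ edges when $q=2$. Subdividing the edge $c_1c_2$ between the two star centers adds one vertex $w$ and replaces one edge by two, so $D_{s,t}$ has $t+2$ vertices and $E+2$ edges. For $\overline P$ there is nothing to count: it has $10=t+2$ vertices and $30=\binom82+2$ edges.

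For admissibility of $D_{s,t}$, I would show that every graph on $t+1$ vertices obtainable as a minor of $D_{s,t}$, other than the one whose model is forced to be degenerate, is admissible by Lemma~\ref{lem:tplus1}, and then handle the full $t+2$-vertex models directly. A $K_{a,t+1-a}$ model has $t+1$ branch sets in $t+2$ vertices. So either all branch sets are singletons except one of size two, which is contracting an edge and deleting nothing, or all are singletons and one vertex is deleted.

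In the deletion case, deleting $w$ leaves $F_{s,t}$ minus the edge $c_1c_2$. That is a subgraph of $F_{s,t}$, hence admissible. Deleting any other vertex leaves a $(t+1)$-vertex subgraph containing $w$ of degree at most $2<t$. For it to contain a spanning $K_{a,t+1-a}$ we need $a\le2$ on $w$'s side, and then $w$ must be adjacent to all $t+1-a\ge t-1$ vertices on the other side, which is impossible for $t\ge 2s+1\ge5$.

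In the contraction case, contracting $c_1w$ or $wc_2$ returns $F_{s,t}$, which is admissible. Contracting any other edge $xy$ produces a graph on $t+1$ vertices in which $w$ still has degree at most $2$, so the same degree argument rules out a spanning $K_{a,t+1-a}$ with $a\le s$. The only exception is when $w$ sits on the $a$-side with $a\le 2$, and this again needs degree $\ge t-1$. This is the step I expect to need the most care: $w$'s degree can stay $2$ while the $a$-side contains $w$ together with a neighbor. In that case the constraint is on the other vertex of that side, and I would verify it using the explicit complement structure, namely that $c_i$ is nonadjacent to its $s$ star leaves.

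For $\overline P$ with $t=8$, the same reduction applies. Minors on $t+1=9$ vertices arise either by deleting a vertex, which gives $\overline{P-v}$, or by contracting an edge. The complement of $P-v$ is connected on $9$ vertices, so every component has at least $9\ge s+1$ vertices. Lemma~\ref{lem:tplus1} then gives admissibility. Contracting an edge $xy$ of $\overline P$ gives a graph whose complement on $9$ vertices is $P$ with the nonadjacent pair $x,y$ identified, the merged vertex adjacent in the complement to common non-neighbors only. I would check that this complement is still connected, for example by noting that $P$ minus two nonadjacent vertices is connected, so Lemma~\ref{lem:tplus1} again applies with all $s\le8$.
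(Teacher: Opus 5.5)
Your reduction to $(t+1)$-vertex minors (delete one vertex, or contract one edge) is correct, and so are your counts. Your treatment of $\overline P$ is essentially the paper's, with two small fixes. In the deletion case, the graph whose connectivity you need is $P-v$, the complement of $\overline P-v$. In the contraction case, you should note that the merged vertex is not isolated in $\overline{\overline P/xy}$: $x,y$ are nonadjacent in $P$, so they have exactly one common neighbor $z$.

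The gap is in the degree argument at the subdivision vertex $w$ of $D_{s,t}$. In a spanning $K_{a,t+1-a}$, a vertex of degree at most $2$ cannot lie on the side of size $a$, since it would need $t+1-a\ge t+1-s\ge s+2$ neighbors. That is the trivial case, and it is the only one your argument excludes. The bound $\deg w\le2$ does not exclude $w$ lying on the side of size $t+1-a$. Then $w$ only has to see the $a$ vertices opposite, which is compatible with $a\le2$ and the small side contained in $N(w)$. Your deletion case declares this impossible without examining it. In the contraction case, the configuration you single out as delicate (the $a$-side containing $w$ and a neighbor) is again trivially impossible, so the real case is never checked.

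The repair is short. Let $L_1,L_2$ be the leaf sets of the complementary stars at $c_1,c_2$, with $|L_1|=s$ and $|L_2|=\ell=t-s-1$, so that in $D_{s,t}$ the vertex $c_1$ is adjacent to $w$ and $L_2$, and $c_2$ to $w$ and $L_1$. Every vertex of the small side must be adjacent to all $t+1-a\ge t-1$ vertices opposite.
\begin{itemize}
\item In $D-x$, and in $D/e$ for an edge $e$ inside the clique $L_1\cup L_2$, the neighbors of $w$ are among $c_1,c_2$. Their degrees are at most $\ell+1=t-s$ and $s+1$, both less than $t-1$.
\item After contracting $c_1y$ with $y\in L_2$, the merged vertex misses only $c_2$, so it is not universal; this rules out $a=1$. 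For $a=2$ the small side would have to be $\{c_1y,c_2\}$, but $c_2$ has degree $s+1<t-1$.
\item Contracting $c_2y$ with $y\in L_1$ is symmetric.
\end{itemize}

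Alternatively, follow the paper. Since $D$ is connected, $D-x$ is a spanning subgraph of $D/xy$ for any neighbor $y$ of $x$, so only contractions matter. A direct check then shows that $\overline{D/e}$ is connected for every edge $e$, and Lemma~\ref{lem:tplus1} applies uniformly without any side analysis.
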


	\begin{proof}
		Let $J$ be a connected graph on $t+2$ vertices. Each forbidden minor $K_{a,t+1-a}$ has $t+1$ vertices, so a model of it in $J$ either consists of singletons on $t+1$ vertices, and is then a spanning subgraph of $J-x$ for the unused vertex $x$, or has exactly one branch set of order $2$, which is an edge $xy$, and is then a spanning subgraph of $J/xy$. Since $J$ is connected, $J-x$ is a spanning subgraph of $J/xy$ for any neighbor $y$ of $x$. Hence $J$ is admissible as soon as $J/e$ is admissible for every edge $e$, and by Lemma~\ref{lem:tplus1} this holds when every $\overline{J/e}$ is connected, since it has $t+1\ge s+1$ vertices.

		Let $q=2$ and $\ell=t-s-1$, so that $\ell\ge s\ge3$. The graph $D=D_{s,t}$ is $J_{0,s,\ell}$: a clique partitioned into sets $B_0,C_0$ of orders $s,\ell$ and a path $uwv$, with $u$ joined to $B_0$, $v$ joined to $C_0$, and no other edges between the path and the clique. It has $(t-1)+2=t+2$ vertices and $\binom{t-1}2+s+\ell+2=E+2$ edges. Its complement $\overline D$ has the edge $uv$, the edges from $u$ to $C_0$, from $v$ to $B_0$, and from $w$ to $B_0\cup C_0$. Contracting $uw$ or $wv$ gives $F_{s,t}$, whose complement is connected on each of its two stars and which is admissible by Lemma~\ref{lem:tplus1} because both stars have at least $s+1$ vertices. If $e$ joins two clique vertices, then in $\overline{D/e}$ every vertex coming from the clique is adjacent to $w$, the vertex $v$ is adjacent to the nonempty set $B_0\setminus e$, and $u$ is adjacent to $v$; so $\overline{D/e}$ is connected. If $e=ub$ with $b\in B_0$, the contracted vertex is adjacent in $D/e$ to every vertex except $v$, so in the complement it is adjacent to $v$; moreover $v$ is adjacent to $B_0\setminus\{b\}\ne\emptyset$, which is adjacent to $w$, which is adjacent to $C_0$. So $\overline{D/e}$ is connected. The case $e=vc$ with $c\in C_0$ is symmetric, using $|C_0\setminus\{c\}|=\ell-1\ge2$. These are all the edges of $D$.

		Let $t=8$. An edge $xy$ of $\overline P$ joins two nonadjacent vertices of $P$, which have exactly one common neighbor $z$ in $P$. In $\overline{\overline P/xy}$ the contracted vertex is therefore adjacent only to $z$, and the rest is $P-\{x,y\}$, which is connected since $P$ is $3$-connected. So $\overline{\overline P/xy}$ is connected, and $\overline P$ is admissible for every $s\le t=8$.
	\end{proof}

	By Lemma~\ref{lem:Aclique} and Proposition~\ref{prop:Rempty2} an extremal graph is $G=K_{s-1}\vee H$ with $H$ admissible, and $\CC_k(G)$ is connected because $s-1\ge k-1$. Its Perron vector is constant on the core by symmetry; let $u$ be that value and $y=x/u$, so $y\equiv1$ on the core. Write $\lambda=\rho(\CC_k(G))$, $\tau=s-k+1\ge1$, $B_i=\binom{s-1}{k-i}$, $B=B_1$ and $\xi=(B/\lambda)^{1/(k-1)}\to0$.

	\begin{lem}\label{lem:yexp}
		Uniformly over the light vertices, $y_v=\xi\bigl(1+d_H(v)\xi/\tau+O(\xi^{2})\bigr)$.
	\end{lem}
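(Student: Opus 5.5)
The plan is to read the expansion directly off the eigenequation at a light vertex $v$, after first proving an a priori bound $\max_{v\notin A} y_v=O(\xi)$.

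The $k$-cliques of $G=K_{s-1}\vee H$ through $v$ are of the form $I\cup Q$, where $I$ is a set of $i$ hubs and $Q\ni v$ is a $(k-i)$-clique of $H$. Dividing the eigenequation $\lambda x_v^{k-1}=\sum_{F\ni v}\prod_{w\in F\setminus\{v\}}x_w$ by $u^{k-1}$, and using $y\equiv1$ on the core, gives
\[
\lambda y_v^{k-1}=\sum_{i}\binom{s-1}{i}\sum_{\substack{Q\ni v\\ |Q|=k-i}}\ \prod_{w\in Q\setminus\{v\}}y_w .
\]
The term $i=k-1$ contributes $B_1=B$. The term $i=k-2$ contributes $B_2\sum_{w\in N_H(v)}y_w$. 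Every remaining term is a product of at least two light coordinates. Since $\Delta(H)\le t-1$ (admissibility with $a=1$), the number of cliques of $H$ through $v$ is bounded by a constant $C=C(k,s,t)$.

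\emph{Step 1: a priori bound.} By Theorem~\ref{thm:jle1stab}, $u$ tends to a positive limit and $\eta\to0$, so $M:=\max_{v\notin A}y_v\to0$. In particular $M\le1$ for large $n$, so every product of at least one light coordinate is at most $M$. Applying the displayed identity at a vertex attaining $M$ gives $\lambda M^{k-1}\le B+CM$. Because $\lambda\to\infty$ and $M\to0$, this yields $M^{k-1}\le\xi^{k-1}(1+CM/B)=\xi^{k-1}(1+o(1))$. Hence $M\le2\xi$ for large $n$. A second pass through the same inequality gives $M=\xi(1+O(\xi))$.

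\emph{Step 2: the first-order correction.} Insert $y_w=\xi(1+O(\xi))$ into the identity. The $i=k-2$ term becomes $B_2d_H(v)\xi(1+O(\xi))$. The terms with at least two light factors total $O(\xi^2)$ uniformly in $v$. Dividing by $\lambda=B\xi^{-(k-1)}$ gives
\[
y_v^{k-1}=\xi^{k-1}\Bigl(1+\frac{B_2}{B}d_H(v)\,\xi+O(\xi^2)\Bigr).
\]

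\emph{Step 3: root extraction.} Take the $(k-1)$-th root using $(1+z)^{1/(k-1)}=1+z/(k-1)+O(z^2)$. The resulting coefficient is
\[
\frac{B_2}{(k-1)B}=\frac{\binom{s-1}{k-2}}{(k-1)\binom{s-1}{k-1}}=\frac{1}{s-k+1}=\frac1\tau ,
\]
which is the claimed expansion. All constants depend only on $k,s,t$ because $d_H(v)\le t-1$, so the error is uniform in $v$.

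The only delicate point is Step~1. The uniform smallness $y_v\to0$ must come from the stability result (Theorem~\ref{thm:jle1stab}, which gives $\eta\to0$ and $u\not\to0$) before the eigenequation can be linearized. After that, the remaining steps are bookkeeping.
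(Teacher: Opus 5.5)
Your proof is correct and follows the paper's route: the eigenequation at a light vertex, the a priori smallness of $\max y$ taken from Theorem~\ref{thm:jle1stab}, a bootstrap to $y_v=\xi(1+O(\xi))$, and first-order root extraction using $B_2/((k-1)B)=1/\tau$. You leave one point implicit, the lower bound $y_w\ge\xi$ needed for the two-sided estimate in Step~2, but it is immediate because every term of the identity is nonnegative and the constant term is $B$.
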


	\begin{proof}
		Since $\Delta(H)\le t-1$ and $\max_v y_v\to0$, the eigenequation at a light vertex reads $\lambda y_v^{k-1}=B+B_2\sum_{w\in N_H(v)}y_w+O\bigl((\max_w y_w)^2\bigr)$. The right side is $B+O(\max_wy_w)$, which first gives $\max_wy_w=O(\xi)$ and then $y_v=\xi(1+O(\xi))$; expanding once more and using $B_2/B=(k-1)/\tau$ gives the statement.
	\end{proof}

	For a graph $J$ put $D_2(J)=\sum_vd_J(v)^2$, let $c_3(J)$ be its number of triangles, and set
	\[
	\Omega(J)=D_2(J)+\theta\,c_3(J),\qquad \theta=\frac{2(k-2)\tau}{\tau+1}>0 .
	\]

	\begin{lem}\label{lem:replace}
		Fix $M$. For all sufficiently large $n$, let $J$ be a union of components of $H$ with $|V(J)|\le M$, and let $J'$ be admissible of the same order. Then neither $e(J')>e(J)$, nor $e(J')=e(J)$ together with $\Omega(J')>\Omega(J)$, is possible.
	\end{lem}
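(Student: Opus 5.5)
The plan is a replacement argument. Let $G'=K_{s-1}\vee H'$, where $H'$ is obtained from $H$ by replacing the union of components $J$ with $J'$. Since $\mathcal A_{s,t}$ is closed under disjoint unions, $H'$ is admissible and $G'$ is $K_{s,t}$-minor-free. We then show that $\rho(\CC_k(G'))>\rho(\CC_k(G))$, which contradicts extremality. The comparison is made at a test vector built from Lemma~\ref{lem:yexp}. We use the Perron vector $y$ of $G$ (normalized with $y\equiv1$ on the core) on $V(G)\setminus V(J)$. On $V(J')$ we place the predicted profile $y'_v=\xi(1+d_{J'}(v)\xi/\tau)$. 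The Rayleigh quotient then changes only through terms involving $V(J)$ versus $V(J')$, and since $|V(J)|\le M$ all error terms are uniform.

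The first step is a local expansion of the relevant part of the Rayleigh quotient in powers of $\xi$. For a bounded light set $X$ inducing $J$ with weights $z_v=\xi(1+\epsilon_v)$, where $\epsilon_v=O(\xi)$, write
\[
\Lambda(J,z)=k\Bigl(B\sum_v z_v+B_2\,e\text{-term}+B_3\,c_3\text{-term}\Bigr)-\lambda\sum_v z_v^k ,
\]
where the $e$-term is $\sum_{vw\in E(J)}z_vz_w$ and the $c_3$-term is the corresponding sum over triangles. Here $\lambda\sum_v z_v^k$ is the normalization penalty; the quotient increases if and only if $kP-\lambda\|\cdot\|_k^k$ increases, since $\lambda$ is the current maximum. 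We expand this using $\lambda\xi^{k-1}=B$. The order-$\xi$ terms, $kB\xi|X|-\lambda\xi^k|X|=(k-1)B\xi|X|$, are equal for $J$ and $J'$ because they have the same order. At order $\xi^2$ the edge term is $kB_2\xi^2e(J)$, while the linear perturbations $\epsilon_v$ cancel to first order because $y$ satisfies the eigenequation. So the difference is $kB_2\xi^2(e(J')-e(J))+O(\xi^3)$, and this alone rules out $e(J')>e(J)$. Cross terms with the rest of $H$ vanish, since $J$ is a union of components and the rest only meets $J$ through the core, whose weight is unchanged.

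The second step handles $e(J')=e(J)$, which requires the $\xi^3$ coefficient. It has three contributions. The first is the triangle term $kB_3\xi^3c_3$. The second comes from optimizing the profile: maximizing the quadratic form in the $\epsilon_v$ gives second-order gains proportional to $\sum_v d(v)^2$. This comes from the coupling $kB_2\xi^2\sum_{vw}(\epsilon_v+\epsilon_w)=kB_2\xi^2\sum_v d(v)\epsilon_v$ against the Hessian penalty $\lambda\binom k2\xi^k\sum\epsilon_v^2$ minus $kB\xi\cdot0$, which has optimum at $\epsilon_v=d(v)\xi/\tau$, consistent with Lemma~\ref{lem:yexp}. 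The third is the term $B_2\xi^2\epsilon_v\epsilon_w$ from the edges, which is of lower order $\xi^4$. We collect the coefficients using $B_2/B=(k-1)/\tau$ and $B_3/B=(k-1)(k-2)/(\tau(\tau+1))$. The $\xi^3$ coefficient should be a positive multiple of $D_2(J)+\theta c_3(J)$ with $\theta=2(k-2)\tau/(\tau+1)$, which justifies the definition of $\Omega$. Because the Perron vector of $G$ on $J$ is itself the stationary profile, the $G$ side realizes exactly this value up to $O(\xi^4)$. The $J'$ side with the test profile attains at least this value up to $O(\xi^4)$. Hence $\Omega(J')>\Omega(J)$ gives a strict gain for large $n$.

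The main obstacle is the bookkeeping at order $\xi^3$. Two things must be checked: that the first-order variations of the profile cancel because of the eigenequation (a stationarity/envelope argument), and that the constants combine to exactly $\theta$. Both integrality and boundedness help here. Both differences $e(J')-e(J)$ and $\Omega(J')-\Omega(J)$ are integers or bounded-denominator multiples, so a nonzero difference is at least a fixed constant, which beats the $O(\xi^{3})$ or $O(\xi^4)$ errors uniformly in $J$ with $|V(J)|\le M$. Finally, strictness at the test vector gives $\rho(\CC_k(G'))>\lambda$ through the variational formula~\eqref{eq:var}, and this contradicts extremality.
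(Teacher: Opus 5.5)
Your proposal is correct and follows essentially the paper's argument: you replace $J$ by $J'$ in the join, keep the Perron vector off $J$, place the trial profile $\xi(1+d_{J'}(v)\xi/\tau)$ on $J'$, and expand the contribution of $J$ to $kP-\lambda\|y\|_k^k$ through order $\xi^3$, obtaining the edge term $kB_2e(J)\xi^2$ and a positive multiple of $D_2+\theta c_3$ with $\theta=2(k-2)\tau/(\tau+1)$ at order $\xi^3$. One small correction: the first-order cancellation of the $\epsilon_v$ terms comes from $\lambda\xi^{k-1}=B$, that is, from the linear and norm parts having opposite first derivatives at $\xi$, and not from the eigenequation, which is why it also holds for the trial profile on $J'$, where no eigenequation is available.
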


	\begin{proof}
		Write $S_i(J,y)=\sum_{Q\in\CC_i(J)}\prod_{v\in Q}y_v$ and let $L_J(y)=\sum_{i\ge1}B_iS_i(J,y)-\frac\lambda k\sum_{v\in J}y_v^{k}$ be the contribution of $J$ to $P_G(y)-\frac\lambda k\|y\|_k^{k}$. Substituting $y_v=\xi(1+w_v\xi+O(\xi^{2}))$ with bounded $w_v$ and expanding,
		\[
		\begin{aligned}
		L_J(y)={}&B\frac{k-1}{k}|V(J)|\xi+B_2e(J)\xi^{2}\\
		&+\xi^{3}\Bigl(B_3c_3(J)+B_2\sum_vd_J(v)w_v-\frac{B(k-1)}{2}\sum_vw_v^{2}\Bigr)+O(\xi^{4}),
		\end{aligned}
		\]
		the terms in the unspecified second-order corrections cancelling because the linear and norm parts have opposite first derivatives at $y_v=\xi$. For the coordinates of Lemma~\ref{lem:yexp}, and for the trial coordinates $y'_v=\xi(1+d_{J'}(v)\xi/\tau)$ on a replacement, this becomes
		\begin{equation}\label{eq:LJ}
			L_J=B\frac{k-1}{k}|V(J)|\xi+B_2e(J)\xi^{2}+\frac{B_2^{2}}{2B(k-1)}\Omega(J)\xi^{3}+O(\xi^{4}),
		\end{equation}
		using $1/\tau=B_2/[B(k-1)]$ and $2B(k-1)B_3/B_2^{2}=\theta$. A bounded admissible replacement of the same order therefore strictly increases the Rayleigh quotient if it increases $e$, or preserves $e$ and increases $\Omega$: all other contributions are unchanged, the old value of $P_G(y)-\frac\lambda k\|y\|_k^{k}$ is zero, and a positive new value gives a quotient larger than $\lambda$. Admissibility of the new light part follows from Lemma~\ref{lem:join}. For fixed $M$ there are finitely many graph types, so the threshold in $n$ is uniform.
	\end{proof}

	\begin{lem}\label{lem:omega}
		For every graph $J$ one has $6c_3(J)\le D_2(J)-2e(J)$, with equality exactly when every component of $J$ is complete. Consequently, if $J'$ is a disjoint union of cliques with the same order and edge count as $J$ and $D_2(J')\ge D_2(J)$, then $\Omega(J')>\Omega(J)$ whenever $J$ has a noncomplete component. If moreover all degrees of $J$ lie in $[\ell,u]$, then $D_2(J)\le2(\ell+u)e(J)-\ell u|V(J)|$.
	\end{lem}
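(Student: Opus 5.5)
The plan is to count triangles locally at each vertex. For a vertex $v$ of degree $d=d_J(v)$, the triangles through $v$ correspond to edges of $J[N(v)]$. There are therefore at most $\binom d2$ of them, with equality exactly when $N(v)$ is a clique. Summing over $v$, each triangle is counted three times, so
\[
3c_3(J)\le\sum_v\binom{d_J(v)}2=\tfrac12\bigl(D_2(J)-2e(J)\bigr),
\]
using $\sum_v d_J(v)=2e(J)$. This is $6c_3(J)\le D_2(J)-2e(J)$.

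Equality holds if and only if every neighborhood is a clique, that is, every closed neighborhood $N[v]$ is complete. We claim this forces each component to be complete. If $N[v]$ is complete for all $v$, then any $w\in N(v)$ satisfies $N[v]\subseteq N[w]$; by symmetry $N[w]\subseteq N[v]$, so $N[v]=N[w]$. Along paths the closed neighborhood is therefore constant on a component, so it equals the component, which is thus a clique. The converse is immediate.

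For the consequence, let $J'$ be a disjoint union of cliques, so equality holds for $J'$ and $6c_3(J')=D_2(J')-2e(J')$. For $J$ with a noncomplete component the inequality is strict, $6c_3(J)<D_2(J)-2e(J)$. Using $e(J')=e(J)$ and $D_2(J')\ge D_2(J)$, we get $c_3(J')>c_3(J)$. Since $\theta>0$,
\[
\Omega(J')=D_2(J')+\theta c_3(J')>D_2(J)+\theta c_3(J)=\Omega(J).
\]

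For the last assertion, each degree satisfies $(d-\ell)(u-d)\ge0$, which rearranges to $d^2\le(\ell+u)d-\ell u$. Summing over the $|V(J)|$ vertices and using $\sum_v d=2e(J)$ gives
\[
D_2(J)\le2(\ell+u)e(J)-\ell u|V(J)|.
\]

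The only step requiring care is the equality characterization, namely the propagation of $N[v]=N[w]$ along edges. Everything else is direct summation.
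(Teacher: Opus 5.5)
Your proof is correct and follows essentially the same route as the paper: local triangle counting $3c_3(J)\le\sum_v\binom{d_J(v)}{2}$, the $\Omega$ comparison using $\theta>0$, and the pointwise bound $(d-\ell)(u-d)\ge0$. You additionally spell out two steps the paper leaves implicit: why complete neighborhoods force complete components, and how $c_3(J')>c_3(J)$ follows from $e(J')=e(J)$ and $D_2(J')\ge D_2(J)$.
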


	\begin{proof}
		Counting adjacent pairs inside neighborhoods gives $3c_3(J)\le\sum_v\binom{d_J(v)}{2}$, with equality exactly when every neighborhood is a clique, that is, when every component is complete; this is the first inequality. The comparison of $\Omega$ follows since $\theta>0$. Finally $(d-\ell)(d-u)\le0$ gives $d^{2}\le(\ell+u)d-\ell u$, and summing over the vertices gives the last bound.
	\end{proof}

	The four lemmas below follow Lemmas~3.5 and~3.6 and Theorems~3.1--3.3 of~\cite{ZL}, with Lemma~\ref{lem:replace} in place of their local edge maximality and degree-sequence majorization. In these four lemmas we assume $t\ge4$; the proofs use $\binom t2-t\ge2$ and $t<\binom t2$, which fail for $t=3$. The case $t=3$ is treated separately in the proof of Theorem~\ref{thm:sgek}.

	\begin{lem}\label{lem:comporder}
		Let $t\ge4$. Every component of $H$ has order at most $t+3$; every component of order at most $t$ is complete; and for each $i\ne t$ at most $t-1$ components have order $i$. Consequently all but $O_t(1)$ light vertices lie in components isomorphic to $K_t$.
	\end{lem}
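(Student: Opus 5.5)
Every claim will be a replacement argument through Lemma~\ref{lem:replace}. The replacements are disjoint unions of cliques of order at most $t$, which are admissible by Corollary~\ref{cor:small}, together with the graphs of Lemma~\ref{lem:exadm}. Each time we take $J$ to be a bounded union of components of $H$ and exhibit an admissible $J'$ of the same order with $e(J')>e(J)$, or with $e(J')=e(J)$ and $\Omega(J')>\Omega(J)$. The bound $M$ in Lemma~\ref{lem:replace} is fixed in advance; it will be a constant of order $t^2$, enough to cover a collection of up to $t$ components each of order at most $t+3$.

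\emph{Small components are complete.} Let $C$ be a component of order $m\le t$ that is not complete. Replacing it by $K_m$ is admissible and strictly increases $e$. This contradicts Lemma~\ref{lem:replace}.

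\emph{Boundedly many components of each small order.} Suppose $i<t$ and there are $t$ components $K_i$. Their $ti$ vertices can be regrouped as $iK_t$, which has strictly more edges because $\binom t2/t>\binom i2/i$ for $i<t$. A regrouping into cliques of order at most $t$ stays admissible. For orders $i>t$ we first bound the order of components and then argue the same way: a component of order $i\in[t+1,t+3]$ has at most $E+i-t$ edges, by Proposition~\ref{prop:tplus1} and Lemma~\ref{lem:djs}. Now take $t$ such components, with $ti$ vertices. Regroup them as $iK_t$ plus nothing, or more generally as cliques $K_t$ covering all $ti$ vertices. This gives $i\binom t2$ edges, against at most $t(E+i-t)=tE+t(i-t)$. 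The difference is $(i-t)(E-t)$, which is positive since $\binom t2-t\ge2$ for $t\ge4$.

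\emph{Components of order at most $t+3$.} This step is the main obstacle. Let $C$ be a component of order $m\ge t+4$. Lemma~\ref{lem:djs} gives $e(C)\le E+m-t$. I compare $C$ with $\lfloor m/t\rfloor K_t\cup K_{m-t\lfloor m/t\rfloor}$. When $m\ge 2t$ this clearly has more edges: at least $2E$ against roughly $E+m$, valid once $t<\binom t2$ is used and handled via $E-t\ge2$. When $t+4\le m<2t$ the comparison is $E+\binom{m-t}2$ versus $E+m-t$. This favours the cliques when $m-t\ge4$, since $\binom r2>r$ for $r\ge4$. This is exactly why the threshold is $t+3$. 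Equality or near-equality cases at $r=3$ are avoided by stopping there.

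The final consequence then follows. Components of order other than $t$ number at most $(t-1)(t+3)$, each of bounded order. Components of order $t$ are complete, hence equal to $K_t$. So all but $O_t(1)$ light vertices lie in copies of $K_t$.
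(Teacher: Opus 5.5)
\textbf{Gap: components of unbounded order.} Your third step applies Lemma~\ref{lem:replace} with a fixed $M$ of order $t^2$ to a component $C$ of arbitrary order $m\ge t+4$. Nothing proved before this point bounds the orders of the components of $H$; only $\Delta(H)\le t-1$ is known. So $m$ may grow with $n$, and then Lemma~\ref{lem:replace} gives nothing: its threshold in $n$ depends on $M$ and is uniform only over the finitely many graph types of order at most $M$. Your later steps rely on this order bound, so this case needs its own argument.

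The paper writes $m=b_0t+p_0$ with $1\le p_0\le t$ and uses Lemma~\ref{lem:replace} only for $b_0\le7$. For $b_0\ge8$ it places $J'=b_0K_t\cup K_{p_0}$ on the vertex set of $C$ and evaluates it at the \emph{old} Perron coordinates. The change in $P_G$ is at least $B_2\bigl(e(J')y_-^2-e(C)y_+^2\bigr)-O(my_+^3)$. This is positive uniformly in $m$, because $e(C)/e(J')\le\frac{19}{24}$, $m=O(e(J'))$, and $y_-/y_+\to1$, $y_+\to0$ by Lemma~\ref{lem:yexp}.

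An equivalent fix inside your framework also works. For $b_0\ge2$ the edge gain $(b_0-1)(E-t)+\frac12p_0(p_0-3)\ge(b_0-1)(E-t)-1$ is at least a constant multiple of $m$. At the old vector every error term is $O(m\xi^3)$, since all light coordinates are $\xi(1+O(\xi))$ and degrees are bounded. Either way, you must supply some estimate uniform in $m$.

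Relatedly, your phrase ``at least $2E$ against roughly $E+m$'' compares nothing once $m>E$. The difference $(b_0-1)(E-t)+\frac12p_0(p_0-3)$ is the correct count, and it is positive by $E-t\ge2$.

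\textbf{Smaller slip at $i=t+1$.} Proposition~\ref{prop:tplus1} gives $e\le E+q-1$, not $E+1$, and Lemma~\ref{lem:djs} does not apply at order $t+1$. When $q\ge3$, the connected admissible graph $F_{s,t}$ has $E+q-1>E+1$ edges, so your bound $E+i-t$ is false there. The conclusion survives with the correct bound: $t(E+q-1)<(t+1)E$, because $s\ge3$ gives $q-1\le(t-3)/4$ and hence $t(q-1)\le t(t-3)/4<E$, as in the paper.

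The cases $i<t$ and $i=t+2,t+3$ are fine as you have them. The graphs of Lemma~\ref{lem:exadm} are not needed, since every replacement here is a union of cliques of order at most $t$.
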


	\begin{proof}
		Let $J$ be a component of order $m=b_0t+p_0>t+3$ with $1\le p_0\le t$. By Lemma~\ref{lem:djs}, $e(J)\le E+m-t$, while the admissible replacement $J'=b_0K_t\cup K_{p_0}$ satisfies
		\[
		e(J')-(E+m-t)=(b_0-1)(E-t)+\tfrac12p_0(p_0-3)>0,
		\]
		using $p_0\ge4$ when $b_0=1$, and $E-t\ge2$ together with $p_0(p_0-3)/2\ge-1$ when $b_0\ge2$. If $b_0\le7$ the replacement has bounded order and Lemma~\ref{lem:replace} applies. If $b_0\ge8$, then $e(J)\le E+b_0t$ and $e(J')\ge b_0E$ give $e(J)/e(J')\le\frac1{b_0}+\frac{2}{t-1}\le\frac18+\frac23=\frac{19}{24}$ and $m/e(J')\le\frac{9}{4(t-1)}$; evaluating the replacement at the old coordinates changes $P_G$ by at least $B_2\bigl(e(J')y_-^{2}-e(J)y_+^{2}\bigr)-O(my_+^{3})>0$, where $y_\pm$ are the extreme light coordinates, since $y_-/y_+\to1$ and $y_+\to0$ by Lemma~\ref{lem:yexp}. Both cases are impossible.

		A component of order at most $t$ may be completed, which is admissible and adds edges unless it was already complete. If $t$ components had the same order $i\ne t$, replacing them by $iK_t$ increases the edges: $t\binom i2<iE$ for $i<t$; $t(E+q-1)<(t+1)E$ for $i=t+1$, because $t(q-1)\le t(t-3)/4<E$; and $t(E+r)<(t+r)E$ for $i=t+r$ with $r=2,3$. All these replacements have bounded order.
	\end{proof}

	\begin{lem}\label{lem:comp1}
		Let $t\ge4$. If $H$ has a component of order $t+1$, then $q\ge2$ and that component is isomorphic to $F_{s,t}$.
	\end{lem}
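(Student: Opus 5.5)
Let $J$ be a component of $H$ of order $t+1$. The plan is to compare $J$ with the replacement $J'=K_t\cup K_1$ through Lemma~\ref{lem:replace}. Both graphs have order $t+1$ and are admissible, since every component of $J'$ has at most $t$ vertices (Corollary~\ref{cor:small} together with Lemma~\ref{lem:join}). The lemma therefore gives
\[
e(J)\ge e(J')=E,
\]
and, if $e(J)=E$, also $\Omega(J)\ge\Omega(J')$. We first exclude $e(J)=E$. Since $J$ is connected on $t+1$ vertices it is not a disjoint union of cliques, because a clique on $t+1$ vertices has degree $t$ and is not admissible. Now $J'$ is a union of cliques with the same order and edge count, and
\[
D_2(J')=t(t-1)^2 .
\]
For $J$ we have $\Delta(J)\le t-1$ by admissibility and $\delta(J)\ge1$ by connectivity. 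Applying the last bound of Lemma~\ref{lem:omega} with $\ell=1$ and $u=t-1$ gives
\[
D_2(J)\le 2t\cdot E-(t-1)(t+1)=t^2(t-1)-(t^2-1)=t(t-1)^2-(t-1)<D_2(J').
\]
The first part of Lemma~\ref{lem:omega} then yields $\Omega(J')>\Omega(J)$, which contradicts Lemma~\ref{lem:replace}. Hence $e(J)\ge E+1$. By Proposition~\ref{prop:tplus1} we have $e(J)\le E+q-1$, so $q\ge2$.

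Next we show that $e(J)=E+q-1$. Suppose $e(J)<E+q-1$. Then $F_{s,t}$ is admissible of the same order, and it has more edges than $J$. Replacing $J$ by $F_{s,t}$ contradicts Lemma~\ref{lem:replace}. Hence $e(J)=E+q-1$, and by the equality analysis in the proof of Proposition~\ref{prop:tplus1}, $\overline J$ is a forest. It has exactly $q$ components, each on at least $s+1$ vertices. We also need each component to be a star. In that proof equality only requires $\overline J$ to be a spanning forest of $q$ trees, and any tree has $m-1$ edges, so the edge count alone does not force stars. The trees are pinned down by $\Omega$.

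Among graphs $J$ whose complement $Y=\overline J$ is a forest with $q$ trees of prescribed sizes, $D_2$ and $c_3$ must be expressed through $Y$. Here $d_J(v)=t-d_Y(v)$. With the edge count fixed,
\[
D_2(J)=\mathrm{const}+\sum_v d_Y(v)^2 ,
\]
so $D_2(J)$ is increasing in $\sum_v d_Y(v)^2$. For a tree of fixed order, $\sum_v d_Y(v)^2$ is uniquely maximized by the star. For triangles, $c_3(J)=\binom{t+1}{3}-(\text{triples meeting }Y)$. The number of triples meeting the forest equals
\[
\sum_{e\in Y}(t-1)\;-\;\#\{\text{paths of length }2\text{ in }Y\}
\]
(a forest has no triangles). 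That path count is $\sum_v\binom{d_Y(v)}{2}$, which again is uniquely maximized by stars. So $\Omega$ strictly increases when a non-star tree is replaced by a star of the same order, and Lemma~\ref{lem:replace} forces every component of $\overline J$ to be a star.

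The step I expect to be the main obstacle is fixing the sizes of the stars. Let the star sizes be $m_1,\dots,m_q$, each at least $s+1$, with sum $t+1$. For a star on $m$ vertices, $\sum_v d_Y(v)^2=(m-1)^2+(m-1)$ and $\sum_v\binom{d_Y(v)}{2}=\binom{m-1}{2}$. Both are convex in $m$, so $\Omega$ is maximized, uniquely up to permutation, by making the sizes as unbalanced as possible: $q-1$ stars of the minimum size $s+1$ and one star of size $t+1-(q-1)(s+1)$. This is exactly $F_{s,t}$. We must also check that $F_{s,t}$ is admissible, which holds by Lemma~\ref{lem:tplus1}. If the sizes differ from this profile, moving one leaf from a smaller star to a larger one keeps every size at least $s+1$ and strictly increases $\Omega$, so the replacement argument applies. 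Therefore $J\cong F_{s,t}$.
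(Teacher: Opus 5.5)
Your proof is correct and follows essentially the paper's argument: you show the complement is a forest of $q$ trees, reduce $\Omega$ to $\sum_v d_Y(v)^2$, force each tree to be a star via Lemma~\ref{lem:replace}, and then shift leaves to unbalance the star sizes. The one difference is how you get $q\ge2$. You obtain it up front, by excluding $e(J)=E$ with the degree bound of Lemma~\ref{lem:omega} against $K_t\cup K_1$. The paper instead passes directly to $e(J)=E+q-1$ and deduces $q\ge2$ at the end, since for $q=1$ a star complement would make $J=K_t\cup K_1$, which is disconnected.
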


	\begin{proof}
		Let $J$ be such a component. Since an admissible graph attaining the bound of Proposition~\ref{prop:tplus1} is available, $e(J)=E+q-1$, so the complement $Y=\overline J$ is a forest with $q$ components. With $N=t+1$, since $Y$ has no triangles, $c_3(J)=\binom N3-(N-2)e(Y)+\sum_v\binom{d_Y(v)}{2}$ and $D_2(J)=\sum_v(N-1-d_Y(v))^{2}$, so with $N$ and $e(Y)$ fixed, maximizing $\Omega(J)$ amounts to maximizing $\sum_vd_Y(v)^{2}$.

		Every component of $Y$ is then a star: otherwise pick a vertex $v$ of maximum degree and a leaf $w$ not adjacent to $v$, with neighbor $u$, and replace $uw$ by $vw$; this keeps the component a tree of the same order, hence keeps $J$ admissible by Lemma~\ref{lem:tplus1}, and increases $\sum_vd_Y(v)^{2}$ by $2(d_Y(v)-d_Y(u))+2>0$, contradicting Lemma~\ref{lem:replace}. If $q=1$ this makes $J=K_t\cup K_1$, contradicting connectedness, so $q\ge2$. Finally, if two stars have orders $b_1\ge b_2\ge s+2$, transferring a leaf from the smaller to the larger keeps both orders at least $s+1$ and increases the degree-square sum by $2(b_1-b_2)+2$. Hence all but at most one star has order $s+1$, which gives $J\cong F_{s,t}$.
	\end{proof}

	\begin{lem}\label{lem:comp2}
		Let $t\ge4$. If $H$ has a component of order $t+2$, then either $q=1$, $t=8$ and the component is $\overline P$, or $q=2$ and the component is $D_{s,t}$.
	\end{lem}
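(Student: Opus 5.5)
Let $J$ be a component of $H$ of order $t+2$. The plan is to pin down $e(J)$ first and then use Lemma~\ref{lem:tplus2} together with the $\Omega$-comparison of Lemma~\ref{lem:replace}.

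\emph{Step 1: $e(J)=E+2$.} By Lemma~\ref{lem:djs}, $e(J)\le E+2$. Suppose $e(J)\le E+1$. We compare $J$ with the admissible graph $J'=K_t\cup K_2$, which has $E+1$ edges, and, when $q\ge2$, with $J''=F_{s,t}\cup K_1$, which has $E+q-1$ edges.
\begin{itemize}
\item If $e(J)\le E$, then $J'$ has more edges, so Lemma~\ref{lem:replace} excludes this case.
\item If $e(J)=E+1$ and $q\ge3$, then $J''$ has more edges, so this case is also excluded.
\item If $e(J)=E+1$ and $q\le2$, we use $\Omega$. The graph $J'$ is a disjoint union of cliques with the same order and edge count as $J$. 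It has $D_2(J')=t(t-1)^2+2$. By Lemma~\ref{lem:omega}, $J$ has maximum degree at most $t-1$, and $J$ is connected, hence not a union of cliques. So it suffices to show $D_2(J')\ge D_2(J)$. For this we use the degree-window bound of Lemma~\ref{lem:omega}, with degrees in $[1,t-1]$. This gives $D_2(J)\le 2t(E+1)-(t-1)(t+2)$, and we compare this with $t(t-1)^2+2$.
\end{itemize}
If the crude window bound fails for small $t$, we instead argue directly. Since $J$ has $E+1=\binom t2+1$ edges on $t+2$ vertices with $\Delta\le t-1$, it is close to $K_t$ plus a pendant structure. Lemma~\ref{lem:leaf} says that a degree-one attachment to a $K_t$ creates a $K_{1,t}$ minor. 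This restricts how the two extra vertices attach, forcing a low-degree configuration whose $D_2$ is at most that of $J'$. We then conclude with Lemma~\ref{lem:omega}, whose inequality is strict because $J$ has a noncomplete component.

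\emph{Step 2: identify $J$.} With $e(J)=E+2$, Lemma~\ref{lem:tplus2} leaves three possibilities: $J\cong J_{0,e,f}$ with $e,f\ge s$, $J\cong J'_{a_1,a_2,a_3}$ with all $a_i\ge s$, or $J\cong\overline P$ with $t=8$. The constraints $e+f=t-1$ and $a_1+a_2+a_3=t-2$ give the following.
\begin{itemize}
\item $J_{0,e,f}$ needs $t\ge 2s+1$, that is, $q\ge2$.
\item $J'$ needs $t\ge3s+2$, which forces $q\ge2$; if $q=2$ one checks $3s+2\le t<3s+2$ fails, so $J'$ requires $q\ge3$.
\item If $q\ge3$, then $F_{s,t}\cup K_1$ has $E+q-1\ge E+2$ edges. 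When $q\ge4$ this strictly exceeds $e(J)$, which excludes every graph on the list. When $q=3$ the edge counts are equal, and we compare $\Omega$.
\end{itemize}

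\emph{Step 3: finish by $\Omega$.} In each remaining case we compute $\Omega=D_2+\theta c_3$ explicitly and apply Lemma~\ref{lem:replace}.
\begin{itemize}
\item For $q=2$: among the $J_{0,e,f}$, the transfer argument of Lemma~\ref{lem:comp1} (moving a clique vertex from the larger side to the smaller one raises $D_2$ and keeps $c_3$ fixed or raises it) pushes to the extreme $e=s$, $f=t-1-s$. This graph is $D_{s,t}$. We must also check that $D_{s,t}$ beats $\overline P$ when $t=8$, and this is a finite computation.
\item For $q=1$: only $\overline P$ with $t=8$ survives, since $J_{0,e,f}$ and $J'$ need $q\ge2$.
\item For $q=3$: we show that $F_{s,t}\cup K_1$ beats every $J_{0,e,f}$ and $J'$ in $\Omega$. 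The complement of $F_{s,t}$ consists of stars, so $F_{s,t}\cup K_1$ has nearly clique-like degrees and many triangles.
\end{itemize}

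I expect the main obstacle to be the $\Omega$ comparisons: the $q=3$ case and the $e(J)=E+1$ case for small $t$. They involve explicit triangle counts, and the weight $\theta$ depends on $k$ and $\tau$, so the inequalities must hold uniformly in $\theta>0$. I would try first to show that the winning graph wins on $D_2$ and on $c_3$ separately.
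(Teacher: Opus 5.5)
Your proposal has a genuine gap in the case $q=3$. There $F_{s,t}\cup K_1$ has exactly $E+q-1=E+2$ edges, the same as $J$. Your Step~3 claim that it beats every $J_{0,e,f}$ in $\Omega$ is false.

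Take $k=s=3$ and $t=14$, so that $q=3$, and let $J=J_{0,3,10}$.
\begin{itemize}
\item $J_{0,3,10}$ has thirteen vertices of degree $13$ and three of degrees $4,11,2$. So $D_2(J)=2338$ and $c_3(J)=\binom{13}{3}+\binom32+\binom{10}2=334$.
\item $F_{3,14}=\overline{2K_{1,3}\cup K_{1,6}}$ has twelve vertices of degree $13$ and centers of degrees $11,11,8$. So $D_2(F_{3,14}\cup K_1)=2334$ and $c_3=\binom{15}{3}-13\cdot12+21=320$.
\end{itemize}
Thus $J$ wins on both coordinates, and no value of $\theta>0$ rescues the comparison. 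Your fallback of winning on $D_2$ and $c_3$ separately also fails in general. Already at $t=11$ the triangle counts go the wrong way: $c_3(J_{0,3,7})=144$ against $c_3(F_{3,11})=139$.

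Nor can any other replacement of $J$ alone win on edges when $q=3$. No admissible graph on $t+2$ vertices has more than $E+2$ edges: connected ones are bounded by Lemma~\ref{lem:djs}, a component of order $t+1$ has at most $E+q-1$ edges, and components of order at most $t$ give at most $E+1$.

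The missing idea is to bring in a second component. By Lemma~\ref{lem:comporder}, all but boundedly many light vertices lie in components $K_t$, so such a component exists for large $n$. Replace $J\cup K_t$ by $2F_{s,t}$. This has the same order $2t+2$, is admissible by Lemma~\ref{lem:tplus1}, and has bounded order. It raises the edge count from $2E+2$ to $2E+2q-2$, which is a strict gain for every $q\ge3$, so Lemma~\ref{lem:replace} excludes all $q\ge3$ at once. This is how the paper proceeds, before invoking Lemma~\ref{lem:tplus2}. After that, your treatment of $q\le2$ goes through.

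Three smaller points:
\begin{itemize}
\item The window bound of Lemma~\ref{lem:omega} with degrees in $[1,t-1]$ gives exactly $2t(E+1)-(t-1)(t+2)=t(t-1)^2+2$ for every $t$, so no small-$t$ fallback is needed.
\item The bound $\Delta(J)\le t-1$ comes from admissibility, not from Lemma~\ref{lem:omega}.
\item For $q=2$, the transfer that raises $D_2$ and $c_3$ moves a clique vertex from the smaller side to the larger one, since both are convex in $e$. You nevertheless land at the correct endpoint $D_{s,t}$.
\end{itemize}
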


	\begin{proof}
		Let $J$ be such a component. Comparison with $K_t\cup K_2$ and Lemma~\ref{lem:djs} give $E+1\le e(J)\le E+2$. If $e(J)=E+1$ then all degrees lie in $[1,t-1]$, so Lemma~\ref{lem:omega} gives $D_2(J)\le t(t-1)^{2}+2=D_2(K_t\cup K_2)$, and since $J$ is connected and not complete the comparison of $\Omega$ is strict, contradicting Lemma~\ref{lem:replace}. Hence $e(J)=E+2$. If $q\ge3$, replacing $J\cup K_t$ by $2F_{s,t}$ raises the edge count from $2E+2$ to $2E+2q-2$; so $q\le2$. By Lemma~\ref{lem:tplus2}, $J$ is $\overline P$ with $t=8$, or $J_{0,e,f}$ with $e,f\ge s$, or $J'_{a_1,a_2,a_3}$ with $a_i\ge s$; the last requires $t-2\ge3s$, hence $q\ge3$, and $J_{0,e,f}$ requires $t-1\ge2s$, hence $q\ge2$. If $q=1$, only $\overline P$ remains, with $t=8$. Suppose $q=2$. Then
		\[
		D_2(J_{0,e,f})=(t-1)^{3}+4+(e+1)^{2}+(f+1)^{2},\qquad
		c_3(J_{0,e,f})=\binom{t-1}{3}+\binom e2+\binom f2,
		\]
		and with $f=t-1-e$ both are strictly convex and symmetric functions of $e$ on $[s,t-1-s]$. Hence $\Omega$ is maximized exactly at the endpoints, both of which give $J_{0,s,t-1-s}\cong D_{s,t}$. If $\overline P$ were also available, then $t=8$ and $q=2$ force $s=k=3$, and $(D_2,c_3)(D_{3,8})=(388,44)$ against $(D_2,c_3)(\overline P)=(360,30)$, so $D_{3,8}$ wins strictly.
	\end{proof}

	\begin{lem}\label{lem:comp3}
		Let $t\ge4$. No component of $H$ has order $t+3$.
	\end{lem}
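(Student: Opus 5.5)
We argue by contradiction. Let $J$ be a component of $H$ of order $t+3$; comparing $J$ with the best admissible graph of the same order will contradict Lemma~\ref{lem:replace}. First we pin down $e(J)$. Lemma~\ref{lem:djs} gives $e(J)\le E+3$. The replacement $K_t\cup K_3$ has $E+3$ edges, so Lemma~\ref{lem:replace} forces $e(J)=E+3$. The same replacement also gives the second comparison: it has the same order and edge count as $J$, all its components are complete, and $J$ is connected but not complete. By Lemma~\ref{lem:omega}, it therefore suffices to show $D_2(J)\le D_2(K_t\cup K_3)=t(t-1)^2+12$. The strict comparison of $\Omega$ then contradicts Lemma~\ref{lem:replace}.

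For the degree-square bound we first apply the last assertion of Lemma~\ref{lem:omega} with $\ell=1$ and $u=t-1$:
\[
D_2(J)\le 2t(E+3)-(t-1)(t+3)=t^2(t-1)+6t-(t^2+2t-3)=t(t-1)^2+4t+3 ,
\]
which falls short of the target. We therefore use the minimum degree of $J$. Admissibility gives $\Delta(J)\le t-1$, so $\sum_v d_J(v)=2E+6=t(t-1)+6$ over $t+3$ vertices. This sum is well below $(t+3)(t-1)$, so there is slack: the vertex degrees fall short of $t-1$ by a total of $3(t-1)-6=3t-9$. We write $D_2=\sum_v\bigl((t-1)-g_v\bigr)^2$, where $g_v\ge0$ is the deficiency of $v$ and $\sum_v g_v=3t-9$. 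Then
\[
D_2=(t+3)(t-1)^2-2(t-1)(3t-9)+\sum_v g_v^2 .
\]
This is maximized by concentrating the deficiency. The target $t(t-1)^2+12$ corresponds exactly to three vertices with $g_v=t-3$, namely the triangle $K_3$.

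The key step is to show that $J$ cannot have vertices of large deficiency arranged this way while staying connected and admissible. We split by the number of vertices of degree at most $2$, say. If $J$ has a vertex $x$ of small degree, consider the connected graph obtained from $J$ by deleting $x$, or by contracting an edge at $x$. This graph has order $t+2$ and loses few edges, so the bound $E+2$ from Lemma~\ref{lem:djs}, with equality only in the graphs classified by Lemma~\ref{lem:tplus2}, controls it. If $J-x$ attains $E+2$, then $J-x$ is one of $J_{0,e,f}$, $J'_{a_1,a_2,a_3}$ or $\overline P$, and $x$ is attached by a single edge. Lemma~\ref{lem:leaf} then produces a $K_{1,t}$ minor, contradicting admissibility. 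Otherwise $x$ has degree at least $2$, and the edge count forces all deficiencies to be spread out. In that case $\sum g_v^2$ is at most about $(3t-9)^2/m$ for $m\ge4$ deficient vertices, and $D_2$ falls below the target.

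We expect the main obstacle to be this case analysis of how the deficiency $3t-9$ can concentrate. Concretely, we must exclude configurations with two or three vertices of low degree that are not arranged as a pendant triangle, for instance a low-degree path attached to a near-clique. For these we would reuse the method of Lemma~\ref{lem:tplus2}: pass to the complement $Y=\overline J$, which has $2t$ edges, and check that its diameter is at most $2$. We would then bound $\sum_v d_Y(v)^2$ directly, since $D_2(J)$ is a decreasing function of $\sum_v d_Y(v)^2$ when $e(Y)$ is fixed. If that proves delicate, the fallback for small $t$ is a direct comparison with $F_{s,t}\cup K_2$ or $D_{s,t}\cup K_1$ when $q\ge2$, which can exceed $E+3$ edges.
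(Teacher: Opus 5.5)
Your setup matches the paper's: $e(J)=E+3$ from Lemma~\ref{lem:djs} and the comparison with $K_t\cup K_3$, the reduction via Lemma~\ref{lem:omega} to the non-strict bound $D_2(J)\le t(t-1)^2+12$, and the exclusion of degree-one vertices. That exclusion works by deleting a leaf $x$ and applying Lemma~\ref{lem:leaf} to the connected admissible graph $J-x$, which has order $t+2$ and $E+2$ edges.

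\textbf{The gap is in what comes after $\delta(J)\ge 2$.} You claim that once $\delta(J)\ge2$ the deficiencies are ``spread out'', so that $\sum_v g_v^2$ is at most about $(3t-9)^2/m$. This is false. Cauchy--Schwarz gives $\sum_v g_v^2\ge(\sum_v g_v)^2/m$, which is the opposite direction. Nothing prevents the deficiency from concentrating on three vertices of degree $2$.

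You then leave the remaining ``main obstacle'' unresolved, namely excluding low-degree paths attached to near-cliques and similar configurations. Your fallback through $F_{s,t}\cup K_2$ does not cover all cases either: it has only $E+q$ edges, so it beats $E+3$ only when $q\ge4$. As written, the proof does not reach the required bound on $D_2(J)$.

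\textbf{The missing step is one line, and it removes the obstacle entirely.} Once $\delta(J)\ge2$, every deficiency satisfies $0\le g_v\le t-3$, so
\[
\sum_v g_v^2\le(t-3)\sum_v g_v=3(t-3)^2 .
\]
Your own expansion then gives
\[
D_2(J)\le t(t-1)^2+3\bigl((t-1)-(t-3)\bigr)^2=t(t-1)^2+12 .
\]
This is exactly the last assertion of Lemma~\ref{lem:omega} with $\ell=2$ and $u=t-1$, and it is how the paper concludes.

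No configuration needs to be excluded. The cases attaining this bound, such as three vertices of degree $2$, are harmless, because Lemma~\ref{lem:omega} already makes the comparison of $\Omega$ strict whenever $J$ is connected and not complete.
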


	\begin{proof}
		Let $J$ be such a component; comparison with $K_t\cup K_3$ and Lemma~\ref{lem:djs} force $e(J)=E+3$. We have $\delta(J)\ge2$: otherwise deleting a vertex $v$ of degree one leaves a connected admissible graph of order $t+2$ with $E+2$ edges, and by Lemma~\ref{lem:leaf} restoring $v$ creates a $K_{1,t}$ minor. Hence all degrees lie in $[2,t-1]$, so Lemma~\ref{lem:omega} gives $D_2(J)\le t(t-1)^{2}+12=D_2(K_t\cup K_3)$; the edge counts agree and $J$ is connected and not complete, so replacing $J$ by $K_t\cup K_3$ strictly increases $\Omega$.
	\end{proof}

	\begin{thm}\label{thm:sgek}
		Let $3\le k\le s\le t$ and write $h=n-s+1=bt+p$ with $0\le p<t$, and put $q=\lfloor\frac{t+1}{s+1}\rfloor$. For all sufficiently large $n$ the unique hypergraph of maximum spectral radius in $\HH^{(k)}_{K_{s,t}}(n)$ is $\CC_k(K_{s-1}\vee H^{*})$, and $K_{s-1}\vee H^{*}$ is the unique extremal graph, where
		\[
		H^{*}=\begin{cases}
			(b-1)K_8\cup\overline P, & q=1,\ t=8,\ p=2,\\[2pt]
			(b-1)K_t\cup D_{s,t}, & q=2,\ p=2,\\[2pt]
			(b-p)K_t\cup pF_{s,t}, & 1\le p\le2q-2,\ (q,p)\ne(2,2),\\[2pt]
			bK_t\cup K_p, & \text{otherwise.}
		\end{cases}
		\]
	\end{thm}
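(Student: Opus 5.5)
By Lemma~\ref{lem:reduce}, Lemma~\ref{lem:Aclique} and Proposition~\ref{prop:Rempty2}, every extremal graph for all sufficiently large $n$ has the form $G=K_{s-1}\vee H$ with $H\in\mathcal A_{s,t}$ on $h$ vertices. It is $\CC_k(G)$ for a connected hypergraph, so the last clause of Lemma~\ref{lem:reduce} gives uniqueness of the hypergraph once the graph is unique. The task is therefore to identify $H$, and I would do this purely by the local replacement principle of Lemma~\ref{lem:replace}.

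For $t\ge4$, Lemmas~\ref{lem:comporder}--\ref{lem:comp3} show that every component of $H$ is one of the following: a clique $K_i$ with $i\le t$; a copy of $F_{s,t}$, which requires $q\ge2$; a copy of $D_{s,t}$, which requires $q=2$; or a copy of $\overline P$, which requires $q=1$ and $t=8$. They also show that there are boundedly many non-$K_t$ components. What remains is a finite optimization over multisets of these non-$K_t$ components with total order $\equiv p\pmod t$. I would record each component's excess order over a multiple of $t$ and its edge excess over $E$ per block: $K_i$ has excess $i$ and edges $\binom i2$, $F_{s,t}$ has excess $1$ and edge excess $q-1$, and the order-$(t+2)$ graphs have excess $2$ and edge excess $2$. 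I would then compare configurations first by $e$ and then by $\Omega$ on a union of at most $O(t)$ components of equal total order. Small cliques merge: by convexity of $\binom i2$, two cliques $K_i,K_{i'}$ with $i+i'\le t$ lose to $K_{i+i'}\cup$ (nothing). When $i+i'>t$, the pair loses to $K_t\cup K_{i+i'-t}$, because $\binom{i}{2}+\binom{i'}2<E+\binom{i+i'-t}2$ whenever $i,i'<t$.

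So the light part has at most one small clique $K_{p'}$, together with $f$ copies of $F_{s,t}$ and at most one order-$(t+2)$ exceptional component (two of those lose to $2F$ or to $K_t\cup K_t\cup\dots$ by edge count). This leaves a counting comparison. Taking $f$ copies of $F$ gives $f(q-1)$ extra edges beyond the clique decomposition with remainder $f$. A small clique $K_{p'}$ gives $\binom{p'}2$, compared with $K_{p'-f}$ plus $f$ copies of $F$. Under the constraint $f+p'\equiv p$, I would maximize $f(q-1)+\binom{p-f}{2}$ (with $p-f\ge0$) over $0\le f\le p$, and separately the option $p'=p+t-f$ with one fewer $K_t$. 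Convexity puts the maximum at $f=0$ or $f=p$. The inequality $p(q-1)>\binom p2$ holds iff $p<2q-1$, i.e. $p\le 2q-2$, and equality $p=2q-1$ is broken by $\Omega$ in favor of cliques via Lemma~\ref{lem:omega}.

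For $p=2$, one must compare $2F$, the configuration $D_{s,t}$, $\overline P$ and $K_2$. Their edge excesses are $2(q-1)$, $2$, $2$ and $1$. When $q=2$ there is an exact tie between $2F$ and $D$, which I would resolve by computing $\Omega$: $D_2$ and $c_3$ of $(b-2)K_t\cup2F$ against $(b-1)K_t\cup D$, with the latter expected to win. When $q=1$ and $t=8$, $\overline P$ beats $K_2$ on edges. Mixed configurations, such as one exceptional $(t+2)$-component together with some $F$'s or small cliques, are eliminated by the same edge counts; for example, $D\cup K_{p-2}$ versus $pF$ when $q=2$. Availability of each candidate is given by Lemma~\ref{lem:exadm} together with closure of $\mathcal A_{s,t}$ under disjoint unions.

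The main obstacle is the case $t=3$, which forces $s=k=3$ and $q=1$ and which the four component lemmas exclude. There the Ding--Johnson--Seymour bound is too weak to rule out long components by edges alone. $K_{1,3}$-minor-freeness with $\Delta\le2$ makes components paths and cycles, and admissibility forbids $K_{2,2}$ minors, hence cycles of length $\ge4$. So components are paths or triangles, and I would apply Lemma~\ref{lem:replace}, splitting long paths into triangles plus a remainder (a path $P_m$ has $m-1$ edges versus $m$ for triangles when $3\mid m$). Unboundedly long paths would be handled as in the $b_0\ge8$ argument of Lemma~\ref{lem:comporder}. This yields $bK_3\cup K_p$, matching the ``otherwise'' case. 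Finally, the whole comparison needs the uniform-in-$b$ threshold of Lemma~\ref{lem:replace}, which holds because only boundedly many components are ever altered.
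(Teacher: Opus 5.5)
\textbf{Main gap: the case $t=3$.} You propose to handle unboundedly long paths ``as in the $b_0\ge8$ argument of Lemma~\ref{lem:comporder}''. That argument needs
\[
\frac{e(J)}{e(J')}\le\frac1{b_0}+\frac2{t-1}\le\frac{19}{24},
\]
and at $t=3$ the term $\frac2{t-1}$ equals $1$, so the argument fails.

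Concretely, $e(P_m)=m-1$, while the triangle packing of the same order has at most $m$ edges. The edge gain is therefore at most one, and it is zero when $3\nmid m$. Evaluating at the old coordinates with the extreme values $y_\pm$ costs $\Theta(m\xi^{3})$, because $y_\pm$ differ by $\Theta(\xi^{2})$: the ends of the path have degree one. Since $\xi\asymp n^{-1/3}$ and $m$ may be of order $n$, this cost swamps the gain. This is exactly the ``edges alone'' obstruction you point out one sentence earlier.

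What survives at every length is the triangle count. The paper exploits it by a bounded surgery rather than a whole-component replacement:
\begin{itemize}
\item delete $v_3v_4$ and add $v_1v_3$;
\item swap $y_1$ and $y_4$ in the trial vector.
\end{itemize}
The exact change of the clique polynomial is then
\[
2(y_4-y_1)(y_2-y_5)+y_4y_2y_3 .
\]
The first term is $O(\xi^{4})$ by Lemma~\ref{lem:yexp}, and the second is $(1+O(\xi))\xi^{3}$, with no dependence on $m$. Alternatively, you could rederive~\eqref{eq:LJ} for an unbounded path with a uniform error $O(m\xi^{4})$ and use that $\Omega$ increases by at least $\theta\lfloor m/3\rfloor$. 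Either way an argument is needed that you have not supplied; the one you cite does not work.

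\textbf{Secondary issues for $t\ge4$.} Your plan is the paper's: Lemmas~\ref{lem:comporder}--\ref{lem:comp3}, then comparison first by $e$ and then by $\Omega$. Your global optimization over $(f,p')$ in place of pairwise replacements is fine, since Lemma~\ref{lem:comporder} bounds $f$ and $p'$. Three points need repair.
\begin{itemize}
\item Your assertion that mixed configurations are eliminated by edge counts is false for $q=2$, $p=3$. The light part $(b-2)K_t\cup F_{s,t}\cup D_{s,t}$ has $bE+3$ edges, the same as $bK_t\cup K_3$. This tie must be broken by $\Omega$. All degrees of $F_{s,t}\cup D_{s,t}$ lie in $[2,t-1]$, so the last bound of Lemma~\ref{lem:omega} gives
\[
D_2(F_{s,t}\cup D_{s,t})\le2(t+1)(2E+3)-2(t-1)(2t+3)=D_2(2K_t\cup K_3),
\]
and the first part of that lemma then gives a strict gain for the cliques.
\item Your appeal to Lemma~\ref{lem:omega} at $p=2q-1$ requires its hypothesis $D_2(pF_{s,t})\le D_2(pK_t\cup K_p)$. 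The paper verifies this with the same degree-range bound.
\item The comparison of $2F_{s,t}$ with $K_t\cup D_{s,t}$ should be computed, not ``expected''. The paper finds the differences $2(s-1)(\ell-1)$ in $D_2$ and $s\ell$ in $c_3$.
\end{itemize}
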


	\begin{proof}
		Suppose first $t\ge4$. By Lemmas~\ref{lem:comporder}--\ref{lem:comp3} the components of $H$ are cliques $K_i$ with $i\le t$, copies of $F_{s,t}$ when $q\ge2$, and at most one component $X$ of order $t+2$, which is $\overline P$ when $q=1$ and $t=8$ and is $D_{s,t}$ when $q=2$; every such $X$ has $E+2$ edges.

		At most one component lies in $\{K_i:i<t\}\cup\{X\}$. Merging two small cliques $K_i,K_\ell$ into full blocks and a residual clique gains $i\ell$ edges if $i+\ell\le t$ and $(t-i)(t-\ell)$ otherwise; the same replacement applied to $X\cup K_i$ gains $2i-1$ edges for $i\le t-2$ and $t-3$ for $i=t-1$; and $2X$ may be replaced by $2K_t\cup K_4$, gaining two edges, an $X$ occurring only for $t\ge7$.

		If $q=1$ no $F_{s,t}$ occurs, so the residue determines $H$ as $bK_t\cup K_p$ unless $t=8$ and $p=2$, where $\overline P$ has thirty edges against twenty-nine for $K_8\cup K_2$.

		Let $q\ge2$ and write $F=F_{s,t}$. There are at most $2q-2$ copies of $F$: replacing $2qF$ by $2qK_t\cup K_{2q}$ gains $q$ edges, and $2q<t$ because $s\ge3$. With $q_0=2q-1$ copies the orders and edge counts of $q_0F$ and $q_0K_t\cup K_{q_0}$ agree, and all degrees of $F$ lie between $\ell_0=(q-1)(s+1)$ and $u_0=t-1$, so Lemma~\ref{lem:omega} gives $D_2(q_0F)\le2(\ell_0+u_0)e-\ell_0u_0N$, where $e$ and $N$ are the common edge count and order. The degrees of $q_0K_t\cup K_{q_0}$ take only the values $u_0$ and $\ell_1=q_0-1$, so $D_2(q_0K_t\cup K_{q_0})=2(\ell_1+u_0)e-\ell_1u_0N$, and the difference of the two right sides is $(\ell_0-\ell_1)(u_0N-2e)\ge0$, because $\ell_0=(q-1)(s+1)>2q-2=\ell_1$ and $2e\le u_0N$. Hence $D_2(q_0F)\le D_2(q_0K_t\cup K_{q_0})$, and Lemma~\ref{lem:omega} gives a strict gain in $\Omega$. A copy of $F$ cannot coexist with a small clique $K_i$: for $i\ge q$ replacing $F\cup K_i$ by $K_t\cup K_{i+1}$ gains $i-q+1$ edges, while for $i\le q-1$ replacing $iK_t\cup K_i$ by $iF$ gains $i(q-1)-\binom i2>0$, the blocks being available by Lemma~\ref{lem:comporder}. Nor can $F$ coexist with $D_{s,t}$ when $q=2$: the pairs $F\cup D_{s,t}$ and $2K_t\cup K_3$ have the same order and $2E+3$ edges, all degrees of the former lie in $[2,t-1]$, and Lemma~\ref{lem:omega} again favors the union of cliques.

		Hence $H$ consists of full blocks together with one small clique, or with $q_0\le2q-2$ copies of $F$, or, when $q=2$, with one $D_{s,t}$. In the second case the residue forces $q_0=p$, and for $1\le p\le2q-2$ the small-clique alternative is excluded by
		\[
		e(pF)-e(pK_t\cup K_p)=p(q-1)-\binom p2=\tfrac12p(2q-p-1)>0 ,
		\]
		while for $p=0$ or $p>2q-2$ only the small-clique alternative survives. It remains to separate $2F$ from $K_t\cup D_{s,t}$ when $q=p=2$. With $\ell=t-s-1$, the graph $F$ has $t-1$ vertices of degree $t-1$ and two of degrees $\ell+1$ and $s+1$, and the subdivided edge has no common neighbor, so $D_2(D_{s,t})=D_2(F)+4$ and $c_3(D_{s,t})=c_3(F)$. Subtracting,
		\[
		D_2(K_t\cup D_{s,t})-D_2(2F)=2(s-1)(\ell-1)>0,\qquad c_3(K_t\cup D_{s,t})-c_3(2F)=s\ell>0,
		\]
		with equal edge counts, so $K_t\cup D_{s,t}$ wins by~\eqref{eq:LJ}.

		For $t=3$ we have $k=s=t=3$, and admissibility excludes $K_{1,3}$ and $K_{2,2}$ minors, so every component of $H$ is a path or a triangle. No path has order at least four: for a path $v_1\cdots v_m$ with light coordinates $y_1,\dots,y_m$, delete $v_3v_4$ and add $v_1v_3$, which keeps the light part admissible, and interchange $y_1$ and $y_4$ in the trial vector, preserving its norm. For $m\ge5$ the exact change in the clique polynomial is $2(y_4-y_1)(y_2-y_5)+y_4y_2y_3$, whose first term is $O(\xi^{4})$ and whose last is $(1+O(\xi))\xi^{3}>0$ by Lemma~\ref{lem:yexp}; for $m=4$ symmetry gives $y_1=y_4$ and only the positive triangle term remains. Completing every remaining three-vertex path to a triangle and merging the leftover $K_1$ and $K_2$ components as above leaves $H=bK_3\cup K_p$.

		Every graph listed in $H^{*}$ is admissible, by Lemmas~\ref{lem:tplus1} and~\ref{lem:exadm}, and has $h$ vertices, and for large $n$ the stated numbers of blocks are nonnegative. All the replacements used above involve finitely many graph types, so their positive gaps in $e$ or $\Omega$ have a uniform positive lower bound, and the one unbounded replacement in Lemma~\ref{lem:comporder} has the fixed ratio bound $19/24$; hence a single threshold $n_0(k,s,t)$ suffices. Finally, if $\HH$ is any maximizing hypergraph and $G=\shad\HH$, then $\CC_k(G)$ is also maximizing by Lemma~\ref{lem:reduce}, so $G\cong K_{s-1}\vee H^{*}$; as $\CC_k(G)$ is connected, strict monotonicity forces $\HH=\CC_k(G)$.
	\end{proof}

\section{Weighted clique inequalities}\label{sec:weighted}

In this section $k\ge3$ is fixed. For a graph $F$, an integer $1\le r<k$ and a nonnegative vector $z$ on $V(F)$, put
\[
S_r(F,z)=\sum_{Q\in E(\CC_r(F))}\prod_{v\in Q}z_v,\qquad
L_r(F)=\max\Bigl\{S_r(F,z):\ z\ge0,\ \sum_{v}z_v^{k}=1\Bigr\},
\]
with $L_r(F)=0$ when $V(F)=\emptyset$. The exponent $k$ in the normalization stays fixed when $r$ varies. Put
\[
\sigma_r=\frac{k}{k-r},\qquad C_r=\frac1t\binom tr=\frac1r\binom{t-1}{r-1},\qquad
\mathcal E_r(F)=\Bigl(\frac{L_r(F)}{C_r}\Bigr)^{\sigma_r},
\]
and call $\mathcal E_r(F)$ the \emph{capacity} of $F$. It depends on $k$ and $t$, which are fixed throughout. By Lemma~\ref{lem:sigma}, $\mathcal E_r(F)\le|V(F)|$ whenever $\Delta(F)\le t-1$.

\begin{lem}\label{lem:capadd}
	For vertex-disjoint graphs $F_1,\dots,F_q$ one has $\mathcal E_r(F_1\cup\dots\cup F_q)=\sum_i\mathcal E_r(F_i)$. Moreover, for $m\ge r$,
	\[
	L_r(K_m)=\binom mr m^{-r/k},\qquad
	\mathcal E_r(K_m)=m\Bigl(\binom{m-1}{r-1}\Big/\binom{t-1}{r-1}\Bigr)^{\sigma_r},
	\]
	and both vanish when $m<r$. In particular $\mathcal E_r(K_t)=t$.
\end{lem}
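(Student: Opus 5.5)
The additivity statement reduces to a one-variable optimization over how the unit $k$-norm is split among the components. Write $F=F_1\cup\dots\cup F_q$. An $r$-clique of $F$ lies inside a single $F_i$, so $S_r(F,z)=\sum_iS_r(F_i,z|_{F_i})$. Since $S_r$ is homogeneous of degree $r$, a restriction $z|_{F_i}$ with $\sum_{v\in F_i}z_v^k=w_i$ gives $S_r(F_i,z|_{F_i})\le L_r(F_i)w_i^{r/k}$, and equality is attained by scaling a maximizer for $F_i$. Hence
\[
L_r(F)=\max\Bigl\{\sum_iL_r(F_i)\,w_i^{r/k}:\ w_i\ge0,\ \sum_iw_i=1\Bigr\}.
\]
Because $r/k<1$, H\"older's inequality with exponents $k/r$ and $\sigma_r=k/(k-r)$ gives $\sum_iL_r(F_i)w_i^{r/k}\le\bigl(\sum_iL_r(F_i)^{\sigma_r}\bigr)^{1/\sigma_r}$. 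Equality holds for $w_i$ proportional to $L_r(F_i)^{\sigma_r}$; if all $L_r(F_i)=0$, both sides vanish. Therefore $L_r(F)^{\sigma_r}=\sum_iL_r(F_i)^{\sigma_r}$, and dividing by $C_r^{\sigma_r}$ gives additivity of $\mathcal E_r$.

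For $K_m$ with $m\ge r$, the upper bound comes from the argument of Lemma~\ref{lem:sigma}. By the arithmetic--geometric mean inequality, $S_r(K_m,z)\le\frac1r\binom{m-1}{r-1}\sum_vz_v^r$. By H\"older, $\sum_vz_v^r\le m^{(k-r)/k}$. Together these give $S_r\le\binom mr m^{-r/k}$, and the constant vector $z_v=m^{-1/k}$ attains this bound. So $L_r(K_m)=\binom mr m^{-r/k}$. Next, $L_r(K_m)^{\sigma_r}=\binom mr^{\sigma_r}m^{-r/(k-r)}$, and $\binom mr=\frac mr\binom{m-1}{r-1}$. Dividing by $C_r^{\sigma_r}=\bigl(\frac1r\binom{t-1}{r-1}\bigr)^{\sigma_r}$ gives
\[
\mathcal E_r(K_m)=m^{\sigma_r-r/(k-r)}\Bigl(\binom{m-1}{r-1}\Big/\binom{t-1}{r-1}\Bigr)^{\sigma_r},
\]
and $\sigma_r-\frac r{k-r}=1$, which is the stated formula. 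For $m<r$ there are no $r$-cliques, so both quantities vanish. Setting $m=t$ gives $\mathcal E_r(K_t)=t$.

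The only delicate point is showing that the inner maximum over the distribution of norm equals the H\"older bound, in particular that the maximizing weights are admissible when some $L_r(F_i)=0$. This holds because such components then receive weight zero.
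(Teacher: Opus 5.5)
Your proof is correct and follows the paper's argument. The additivity part is exactly the paper's homogeneity-plus-H\"older computation, with weights $W_i$ proportional to $L_r(F_i)^{\sigma_r}$. For the clique formula you bound $S_r(K_m,z)$ by the termwise arithmetic--geometric mean inequality and H\"older (as in Lemma~\ref{lem:sigma}), where the paper uses Maclaurin's and the power-mean inequality; both chains are tight at the constant vector, so the difference is immaterial.
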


\begin{proof}
	Write $W_i=\sum_{v\in V(F_i)}z_v^{k}$, so that $\sum_iW_i=1$. By homogeneity and H\"older's inequality with exponents $\sigma_r$ and $k/r$,
	\[
	S_r(F_1\cup\dots\cup F_q,z)\le\sum_iL_r(F_i)W_i^{r/k}\le\Bigl(\sum_iL_r(F_i)^{\sigma_r}\Bigr)^{1/\sigma_r}.
	\]
	If some $L_r(F_i)$ is positive, equality is attained by taking $W_i$ proportional to $L_r(F_i)^{\sigma_r}$ and a maximizing vector inside each $F_i$ of positive mass. For $K_m$, Maclaurin's inequality and the power-mean inequality give $e_r(z)\le\binom mr\bigl(\frac1m\sum_vz_v\bigr)^{r}\le\binom mr m^{-r/k}$, with equality for the constant vector. Since $\binom mr=\frac mr\binom{m-1}{r-1}$ and $r\sigma_r/k=\sigma_r-1$, raising to the power $\sigma_r$ gives the formula for $\mathcal E_r(K_m)$.
\end{proof}

\begin{lem}\label{lem:pack}
	Let $2\le r\le t$ and put $f(m)=m\binom{m-1}{r-1}^{\sigma_r}$ for $m\ge r$ and $f(m)=0$ for $0\le m<r$. If $m_1,\dots,m_c\le t$ are nonnegative integers with $\sum_im_i=bt+p$ and $0\le p<t$, then
	\[
	\sum_{i=1}^{c}f(m_i)\le bf(t)+f(p).
	\]
	If $bt+p\ge r$, equality requires $b$ parts equal to $t$ and, when $p\ge r$, one further part equal to $p$; when $p<r$, the remaining parts form an arbitrary partition of $p$.
\end{lem}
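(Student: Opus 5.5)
The natural approach is an exchange argument: show that any multiset of parts can be transformed into the canonical one ($b$ copies of $t$ and one part $p$) without decreasing $\sum f(m_i)$, with strictness unless the stated equality pattern holds. The key property needed is a superadditivity/convexity statement for $f$ on $\{0,\dots,t\}$. Since $\sigma_r>1$ and $m\mapsto\binom{m-1}{r-1}$ is nondecreasing and (for $r\ge2$) positive from $m=r$ on, I would first check that $g(m)=f(m)/m=\binom{m-1}{r-1}^{\sigma_r}$ is nondecreasing on $0\le m\le t$, strictly increasing for $m\ge r-1$ (with $g(m)=0$ for $m<r$). From this, for two parts $m_i\le m_\ell$ I will show the ``transfer'' inequality: moving one unit from the smaller part to the larger (while the larger stays $\le t$) does not decrease $f(m_i)+f(m_\ell)$. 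Concretely I aim to prove that $\Delta f(m)=f(m+1)-f(m)$ is nondecreasing in $m$ on $0\le m\le t-1$, i.e. $f$ is convex on the integers in $[0,t]$; then $f(m_\ell+1)-f(m_\ell)\ge f(m_i)-f(m_i-1)$ whenever $m_\ell\ge m_i-1$.

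Convexity is the main obstacle. For $m<r-1$ the increments are $0$, at $m=r-1$ the increment is $f(r)=r>0$, and for $m\ge r$ one writes $f(m+1)-f(m)=(m+1)g(m+1)-mg(m)=g(m+1)+m(g(m+1)-g(m))$. Both $g(m+1)$ and $m\bigl(g(m+1)-g(m)\bigr)$ should be nondecreasing: $g$ is increasing, and $g(m)=\binom{m-1}{r-1}^{\sigma_r}$ is a power $>1$ of a polynomial in $m$ with nonnegative convex behaviour on $m\ge r-1$ (indeed $\binom{m-1}{r-1}$ is a product of nonnegative increasing linear factors, hence discretely convex, and a convex nondecreasing power of it is convex), so its increments grow; multiplying the nondecreasing increments by the increasing factor $m$ preserves monotonicity. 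I also need the junction check $f(r)-f(r-1)=r\le f(r+1)-f(r)=(r+1)r^{\sigma_r}-r$, which is clear. I would verify the discrete convexity of $\binom{m-1}{r-1}^{\sigma}$ directly via $\phi(x)=x^{\sigma}$ convex increasing composed with a discretely convex nondecreasing sequence.

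Given convexity, the argument is standard: repeatedly take the two smallest nonzero parts less than $t$, say $m_i\le m_\ell<t$, and move a unit from $m_i$ to $m_\ell$; the sum does not decrease, and the process terminates with at most one part strictly between $0$ and $t$, which by the total count must be $p$ (and the number of $t$'s must be $b$). Equality: if $bt+p\ge r$, strict convexity of $f$ on $[r-1,t]$ (increments strictly increasing there, since $g$ strictly increases) shows that any transfer between two parts both $\ge r-1$, or any configuration with two parts $<t$ not both below $r$ after accounting, strictly increases the sum, unless the smaller part stays in the flat region $[0,r-1]$ where increments vanish. Tracking this gives the stated characterization: any part in $(0,t)$ of size $\ge r$ must be unique, so when $p\ge r$ the configuration is $b$ copies of $t$ and one $p$, and when $p<r$ the parts not equal to $t$ all lie below $r$ and sum to $p$ (since a collection of parts below $r$ summing to $\ge t$ could be merged with strict gain), forming an arbitrary partition of $p$.
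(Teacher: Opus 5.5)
Your proof is correct and is essentially the paper's. Convexity of $f$ comes from $g$ being nonnegative, nondecreasing and convex; your identity $\Delta f(m)=g(m+1)+m\,\Delta g(m)$ is the first-difference form of the paper's $\Delta^2 f(m)=m\,\Delta^2 g(m)+2\,\Delta g(m+1)$. The same unit-transfer exchange argument then follows.

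One slip in the equality discussion: the exemption ``unless the smaller part stays in the flat region'' is false.
\begin{itemize}
\item Moving a unit from a positive part $m_i<r$ into a part $m_\ell$ with $r-1\le m_\ell<t$ gains $\Delta f(m_\ell)>0$. A transfer into a part at least as large gains nothing only when the receiving part is at most $r-2$.
\item This strictness is what forbids a part in $[r,t-1]$ from coexisting with another positive part below $t$ in an equality configuration. You need that to pass from ``unique'' to the stated characterization.
\item For $p\ge r$, the remaining alternative, that all non-$t$ parts lie below $r$, is excluded because the sum would then be at most $bf(t)<bf(t)+f(p)$.
\end{itemize}
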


\begin{proof}
	Put $g(m)=\binom{m-1}{r-1}^{\sigma_r}$ for $m\ge r$ and $g(m)=0$ otherwise. The sequence $\binom{m-1}{r-1}$ (extended by zero) is nonnegative, nondecreasing and convex, since its successive differences are $0,\dots,0,1,\binom{r-1}{r-2},\binom{r}{r-2},\dots$; as $x\mapsto x^{\sigma_r}$ is convex and increasing, $g$ has the same three properties. With $\Delta g(m)=g(m+1)-g(m)$,
	\[
	\Delta^2f(m)=m\,\Delta^2g(m)+2\,\Delta g(m+1)\ge0,
	\]
	with strict inequality for $m\ge r-2$. If two positive parts are both smaller than $t$, moving a vertex from the smaller to the larger does not decrease the sum, and increases it strictly if one of the two parts has size at least $r$. Repeating leaves parts equal to $t$ and at most one smaller nonzero part, which proves the bound. In an equality partition at most one part of size at least $r$ is smaller than $t$, and such a part coexists with no other smaller positive part. If $bt+p\ge r$ the maximum is positive, so some part has size at least $r$. Parts smaller than $r$ have total smaller than $r$, since otherwise concentrating them creates a part of size $r$ and a strict gain. The equality conditions follow.
\end{proof}

\subsection*{Cliques of size at least three}

The next lemma is a weighted form of the closed-neighborhood counting of Chao and Dong~\cite{ChD}; the weights must be kept, and we give the proof.

\begin{lem}\label{lem:closednbhd}
	Let $F$ be a graph with nonnegative vertex weights $z$ and let $r\ge3$. For $v\in V(F)$ let $\mathcal T_v$ be the set of $r$-cliques of $F$ meeting $N_F[v]=N_F(v)\cup\{v\}$. Then
	\[
	\sum_{v\in V(F)}z_v\sum_{Q\in\mathcal T_v}\prod_{u\in Q}z_u\ \le\ \sum_{v\in V(F)}z_v\,e_r\bigl(z_{N_F[v]}\bigr).
	\]
\end{lem}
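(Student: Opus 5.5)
The plan is to prove the inequality by a weight-preserving injection between the index sets of the two sides. Expanding, the left side is a sum of $z_vz_Q$ over pairs $(v,Q)$ with $Q$ an $r$-clique and $Q\cap N_F[v]\ne\emptyset$, where $z_X=\prod_{x\in X}z_x$. The right side is a sum of $z_uz_R$ over pairs $(u,R)$ with $R$ an $r$-subset of $N_F[u]$. Since all weights are nonnegative, it suffices to build an injective map $(v,Q)\mapsto(u,R)$ with $z_uz_R=z_vz_Q$.

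\emph{Pairs with $v\in Q$.} Such a pair is sent to $(v,Q)$ itself. This is valid because $Q$ is a clique containing $v$, so $Q\subseteq N_F[v]$.

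\emph{Pairs with $v\notin Q$.} Choose $u\in Q\cap N_F(v)$, which is nonempty. Set $R=(Q\setminus\{u\})\cup\{v\}$. Then $R$ is an $r$-subset of $N_F[u]$, because $Q\setminus\{u\}\subseteq N_F(u)$ and $v\in N_F(u)$. The weight is preserved: $z_uz_R=z_vz_Q$. Moreover $u\notin R$, so these images are disjoint from the images of the first type, which all satisfy $u\in R$.

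It remains to choose $u$ so that this second map is injective. A preimage of $(u,R)$ is a vertex $w\in R$ such that $R\setminus\{w\}$ is a clique; it then gives the pair $(w,(R\setminus\{w\})\cup\{u\})$. There are two cases.

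\begin{itemize}
\item If $R$ itself is a clique, then $K=R\cup\{u\}$ is an $(r+1)$-clique. The relevant pairs are $(w,K\setminus\{w\})$ for $w\in K$, and the candidate images are $(u',K\setminus\{u'\})$ for $u'\in K$. All of these carry the same weight $z_K$. Fix a linear order of $V(F)$ and let $u$ be the cyclic successor of $w$ in $K$. This gives a bijection inside each $(r+1)$-clique.
\item If $R$ is not a clique but $R\setminus\{w\}$ is a clique for some $w$, then, because $r\ge3$, at most two such $w$ exist. If there are two, say $w,w'$, then $R$ is a clique minus the single edge $ww'$. The two preimages are $(w,Q)$ and $(w',Q')$, with $Q=(R\setminus\{w\})\cup\{u\}$ and $Q'=(R\setminus\{w'\})\cup\{u\}$, and here $w'\in Q$ with $w'\notin N_F(w)$.
\end{itemize}

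The choice of $u$ must therefore be made by a rule that, in the second case, selects $u$ for at most one of $w,w'$. For instance, take $u$ to be the least element of $Q\cap N_F(v)$ in the fixed order, but only after first excluding the case that $Q\cup\{v\}$ is a clique, where the cyclic rule applies instead. One then checks that the two preimages cannot both pick the same $u$. The point is that $Q\cap N_F(w)=Q\setminus\{w'\}$ and $Q'\cap N_F(w')=Q'\setminus\{w\}$ have size $r-1\ge2$. This gives enough room to break the tie, for example by the relative order of $w$ and $w'$.

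I expect this tie-breaking to be the main obstacle. It is also where $r\ge3$ must enter: for $r=2$ the intersection $Q\cap N_F(v)$ can be a single vertex, which forces collisions, and this matches the failure of the $j=2$ comparison noted in the introduction. If a clean deterministic rule proves awkward, the fallback is a fractional version: distribute the weight $z_vz_Q$ equally over all admissible $u\in Q\cap N_F(v)$, and then verify that every target $(u,R)$ receives total weight at most $z_uz_R$. By the case analysis above, each target has at most two non-clique preimages, each of which splits its weight over at least $r-1\ge2$ targets. In the clique case there are exactly matching numbers of pairs and targets.
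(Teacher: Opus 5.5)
Your argument is correct in substance and is essentially the paper's proof. The paper compares, monomial by monomial, the coefficients of the degree-$(r+1)$ monomials on the two sides. It disposes of monomials with a repeated variable exactly as your first-type pairs do. For a squarefree monomial on an $(r+1)$-set $D$ it tabulates the same three configurations you reach: $D$ a clique, $D$ a clique minus one edge, and $D$ containing at most one $r$-clique. In the clique-minus-an-edge case the count is $2$ against $r-1$, which is where $r\ge3$ enters. Your injection is that count made explicit.

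One slip needs fixing: the plain least-element rule always collides. The sets $Q\cap N_F(w)=Q\setminus\{w'\}$ and $Q'\cap N_F(w')=Q'\setminus\{w\}$ are the same set $(R\setminus\{w,w'\})\cup\{u\}$, so the sentence claiming the two preimages cannot pick the same $u$ is false for that rule. You must actually apply a tie-break: for instance, the smaller of $w,w'$ takes the least element and the larger takes the second least. This is well defined because $w'$ can be recovered from $(w,Q)$ as the unique non-neighbor of $w$ in $Q$. Alternatively, use your fractional version, in which such a target receives only $\frac{2}{r-1}\le1$ times its weight.
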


\begin{proof}
	We compare coefficients of monomials of degree $r+1$. A monomial with a repeated variable has the form $z_v^2\prod_{u\in B}z_u$ with $|B|=r-1$ and $v\notin B$. Its coefficient on the left is $1$ exactly when $B\cup\{v\}$ is an $r$-clique, and on the right it is $1$ whenever $B\subseteq N_F(v)$; so the right coefficient is at least the left one.
	
	Now fix an $(r+1)$-set $D$. The coefficient of $\prod_{u\in D}z_u$ on the right is the number of vertices of $D$ adjacent to all other vertices of $D$. On the left it is the number of $r$-cliques $Q\subset D$ for which the remaining vertex of $D$ has a neighbor in $Q$. If $F[D]$ has no $r$-clique, the latter is zero. Otherwise fix an $r$-clique $Q\subset D$, and let $\ell$ be the number of neighbors in $Q$ of the remaining vertex. The two coefficients are
	\[
	\begin{array}{c|cccc}
		\ell & 0 & 1\le\ell\le r-2 & r-1 & r\\\hline
		\text{right} & 0 & \ell & r-1 & r+1\\
		\text{left} & 0 & 1 & 2 & r+1
	\end{array}
	\]
	and since $r\ge3$ the right coefficient is always at least the left one.
\end{proof}

For $r=2$ the comparison fails: for a path on three vertices the two coefficients of the product of its three weights are $1$ and $2$. This is why the case $j=2$ below needs the minor conditions.

\begin{lem}\label{lem:completion}
	Let $r\ge3$, $\Delta(F)\le t-1$ and $z\ge0$. There is a graph $F'$ on $V(F)$, each component of which is complete of order at most $t$, with $S_r(F,z)\le S_r(F',z)$.
\end{lem}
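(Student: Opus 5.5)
The plan is to build $F'$ greedily, one clique at a time. At each step I choose a vertex $v$, replace $N_F[v]$ by a complete graph, and recurse on $F-N_F[v]$. Lemma~\ref{lem:closednbhd} will supply, by an averaging argument, a vertex $v$ for which this step does not decrease the weighted $r$-clique sum. The induction is on $|V(F)|$; the empty graph is trivial.

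For $v\in V(F)$ put
\[
a_v=\sum_{Q\in\mathcal T_v}\prod_{u\in Q}z_u,\qquad b_v=e_r\bigl(z_{N_F[v]}\bigr),
\]
so that $b_v=S_r(K_{N_F[v]},z)$ is the weighted $r$-clique sum of the complete graph on $N_F[v]$. Lemma~\ref{lem:closednbhd} states exactly that $\sum_vz_va_v\le\sum_vz_vb_v$. Two cases arise.

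\emph{Case $z\equiv0$.} Then $S_r(F,z)=0$, and we may take $F'$ to be the edgeless graph on $V(F)$; its components are $K_1$, which is complete of order $1\le t$.

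\emph{Case $z\not\equiv0$.} Since $\sum_vz_v(b_v-a_v)\ge0$ with nonnegative weights $z_v$ not all zero, some vertex $v$ with $z_v>0$ satisfies $a_v\le b_v$. Fix such a $v$ and put $X=N_F[v]$. Because $\Delta(F)\le t-1$, we have $|X|\le t$.

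Now split the $r$-cliques of $F$ into two families. Those meeting $X$ are exactly the members of $\mathcal T_v$, and their total weight is $a_v$. Those avoiding $X$ are precisely the $r$-cliques of $F-X$, with total weight $S_r(F-X,z)$. Hence
\[
S_r(F,z)=a_v+S_r(F-X,z)\le b_v+S_r(F-X,z).
\]

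The graph $F-X$ is an induced subgraph of $F$, so it still has maximum degree at most $t-1$, and it has fewer vertices. By induction there is a graph $F''$ on $V(F)\setminus X$ whose components are complete of order at most $t$, with $S_r(F-X,z)\le S_r(F'',z)$.

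Set $F'=K_X\cup F''$, the disjoint union of the complete graph on $X$ and $F''$. Its components are complete of order at most $t$. Since no edge joins $X$ to $V(F)\setminus X$ in $F'$, its $r$-cliques split as well, so
\[
S_r(F',z)=b_v+S_r(F'',z)\ge b_v+S_r(F-X,z)\ge S_r(F,z),
\]
which completes the induction.

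The only substantive step is the existence of a vertex with $a_v\le b_v$, which is where $r\ge3$ enters through Lemma~\ref{lem:closednbhd}. One must also check that the family of $r$-cliques meeting $X$ is exactly $\mathcal T_v$, which holds by the definition of $\mathcal T_v$, so the split above loses nothing. The choice $z_v>0$ is not actually needed for the inequality; it only ensures that the averaging argument produces some vertex.
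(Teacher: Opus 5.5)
Your proof is correct and follows the paper's own argument. Lemma~\ref{lem:closednbhd} gives, by averaging, a vertex $v$ with $z_v>0$ for which replacing $N_F[v]$ by an isolated clique of order at most $t$ does not decrease $S_r$, and one then recurses on $F-N_F[v]$. Your explicit induction on $|V(F)|$, with the split $S_r(F,z)=a_v+S_r(F-X,z)$, simply makes precise the paper's ``repeat the argument in the graph induced on $V(F)\setminus N_F[v]$.''
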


\begin{proof}
	If $z=0$, take $F'$ edgeless. Otherwise Lemma~\ref{lem:closednbhd} provides a vertex $v$ with $z_v>0$ and $\sum_{Q\in\mathcal T_v}\prod_{u\in Q}z_u\le e_r(z_{N_F[v]})$. Delete all edges between $N_F[v]$ and its complement and make $N_F[v]$ a clique. The cliques destroyed are exactly those in $\mathcal T_v$, and the new cliques inside $N_F[v]$ have total weight $e_r(z_{N_F[v]})$, so the weighted clique sum does not decrease. The new clique has order at most $t$, and no other degree increases. Repeat the argument in the graph induced on $V(F)\setminus N_F[v]$, keeping the new clique fixed.
\end{proof}

\begin{thm}\label{thm:weighted3}
	Let $3\le r\le t$ and $k>r$, and write $h=bt+p$ with $0\le p<t$. Put $\gamma_p=p\bigl(\binom{p-1}{r-1}/\binom{t-1}{r-1}\bigr)^{\sigma_r}$ if $p\ge r$ and $\gamma_p=0$ otherwise. Every graph $F$ of order $h$ with $\Delta(F)\le t-1$ satisfies
	\[
	\mathcal E_r(F)\le bt+\gamma_p .
	\]
	If $p\ge r$, equality holds if and only if $F\cong bK_t\cup K_p$; if $p<r$, it holds if and only if $F\cong bK_t\cup F_0$ for some graph $F_0$ of order $p$.
\end{thm}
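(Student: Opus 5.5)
The plan is to reduce to disjoint unions of cliques via Lemma~\ref{lem:completion}, and then to use additivity of capacity (Lemma~\ref{lem:capadd}) and the packing inequality (Lemma~\ref{lem:pack}).

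\emph{Upper bound.} Let $z$ be a maximizing vector for $L_r(F)$, so $S_r(F,z)=L_r(F)$ and $\sum_v z_v^k=1$. By Lemma~\ref{lem:completion} there is a graph $F'$ on $V(F)$ whose components are cliques $K_{m_1},\dots,K_{m_c}$ with $m_i\le t$ and $\sum_i m_i=h$, such that $S_r(F,z)\le S_r(F',z)\le L_r(F')$. Hence $\mathcal E_r(F)\le\mathcal E_r(F')$. Lemma~\ref{lem:capadd} gives
\[
\mathcal E_r(F')=\sum_i\mathcal E_r(K_{m_i})=\binom{t-1}{r-1}^{-\sigma_r}\sum_i f(m_i),
\]
where $f$ is the function of Lemma~\ref{lem:pack}. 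That lemma bounds the sum by $bf(t)+f(p)$. Since $f(t)\binom{t-1}{r-1}^{-\sigma_r}=t$ and $f(p)\binom{t-1}{r-1}^{-\sigma_r}=\gamma_p$, this gives $\mathcal E_r(F)\le bt+\gamma_p$. The identity $\mathcal E_r(bK_t\cup F_0)=bt+\mathcal E_r(F_0)$ follows from additivity. Moreover $\mathcal E_r(F_0)=\gamma_p$ when $F_0=K_p$ with $p\ge r$, and $\mathcal E_r(F_0)=0$ when $|F_0|=p<r$, since then $F_0$ has no $r$-cliques. So the stated graphs attain equality.

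\emph{Equality, first reduction.} Assume $\mathcal E_r(F)=bt+\gamma_p$. Then equality holds throughout the chain. Equality in Lemma~\ref{lem:pack} forces $F'$ to consist of $b$ copies of $K_t$ together with either $K_p$ (if $p\ge r$) or a union of cliques of total order $p$ (if $p<r$). The remaining task is to transfer this structure from $F'$ back to $F$. I expect this to be the main obstacle, because Lemma~\ref{lem:completion} only gives $S_r(F,z)\le S_r(F',z)$ and does not say $F=F'$.

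\emph{Equality, transferring structure to $F$.} I would use the following facts.
\begin{itemize}
\item[(i)] At the maximizer, $S_r(F,z)=L_r(F)=L_r(F')$. So $z$ is also a maximizer for $F'$.
\item[(ii)] For a union of cliques, Lemma~\ref{lem:capadd} pins down the maximizers exactly: by the equality cases of Maclaurin and H\"older, $z$ is constant on each $K_t$ block and on the $K_p$ block (when $p\ge r$), with positive mass on each. The vertices of the leftover part of order $p<r$ lie in no $r$-clique, so $z$ vanishes there.
\item[(iii)] Every step in the proof of Lemma~\ref{lem:completion} must be an equality at $z$. At each step the vertex $v$ chosen has $z_v>0$, so the equality case of Lemma~\ref{lem:closednbhd} applies. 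Tracing its coefficient table, a strict gain occurs as soon as some monomial with positive weight has a strictly larger right coefficient. Within the support of $z$, this forces two things. First, $N_F[v]$ is already a clique, since a missing edge inside $N_F[v]$ makes the repeated monomial $z_v^2\prod_B z_u$ strictly larger on the right. Second, no $r$-clique leaves $N_F[v]$, since an outside vertex $w$ with $\ell\ge1$ neighbors in some $r$-clique of $N_F[v]$ gives a table entry with strictly larger right coefficient.
\end{itemize}
Since $\Delta(F)\le t-1$ and $N_F[v]$ is then a $t$-clique (or a $p$-clique), every positive-weight block is a component of $F$ equal to $K_t$ or $K_p$. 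A vertex outside the support of $z$ cannot attach to such a block, because a block of order $t$ has full degree $t-1$. For the $K_p$ block with $p<t$, an extra edge would make some $r$-clique meet it, or would create extra capacity, which Lemma~\ref{lem:pack} forbids; I would check this by directly comparing the capacities.

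This yields $F\cong bK_t\cup K_p$ when $p\ge r$. When $p<r$ the zero-weight remainder $F_0$ of order $p$ is unconstrained, giving $F\cong bK_t\cup F_0$.

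Handling the equality case of Lemma~\ref{lem:closednbhd} carefully on the support of $z$ is the delicate step. The reason is that monomials involving zero-weight vertices give no information, so those vertices have to be treated separately using the degree bound.
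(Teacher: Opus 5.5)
Your upper bound is the paper's argument: Lemma~\ref{lem:completion}, then Lemma~\ref{lem:capadd}, then Lemma~\ref{lem:pack}. Your equality idea is also the paper's: a positive repeated monomial $z_v^2\prod_{u\in B}z_u$, with $B\subseteq N_F(v)$ not forming an $r$-clique with $v$, makes the comparison of Lemma~\ref{lem:closednbhd} strict. Step (iii), however, does not work as written, for two concrete reasons.

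\emph{Per-step equality is not the equality case of Lemma~\ref{lem:closednbhd}.} Equality in each step of the particular run of Lemma~\ref{lem:completion} that produced $F'$ only gives the single-vertex identities $\sum_{Q\in\mathcal T_v}\prod_{u\in Q}z_u=e_r(z_{N_F[v]})$. These hold at the vertices that run happened to choose by averaging. Lemma~\ref{lem:closednbhd} compares the $z_v$-weighted sums over \emph{all} vertices, and its coefficient table says nothing about a single vertex. The single-vertex inequality can even fail: take a pendant vertex attached to a $K_4$, with $r=3$. So the structure of $F$ cannot be read off from ``every step is an equality''.

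What does work is this. If the summed inequality of Lemma~\ref{lem:closednbhd} for $F$ at $z$ is strict, then some $v$ with $z_v>0$ has a strict single-vertex inequality. Starting the completion at that $v$ gives a graph $F''$ of order $h$ with $\Delta(F'')\le t-1$ and $\mathcal E_r(F'')>\mathcal E_r(F)=bt+\gamma_p$. This contradicts the bound you already proved.

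\emph{The ``second'' claim is false.} An outside vertex with $\ell\ge1$ neighbors in an $r$-clique does not always give a strictly larger right coefficient. For $\ell=1$ both coefficients are $1$; for $\ell=r$ both are $r+1$; for $r=3$ and $\ell=2$ both are $2$. So ``no $r$-clique leaves $N_F[v]$'' does not follow from the table.

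You do not need that claim. Apply the repeated-monomial argument at the vertex $v$ whose closed neighborhood became a positive block $X$ of $F'$. By your (ii), $z>0$ on $X$ and $|X|\ge r$. Any non-edge inside $X\setminus\{v\}$ therefore extends to a positive monomial with $B\subseteq X\setminus\{v\}$, and the argument above shows that $X$ is a clique of $F$. The $t$-blocks are then components of $F$ by the degree bound. When $p\ge r$, the equality case of Lemma~\ref{lem:pack} leaves no vertex outside the positive blocks, so the $K_p$ block has nowhere to send further edges. Your vague capacity comparison for $K_p$ is unnecessary.

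\emph{How the paper does it.} The paper bypasses $F'$ and your (ii). At a maximizer $z$ for $F$, every vertex of an $r$-clique has positive coordinate: perturb the zero coordinates, using $r<k$. Vertices in no $r$-clique have coordinate zero. Hence $U=\{v:z_v>0\}$ is the union of the $r$-cliques, and each $v\in U$ has at least $r-1$ neighbors in $U$, which is what makes the monomial positive.

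The strictness argument then makes every neighborhood in $F[U]$ complete. So $F[U]$ is a disjoint union of cliques of orders in $[r,t]$, with $\mathcal E_r(F)=\mathcal E_r(F[U])$. Lemma~\ref{lem:pack}, applied to these orders together with a part of size one for each vertex outside $U$, gives both equality cases at once. This route is shorter and never needs to identify the maximizers of a union of cliques.
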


\begin{proof}
	Lemmas~\ref{lem:completion}, \ref{lem:capadd} and~\ref{lem:pack} give the bound, and the stated graphs attain it by Lemma~\ref{lem:capadd}. Suppose $h\ge r$ and equality holds, and let $z$ be a maximizing vector for $S_r(F,\cdot)$. Every vertex of an $r$-clique has positive coordinate: if such a clique had $q\ge1$ zero coordinates, replacing them by $\varepsilon$ would add a term of order $\varepsilon^{q}$ at a normalization cost $O(\varepsilon^k)$, and $q\le r<k$. A vertex in no $r$-clique has zero coordinate, since otherwise its mass could be moved to the clique terms. Let $U=\{v:z_v>0\}$.
	
	Suppose some $v\in U$ has two nonadjacent neighbors in $U$. As $v$ lies in an $r$-clique, $|N_F(v)\cap U|\ge r-1$, so these two neighbors extend to an $(r-1)$-set $B\subseteq N_F(v)\cap U$. The monomial $z_v^2\prod_{u\in B}z_u$ is positive and has coefficient $1$ on the right of Lemma~\ref{lem:closednbhd} and $0$ on the left, so that inequality is strict, and some vertex of positive weight gives a strict gain in the step of Lemma~\ref{lem:completion}. The resulting graph has order $h$ and maximum degree at most $t-1$ and exceeds the bound, a contradiction. Hence every neighborhood in $F[U]$ is complete, so each component of $F[U]$ is complete, of order between $r$ and $t$. Applying Lemma~\ref{lem:pack} to these orders, together with a part of size one for each vertex outside $U$, gives $U=V(F)$ and $F=bK_t\cup K_p$ if $p\ge r$, and $F[U]=bK_t$ if $p<r$. In the latter case every vertex of $U$ already has degree $t-1$, so no edge joins $U$ to its complement, which has order $p$.
\end{proof}

\begin{lem}\label{lem:defect3}
	Let $3\le r\le t$, $\Delta(F)\le t-1$, and suppose that no component of $F$ is isomorphic to $K_t$. Then
	\[
	\mathcal E_r(F)\le\zeta\,|V(F)|,\qquad \zeta=\Bigl(1-\frac{(r-1)(r-2)}{(t-1)(t-2)}\Bigr)^{\sigma_r}<1 .
	\]
\end{lem}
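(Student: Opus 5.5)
Since the capacity is additive over components by Lemma~\ref{lem:capadd}, it suffices to prove the bound $\mathcal E_r(J)\le\zeta|V(J)|$ for a single connected graph $J$ with $\Delta(J)\le t-1$ and $J\not\cong K_t$. By homogeneity and Lemma~\ref{lem:sigma}, the claim is equivalent to $S_r(J,z)\le C_r\zeta^{1/\sigma_r}W^{r/k}|V(J)|^{(k-r)/k}$. Following the proof of Lemma~\ref{lem:sigma}, I will bound the weighted clique sum by $\frac1r\sum_vm_vz_v^r$ through the arithmetic-geometric mean inequality and then apply H\"older. This reduces the problem to the pointwise estimate
\[
m_v\le\Bigl(1-\frac{(r-1)(r-2)}{(t-1)(t-2)}\Bigr)\binom{t-1}{r-1}
\]
for every vertex $v$, where $m_v$ is the number of $r$-cliques of $J$ containing $v$. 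Indeed, H\"older then gives $\sum_vm_v^{\sigma_r}\le\zeta|V(J)|\binom{t-1}{r-1}^{\sigma_r}$, which is exactly the bound on $\mathcal E_r$.

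For the pointwise estimate, fix $v$ and let $d=d(v)\le t-1$. The number $m_v$ equals the number of $(r-1)$-cliques in $J[N(v)]$. If $d\le t-2$, then $m_v\le\binom{t-2}{r-1}=\frac{t-r}{t-1}\binom{t-1}{r-1}$. This is at most the target provided $\frac{t-r}{t-1}\le1-\frac{(r-1)(r-2)}{(t-1)(t-2)}$, which reduces to $(r-1)(t-2)\ge(r-1)(r-2)$, i.e.\ $t\ge r$, and so holds.

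If $d=t-1$, then $N(v)$ cannot be a clique. Otherwise $N[v]$ would be a $K_t$ whose vertices all have degree $t-1$ inside it, so it would be a whole component isomorphic to $K_t$, which is excluded. Hence $N(v)$ misses at least one pair $\{x,y\}$, and every $(r-1)$-subset containing both $x$ and $y$ fails to be a clique. This removes $\binom{t-3}{r-3}$ subsets, and
\[
\binom{t-3}{r-3}=\frac{(r-1)(r-2)}{(t-1)(t-2)}\binom{t-1}{r-1}.
\]
This is exactly the stated defect, and $r\ge3$ ensures it is positive, so $\zeta<1$.

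The main point to check is this second case: that a vertex of full degree whose neighborhood is complete forces a $K_t$ component. This uses $\Delta\le t-1$ for the neighbors of $v$, since each already has $t-1$ neighbors inside $N[v]$. The remaining issue is the normalization: the maximum defining $L_r$ is taken over unit vectors, and H\"older must be applied with $W=1$, exactly as in Lemma~\ref{lem:sigma}. Vertices lying in no $r$-clique have $m_v=0$ and satisfy the pointwise bound trivially, so no further case is needed.
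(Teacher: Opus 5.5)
Your proof is correct and is the paper's argument. You bound $m_v\le\binom{t-1}{r-1}-\binom{t-3}{r-3}$ pointwise in the two cases $d(v)\le t-2$ and $d(v)=t-1$, where a complete neighborhood would force a $K_t$ component, and then feed this into Lemma~\ref{lem:sigma}; the preliminary reduction to connected components via Lemma~\ref{lem:capadd} is harmless but unnecessary, since the pointwise estimate already holds in all of $F$.
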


\begin{proof}
	Let $m_v$ be the number of $r$-cliques through $v$ and $M=\binom{t-1}{r-1}$. If $d_F(v)\le t-2$ then $m_v\le\binom{t-2}{r-1}=M-\binom{t-2}{r-2}$. If $d_F(v)=t-1$, the neighborhood of $v$ is not complete, for otherwise $N_F[v]$ would be a $K_t$ whose vertices have full degree and hence a component; a missing edge in $N_F(v)$ excludes at least $\binom{t-3}{r-3}$ of its $(r-1)$-subsets. In both cases $m_v\le M-\binom{t-3}{r-3}=M\bigl(1-\frac{(r-1)(r-2)}{(t-1)(t-2)}\bigr)$. By Lemma~\ref{lem:sigma}, $\mathcal E_r(F)\le\sum_v(m_v/M)^{\sigma_r}$, and the claim follows.
\end{proof}

\begin{cor}\label{cor:gap3}
	Let $3\le r\le t$, $k>r$ and $0\le p<t$. There is $\delta>0$, depending only on $k,r,t$, such that for every $b\ge0$ every graph $F$ of order $bt+p$ with $\Delta(F)\le t-1$ is either an equality graph of Theorem~\ref{thm:weighted3} or satisfies $\mathcal E_r(F)\le bt+\gamma_p-\delta$.
\end{cor}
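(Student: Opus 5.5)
The plan is to split $F$ into its $K_t$ components and the rest, bound the rest through Lemma~\ref{lem:defect3} together with finitely many exact comparisons, and obtain a gap $\delta$ that is independent of $b$. Write $F=aK_t\cup F_1$, where no component of $F_1$ is $K_t$ and $|V(F_1)|=(b-a)t+p=:m$. Lemma~\ref{lem:capadd} gives $\mathcal E_r(F)=at+\mathcal E_r(F_1)$. It therefore suffices to show that $\mathcal E_r(F_1)\le(b-a)t+\gamma_p-\delta$ unless $F$ is an equality graph.

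\emph{Large remainder.} Lemma~\ref{lem:defect3} gives $\mathcal E_r(F_1)\le\zeta m$. Since $\gamma_p\ge0$, we have $\zeta m\le m-p+\gamma_p-\delta$ as soon as $(1-\zeta)m\ge p+\delta$. This holds whenever $m\ge m_0:=\lceil (t+1)/(1-\zeta)\rceil$, taking $\delta\le1$. The threshold $m_0$ depends only on $k,r,t$.

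\emph{Small remainder.} If $m<m_0$, then $b-a$ is bounded by a constant. Up to isomorphism there are finitely many graphs $F_1$ of order less than $m_0$ with $\Delta\le t-1$ and no $K_t$ component. Fix one, and apply Theorem~\ref{thm:weighted3} to $F_1$ with $b$ replaced by $b-a$.

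\begin{itemize}
\item If $F_1$ is not an equality graph, its inequality is strict, so the difference $(b-a)t+\gamma_p-\mathcal E_r(F_1)$ is positive. This difference depends only on $F_1$ and not on $a$.
\item If $F_1$ is an equality graph, it has the form $(b-a)K_t\cup K_p$ or $(b-a)K_t\cup F_0$. Since $F_1$ has no $K_t$ component, this forces $b=a$. Then $F=bK_t\cup F_1$ is itself an equality graph for $F$.
\end{itemize}

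Take $\delta$ to be the minimum of $1$ and these finitely many positive differences. This $\delta$ depends only on $k,r,t$, since $p<t$ ranges over finitely many values.

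\emph{Main obstacle.} The delicate point is checking that an equality graph for $F_1$ with $b=a$ matches the equality description for $F$. When $p\ge r$ this is $F_1\cong K_p$. When $p<r$ it is $F_1$ arbitrary of order $p$, with capacity $0=\gamma_p$. Both cases match the statement exactly.

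A second point is the case $m<r$ with $p\ge r$. Here $m\equiv p \pmod t$ forces $m\ge p$, so the case cannot occur. Everything else is finite bookkeeping.
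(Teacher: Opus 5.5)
Your proposal is correct and follows essentially the same route as the paper. Both split off the $K_t$ components using Lemma~\ref{lem:capadd}, use Lemma~\ref{lem:defect3} to get a gap of at least $1$ when the remainder is large, and handle the finitely many small remainders by Theorem~\ref{thm:weighted3}, taking $\delta$ as the minimum of $1$ and the resulting positive differences.
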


\begin{proof}
	Write $F=qK_t\cup J$, where $J$ has no component $K_t$, and put $d=b-q$, so that $|V(J)|=dt+p$. By Lemmas~\ref{lem:capadd} and~\ref{lem:defect3},
	\[
	bt+\gamma_p-\mathcal E_r(F)=dt+\gamma_p-\mathcal E_r(J)\ge(1-\zeta)dt+\gamma_p-\zeta p ,
	\]
	which is at least $1$ once $d\ge d_0$. For $d<d_0$ there are finitely many graphs $J$, and by Theorem~\ref{thm:weighted3} each of them gives an equality graph or a positive difference. Let $\delta$ be the minimum of $1$ and these differences.
\end{proof}

\subsection*{Edges}

Throughout the rest of this section $2\le s\le t$, $k=s+1$, $\mathcal E=\mathcal E_2$ and
\[
\Delta_t=t-1,\qquad \sigma=\sigma_2=\frac{s+1}{s-1},\qquad E=\binom t2 .
\]
Thus $\mathcal E(F)=(2L_2(F)/\Delta_t)^{\sigma}$, and $\mathcal E(K_m)=m\bigl(\frac{m-1}{\Delta_t}\bigr)^{\sigma}$ for $m\ge1$. The arithmetic--geometric mean inequality followed by H\"older's inequality, as in Lemma~\ref{lem:sigma}, gives
\begin{equation}\label{eq:degest}
	\mathcal E(F)\le \Delta_t^{-\sigma}\sum_{v\in V(F)}d_F(v)^{\sigma}.
\end{equation}
If $F$ is connected and has an edge, a maximizing vector is positive, since a small positive coordinate at a vertex adjacent to the support gives a linear gain at a normalization cost of order $k$. Hence equality in~\eqref{eq:degest} for such $F$ forces equal coordinates along every edge and then equal degrees: it requires $F$ to be regular.

\begin{lem}\label{lem:capt1}
	Every admissible graph $F$ on $t+1$ vertices satisfies $\mathcal E(F)\le t$, with equality if and only if $F\cong K_t\cup K_1$.
\end{lem}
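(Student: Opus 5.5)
The plan is to combine the degree estimate~\eqref{eq:degest} with the explicit description of admissible graphs on $t+1$ vertices in Lemma~\ref{lem:tplus1}. I will work in the complement $Y=\overline F$, writing $c_v=d_Y(v)$, so that $d_F(v)=t-c_v$. By Lemma~\ref{lem:tplus1}, every component of $Y$ has at least $s+1\ge3$ vertices. Hence $c_v\ge1$ and $d_F(v)\le t-1=\Delta_t$. Then~\eqref{eq:degest} gives
\[
\mathcal E(F)\le\sum_{v}\Bigl(\frac{t-c_v}{t-1}\Bigr)^{\sigma},
\]
with each summand at most $1$. The sharpness check comes first: for $F=K_t\cup K_1$, Lemma~\ref{lem:capadd} gives $\mathcal E(F)=\mathcal E(K_t)+\mathcal E(K_1)=t$. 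So it remains to prove the upper bound and to show that every other admissible $F$ falls strictly below it.

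A crude global convexity argument fails, because it would allow all but one vertex to have $c_v=1$ with no structural constraint. I will therefore bound the sum component by component in $Y$. Fix a component $Y_i$ of order $m_i\ge s+1$. Its degrees satisfy $c_v\ge1$ and $\sum_{v\in Y_i}c_v\ge2(m_i-1)$. The map $c\mapsto\bigl((t-c)/(t-1)\bigr)^{\sigma}$ is convex and decreasing. Maximizing its sum over $Y_i$ under these constraints therefore pushes all the excess onto one vertex, which is the star pattern. This gives
\[
\sum_{v\in Y_i}\Bigl(\frac{t-c_v}{t-1}\Bigr)^{\sigma}\le(m_i-1)+x_i^{\sigma},\qquad x_i=\frac{t+1-m_i}{t-1}.
\]
Let $q'\ge1$ be the number of components of $Y$. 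Summing over components, it suffices to show
\[
\sum_{i=1}^{q'}x_i^{\sigma}\le q'-1,
\]
with equality only when $q'=1$. When $q'=1$ we have $m_1=t+1$, so $x_1=0$ and the inequality is an equality.

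The main obstacle is the case $q'\ge2$. Here $\sum_i x_i=(q'-1)(t+1)/(t-1)$ is slightly larger than $q'-1$, so the convexity gain from $\sigma=(s+1)/(s-1)>1$ must beat the factor $(t+1)/(t-1)$. This is where the constraint $m_i\ge s+1$ enters, together with the fact that $q'\ge2$ forces $t+1\ge2(s+1)$ (Proposition~\ref{prop:tplus1}).
\begin{itemize}
\item Each $x_i$ satisfies $x_i\le(t-s)/(t-1)<1$. Since $x\mapsto x^{\sigma}$ is convex on $[0,1]$, for fixed $\sum x_i$ the sum $\sum x_i^{\sigma}$ is maximized by making the $x_i$ as unequal as the box constraints allow.
\item I would first try to reduce this extremal configuration to the case $q'=2$, with one component as small as allowed ($m=s+1$) and the other as large as possible.
\item In that case the inequality reads $\bigl(\frac{t-s}{t-1}\bigr)^{\sigma}+\bigl(\frac{s+1}{t-1}\bigr)^{\sigma}<1$. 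I would prove this using $t-1\ge2s$ and the elementary estimate $a^{\sigma}+b^{\sigma}\le(a+b)^{\sigma}\max(a,b)^{0}\cdots$, or failing that by a direct comparison $a^{\sigma}+b^{\sigma}\le\max(a,b)^{\sigma-1}(a+b)$.
\end{itemize}
If this two-component estimate turns out too weak near the balanced case, the fallback is to use the sharper H\"older form of~\eqref{eq:degest}, retaining the equality condition (regularity) rather than only the crude bound.

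For the equality case, suppose $\mathcal E(F)=t$. Then every step above must be tight. Tightness in the componentwise step with $q'=1$ forces $Y$ to be a star $K_{1,t}$, so $F\cong K_t\cup K_1$. For $q'\ge2$, the strict inequality established above already rules out equality.
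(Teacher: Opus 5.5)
Your reduction is sound and differs from the paper's. You apply \eqref{eq:degest} to $F$ itself and then bound $\sum_v\bigl((t-c_v)/(t-1)\bigr)^\sigma$ one component of $\overline F$ at a time by convexity, which correctly gives $\mathcal E(F)\le t+1-q'+\sum_i x_i^\sigma$. The paper instead first replaces each component of $\overline F$ by a star rooted at a vertex of minimum weight. This $x$-dependent move does not decrease the weighted edge sum of $F$ and keeps admissibility by Lemma~\ref{lem:tplus1}; the paper then applies \eqref{eq:degest} to the resulting $F^*$. Both routes reach the same scalar inequality, since your $x_i=(t+1-m_i)/(t-1)$ is exactly the paper's $u_i=(t-a_i)/\Delta_t$ with $a_i=m_i-1$. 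Your route is more elementary, and it gives the equality case directly. For $q'=1$, strict monotonicity and strict convexity of $c\mapsto((t-c)/(t-1))^\sigma$ force the degree sequence $(t,1,\dots,1)$, hence $\overline F\cong K_{1,t}$, with no need for the paper's maximizing-vector argument.

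The gap is the case $q'\ge2$. This is the only quantitative content of the lemma, and you leave it as a list of things you would try.
\begin{itemize}
\item The proposed reduction to $q'=2$ is not justified. For $q'\ge3$ the most unequal configuration has $q'-1$ components of order $s+1$ and one larger component. The required bound $\sum_i x_i^\sigma<q'-1$ for that configuration does not follow from the two-component inequality without further argument.
\item Your first estimate is incomplete as written. Its natural reading, $a^\sigma+b^\sigma\le(a+b)^\sigma$, is useless here because $a+b=(t+1)/(t-1)>1$.
\item For your second estimate you never check that the resulting number is below $1$.
\end{itemize}

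To close the gap, drop the reduction and apply your ``direct comparison'' to all components at once:
\[
\sum_i x_i^\sigma\le(\max_i x_i)^{\sigma-1}\sum_i x_i\le(q'-1)\bigl(1-\tfrac{s-1}{t-1}\bigr)^{2/(s-1)}\bigl(1+\tfrac{2}{t-1}\bigr).
\]
Here you use $\sigma-1=2/(s-1)$, $\max_i x_i\le(t-s)/(t-1)$ and $\sum_i x_i=(q'-1)(t+1)/(t-1)$. Then observe that
\[
\tfrac{2}{s-1}\log\bigl(1-\tfrac{s-1}{t-1}\bigr)+\log\bigl(1+\tfrac{2}{t-1}\bigr)<-\tfrac{2}{t-1}+\tfrac{2}{t-1}=0,
\]
which is valid because $q'\ge2$ gives $0<s-1<t-1$. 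This is exactly the paper's final step, and with it your argument, including the equality case, is complete.
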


\begin{proof}
	Fix $x\ge0$. In each component of $\overline F$ take a spanning tree rooted at a vertex of minimum weight, and replace the tree by the star at that root, replacing $x_vx_{\mathrm{parent}(v)}$ by $x_vx_{\mathrm{root}}$ for every other vertex $v$; then delete the remaining edges of $\overline F$. The weighted edge sum of the complement does not increase, so that of $F$ does not decrease, and by Lemma~\ref{lem:tplus1} the new graph $F^*$ is still admissible, since the component orders of the complement are unchanged. Thus $\mathcal E(F)\le\mathcal E(F^*)$, where $\overline{F^*}$ is a forest of $c$ stars with $a_1,\dots,a_c\ge s$ leaves and $\sum_ia_i=t+1-c$.
	
	If $c=1$ then $F^*=K_t\cup K_1$ and $\mathcal E(F^*)=t$. Let $c\ge2$. Then $F^*$ has $t+1-c$ vertices of degree $\Delta_t$ and $c$ vertices of degrees $t-a_i$. With $u_i=(t-a_i)/\Delta_t$ we have $u_i\le1-\frac{s-1}{\Delta_t}$ and $\sum_iu_i=(c-1)(1+\frac2\Delta_t)$, so by~\eqref{eq:degest}
	\[
	\mathcal E(F^*)\le t+1-c+\sum_iu_i^{\sigma}\le t+1-c+\Bigl(1-\frac{s-1}{\Delta_t}\Bigr)^{\frac{2}{s-1}}\Bigl(1+\frac2\Delta_t\Bigr)(c-1)<t,
	\]
	because $\frac{2}{s-1}\log\bigl(1-\frac{s-1}{\Delta_t}\bigr)+\log\bigl(1+\frac2\Delta_t\bigr)<-\frac2\Delta_t+\frac2\Delta_t=0$; here $c\ge2$ gives $s-1<\Delta_t$.
	
	If $\mathcal E(F)=t$, then $c=1$ and a maximizing vector of $F$ also maximizes $K_t\cup K_1$: it vanishes at the star center and is constant and positive elsewhere. Equality in the complement comparison makes these $t$ vertices a clique of $F$; their degrees are then $\Delta_t$, so the remaining vertex is isolated.
\end{proof}

\begin{lem}\label{lem:capP}
	$\mathcal E(\overline P)=10\bigl(\frac67\bigr)^{\sigma}$ when $t=8$. This is smaller than $\mathcal E(K_8\cup K_2)$ for $s\le7$ and larger for $s=8$.
\end{lem}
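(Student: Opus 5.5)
Since $\overline P$ is $6$-regular and $\Delta_t=7$ when $t=8$, I would compute $\mathcal E(\overline P)$ by matching the upper bound~\eqref{eq:degest} with the constant vector. Then I would compare with $\mathcal E(K_8\cup K_2)$ through a one-variable function of $\sigma=\frac{s+1}{s-1}$, since $k=s+1$ in this subsection.

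\emph{Step 1: the upper bound.} With $d_F(v)=6$ for all ten vertices, \eqref{eq:degest} gives
\[
\mathcal E(\overline P)\le 7^{-\sigma}\cdot 10\cdot 6^{\sigma}=10\bigl(\tfrac67\bigr)^{\sigma}.
\]

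\emph{Step 2: the matching lower bound.} Take the constant unit vector $z\equiv 10^{-1/k}$. Since $\overline P$ has thirty edges, $S_2(\overline P,z)=30\cdot 10^{-2/k}$. Hence
\[
\mathcal E(\overline P)\ge\bigl(\tfrac{60}{7}\,10^{-2/k}\bigr)^{\sigma}.
\]
Because $2\sigma/k=2/(s-1)=\sigma-1$, the right side equals $60^{\sigma}7^{-\sigma}10^{1-\sigma}=10(6/7)^{\sigma}$. This proves the first assertion. This is just the equality case of~\eqref{eq:degest} for a regular graph at the constant vector.

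\emph{Step 3: the value for $K_8\cup K_2$.} By Lemma~\ref{lem:capadd}, $\mathcal E(K_8\cup K_2)=\mathcal E(K_8)+\mathcal E(K_2)=8+2\cdot 7^{-\sigma}$. So the comparison reduces to the sign of
\[
g(\sigma)-8,\qquad g(\sigma)=10a^{\sigma}-2b^{\sigma},\qquad a=\tfrac67,\ b=\tfrac17.
\]

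\emph{Step 4: the sign of $g-8$.} For $s=8$ we have $\sigma=9/7$. Numerically $10\cdot 6^{9/7}\approx100.1$, while $8\cdot7^{9/7}+2\approx99.6$, so $g(9/7)>8$ and $\overline P$ has the larger capacity. For $s\le7$ we have $\sigma\ge4/3$, since $\sigma$ decreases in $s$. At $\sigma=4/3$, $10\cdot6^{4/3}\approx109.03<8\cdot 7^{4/3}+2\approx109.12$, so $g(4/3)<8$.

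To cover all $\sigma\ge4/3$, I would show $g$ is decreasing there. The derivative is
\[
g'(\sigma)=a^{\sigma}\bigl(10\ln a-2(b/a)^{\sigma}\ln b\bigr).
\]
Since $(b/a)^{\sigma}=6^{-\sigma}$ decreases in $\sigma$, it suffices to check that the bracket is negative at $\sigma=4/3$. There it is approximately $-1.54+0.36<0$, so $g'<0$ on $[4/3,\infty)$.

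\emph{Main obstacle.} The only delicate point is the case $s=7$, where the margin is about $0.1$ out of $109$. The numerical evaluations at $\sigma=4/3$ must be done with enough precision, for example by bounding $6^{1/3}$ and $7^{1/3}$ by explicit rationals.
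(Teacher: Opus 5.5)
Your proposal is correct and follows essentially the paper's argument: regularity makes the constant vector attain~\eqref{eq:degest}, and the comparison reduces to a decreasing function of $\sigma$ whose sign is checked at $\sigma=\frac43$ and $\sigma=\frac97$. The paper makes your decimal checks rigorous with the certificates $6^{1/3}<\frac{20}{11}$ and $7^{1/3}>\frac{153}{80}$ (at $\sigma=\frac43$) and $6^9>10^7$ and $4^7<7^5$ (at $\sigma=\frac97$). It also proves the monotonicity for all $\sigma\ge1$ via $30\log\frac76>\frac{30}{7}>\log7$.
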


\begin{proof}
	The graph $\overline P$ is $6$-regular, so the constant vector attains~\eqref{eq:degest}. Put $f(\sigma)=10(\frac67)^{\sigma}-8-2(\frac17)^{\sigma}$, the difference of the two capacities. Then $f'(\sigma)=2\cdot7^{-\sigma}\bigl(\log7-5\cdot6^{\sigma}\log\frac76\bigr)<0$ for $\sigma\ge1$, since $30\log\frac76>\frac{30}{7}>\log7$. For $s\le7$ we have $\sigma\ge\frac43$, and $6^{1/3}<\frac{20}{11}$, $7^{1/3}>\frac{153}{80}$ give
	\[
	10\cdot6^{4/3}<\tfrac{1200}{11}<\tfrac{1091}{10}<8\cdot7^{4/3}+2 ,
	\]
	that is, $f(\frac43)<0$. For $s=8$, $\sigma=\frac97$, and $6^9>10^7$, $4^7<7^5$ give $10\cdot6^{9/7}>100>8\cdot7^{9/7}+2$, that is, $f(\frac97)>0$.
\end{proof}

\begin{lem}\label{lem:capt2}
	Let $t\ge3$ and let $F$ be connected and admissible on $t+2$ vertices. Then $\mathcal E(F)<\mathcal E(K_t\cup K_2)$, unless $t=8$ and $F\cong\overline P$.
\end{lem}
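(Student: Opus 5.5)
The plan is to split according to the number of edges. The tools are the degree estimate~\eqref{eq:degest} for sparse $F$, and the structure Lemma~\ref{lem:tplus2} together with a rearrangement at a fixed vector in the extremal edge count. Write $F$ for a connected admissible graph on $t+2$ vertices. By Lemma~\ref{lem:join}, $F$ has no $K_{1,t}$ minor, so Lemma~\ref{lem:djs} gives $e(F)\le E+2$. Recall $\mathcal E(K_t\cup K_2)=t+2\Delta_t^{-\sigma}$ by Lemma~\ref{lem:capadd}.

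\emph{Case $e(F)\le E+1$.} All degrees lie in $[1,\Delta_t]$, since $F$ is connected and admissible, and their sum is at most $2E+2=t\Delta_t+2$. The function $d\mapsto d^{\sigma}$ is convex with $\sigma>1$. Hence, over integer sequences in $[1,\Delta_t]$ of length $t+2$ with this sum, $\sum_v d^{\sigma}$ is maximized by the majorization-maximal sequence $(\Delta_t^{\,t},1,1)$. Then~\eqref{eq:degest} gives $\mathcal E(F)\le t+2\Delta_t^{-\sigma}$. Equality would force this degree sequence and, by the remark after~\eqref{eq:degest}, regularity. For $t\ge3$ these are incompatible with connectedness, so the inequality is strict.

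\emph{Case $e(F)=E+2$.} By Lemma~\ref{lem:tplus2}, $F$ is $J_{0,e,f}$, or $J'_{a_1,a_2,a_3}$, or $\overline P$ with $t=8$. For $\overline P$ there is nothing to prove beyond the stated exception. Here the degree estimate alone is too weak: for $J_{0,e,f}$ with $\sigma$ close to $1$ it gives roughly $t+2$ rather than $t$. Instead I would argue at a maximizing unit vector $x$ for $S_2(F,\cdot)$, and use the fact that permuting the coordinates of $x$ preserves the norm. For $J_{0,e,f}$, write $C=B_0\cup C_0$ for the $(t-1)$-clique, $X_B$ and $X_C$ for the weights of its two parts, and $S_C$ for its edge sum. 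Then
\[
S_2(F,x)=S_C+x_uX_B+x_vX_C+x_w(x_u+x_v).
\]
Consider the two candidates $K_t$ on $C\cup\{u\}$ with $K_2$ on $\{v,w\}$, and $K_t$ on $C\cup\{v\}$ with $K_2$ on $\{u,w\}$. The average of their edge sums exceeds $S_2(F,x)$ by
\[
\tfrac12\bigl(x_uX_C+x_vX_B\bigr)-\tfrac12x_w(x_u+x_v).
\]
Before comparing, I would swap coordinates so that $w$ carries a weight at most every weight in $B_0$ and in $C_0$. Such a swap keeps the norm, and it should not decrease $S_2(F,\cdot)$: $w$ has the smallest degree among the relevant vertices, and its two neighbours $u,v$ have at least as many neighbours of large weight. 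Since $|B_0|,|C_0|\ge s\ge2$, we then get $X_C\ge x_w$ and $X_B\ge x_w$, so the difference is nonnegative. One of the two candidates therefore has value at least $S_2(F,x)$, giving $\mathcal E(F)\le\mathcal E(K_t\cup K_2)$. Strictness follows because $X_C\ge2\min_C x>x_w$ whenever $x$ is positive on $C$, and positivity holds by the remark after~\eqref{eq:degest}. For $J'_{a_1,a_2,a_3}$ the same scheme applies: form $K_t$ from the $(t-2)$-clique and two leaves of the claw, and $K_2$ from the centre $z$ and the third leaf. Average over the three choices of the excluded leaf, after moving the smallest coordinate onto $z$. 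The cross terms again produce a gain, since each $|B_i|\ge s$.

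The main obstacle is to justify the rearrangement step cleanly. Concretely, this means showing that after the swap the difference is strictly positive, with no loss from edges incident to $w$ (respectively $z$) whose weights were moved. If the swap argument proves awkward, the fallback is to write the eigenequations of $F$ at $u,v,w$ directly. These give $x_w^{k-1}\propto x_u+x_v$ and $x_u^{k-1}\propto X_B+x_w$, from which $x_w\le\min(X_B,X_C)$ should follow with a constant margin. I would apply the same device to $J'$.
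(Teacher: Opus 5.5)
The first case ($e(F)\le E+1$) is correct and is the paper's argument. The second case has a genuine gap, and the route you propose for it fails.

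Your displayed gain is miscomputed. The candidate with $K_t$ on $C\cup\{u\}$ and $K_2$ on $\{v,w\}$ also loses the edges from $v$ to $C_0$, and the other candidate loses those from $u$ to $B_0$. The two gains at $x$ are $(x_u-x_v)X_C-x_ux_w$ and $(x_v-x_u)X_B-x_vx_w$, so their average is
\[
\tfrac12(x_u-x_v)(X_C-X_B)-\tfrac12x_w(x_u+x_v).
\]
Take $e=f$, for instance the admissible graph $J_{0,2,2}$ with $(s,t,k)=(2,5,3)$. The positive maximizer is unique, hence invariant under the automorphism exchanging the two sides. So $x_u=x_v$, and both gains are strictly negative.

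No swap rescues this. The eigenequations give clique values $\ge x_u=x_v\ge x_w$, so the best placement of the coordinates of $x$ on $K_t\cup K_2$ puts the $t$ largest on $K_t$. Its value is $S_C+x_u(X_B+X_C)+x_ux_w=S_2(F,x)-x_ux_w$. Since $F$ has one edge more than $K_t\cup K_2$, no comparison at a rearrangement of the maximizer of $F$ can succeed. The advantage of $K_t\cup K_2$ appears only after the vector is re-optimized. The same happens for $J'_{a,a,a}$: each choice of excluded leaf gives gain $\zeta(\zeta-2x_z)$, with $\zeta$ the common leaf value. At the maximizer $x_z>\zeta/2$; for $(s,t,k)=(2,8,3)$ one finds $x_z\approx0.74\,\zeta$. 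Your eigenequation fallback only bounds $x_w$ and leaves the negative term $-x_ux_w$ untouched.

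The step you dismissed is the one that works. Your heuristic that \eqref{eq:degest} is too weak for $\sigma$ near $1$ treats $\sigma$ and $t$ as independent. But $\sigma=1+\frac2{s-1}$, and admissibility forces $e,f\ge s$, so $t\ge2s+1$; for $J'$ it forces $a_i\ge s$, so $\Delta_t\ge3s+1$.

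The paper uses convexity to reduce to $(e,f)=(s,t-1-s)$. It notes that $\Delta_t^\sigma+2-2^\sigma-(s+1)^\sigma-(t-s)^\sigma$ increases with $t$, and checks $(2s)^\sigma-2(s+1)^\sigma>2^\sigma-2$ at $t=2s+1$. For large $s$ the left side tends to $4\log2-2>0$ and the right side tends to $0$, so the extra edge is absorbed. For $J'$ it reduces to $\Delta_t=3s+1$ and checks $2(3s+1)^\sigma-3(2s+1)^\sigma>3^\sigma$. Then \eqref{eq:degest} gives $\mathcal E(F)<t+2\Delta_t^{-\sigma}$ directly.
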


\begin{proof}
	Note that $\mathcal E(K_t\cup K_2)=t+2\Delta_t^{-\sigma}$. By Lemma~\ref{lem:djs}, $e(F)\le E+2$. If $e(F)\le E+1$, the degrees lie in $[1,\Delta_t]$ with sum at most $tD+2$, and by convexity $\sum_vd_F(v)^{\sigma}\le tD^{\sigma}+2$; equality would force the degree pattern $(\Delta_t^t,1^2)$, which is not regular, so~\eqref{eq:degest} is strict.
	
	Let $e(F)=E+2$, and apply Lemma~\ref{lem:tplus2}. For $J_{0,e,f}$ the degrees are $\Delta_t$ (on the clique), $2$, $e+1$ and $f+1$, with $e,f\ge s$ and $e+f=t-1$; by convexity $2^{\sigma}+(e+1)^{\sigma}+(f+1)^{\sigma}\le2^{\sigma}+(s+1)^{\sigma}+(t-s)^{\sigma}$, and it suffices to show that this is less than $\Delta_t^{\sigma}+2$. The difference $\Delta_t^{\sigma}+2-2^{\sigma}-(s+1)^{\sigma}-(t-s)^{\sigma}$ increases with $t$, and $t\ge2s+1$, so it suffices to prove $(2s)^{\sigma}-2(s+1)^{\sigma}>2^{\sigma}-2$. This is $10>6$ for $s=2$ and $4>2$ for $s=3$. For $s\ge4$ put $a=s-1$ and $\omega=2/a$, so that $\sigma=1+\omega$ and $(1+\omega)a=a+2$. Then
	\[
	\begin{aligned}
	(2a+2)^{1+\omega}-2(a+2)^{1+\omega}&=(1+\omega)\int_0^a\bigl((a+2+x)^\omega-(a+2)^\omega\bigr)\,dx\\
	&\ge(a+2)(2a+2)^{\omega-1}>\tfrac12\,8^{\omega}>2^{1+\omega}-2,
	\end{aligned}
	\]
	using $(a+2+x)^\omega-(a+2)^\omega\ge \omega(2a+2)^{\omega-1}x$ for $0\le x\le a$, then $a\ge3$, and finally $y^3-4y+4>0$ for $y=2^\omega>0$.
	
	For $J'_{a_1,a_2,a_3}$ the degrees are $\Delta_t$ (on the $t-2$ clique vertices), $3$ and $\Delta_t-a_i$, with $a_i\ge s$ and $\sum_ia_i=t-2$, so $\Delta_t\ge3s+1$. By convexity $3^{\sigma}+\sum_i(\Delta_t-a_i)^{\sigma}\le3^{\sigma}+2(\Delta_t-s)^{\sigma}+(2s+1)^{\sigma}$, and it suffices to show that this is less than $2\Delta_t^{\sigma}+2$. Since $\Delta_t^{\sigma}-(\Delta_t-s)^{\sigma}$ increases with $\Delta_t$, it suffices to take $\Delta_t=3s+1$, that is, to show $2(3s+1)^{\sigma}-3(2s+1)^{\sigma}>3^{\sigma}$. With $a=s-1$ and $B=2a+3$, convexity gives
	\[
	2(B+a+1)^{\sigma}-3B^{\sigma}\ge B^{\sigma-1}\bigl(2\sigma(a+1)-B\bigr)=\Bigl(3+\frac4a\Bigr)B^{2/a}>3^{1+2/a}.
	\]
	In both families, dividing by $\Delta_t^{\sigma}$ and using~\eqref{eq:degest} gives $\mathcal E(F)<t+2\Delta_t^{-\sigma}$. The remaining family is $\overline P$ with $t=8$.
\end{proof}

\begin{lem}\label{lem:pendant}
	Let $t\ge4$ and let $F$ be a connected $K_{1,t}$-minor-free graph on $m\ge t+2$ vertices with $\ell$ vertices of degree one. Then
	\[
	e(F)\le E+m-t-\lfloor\ell/2\rfloor .
	\]
\end{lem}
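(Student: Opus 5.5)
We argue by induction on $\ell$, with Lemma~\ref{lem:djs} as the base case $\ell\in\{0,1\}$, where $\lfloor\ell/2\rfloor=0$. The idea is to strip pendant vertices in pairs. Each pair removed lowers $m$ by two and $e(F)$ by two. If we can remove a pair while keeping the graph connected and of order at least $t+2$, the bound for the smaller graph gives
\[
e(F)\le E+m-t-\lfloor\ell/2\rfloor-2+2,
\]
but only if the smaller graph still has $\ell-2$ leaves. This naive deletion loses track of the count, since deleting leaves can create new leaves and the induction hypothesis then goes the wrong way. So a direct induction on deletions is not the right mechanism, and we organize the argument differently.

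The plan is a charging argument against the extremal structure behind Lemma~\ref{lem:djs}. First we note that a leaf $w$ with neighbor $v$ forces $d_F(v)\le t-1$, and in fact $v$ has at most $t-2$ other neighbors. More precisely, if $X$ is a connected set containing $v$ with $t-1$ outside neighbors other than $w$, then contracting $X$ gives a $K_{1,t}$ minor. So, as in Lemma~\ref{lem:leaf}, every connected set through the neighbor of a leaf has at most $t-2$ outside neighbors besides that leaf.

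Next let $F_0$ be $F$ with all leaves deleted, on $m-\ell$ vertices. We split into two cases.

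If $m-\ell\ge t+2$, then $F_0$ is connected and $K_{1,t}$-minor-free. We then prove the stronger bound $e(F_0)\le E+(m-\ell)-t-\lceil\ell/2\rceil$ when the leaves are reattached. The attachment vertices have deficient neighborhoods, and each attachment vertex (or each pair of them) costs one edge relative to the Ding--Johnson--Seymour extremal configurations. Adding back the $\ell$ leaf edges then gives $e(F)\le E+m-t-\lceil\ell/2\rceil\le E+m-t-\lfloor\ell/2\rfloor$.

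If $m-\ell\le t+1$, the core is small. Then $e(F_0)\le\binom{m-\ell}{2}$, refined by the degree restriction at attachment vertices, and a direct count finishes.

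To make the first case rigorous without reproving the Ding--Johnson--Seymour result, we would instead induct on $m$, deleting one carefully chosen vertex. When $\ell\ge2$, take two leaves $w_1,w_2$ with neighbors $v_1,v_2$ and compare $F$ with the graph $F'$ obtained by deleting $w_1,w_2$ and adding the edge $v_1v_2$ if it is absent, or identifying suitably if it is present. The graph $F'$ remains $K_{1,t}$-minor-free, because any model using $v_1v_2$ could route through $w$-free paths. This needs checking, and it is where the hypothesis $t\ge4$ should enter. Then $e(F)=e(F')+2-[v_1v_2\notin E]$, which loses at most one edge per pair. We expect the leaf count to drop by at most two, giving the induction.

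The main obstacle is this last step. We must guarantee that the operation reduces $\ell$ by at most two, and never creates a leaf-count drop larger than the edge gain. We must also guarantee that it preserves $K_{1,t}$-minor-freeness and the order bound $m-2\ge t+2$. Small orders, $m\in\{t+2,t+3\}$, we would handle by hand using Lemma~\ref{lem:tplus2} and Lemma~\ref{lem:leaf}. By Lemma~\ref{lem:leaf}, an extremal graph on $t+2$ vertices with $E+2$ edges has no leaf, so each leaf there forces a deficit.
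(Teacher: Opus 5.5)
Your proposal does not yet contain a proof. The case split on $F_0$ relies on a claimed stronger bound with $\lceil\ell/2\rceil$. Its only justification is that ``each attachment vertex costs one edge relative to the Ding--Johnson--Seymour extremal configurations'', and those configurations are not available, so that route is heuristic.

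The final reduction is the real candidate, and it has two holes. First, the $K_{1,t}$-minor-freeness of $F'$ is left as ``needs checking'', and that is the heart of the lemma. Second, and more seriously, the edge bookkeeping works in only one case. Passing from $(m,\ell)$ to $(m-2,\ell-2)$ lowers $E+m-t-\lfloor\ell/2\rfloor$ by exactly one, so you need $e(F)-e(F')\le1$. This holds when $v_1\ne v_2$ and $v_1v_2\notin E(F)$: then $e(F)=e(F')+1$ and $F'$ has exactly $\ell-2$ leaves. When $v_1v_2\in E(F)$ or $v_1=v_2$, deleting $w_1,w_2$ costs two edges by your own formula. Contracting $v_1v_2$ as well loses at least three edges while the bound drops by only two. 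You cannot always pick a better pair: take $\ell=2$ with the two leaves hanging from adjacent vertices, or from a single vertex. The base cases $m\in\{t+2,t+3\}$ are also left open. Moreover, Lemma~\ref{lem:tplus2} and the last case of Lemma~\ref{lem:leaf} assume admissibility, not just $K_{1,t}$-minor-freeness.

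The missing idea is to join the leaves themselves rather than delete them. A connected graph on at least $t+1$ vertices has a $K_{1,t}$ minor if and only if it has a spanning tree with at least $t$ leaves. So it suffices to show that adding the edge $w_1w_2$ between two degree-one vertices does not increase the largest number of leaves of a spanning tree. A spanning tree $T$ not using $w_1w_2$ is already a spanning tree of $F$, so suppose $w_1w_2\in E(T)$.

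If $w_1v_1,w_2v_2\in E(T)$ as well, delete $w_1w_2$ and reconnect the two components by an edge of $F$. That edge cannot meet $w_1$ or $w_2$, since their only edges in $F$ already lie in $T$. This makes $w_1,w_2$ leaves and destroys at most two leaves.

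Otherwise, say $w_1v_1\notin E(T)$. Then $w_1$ is a leaf and $w_2v_2\in E(T)$. Replacing $w_1w_2$ by $w_1v_1$ makes $w_2$ a leaf and destroys at most $v_1$.

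This argument works whether $v_1,v_2$ are equal, adjacent, or neither. The other leaves keep degree one, so you can add a matching on $2\lfloor\ell/2\rfloor$ of the leaves. This keeps the order $m$, adds $\lfloor\ell/2\rfloor$ edges, and preserves $K_{1,t}$-minor-freeness. A single application of Lemma~\ref{lem:djs} then gives the bound, with no induction and no base cases.

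Incidentally, your $F'$ is the minor of $F+w_1w_2$ obtained by contracting $v_1w_1$ and $v_2w_2$. So this argument also supplies the minor-freeness you left unchecked, but it does not rescue your edge count when $v_1,v_2$ are adjacent or equal.
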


\begin{proof}
	A connected graph has a $K_{1,t}$ minor if and only if it has a spanning tree with at least $t$ leaves. We show that adding an edge $uv$ between two vertices of degree one, with neighbors $a$ and $b$, does not increase the largest number of leaves of a spanning tree. Let $T$ be a spanning tree of the new graph containing $uv$. If $ua,vb\in E(T)$, delete $uv$ and reconnect the two components by an edge of $F$, which avoids $u$ and $v$; the vertices $u,v$ become leaves and at most two leaves are lost. If $ua\notin E(T)$, then $u$ is a leaf of $T$ and $vb\in E(T)$; replace $uv$ by $ua$, so that $v$ becomes a leaf and at most one leaf is lost. Adding a matching of $\lfloor\ell/2\rfloor$ such edges therefore keeps the graph $K_{1,t}$-minor-free, and Lemma~\ref{lem:djs} gives the bound.
\end{proof}

\begin{lem}\label{lem:pendcount}
	Let $t\ge4$, $3\le r<t$, and let $F$ be connected and admissible with $|V(F)|=t+r$ and $e(F)=E+r-q_F$. If $F$ has $\ell$ vertices of degree one, then $\ell-2q_F\le r-3$.
\end{lem}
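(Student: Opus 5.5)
The plan is to derive the inequality from Lemma~\ref{lem:pendant}, and to sharpen it by one unit in the single case where that lemma alone is not enough. Here $m=t+r\ge t+3$. Admissibility implies $K_{1,t}$-minor-freeness by Lemma~\ref{lem:join}, so Lemma~\ref{lem:pendant} applies and gives
\[
E+r-q_F=e(F)\le E+(t+r)-t-\lfloor\ell/2\rfloor=E+r-\lfloor\ell/2\rfloor .
\]
Hence $q_F\ge\lfloor\ell/2\rfloor$, and so $\ell-2q_F\le\ell-2\lfloor\ell/2\rfloor\le1$. For $r\ge4$ this already gives $\ell-2q_F\le1\le r-3$.

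It remains to treat $r=3$, where we must show $\ell-2q_F\le0$. By the bound above, the only case left to exclude is $\ell$ odd together with $q_F=\lfloor\ell/2\rfloor$. Suppose this happens.
\begin{enumerate}
\item Pair up $\ell-1$ of the pendant vertices and add the corresponding matching edges. Exactly as in the proof of Lemma~\ref{lem:pendant}, the resulting graph $F'$ is connected and $K_{1,t}$-minor-free on $t+3$ vertices.
\item Count edges: $e(F')=E+3-q_F+\lfloor\ell/2\rfloor=E+3$.
\item In $F'$, the one unmatched pendant vertex $w$ still has degree one. So $F'-w$ is connected, $K_{1,t}$-minor-free, on $t+2$ vertices, with $E+2$ edges.
\end{enumerate}

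Next we show that adding $w$ back to $F'-w$ creates a $K_{1,t}$ minor, which contradicts step 1. The intended route goes through Lemma~\ref{lem:tplus2} and Lemma~\ref{lem:leaf}.
\begin{itemize}
\item The classification in Lemma~\ref{lem:tplus2} uses admissibility only in its last paragraph, which proves $e,f\ge s$ and $a_i\ge s$. Everything before that uses only $K_{1,t}$-minor-freeness: the diameter-two argument for $\overline F$, the cut analysis, the star/double-star analysis and the $6$-cycle analysis. So $F'-w$ is isomorphic to some $J_{0,e,f}$, some $J'_{a_1,a_2,a_3}$ (in both cases with possibly small or zero parameters), or $\overline P$ with $t=8$.
\item The connected sets exhibited in the proof of Lemma~\ref{lem:leaf} work for these graphs too. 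Every vertex lies in a connected set with at least $t-1$ outside neighbors. One checks that the constructions do not need the parts to be large: for instance, a clique vertex together with its path neighbor and $w$ still sees the other $t-2$ clique vertices and the far end of the path. If some part is empty, the corresponding clique-vertex case simply does not occur.
\end{itemize}
Attaching $w$ to such a set containing its neighbor therefore gives a $K_{1,t}$ minor in $F'$, the contradiction we need. This proves $\ell-2q_F\le0=r-3$.

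The main obstacle is this transfer. One must recheck that Lemmas~\ref{lem:tplus2} and~\ref{lem:leaf} remain valid for $K_{1,t}$-minor-free graphs that need not be admissible, including the degenerate parameter values such as $e=0$ or $a_i=0$. If that recheck failed, the fallback would be to avoid adding edges altogether. One would instead delete $w$ from $F$ itself and argue with $F-w$, which is admissible on $t+2$ vertices with $E+2-q_F$ edges, combined with Lemma~\ref{lem:pendant} applied to the remaining $\ell-1$ pendant vertices.
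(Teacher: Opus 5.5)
Your proof is correct, but it is organized differently from the paper's.

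\textbf{Your route.} Lemma~\ref{lem:pendant} gives $q_F\ge\lfloor\ell/2\rfloor$ at once. That yields $\ell-2q_F\le1$ for every $r$, so all $r\ge4$ are immediate, and for $r\ge5$ you get more than the lemma claims. The only case left is $r=3$ with $\ell$ odd and $q_F=(\ell-1)/2$. There the matching trick produces an extremal connected $K_{1,t}$-minor-free graph on $t+2$ vertices with $E+2$ edges, with one pendant vertex attached.

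\textbf{The paper's route.} It deletes all $\ell$ pendant vertices and argues by counting. If $q_F\ge1$, the bound $e(F_0)\le\binom{|V(F_0)|}{2}$ forces $F_0=K_t-e$, which can carry at most two pendant vertices. If $q_F=0$, the count forces $|V(F_0)|\in\{t,t+1,t+2\}$. Lemma~\ref{lem:leaf} is then applied to $F_0$, which is a subgraph of $F$ and hence automatically admissible.

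\textbf{What your route costs.} It is exactly the point you flag. $F'-w$ contains the added matching edges, so it need not be admissible, and you need Lemmas~\ref{lem:tplus2} and~\ref{lem:leaf} for connected $K_{1,t}$-minor-free graphs. That extension does hold:
\begin{itemize}
\item In the proof of Lemma~\ref{lem:tplus2}, admissibility is used only in the final paragraph, to get $e,f,a_i\ge s$. Everything before that uses only $K_{1,t}$-minor-freeness, connectivity of $F$, and edge counts. The classification therefore survives with unrestricted parameters $e+f=t-1$ and $a_1+a_2+a_3=t-2$.
\item In the proof of Lemma~\ref{lem:leaf}, the connected sets $\{u,w,v\}$, $\{c,u,w\}$, $\{z,z_i,z_{i'}\}$ and $\{b,z_{i'},z\}$ have $t-1$ outside neighbors for all values of these parameters.
\end{itemize}
So your proof goes through. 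The paper's version avoids reopening those two proofs.

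\textbf{Your fallback.} It would not close the case on its own. Applying Lemma~\ref{lem:pendant} to $F-w$, which has between $\ell-1$ and $\ell$ pendant vertices, gives only $q_F\ge(\ell-1)/2$, and that is consistent with the case you are trying to exclude.
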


\begin{proof}
	By Lemma~\ref{lem:djs}, $q_F\ge0$. Suppose $\ell-2q_F\ge r-2$, and delete the $\ell$ vertices of degree one to obtain a nonempty connected graph $F_0$.
	
	If $q_F\ge1$, then $\ell\ge r$; write $\ell=r+d$, so that $|V(F_0)|=t-d$ and $e(F_0)=E-d-q_F$. The bound $e(F_0)\le\binom{t-d}2$ gives $d(2t-d-3)\le2q_F\le d+2$, hence $d(2t-d-4)\le2$. For $1\le d\le t-1$ the left side is at least $\min\{2t-5,(t-1)(t-3)\}>2$, so $d=0$ and $q_F=1$. Then $F_0=K_t-e$, and the vertices of degree one can only be attached to the two ends of the missing edge, at most one each, because $\Delta(F)\le t-1$. So $\ell\le2<r$, a contradiction.
	
	If $q_F=0$, put $u=|V(F_0)|\le t+2$, so that $e(F_0)=E+u-t$; the bound $e(F_0)\le\binom u2$ forces $u\ge t$. If $u=t$ then $F_0=K_t$, which cannot receive a pendant neighbor. If $u=t+1$ then $\overline{F_0}$ has $t-1$ edges and is disconnected, and by Lemma~\ref{lem:leaf} a pendant neighbor creates a $K_{1,t}$ minor. If $u=t+2$ then $F_0$ is admissible with $E+2$ edges, and Lemma~\ref{lem:leaf} again forbids a pendant neighbor. Since $\ell\ge r-2\ge1$, each case is impossible.
\end{proof}

\begin{lem}\label{lem:large}
	Let $t\ge4$ and let $F$ be connected and admissible on $m\ge t+3$ vertices, where $m=bt+p$ with $0\le p<t$. Then $\mathcal E(F)<\mathcal E(bK_t\cup K_p)$, and the difference is at least a positive constant depending only on $s$ and $t$.
\end{lem}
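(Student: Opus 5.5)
We work from the degree estimate~\eqref{eq:degest}, $\mathcal E(F)\le\Delta_t^{-\sigma}\sum_vd_F(v)^{\sigma}$, together with the edge bounds of Lemmas~\ref{lem:djs} and~\ref{lem:pendant}. The target is
\[
\mathcal E(bK_t\cup K_p)=bt+p\Bigl(\tfrac{p-1}{\Delta_t}\Bigr)^{\sigma}.
\]
Write $m=bt+p$, and let $\ell$ be the number of vertices of degree one. By Lemma~\ref{lem:pendant},
\[
e(F)\le E+m-t-\lfloor\ell/2\rfloor,
\]
so the degree sum is at most $2E+2(m-t)-2\lfloor\ell/2\rfloor$. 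Every degree lies in $[1,\Delta_t]$, and $x\mapsto x^\sigma$ is convex. Hence, for a fixed degree sum and a fixed number of vertices of degree one, $\sum_vd_F(v)^\sigma$ is maximized by pushing the degrees to the extremes $\Delta_t$ and $2$ (or $1$ for the $\ell$ pendant vertices), with at most one intermediate value. This gives an explicit upper bound $\Phi(m,\ell)$ for $\sum_v d^\sigma$.

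For large $m$, roughly $m\ge b_0t$ with $b_0$ a constant, the bound suffices directly. The number of vertices of degree $\Delta_t$ is then at most about $(2E+2m-2t)/\Delta_t\approx t+2m/\Delta_t$. Each such vertex contributes $1$ to $\Delta_t^{-\sigma}\sum d^\sigma$, and each of the remaining vertices contributes at most $(2/\Delta_t)^\sigma<1$. So
\[
\mathcal E(F)\le\Bigl(t+\tfrac{2m}{\Delta_t}\Bigr)+m\Bigl(\tfrac{2}{\Delta_t}\Bigr)^{\sigma}+O(1),
\]
which is at most $m\bigl(2/\Delta_t+(2/\Delta_t)^\sigma\bigr)+O(1)$. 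For $t\ge4$, so $\Delta_t\ge3$, this linear coefficient is strictly less than $1$. Meanwhile $\mathcal E(bK_t\cup K_p)\ge m-t$, and the gap therefore grows linearly in $m$. This gives the uniform constant for all large $m$. The step needs care when $t=4$: there $2/3+(2/3)^\sigma$ with $\sigma\le 5/3$ is about $1.17$, which is not below $1$. In that case I would refine the count using the number of low-degree vertices forced by the edge count. The sharper accounting is that the vertex count is fixed at $m$: with $n_\Delta$ vertices of degree $\Delta_t$ and the rest of degree at least $1$, we get $n_\Delta\Delta_t+(m-n_\Delta)\le 2E+2m-2t$, so $n_\Delta\le(m+2E-2t)/(\Delta_t-1)$. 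Since $(\Delta_t-1)^{-1}<1$ when $t\ge4$, this yields a loss proportional to $m$.

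The remaining range $t+3\le m<b_0t$ contains finitely many graphs, so it suffices to prove strict inequality; uniformity of the gap then follows by taking a minimum. Strictness comes from comparing the convex bound on the degree sequence with $bK_t\cup K_p$, whose degree multiset is $(\Delta_t^{bt},(p-1)^p)$. By Lemma~\ref{lem:pendant}, $F$ has at most $E+m-t$ edges, and this is less than $e(bK_t\cup K_p)=bE+\binom p2$ once $b\ge2$ or $p\ge4$, using $E-t\ge2$, exactly as in Lemma~\ref{lem:comporder}. Since the degree sum of $F$ is strictly smaller, convexity together with the cap $\Delta_t$ gives $\sum d^\sigma$ strictly below that of the union of cliques. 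Here I expect to need a majorization argument: a sequence with a smaller sum and entries bounded by $\Delta_t$ cannot exceed $bt\cdot\Delta_t^\sigma+p(p-1)^\sigma$ once enough entries are forced below $\Delta_t$.

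The residual cases are $b=1$ with $p\in\{3\}$ (that is, $m=t+3$), and $b=1,p\le3$ in general. Here the edge counts can tie, and we use Lemma~\ref{lem:pendcount} to limit the number of degree-one vertices. By Lemma~\ref{lem:leaf}, pendant vertices are impossible in the tight case, so $\delta(F)\ge2$, as in Lemma~\ref{lem:comp3}. Then all degrees lie in $[2,\Delta_t]$, and convexity gives
\[
\sum_v d^\sigma\le t\Delta_t^\sigma+3\cdot2^\sigma=\sum_v d_{K_t\cup K_3}(v)^\sigma.
\]
Equality would require the degree pattern $(\Delta_t^t,2^3)$, which is not regular, so~\eqref{eq:degest} is strict for the connected graph $F$.

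I expect the main obstacle to be the middle range: showing strictly and uniformly that the convex degree bound falls below $bt+p((p-1)/\Delta_t)^\sigma$ when $p$ is small and the pendant count is near its maximum. I would handle it through the case split of Lemma~\ref{lem:pendcount} on $q_F$.
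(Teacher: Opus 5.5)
There is a genuine gap, and it is at the step that carries your middle range. Your overall plan (a linear gap for large $m$, strict inequality on the finitely many remaining graphs, and the tight case $m=t+3$, $e(F)=E+3$ via Lemma~\ref{lem:leaf} and $\delta(F)\ge2$) is reasonable, and the tight case is handled essentially as in the paper (compare Lemma~\ref{lem:comp3}).

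\textbf{The gap.} From $e(F)<e(bK_t\cup K_p)$ you conclude that ``convexity together with the cap $\Delta_t$'' makes $\sum_vd_F(v)^\sigma$ strictly smaller than $bt\Delta_t^\sigma+p(p-1)^\sigma$. That inference is invalid: a sequence with smaller sum can put more than $bt$ entries at $\Delta_t$ and pay with many $1$'s. Take $s=2$ (so $\sigma=3$), $t=6$ and $m=10$ (so $b=1$, $p=4$). The sequence $(5^7,1^3)$ respects the cap. Its sum is $38=2(E+m-t)$, so it satisfies the edge bound $e(F)\le E+m-t$ that you use, and it lies below the sum $42$ of $K_6\cup K_4$. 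Yet $7\cdot5^3+3=878>858=6\cdot5^3+4\cdot3^3$.

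What you need is weak majorization, that is, the prefix inequalities $\sum_{i\le bt+a}d_{(i)}\le bt\Delta_t+a(p-1)$ for $1\le a\le p$. These require lower bounds on the smallest degrees.
\begin{itemize}
\item For $b\ge2$, the bound $\delta(F)\ge1$ together with Lemma~\ref{lem:djs} suffices. The prefix sum is at most $2(E+m-t)-(p-a)$, which leaves slack $(b-1)t(t-3)-p+a(p-2)\ge2$. That part is easily repaired.
\item For $m=t+r$ with $3\le r<t$, this no longer works, as the example shows, and this is the real content of the lemma. The paper proves that the degree sequence is weakly majorized by $(\Delta_t^t,(r-1)^r)$. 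It uses Lemma~\ref{lem:pendcount}, $\ell-2q_F\le r-3$, to bound the prefix of length $t+a$ by $t\Delta_t+2a-2q_F+\ell\le t\Delta_t+a(r-1)$. Strictness then comes from the smaller degree sum when $r\ge4$, and from non-regularity when $r=3$.
\end{itemize}
You name Lemma~\ref{lem:pendcount} as the tool you would use, but you do not carry out this computation. Moreover, the edge bound you actually invoke in the middle range is the unrefined $E+m-t$, which the example above satisfies. The example is excluded only by pendant-vertex control: Lemma~\ref{lem:pendant} with $\ell=3$ gives $e(F)\le18$.

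\textbf{The large-$m$ regime.} This part is valid in substance, but the count is loose. With the refined bound of Lemma~\ref{lem:pendant}, every non-pendant vertex absorbs degree at least $2$. Hence at most $t+1$ vertices of $F$ can have degree $\Delta_t$, not about $t+2m/\Delta_t$. After your convexity reduction, the coefficient of $m$ is then $(2/\Delta_t)^\sigma<1$ for every $t\ge4$, so the difficulty you found at $t=4$ is an artifact of the crude count.

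The paper also avoids any finite check on $2t\le m<b_0t$. Bounding $d^\sigma$ on $[2,\Delta_t]$ by its chord and using Lemma~\ref{lem:pendant} gives $\sum_vd_F(v)^\sigma\le t\Delta_t^\sigma+(m-t)2^\sigma+O(1)$. For $m\ge2t$, where $b\ge2$, this lies below $bt\Delta_t^\sigma+p(p-1)^\sigma$ by an explicit constant independent of $m$.
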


\begin{proof}
	First let $m=t+r$ with $3\le r<t$, write $e(F)=E+r-q_F$, and let $\ell$ be the number of vertices of degree one. We claim that the nonincreasing degree sequence of $F$ is weakly majorized by $(\Delta_t^{t},(r-1)^{r})$. Prefixes of length at most $t$ are handled by $\Delta(F)\le \Delta_t$. If $\ell\ge r$, at most $t$ vertices have degree at least two and the comparison holds termwise. Otherwise, for a prefix of length $t+a$ with $1\le a\le r$, the remaining $r-a$ vertices have degree sum at least $2(r-a)-\min\{\ell,r-a\}$, so the prefix sum is at most
	\[
	tD+2a-2q_F+\ell\le tD+2a+r-3\le tD+a(r-1),
	\]
	by Lemma~\ref{lem:pendcount}. Since $x^{\sigma}$ is increasing and strictly convex, $\sum_vd_F(v)^{\sigma}\le tD^{\sigma}+r(r-1)^{\sigma}$, which is $\Delta_t^{\sigma}\mathcal E(K_t\cup K_r)$. For $r\ge4$ the degree sum $tD+2r-2q_F$ is strictly smaller than $tD+r(r-1)$, so the inequality is strict; for $r=3$ equality would force the degree sequence $(\Delta_t^t,2^3)$, which is not regular, so~\eqref{eq:degest} is strict. As there are finitely many such $F$, the gaps have a positive minimum.
	
	Now let $m\ge2t$, and put $A_\sigma=\frac{\Delta_t^{\sigma}-2^{\sigma}}{\Delta_t-2}$ and $B_\sigma=A_\sigma-(2^{\sigma}-1)$, which is positive by convexity. With $\varepsilon=\ell\bmod2$, Lemma~\ref{lem:pendant} gives
	\[
	\sum_{d_F(v)\ge2}(d_F(v)-2)=2e(F)-2m+\ell\le t(\Delta_t-2)+\varepsilon .
	\]
	Bounding $d^{\sigma}$ by the chord of $x^{\sigma}$ on $[2,\Delta_t]$ at the vertices of degree at least two, and by $1$ at the others,
	\[
	\sum_vd_F(v)^{\sigma}\le tD^{\sigma}+(m-t)2^{\sigma}+\varepsilon A_\sigma-\ell(2^{\sigma}-1)\le tD^{\sigma}+(m-t)2^{\sigma}+B_\sigma .
	\]
	Since $p\bigl((p-1)^{\sigma}-2^{\sigma}\bigr)\ge-2(2^{\sigma}-1)$ for $0\le p<t$, and $b\ge2$, the sum $btD^{\sigma}+p(p-1)^{\sigma}=\Delta_t^{\sigma}\mathcal E(bK_t\cup K_p)$ exceeds this bound by at least
	\[
	t(\Delta_t^{\sigma}-2^{\sigma})-A_\sigma-(2^{\sigma}-1)=\Bigl(t-\frac1{t-3}\Bigr)(\Delta_t^{\sigma}-2^{\sigma})-2^{\sigma}+1\ge3\cdot3^{\sigma}-4\cdot2^{\sigma}+1>0 ,
	\]
	where the last expression equals $2$ at $\sigma=1$ and increases. By~\eqref{eq:degest} the gap is at least this constant divided by $\Delta_t^{\sigma}$.
\end{proof}

\begin{lem}\label{lem:packK}
	Among disjoint unions of cliques of order at most $t$ on $bt+p$ vertices, $0\le p<t$, the unique maximizer of $\mathcal E$ is $bK_t\cup K_p$, and every other such union has capacity smaller by a positive constant depending only on $k$ and $t$.
\end{lem}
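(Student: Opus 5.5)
The plan is to reduce to the packing inequality of Lemma~\ref{lem:pack} with $r=2$, applied to the clique orders. By Lemma~\ref{lem:capadd}, a disjoint union of cliques $K_{m_1}\cup\dots\cup K_{m_c}$ with $m_i\le t$ and $\sum_i m_i=bt+p$ has capacity
\[
\mathcal E=\sum_i\mathcal E(K_{m_i})=\Delta_t^{-\sigma}\sum_i f(m_i),\qquad f(m)=m(m-1)^{\sigma}.
\]
This $f$ agrees with the function of Lemma~\ref{lem:pack} for $r=2$: there $f(0)=0$, $f(1)=0$, and for $m\ge2$ one has $f(m)=m\binom{m-1}{1}^{\sigma}$. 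So Lemma~\ref{lem:pack} gives $\sum_if(m_i)\le bf(t)+f(p)$, and the right side is exactly $\Delta_t^{\sigma}\mathcal E(bK_t\cup K_p)$. This proves that $bK_t\cup K_p$ is a maximizer.

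For uniqueness I will use the equality clause of Lemma~\ref{lem:pack} with $r=2$. If $p\ge2$, equality forces $b$ parts equal to $t$ and one part equal to $p$, so the union is $bK_t\cup K_p$. If $p\le1$, the leftover parts partition $p$ into parts of size $0$ or $1$. Since zero parts are not vertices, the union is $bK_t$ when $p=0$ and $bK_t\cup K_1$ when $p=1$, which is again $bK_t\cup K_p$. Note that the general ambiguity for $p<r$ disappears here, because with $r=2$ the only graph of order at most one is $K_p$ itself, and singletons cannot be merged into anything else.

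The main point is the uniform gap, which must not depend on $b$. The key observation is that the merging moves in Lemma~\ref{lem:pack} only ever involve the parts smaller than $t$. So I will write any union as $qK_t$ together with a union $J$ of cliques of order less than $t$, where $|V(J)|=dt+p$. The deficit is then $\Delta_t^{-\sigma}\bigl(df(t)+f(p)-\sum_{J}f(m_i)\bigr)$. I will bound $f(m)\le m\,(t-2)^{\sigma}$ for every $m\le t-1$. This gives
\[
df(t)+f(p)-\sum_J f(m_i)\ \ge\ dt\bigl((t-1)^{\sigma}-(t-2)^{\sigma}\bigr)-p(t-2)^{\sigma},
\]
which is at least $1$ once $d$ exceeds a constant $d_0(t,\sigma)$. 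For $d<d_0$ there are only finitely many multisets of orders, and by the strict part of Lemma~\ref{lem:pack} each non-maximizing one has a positive deficit. Taking the minimum of these finitely many deficits and of $1$, and dividing by $\Delta_t^{\sigma}$, gives a constant depending only on $k$ and $t$, since $\sigma=\sigma_2$ depends on $k$.

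The step I expect to need care is this last one: confirming that a non-maximizing union really produces a strictly positive deficit. When $p\ge2$, any deviation leaves either two positive parts below $t$ or a wrong residual part, and the strict increase from $\Delta^2f(m)>0$ for $m\ge0$ gives positivity. When $p\le1$, the only deviation is having at least two parts below $t$, one of which has size at least $2$, and strictness again applies. I expect this to be routine once it is phrased via the equality clause of Lemma~\ref{lem:pack}.
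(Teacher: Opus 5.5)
Your proof is correct, but it reaches the $b$-independent gap by a different route from the paper. The paper works directly with $c_m=\mathcal E(K_m)$. These numbers have strictly increasing differences, because $c_1-c_0=0$ and $x(x-1)^{\sigma}$ is strictly convex for $x>1$. Any union other than $bK_t\cup K_p$ has two positive parts $1\le a\le b'<t$. Moving one vertex from the smaller to the larger gains $(c_{b'+1}-c_{b'})-(c_a-c_{a-1})>0$, a number drawn from a finite set depending only on $t$ and $\sigma$. Every further move toward the packed union is also a strict gain, so this single first gain is already the uniform gap, and uniqueness comes out at the same time.

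You instead cite Lemma~\ref{lem:pack} with $r=2$ for the bound and its equality clause for uniqueness. You then get the gap by splitting on the number $d$ of missing blocks. The crude bound $f(m)\le m(t-2)^{\sigma}$ handles $d\ge d_0$, and finitely many configurations remain for $d<d_0$. This is the same template the paper uses in Corollary~\ref{cor:gap3}, where no local improving move is available. Here it is longer than necessary: your own observation that the moves only touch parts below $t$ would give the gap in one line, as in the paper. Your version is nonetheless sound, and it is the more robust template when only qualitative uniqueness is at hand.

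One small inaccuracy in your last paragraph: for $p\le1$, a deviation need not contain a part of size at least $2$. For instance, $(b-1)K_t\cup tK_1$ is a deviation when $p=0$. This does not hurt the argument. For $r=2$ one has $\Delta^2f(0)=f(2)>0$, so merging two singletons is also a strict gain. Moreover, the equality clause of Lemma~\ref{lem:pack}, that parts below $r$ have total less than $r$, already excludes such unions.
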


\begin{proof}
	The numbers $c_m=\mathcal E(K_m)$, with $c_0=0$, have strictly increasing differences, since $x(x-1)^{\sigma}$ is strictly convex for $x>1$ and $c_1-c_0=0$. Hence moving a vertex from a component of order $a$ to one of order $b'$ with $1\le a\le b'<t$ strictly increases $c_a+c_{b'}$, and the gains form a finite set.
\end{proof}

\begin{lem}\label{lem:packP}
	Let $(s,t)=(8,8)$. Among disjoint unions of cliques of order at most $8$ and copies of $\overline P$ on $8b+p$ vertices, the unique maximizer of $\mathcal E$ is $(b-1)K_8\cup\overline P$ if $p=2$ and $b\ge1$, and $bK_8\cup K_p$ otherwise; every other union has capacity smaller by a positive constant.
\end{lem}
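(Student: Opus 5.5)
We work with the capacities $c_m=\mathcal E(K_m)=m\bigl(\frac{m-1}{7}\bigr)^{\sigma}$ and $\pi=\mathcal E(\overline P)=10\bigl(\frac67\bigr)^{\sigma}$. Here $\sigma=\frac97$, since $s=8$ and $k=9$. By Lemma~\ref{lem:capadd}, capacity is additive over components. So a union with $\nu$ copies of $\overline P$ and clique components of orders $m_i\le8$ has capacity $\nu\pi+\sum_ic_{m_i}$, with $10\nu+\sum_im_i=8b+p$.

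\textbf{Fixed $\nu$.} For each $\nu$ the clique part is optimized by Lemma~\ref{lem:packK}. Write $8b+p-10\nu=8b_\nu+p_\nu$. The best value is then $\Phi(\nu)=\nu\pi+8b_\nu+c_{p_\nu}$, and any other clique arrangement loses a positive constant from a finite list. The problem thus reduces to comparing finitely many residue patterns in $\nu$.

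\textbf{Bounding $\nu$.} First show $\nu\le1$. Replace $2\overline P$ (20 vertices) by $2K_8\cup K_4$. This is a gain provided
\[
2\pi<16+c_4=16+4\bigl(\tfrac37\bigr)^{\sigma}.
\]
Numerically, $(6/7)^{9/7}\approx0.820$, so $2\pi\approx16.40$. Also $(3/7)^{9/7}\approx0.336$, so $16+c_4\approx17.34$. I would certify this with rational bounds in the style of Lemma~\ref{lem:capP}: an upper bound for $6^{9/7}/7^{9/7}$ and a lower bound for $3^{9/7}$.

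\textbf{The case $\nu=1$.} We compare $\pi+c$ for the leftover, with $10+8b'+p'=8b+p$, against $8b+c_p$. Write $p'\equiv p-2\pmod 8$.
\begin{itemize}
\item[(i)] $p=2$, so $p'=0$. This is $\pi$ versus $8+c_2$, and $\pi$ wins by Lemma~\ref{lem:capP}. This needs $b\ge1$.
\item[(ii)] $p\in\{0,1\}$, so $p'=p+6$ and one block is lost. We compare $\pi+c_{p+6}$ with $16+c_p$. For $p=0$: $8.2+6\cdot(5/7)^{9/7}\approx8.2+3.9<16$. For $p=1$: $8.2+7\cdot(6/7)^{9/7}\approx8.2+5.74<16$.
\item[(iii)] $3\le p\le7$, so $p'=p-2$. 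We need $\pi+c_{p-2}<8+c_p$, that is, $c_p-c_{p-2}>\pi-8\approx0.20$. This holds, since for $p\ge3$ we have $c_p-c_{p-2}\ge c_3-c_1=3(2/7)^{9/7}\approx0.60$.
\end{itemize}
In cases (ii) and (iii) the winner is $bK_8\cup K_p$. When $p=2$ and $b=0$ there is no room for $\overline P$, so the answer is $K_2$.

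\textbf{Uniformity and the main obstacle.} All gaps are drawn from a finite list that is independent of $b$, which gives a uniform positive constant. Uniqueness follows because every strict comparison above, together with Lemma~\ref{lem:packK}, is strict. The main obstacle is certifying the numerical inequalities rigorously with $\sigma=9/7$. This applies especially to (i), already done in Lemma~\ref{lem:capP}, to the narrow margin in (iii) at $p=3$, and to the bound on $\nu$. For these I would use rational bounds on $6^{1/7}$ and $7^{1/7}$, as in that lemma.
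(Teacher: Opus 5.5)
Your proposal is correct and is essentially the paper's proof. Both arguments bound the number of copies of $\overline P$ by the replacement $2\overline P\to2K_8\cup K_4$ and then compare $\overline P$ plus a residual clique $K_m$ with the complete packing, using $c_{m+2}-c_m\ge c_3$ for $m\le5$, a direct bound for $m=6,7$, Lemma~\ref{lem:capP} for $p=2$, and Lemma~\ref{lem:packK}. The numerical certification you postpone needs only two integer inequalities, $\mathcal E(\overline P)<\frac{33}{4}$ (equivalently $40^7\cdot6^9<33^7\cdot7^9$) and $c_3>\frac12$ (equivalently $6^7\cdot2^9>7^9$), and these cover all of your cases.
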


\begin{proof}
	Here $\sigma=\frac97$. Put $c_m=\mathcal E(K_m)$ and $\beta=\mathcal E(\overline P)$. Then
	\[
	8+c_2<\beta<\tfrac{33}{4},\qquad c_3>\tfrac12 .
	\]
	The first inequality is Lemma~\ref{lem:capP}; the second is equivalent to $40^7\cdot6^9<33^7\cdot7^9$, and the third to $6^7\cdot2^9>7^9$. Two copies of $\overline P$ lose to $2K_8\cup K_4$, since $2\beta<\frac{33}2<16+c_4$. If a copy of $\overline P$ occurs together with a clique $K_m$, $1\le m\le5$, then replacing $\overline P\cup K_m$ by $K_8\cup K_{m+2}$ gains $8+c_{m+2}-c_m-\beta\ge8+c_3-\beta>0$ by convexity; for $m=6,7$, the bound $\beta+c_m<\frac{33}4+7<16$ shows that the complete packing on $10+m$ vertices is better. Combined with Lemma~\ref{lem:packK}, a maximizer contains $\overline P$ only if $p=2$ and all other components are $K_8$, and then $\overline P$ wins by the first inequality.
\end{proof}

\begin{lem}\label{lem:t23}
	Let $2\le s\le t\le3$, and let $H$ be admissible on $bt+p$ vertices, $0\le p<t$. Then $\mathcal E(H)\le\mathcal E(bK_t\cup K_p)$, with equality if and only if $H\cong bK_t\cup K_p$, and every other admissible $H$ has capacity smaller by a positive constant $\delta_{k,t}$.
\end{lem}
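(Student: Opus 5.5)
The plan is to reduce to a structural description of admissible graphs for $t\le3$ and then use the additivity and packing lemmas for capacity. Admissibility means $H$ has no $K_{a,t+1-a}$ minor for $1\le a\le s$. In particular $\Delta(H)\le t-1$.

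For $t=2$ we have $\Delta(H)\le1$, so $H$ is a matching plus isolated vertices. Every component is then a clique of order at most $2$, and Lemma~\ref{lem:packK} applies directly. It gives the unique maximizer $bK_2\cup K_p$ and a uniform positive gap over all other unions of cliques.

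For $t=3$ we have $s\in\{2,3\}$, and in both cases the forbidden minors include $K_{1,3}$ and $K_{2,2}=C_4$. Since $\Delta(H)\le2$, every component is a path or a cycle. A cycle of length at least $4$ contracts to $C_4$, so the only cycles are triangles. Hence every component is $K_3$ or a path $P_m$. Paths of order at most $2$ are cliques, so the real work is to show that a path $P_m$ with $m\ge3$ has strictly smaller capacity than the clique packing on $m$ vertices. By Lemma~\ref{lem:capadd} and Lemma~\ref{lem:packK}, this reduces the whole statement to unions of cliques of order at most $3$.

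To bound $\mathcal E(P_m)$ I would use \eqref{eq:degest}, which gives
\[
\mathcal E(P_m)\le 2^{-\sigma}\bigl((m-2)2^{\sigma}+2\bigr)=m-2+2^{1-\sigma}.
\]
Equality there would require a regular graph, and $P_m$ is not regular, so the inequality is strict. The comparison graph is the packing $\lfloor m/3\rfloor K_3\cup K_{m\bmod 3}$. Its capacity is $3\lfloor m/3\rfloor+c_{m\bmod3}$, where $c_0=c_1=0$ and $c_2=2\cdot2^{-\sigma}=2^{1-\sigma}$. Comparing the two quantities:
\begin{itemize}
\item For $m\equiv0\pmod 3$ we need $m-2+2^{1-\sigma}\le m$, which holds since $2^{1-\sigma}<1$.
\item For $m\equiv2\pmod 3$ the two quantities agree, and the strictness of \eqref{eq:degest} settles it.
\item For $m\equiv1\pmod3$ we need $m-2+2^{1-\sigma}\le m-1$, which holds for the same reason.
\end{itemize}
In each case the strictness of \eqref{eq:degest} gives a strict inequality.

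The main obstacle is uniformity in $m$: the gap must be a positive constant independent of the path length. For $m\equiv0,1\pmod 3$ the explicit margin is at least $1-2^{1-\sigma}>0$, which is uniform. For $m\equiv2\pmod3$ the degree bound is tight, so I would instead argue directly. I would split $P_m$ into a bounded end segment, for example $P_5$, and the remainder, cut at an edge. The remaining paths have lengths in the good residues, except that a finite number of short paths must be checked separately. I would bound the capacity of $P_m$ by the sum of the capacities of its pieces plus the cut edge's contribution, using subadditivity under edge deletion from H\"older's inequality as in Lemma~\ref{lem:capadd}; I still have to confirm that this works. If that fails, a cleaner route is to compute $L_2(P_m)$ near-exactly. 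The maximizing vector is positive, and the eigenequation $z_{i-1}+z_{i+1}=\lambda z_i^{k-1}$ gives interior coordinates close to constant with boundary decay. The boundary deficit at each end should then be a definite constant, giving $\mathcal E(P_m)\le m-2+2^{1-\sigma}-\delta$ uniformly.

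Finally, combining components is handled by additivity of capacity. If any component is a path with $m\ge3$, I replace it by its clique packing and strictly gain at least $\delta$. The resulting union of cliques is then handled by Lemma~\ref{lem:packK}, which gives the unique maximizer $bK_t\cup K_p$ and the uniform gap $\delta_{k,t}$.
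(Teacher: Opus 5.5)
You have the structure right. For $t=3$ every component is a path or a triangle, \eqref{eq:degest} gives $\mathcal E(P_m)\le m-2+2^{1-\sigma}$, the margins for $m\equiv0,1\pmod3$ are uniform, and additivity plus Lemma~\ref{lem:packK} finish the argument. The gap is the one you flag yourself, and it is essential. For $m\equiv2\pmod3$ the degree bound equals the capacity $m-2+2^{1-\sigma}$ of the packing. Strictness of \eqref{eq:degest} for each fixed $m$ therefore gives no constant independent of $m$. The lemma asserts such a constant, and Theorem~\ref{thm:weighted2} needs it because $h$ is unbounded.

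Neither fallback supplies the constant. If you cut $P_m$ at an edge and bound both pieces by \eqref{eq:degest}, you get $m-4+2\cdot2^{1-\sigma}$, so the cut edge would have to cost less than $2-2^{1-\sigma}$. But its contribution is of order one: in the scale-free form below it is $\sigma z_uz_v$, which is close to $\sigma\ge2$ at an interior edge because the optimal interior coordinates are nearly $1$. So this does not close as sketched. The eigenvector-decay route is a heuristic, not an estimate.

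The missing idea is that for $t=3$ the capacity has an exact additive, unnormalized form. Since $\Delta_t=2$ and $2\sigma=k(\sigma-1)$, optimizing the scale of a unit vector gives
\[
\mathcal E(F)=\max_{z\ge0}\Bigl\{\sigma\sum_{uv\in E(F)}z_uz_v-(\sigma-1)\sum_{v}z_v^{k}\Bigr\}.
\]
Put $\phi_d(z)=(d/2)^{\sigma}-\frac{\sigma d}{2}z^{2}+(\sigma-1)z^{k}$. Then $\phi_d\ge0$, and it vanishes only at $z=(d/2)^{1/(k-2)}$. For a path the objective equals
\[
\sum_v(d(v)/2)^{\sigma}-\sum_v\phi_{d(v)}(z_v)-\tfrac\sigma2\sum_{uv}(z_u-z_v)^2 .
\]
Discarding all the nonnegative terms is exactly \eqref{eq:degest}. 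Keeping only those at an end vertex and its neighbor gives, for every $m\ge3$,
\[
\mathcal E(P_m)\le m-2+2^{1-\sigma}-\varepsilon_k,\qquad \varepsilon_k=\min_{u,v\ge0}\Bigl\{\phi_1(u)+\phi_2(v)+\tfrac\sigma2(u-v)^2\Bigr\}>0 .
\]
Here $\varepsilon_k>0$ because $\phi_1$ and $\phi_2$ vanish at the distinct points $2^{-1/(k-2)}$ and $1$. This deficit is local, independent of $m$, and covers all residues at once; with it the rest of your argument goes through unchanged.
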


\begin{proof}
	For $t=2$, admissible graphs have maximum degree one, so their components are $K_1$ and $K_2$, of capacities $0$ and $2$, and the claim is immediate with $\delta=2$.
	
	Let $t=3$, so $k\in\{3,4\}$ and $\sigma\in\{3,2\}$. Admissibility excludes $K_{1,3}$ and $K_{2,2}$ minors, so every component of $H$ is a path or a triangle. Since $\Delta_t=2$ and $2\sigma=k(\sigma-1)$, optimizing the scalar multiple of a unit vector gives
	\[
	\mathcal E(F)=\max_{z\ge0}\Bigl\{\sigma\sum_{uv\in E(F)}z_uz_v-(\sigma-1)\sum_{v}z_v^{k}\Bigr\}.
	\]
	For $d\in\{1,2\}$ put $\phi_d(z)=(\frac d2)^{\sigma}-\frac{\sigma d}2z^2+(\sigma-1)z^k$. Then $\phi_d\ge0$, with equality only at $z=(d/2)^{1/(k-2)}$, so
	\[
	\varepsilon_k=\min_{u,v\ge0}\Bigl\{\phi_1(u)+\phi_2(v)+\tfrac\sigma2(u-v)^2\Bigr\}>0 .
	\]
	For the path $P_m$, $m\ge3$, the objective above equals $\sum_v(\frac{d(v)}2)^{\sigma}-\sum_v\phi_{d(v)}(z_v)-\frac\sigma2\sum_{uv}(z_u-z_v)^2$, and keeping only the terms of one end vertex and its neighbor gives $\mathcal E(P_m)\le m-2+2^{1-\sigma}-\varepsilon_k$. Put $c=2^{1-\sigma}\in\{\frac14,\frac12\}$. The packed union of triangles and a residual clique on $m$ vertices has capacity $m$, $m-1$ or $m-2+c$ according as $m\equiv0,1,2\pmod3$, so replacing a path of order at least $3$ gains at least $\varepsilon_k$. It remains to compare unions of $K_1$, $K_2$ and $K_3$: with $a_1$ copies of $K_1$ and $a_2$ of $K_2$ the capacity is $bt+p-a_1-(2-c)a_2$, and the largest value is attained only by $(a_1,a_2)=(0,0),(1,0),(0,1)$ for $p=0,1,2$, respectively. Every other value is smaller by a positive multiple of $c$. Take $\delta=\min\{\varepsilon_k,c\}$.
\end{proof}

\begin{thm}\label{thm:weighted2}
	Let $2\le s\le t$ and $k=s+1$, write $h=bt+p$ with $0\le p<t$, and let $H^*_h=(b-1)K_8\cup\overline P$ if $(s,t)=(8,8)$, $p=2$ and $b\ge1$, and $H^*_h=bK_t\cup K_p$ otherwise. Every admissible graph $H$ on $h$ vertices satisfies $\mathcal E(H)\le\mathcal E(H^*_h)$, with equality if and only if $H\cong H^*_h$. Moreover there is $\delta=\delta(s,t)>0$, independent of $h$, such that $\mathcal E(H)\le\mathcal E(H^*_h)-\delta$ for every other admissible $H$.
\end{thm}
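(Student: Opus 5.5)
The plan is to reduce to components, using additivity of capacity (Lemma~\ref{lem:capadd}), and then to pack. The small cases $t\le3$ are exactly Lemma~\ref{lem:t23}, so assume $t\ge4$. Decompose $H$ into its connected components $J_1,\dots,J_c$, so that $\mathcal E(H)=\sum_i\mathcal E(J_i)$. Each component is admissible, since $\mathcal A_{s,t}$ is closed under subgraphs, and has maximum degree at most $t-1$. We classify components by order and replace each noncomplete component by a disjoint union of cliques (or by $\overline P$) of the same order whose capacity is at least as large, strictly larger by a uniform constant unless the component is already of the allowed form.

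The classification by order runs as follows.
\begin{itemize}
\item Order at most $t$: completing the component does not decrease $S_2$, since it only adds edges, so $\mathcal E(J)\le\mathcal E(K_m)$. Equality should force completeness, because a maximizing vector is positive and an added edge contributes a positive term. Finitely many graph types give a uniform gap.
\item Order $t+1$: Lemma~\ref{lem:capt1} compares $J$ with $K_t\cup K_1$. A connected $J$ is not $K_t\cup K_1$, so there is a gap, again uniform over finitely many types.
\item Order $t+2$: Lemma~\ref{lem:capt2} compares $J$ with $K_t\cup K_2$. The only exception is $\overline P$ when $t=8$.
\item Order at least $t+3$: Lemma~\ref{lem:large} gives a comparison with $b'K_t\cup K_{p'}$ with a gap depending only on $s,t$. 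This is the key point for uniformity in $h$.
\end{itemize}

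After these replacements $H$ is dominated by a disjoint union of cliques of order at most $t$, together with copies of $\overline P$ when $t=8$ and $\overline P$ actually beats $K_8\cup K_2$. By Lemma~\ref{lem:capP} this happens only when $s=8$. If $s\le7$ and $t=8$, each $\overline P$ component is strictly dominated by $K_8\cup K_2$. Lemma~\ref{lem:packK} then gives $bK_t\cup K_p$ as the unique maximizer with a uniform gap, and for $(s,t)=(8,8)$ Lemma~\ref{lem:packP} gives the stated $H^*_h$. Both results are uniform in $b$.

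For uniqueness and the gap $\delta$, take $\delta$ to be the minimum of the gaps in the component comparisons and the packing gaps. Every step either strictly loses some gap or preserves the structure. If $H\not\cong H^*_h$, then either some component is not a clique (and not the permitted $\overline P$), in which case we lose at least one component gap, or all components are of the allowed types but the packing differs, in which case the packing lemma gives a loss. Since the gaps add along the chain of inequalities, $\mathcal E(H)\le\mathcal E(H^*_h)-\delta$.

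The main obstacle is making sure that replacing a component of order $m$ by the clique packing of its own order, and then repacking globally, is consistent. The replacement graph for a component of order $m=b't+p'$ is $b'K_t\cup K_{p'}$, which is a union of cliques of order at most $t$, so Lemma~\ref{lem:packK} applies to the combined union. One must also check that Lemma~\ref{lem:large} covers every $m\ge t+3$, both $3\le r<t$ and $m\ge2t$, so that no order is missed. The rest is bookkeeping of the finitely many gap constants.
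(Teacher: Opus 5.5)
Your proposal is correct and follows essentially the same route as the paper's proof. Both reduce to components by additivity (Lemma~\ref{lem:capadd}), complete the components of order at most $t$, handle larger components with Lemmas~\ref{lem:capt1}, \ref{lem:capt2}, \ref{lem:capP} and~\ref{lem:large}, and finish with Lemmas~\ref{lem:packK} and~\ref{lem:packP}, taking $\delta$ as the minimum of finitely many gaps. The paper also notes that the replacements stay admissible (Corollary~\ref{cor:small}, Lemma~\ref{lem:exadm}); you omit this, but the inequality does not need it, since the packing lemmas apply to arbitrary unions of cliques.
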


\begin{proof}
	For $t\le3$ this is Lemma~\ref{lem:t23}; let $t\ge4$. A connected component of order at most $t$ can be completed without leaving the admissible class, and this strictly increases the capacity unless it is already complete. Lemmas~\ref{lem:capt1} and~\ref{lem:capt2} replace every component of order $t+1$ or $t+2$ by $K_t\cup K_1$ or $K_t\cup K_2$ with a strict gain, except $\overline P$ when $(s,t)=(8,8)$; by Lemma~\ref{lem:capP} and Lemma~\ref{lem:exadm}, $\overline P$ is admissible and loses to $K_8\cup K_2$ when $t=8$ and $s\le7$. Lemma~\ref{lem:large} replaces every larger component. All replacements are admissible, and capacity is additive by Lemma~\ref{lem:capadd}, so the problem reduces to Lemmas~\ref{lem:packK} and~\ref{lem:packP}. The strict gains for components of order at most $2t-1$ form a finite set, the larger ones are bounded below by Lemma~\ref{lem:large}, and the packing lemmas supply a uniform gain for every other union of retained components; the minimum of these constants is $\delta$.
\end{proof}

\section{All residues when $k>s$}\label{sec:kgs}

Throughout this section $2\le s\le\min(k-1,t)$, so $j=k-s+1\ge2$, and $t\ge j$. Put
\[
\alpha=\frac{s-1}{k},\qquad K=\frac1t\binom tj=C_j,\qquad \mathcal E=\mathcal E_j ,
\]
so that $1-\alpha=j/k$ and $\mathcal E(K_t)=t$. By homogeneity, if $\sum_vz_v^k=W$ then
\begin{equation}\label{eq:capscale}
	S_j(F,z)\le K\,\mathcal E(F)^{\alpha}\,W^{j/k}.
\end{equation}
Write $h=n-s+1=bt+p$ with $0\le p<t$, and let $H^*=bK_t\cup K_p$ if $j\ge3$, and $H^*=H^*_h$ as in Theorem~\ref{thm:weighted2} if $j=2$. In both cases $H^*=BK_t\cup Q$ with $Bt+|V(Q)|=h$, where $Q$ belongs to a finite list depending only on $k,s,t,p$, and $H^*$ is admissible. Put $\mathcal E_*=\mathcal E(H^*)=Bt+\mathcal E(Q)$. Theorem~\ref{thm:weighted3}, Corollary~\ref{cor:gap3} and Theorem~\ref{thm:weighted2} give $\delta>0$, independent of $b$, such that every admissible graph $F$ on $h$ vertices satisfies
\begin{equation}\label{eq:unifgap}
	\mathcal E(F)\le\mathcal E_*,\qquad \mathcal E(F)\le\mathcal E_*-\delta\ \text{ unless }\ \CC_j(F)\cong\CC_j(H^*).
\end{equation}

\begin{lem}\label{lem:struct}
	Let $G$ be an extremal graph with normalized Perron vector $x$, let $A$ be the set of its $s-1$ largest coordinates, $T=N(A)$ and $R=V(G)\setminus(A\cup T)$. For all sufficiently large $n$, $A$ induces a clique, $|R|=O(1)$, $x_a\to k^{-1/k}$ for $a\in A$, $\max_{v\notin A}x_v=O(n^{-1/k})$, $\Delta(G[T])\le t-1$, $\Delta(G-A)=O(1)$ and $\rho(\CC_k(G))=\Theta(n^{\alpha})$.
\end{lem}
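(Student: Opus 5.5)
The lemma collects results already proved for $2\le s\le\min(k-1,t)$, so the plan is to assemble them and add two short degree arguments.

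First, an extremal graph $G$ satisfies $\rho(\CC_k(G))\ge\rho(\CC_k(K_{s-1}\vee bK_t\cup pK_1))$. This graph lies in the class by Corollary~\ref{cor:small}. Together with Corollary~\ref{cor:lb} and Theorem~\ref{thm:exact}, this gives $\rho(\CC_k(G))=(c+o(1))n^{\alpha}$, hence $\rho=\Theta(n^{\alpha})$. Theorem~\ref{thm:stab} then applies to any sequence of extremal graphs. It gives $x_a\to k^{-1/k}$ for $a\in A$ and $\max_{v\notin A}x_v\to0$. Lemma~\ref{lem:Acliquelow} shows that $A$ induces a clique for large $n$. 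Proposition~\ref{prop:Rbdd} gives $|R|\le k\kappa_p/(s-1)<kt/(s-1)$, which is $O(1)$ uniformly in the residue. Corollary~\ref{cor:deg}, applied to the $(s-1)$-set $A$, gives $\Delta(G[T])\le t-1$.

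Next I bound $\Delta(G-A)$, following the last part of the proof of Proposition~\ref{prop:Rbdd}. Put $M=\lfloor kt/(s-1)\rfloor$. A vertex of $T$ has at most $t-1$ neighbours in $T$ and at most $|R|\le M$ in $R$. A vertex $r\in R$ has at most $t-1$ neighbours in $T$, by Lemma~\ref{lem:common} applied to the $s$ vertices $A\cup\{r\}$, and fewer than $M$ in $R$. Hence $\Delta(G-A)\le t-1+M=O(1)$.

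Finally I obtain the sharp bound on the light coordinates from the eigenequation at a vertex $v\notin A$ with $x_v=\eta=\max_{v\notin A}x_v$. Since $\deg_G(v)\le s-1+\Delta(G-A)=O(1)$, only $O(1)$ $k$-cliques contain $v$. Each such clique has at most $s-1$ vertices in $A$, so at least $j-1$ of its other vertices lie outside $A$. This gives $\rho\,\eta^{k-1}\le C\eta^{j-1}$, that is, $\eta^{s-1}\le C/\rho=O(n^{-(s-1)/k})$, and so $\eta=O(n^{-1/k})$. If $\eta=0$ there is nothing to prove.

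I do not expect a real obstacle, because the substantive work is already in Proposition~\ref{prop:Rbdd} and Theorem~\ref{thm:stab}. The points needing care are uniformity over residues, which Proposition~\ref{prop:Rbdd} provides, and making sure the case $j=2$ is covered by the hypotheses of the cited results. It is, since they require only $s\le\min(k-1,t)$ and $t\ge j$.
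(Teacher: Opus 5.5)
Your proposal is correct and follows the paper's proof essentially step for step. The qualitative claims come from Lemma~\ref{lem:Acliquelow}, Proposition~\ref{prop:Rbdd}, Theorems~\ref{thm:exact} and~\ref{thm:stab} and Corollary~\ref{cor:deg}; then Lemma~\ref{lem:common} gives $\Delta(G-A)\le t-1+|R|$, and the eigenequation at a heaviest vertex outside $A$ gives $\rho\,\eta^{k-1}\le C\eta^{j-1}$, hence $\eta=O(n^{-1/k})$.
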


\begin{proof}
	Lemma~\ref{lem:Acliquelow}, Proposition~\ref{prop:Rbdd}, Theorems~\ref{thm:exact} and~\ref{thm:stab} and Corollary~\ref{cor:deg} give everything except the last two bounds on degrees and coordinates. Every $v\notin A$ has at most $t-1$ neighbors in $T$ by Lemma~\ref{lem:common} applied to $A\cup\{v\}$, so $\Delta(G-A)\le t-1+|R|=O(1)$. Let $\eta=\max_{v\notin A}x_v$ be attained at $v$. Only boundedly many $k$-cliques contain $v$, and each has at least $j-1$ further vertices outside $A$, so the eigenequation gives $\rho\,\eta^{k-1}\le C\eta^{j-1}$, that is, $\eta^{s-1}\le C/\rho=O(n^{-(s-1)/k})$.
\end{proof}

\begin{lem}\label{lem:refined}
	With the notation of Lemma~\ref{lem:struct}, put $H=G[T]$, $W_0=\sum_{v\notin A}x_v^k$ and $\pi_A=\prod_{a\in A}x_a$. Then
	\[
	\rho\bigl(\CC_k(K_{s-1}\vee H^*)\bigr)-\rho(\CC_k(G))\ \ge\ k\,\pi_AK\,W_0^{j/k}\bigl(\mathcal E_*^{\alpha}-\mathcal E(H)^{\alpha}\bigr)-O\bigl(n^{-(j+1)/k}\bigr).
	\]
\end{lem}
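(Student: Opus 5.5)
The plan is to compare both hypergraphs at explicit test vectors built from the Perron vector $x$ of $G$, using the variational description~\eqref{eq:var}.

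Write $\rho=\rho(\CC_k(G))=kP_G(x)$ and split $P_G(x)$ by the number of hubs in a clique, exactly as in Proposition~\ref{prop:master}. The cliques containing all of $A$ and a $j$-clique of $H$ contribute $\pi_A S_j(H,x_T)$. By~\eqref{eq:capscale} this is at most $\pi_AK\,\mathcal E(H)^{\alpha}W_T^{j/k}\le\pi_AK\,\mathcal E(H)^{\alpha}W_0^{j/k}$. Every other clique either contains $A$ together with a vertex of $R$, has at most $s-2$ hubs, or lies inside $A$.

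\begin{itemize}
\item The cliques inside $A$ contribute $e_k(x_A)$, which is $0$ because $s-1<k$.
\item The cliques with at most $s-2$ hubs have at least $j+1$ vertices outside $A$. By Lemma~\ref{lem:struct} these vertices have coordinates $O(n^{-1/k})$, and $\Delta(G-A)=O(1)$, so degeneracy counting bounds the number of such cliques linearly in $n$. Together with $\sum_{v\notin A}x_v^{j+1}\le n\,\eta^{j+1}$, their weight is $O(n\cdot n^{-(j+1)/k})=O(n^{\alpha-1/k})$.
\end{itemize}

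This term is too big to absorb naively, so the first step is to treat the same-hub-count cliques symmetrically on both sides and let them cancel. The cliques meeting $R$ number $O(1)$ and weigh $O(n^{-j/k})$ each.

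For the candidate, take the test vector equal to $x$ on $A$ and, on the light part, the optimal vector for $S_j(H^*,\cdot)$ scaled to total mass $W_0$. Then $z$ has the same norm as $x$, and its $r=j$ term equals $\pi_AK\,\mathcal E_*^{\alpha}W_0^{j/k}$ by the definition of capacity. Hence
\[
\rho(\CC_k(K_{s-1}\vee H^*))-\rho\ \ge\ k\,\pi_AK\,W_0^{j/k}\bigl(\mathcal E_*^{\alpha}-\mathcal E(H)^{\alpha}\bigr)+k\bigl(\Sigma^*_{>j}(z)-\Sigma_{>j}(x)\bigr)-O\bigl(n^{-j/k}\bigr).
\]
Here $\Sigma_{>j}$ denotes the clique weight with more than $j$ light vertices.

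The main obstacle is showing that $\Sigma^*_{>j}(z)-\Sigma_{>j}(x)\ge -O(n^{-(j+1)/k})$, which is a sharper claim than the $O(n^{\alpha-1/k})$ estimates above. The terms with $r=j+1$ light vertices and $s-2$ hubs are each of order $n^{(s-2)/k}$ times a sum over $(j+1)$-cliques. The candidate term is $\binom{s-1}{s-2}$ times $S_{j+1}(H^*,z)$, and comparing it to $S_{j+1}$ of $G-A$ needs information beyond capacity.

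My first attempt will use Proposition~\ref{prop:termwise} in the refined form of Lemma~\ref{lem:count}: every $\Delta\le t-1$ graph has at most $\frac1t\binom t{r}$ $r$-cliques per vertex. Combined with Hölder as in Lemma~\ref{lem:sigma}, this bounds each higher-$r$ term for $G$ by the value that $bK_t$ attains at the uniform vector. The candidate attains the same value up to the bounded residual part, costing $O(1)$ vertices times $\eta^{k}=O(n^{-1})$.

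Hölder applied termwise at the level $r=j+1$ gives a loss only in lower-order factors. If $H^*$'s optimal $z$ is not uniform on the $K_t$ blocks, I will instead choose $z$ uniform on the $BK_t$ blocks, at a cost that is itself $O(n^{-1})$ relative to the $r=j$ term. Since $j+1\le k$ and $n^{-1}\le n^{-(j+1)/k}$, all residual errors are then $O(n^{-(j+1)/k})$, giving the claimed inequality. If the matching at level $r=j+1$ fails, I would fall back to an error bound of order $n^{\alpha-1/k}\cdot\eta$-savings, tracking how $W_0$ is split between $R$ and $T$.
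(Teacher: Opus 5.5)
Your route is the paper's. You keep $x_A$ on the hubs and put the scaled capacity maximizer of $H^*$ on the light part. You bound the $\ell=j$ term of $G$ by \eqref{eq:capscale} and each $\ell>j$ term by Lemma~\ref{lem:sigma}, and the block-uniform candidate matches those bounds. As written, however, the argument does not reach the stated error term. You estimate the cliques meeting $R$ at $O(n^{-j/k})$ each, and that $O(n^{-j/k})$ survives in your displayed inequality. Your closing claim that all errors are $O(n^{-(j+1)/k})$ silently drops it.

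This is not cosmetic. In Theorem~\ref{thm:kgs} the gain being detected is $k\pi_AKW_0^{j/k}\alpha\delta h^{-j/k}=\Theta(n^{-j/k})$, so an error of the same order makes the lemma useless. The missing observation is that your category of cliques containing $A$ together with a vertex of $R$ is empty. $T=N(A)$ is the common neighbourhood, so every $r\in R$ misses some hub. Hence every clique meeting $R$ has at most $s-2$ hubs, and therefore at least $j+1$ vertices outside $A$, each of coordinate $O(n^{-1/k})$ by Lemma~\ref{lem:struct}. Since $|R|=O(1)$ and $\Delta(G-A)=O(1)$, there are $O(1)$ such cliques, and their total weight is $O(n^{-(j+1)/k})$.

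Two smaller corrections. First, the cost of matching the $\ell>j$ terms is not $O(1)$ vertices times $\eta^k=O(n^{-1})$. The block value is $v=(W_0/\mathcal E_*)^{1/k}$, and $Bt$ and $\mathcal E_*$ both equal $h-O(1)$. So $B\binom t\ell v^\ell$ differs from $\frac1t\binom t\ell W_0^{\ell/k}h^{1-\ell/k}$ by a relative factor $1+O(1/h)$, that is, by $O(n^{-\ell/k})$ in absolute terms. This is larger than $n^{-1}$ but still $O(n^{-(j+1)/k})$, because $\ell\ge j+1$. On the $G$ side you also need $W_T\le W_0$ and $|T|\le h$.

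Second, your hedges are unnecessary. By Lemma~\ref{lem:capadd}, the light vector can be taken equal to $v$ on every copy of $K_t$ and to $vz$ on $Q$, with $z$ a capacity maximizer of $Q$. This vector attains $S_j=KW_0^{j/k}\mathcal E_*^{\alpha}$ exactly and is uniform on the blocks. No fallback at level $j+1$ is needed.
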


\begin{proof}
	A $k$-clique meeting $R$ contains at most $s-2$ vertices of $A$, hence at least $j+1$ vertices outside $A$, and only boundedly many $k$-cliques meet $R$; by Lemma~\ref{lem:struct} their total weight is $O(n^{-(j+1)/k})$. Every other $k$-clique consists of $k-\ell$ vertices of $A$ and an $\ell$-clique of $H$, with $j\le\ell\le\min(k,t)$, and for fixed $\ell$ these contribute $e_{k-\ell}(x_A)S_\ell(H,x)$. For $\ell=j$, the coefficient is $\pi_A$ and~\eqref{eq:capscale} bounds $S_j(H,x)$ by $K\mathcal E(H)^{\alpha}W_0^{j/k}$. For $\ell>j$, Lemma~\ref{lem:sigma} (and, for $\ell=k$, the arithmetic--geometric mean inequality alone) gives $S_\ell(H,x)\le\frac1t\binom t\ell W_0^{\ell/k}h^{1-\ell/k}$.
	
	On $K_{s-1}\vee H^*$ keep the coordinates $x_A$. If $\mathcal E(Q)>0$, let $z$ maximize $S_j(Q,\cdot)$ under $\sum_qz_q^k=\mathcal E(Q)$, so that $S_j(Q,z)=K\mathcal E(Q)$; otherwise let $z=0$. Put $v=(W_0/\mathcal E_*)^{1/k}$, and give coordinate $v$ to every vertex of the $B$ copies of $K_t$ and $vz_q$ to $q\in V(Q)$. The light mass is $v^k(Bt+\mathcal E(Q))=W_0$, so the vector is normalized, and its light $j$-clique sum is $Kv^j\mathcal E_*=KW_0^{j/k}\mathcal E_*^{\alpha}$. For $\ell>j$ its light $\ell$-clique sum is $v^\ell\bigl(B\binom t\ell+O(1)\bigr)$; since $Bt=h-O(1)$ and $\mathcal E_*=h-O(1)$, this is $\frac1t\binom t\ell W_0^{\ell/k}h^{1-\ell/k}+O(n^{-\ell/k})$. Multiplying by the bounded coefficients $e_{k-\ell}(x_A)$, summing, and comparing with the upper bounds for $G$ gives the claim, by~\eqref{eq:var}.
\end{proof}

\begin{thm}\label{thm:kgs}
	Let $2\le s\le\min(k-1,t)$ and $t\ge j$, and write $h=n-s+1=bt+p$ with $0\le p<t$. For all sufficiently large $n$ the unique hypergraph of maximum spectral radius in $\HH^{(k)}_{K_{s,t}}(n)$ is $\CC_k(K_{s-1}\vee H^*)$, where $H^*=(b-1)K_8\cup\overline P$ if $(k,s,t,p)=(9,8,8,2)$ and $H^*=bK_t\cup K_p$ otherwise. If $p=0$ or $p\ge j$, then $K_{s-1}\vee H^*$ is the unique extremal graph. If $0<p<j$, then in every extremal graph the $p$ vertices that lie neither in the dominating clique nor in a copy of $K_t$ belong to no $k$-clique, and the extremal graph is not unique.
\end{thm}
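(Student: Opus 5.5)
The plan is to combine the structural Lemma~\ref{lem:struct} with the comparison of Lemma~\ref{lem:refined} and the uniform gap~\eqref{eq:unifgap}. The first step is to check that the candidate $K_{s-1}\vee H^*$ is $K_{s,t}$-minor-free. For $H^*=bK_t\cup K_p$ this is Corollary~\ref{cor:small}. For $\overline P$, admissibility is Lemma~\ref{lem:exadm} together with Lemma~\ref{lem:join}. Moreover, $(k,s,t,p)=(9,8,8,2)$ is exactly the case $j=2$, $(s,t)=(8,8)$ of Theorem~\ref{thm:weighted2}. For $j\ge3$ the exceptional component never arises, because Theorem~\ref{thm:weighted3} gives $bK_t\cup K_p$ or $bK_t\cup F_0$. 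Hence $H^*$ as defined before Lemma~\ref{lem:struct} agrees with the statement.

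Next, let $G$ be extremal with Perron vector $x$. Lemma~\ref{lem:refined} bounds the deficit $\rho(\CC_k(K_{s-1}\vee H^*))-\rho(\CC_k(G))$ from below. Since $\pi_A$ and $W_0$ are bounded below and $\mathcal E_*=\Theta(n)$, the main term is of order $n^{\alpha-1}\bigl(\mathcal E_*-\mathcal E(H)\bigr)$. Here we need $\mathcal E(H)$ for $H=G[T]$ on $|T|=h-|R|$ vertices. The admissible graph $H\cup\overline{K_{|R|}}$ has $h$ vertices and the same capacity, so~\eqref{eq:unifgap} gives $\mathcal E(H)\le\mathcal E_*$. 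If moreover $\CC_j(H\cup\overline{K_{|R|}})\not\cong\CC_j(H^*)$, the gap is at least $\delta$, and the deficit is at least $c\,n^{\alpha-1}-O(n^{-(j+1)/k})$. Since $\alpha-1=-j/k>-(j+1)/k$, this is positive for large $n$, contradicting extremality. So the $j$-clique structure of the light part, with $R$ added as isolated vertices, is that of $H^*$.

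The main obstacle is then to show that $R=\emptyset$ and that $H\cong H^*$ up to the allowed ambiguity. This needs the second-order terms, since the first-order comparison is now tight. I would first try a direct comparison at the Perron vector. Once the $j$-cliques of $H$ are known to form $BK_t\cup Q$ on the right vertices, every $k$-clique of $G$ meeting $R$ has at least $j+1$ light vertices. So the cliques through $R$ carry weight $O(n^{-(j+1)/k})$, while moving a vertex $r\in R$ into $T$ gains a $j$-clique term of order $n^{-j/k}$ unless $r$ is one of the residual vertices. For $p\ge j$ this forces $r$ to be joined to the $K_p$, which then lies in $T$. Edges of $G$ outside $\CC_j$-relevant positions contribute only to $\ell$-cliques with $\ell>j$. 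For $p=0$ or $p\ge j$, every light vertex already lies in a $j$-clique of $H^*$ and all cliques are saturated, so by Observation~\ref{obs:strict}, applied to $\CC_k(G)\subseteq\CC_k(K_{s-1}\vee H^*)$ on the common vertex set, we get equality and uniqueness. Some care is needed to place $R$ correctly before invoking this inclusion. I would handle it with the replacement argument of Proposition~\ref{prop:Rempty2}, using Lemma~\ref{lem:cliquesum} with the clique $A$.

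For $0<p<j$, the $p$ residual vertices lie in no $j$-clique with light vertices. A $k$-clique through such a vertex would need at least $j$ light vertices, including one of its neighbors in $T$ or $R$. Since all $K_t$ blocks are saturated (degree $t-1$), no such vertex exists. So these vertices lie in no $k$-clique, and their Perron coordinates vanish. Non-uniqueness follows by exhibiting a second extremal graph. For instance, join one residual vertex to $A$ when $p<s$ is compatible, or add an edge among the residual vertices when $p\ge2$. Such a graph has the same $\CC_k$ up to isolated vertices, so the same spectral radius, and it remains $K_{s,t}$-minor-free by Corollary~\ref{cor:small}. The hypergraph $\CC_k(K_{s-1}\vee H^*)$ is still unique, because any extremal $G$ has the same set of $k$-cliques up to isomorphism, and Lemma~\ref{lem:reduce} applies to the connected covered part.
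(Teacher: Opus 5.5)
Your first step is exactly the paper's: you apply \eqref{eq:unifgap} to $\widetilde H=H\cup\overline{K_{|R|}}$ and feed the gap $\delta$ through concavity into Lemma~\ref{lem:refined}. The trouble starts when you call the rest a second-order problem. It is not one. The vertices of $R$ are isolated in $\widetilde H$, hence in $\CC_j(\widetilde H)$, so the conclusion $\CC_j(\widetilde H)\cong\CC_j(H^*)$ already accounts for them. When $p=0$ or $p\ge j$, $\CC_j(H^*)$ has no isolated vertex, so $R=\emptyset$ at once. Then $\CC_j(H)\cong\CC_j(H^*)$ gives $H\cong H^*$ directly for $j=2$. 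For $j\ge3$, each block of $j$-cliques spans a clique of $H$, its $K_t$-vertices already have degree $t-1$, and $\Delta(G[T])\le t-1$ excludes all further edges. The tool you propose instead would fail. The replacement of Proposition~\ref{prop:Rempty2} makes $N(r)=A$, and such an $r$ lies in a $k$-clique only if $k-1\le s-1$, i.e.\ $j\le1$. For $j\ge2$ that modification can only lose weight. Your inclusion $\CC_k(G)\subseteq\CC_k(K_{s-1}\vee H^*)$ also presupposes the labelled structure you are trying to prove.

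For $0<p<j$, the step \emph{all $K_t$ blocks are saturated, so no such vertex exists} is a genuine gap. Saturation means degree $t-1$ in $G[T]$ and says nothing about edges from a block $Y$ to $R$. Here $R$ really can be nonempty: in the second extremal graph below the residual vertices lie in $R$. A $k$-clique through a residual $r\in R$ has at most $s-2$ hubs and could a priori use several vertices of one block. Your argument does work for residual vertices in $T$, since they have no $H$-neighbors in the blocks and $p<j$. The missing ingredient for $R$ is Lemma~\ref{lem:Kst}. Since $s-1+p<k$, such a clique meets exactly one block $Y$, so it has at least $k-p\ge s$ vertices in $A\cup V(Y)$, which induces $K_{s+t-1}$. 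Its part outside $A\cup V(Y)$ is a nonempty clique, so it lies in one component of $G-(A\cup V(Y))$. That component then has at least $s$ neighbors in $A\cup V(Y)$, a contradiction. Finally, your non-uniqueness examples go the wrong way. In $K_{s-1}\vee(bK_t\cup K_p)$ the residual vertices are already joined to $A$ and already form a clique. The second extremal graph comes from deleting the edges at those $p$ vertices. This leaves $\CC_k$ unchanged, because they lie in no $k$-clique, and it trivially preserves $K_{s,t}$-minor-freeness.
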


\begin{proof}
	Let $G$ be extremal, with the notation of Lemma~\ref{lem:struct}. The graph $H=G[T]$ is admissible because $G[A\cup T]=K_{s-1}\vee H$, and adding $|R|$ isolated vertices gives an admissible graph $\widetilde H$ on $h$ vertices with $\mathcal E(\widetilde H)=\mathcal E(H)$. If $\CC_j(\widetilde H)\not\cong\CC_j(H^*)$, then $\mathcal E(H)\le\mathcal E_*-\delta$ by~\eqref{eq:unifgap}, and by concavity
	\[
	\mathcal E_*^{\alpha}-\mathcal E(H)^{\alpha}\ge\alpha\mathcal E_*^{\alpha-1}\delta\ge\alpha\delta\,h^{-j/k}.
	\]
	Since $\pi_A\to k^{-(s-1)/k}$ and $W_0\to j/k$, Lemma~\ref{lem:refined} gives $\rho(\CC_k(K_{s-1}\vee H^*))-\rho(\CC_k(G))\ge cn^{-j/k}-O(n^{-(j+1)/k})>0$, contradicting extremality. Hence $\CC_j(\widetilde H)\cong\CC_j(H^*)$.
	
	If $p=0$ or $p\ge j$, then $\CC_j(H^*)$ has no isolated vertex, so $R=\emptyset$. For $j=2$ this gives $H\cong H^*$ directly. For $j\ge3$, every pair of vertices in a copy of $K_t$ or in the residual $K_p$ lies in a $j$-clique, so these sets are cliques of $H$; their vertices in the copies of $K_t$ have degree $t-1$, so no other edges exist, and $H\cong H^*$. In both cases $G=K_{s-1}\vee H^*$.
	
	Let $0<p<j$. Then $H$ contains $b$ copies of $K_t$, which are components of $H$, and $p$ vertices of $G$ lie outside $A$ and these copies. A $k$-clique through one of these $p$ vertices cannot lie in $A$ together with them, since $s-1+p<k$, so it meets exactly one copy $Y$ of $K_t$. Then $G[A\cup V(Y)]=K_{s+t-1}$, the clique has at least $k-p>s-1$ vertices there, and its part outside $A\cup V(Y)$ is nonempty and connected, contradicting Lemma~\ref{lem:Kst}. So these $p$ vertices lie in no $k$-clique, and $\CC_k(G)\cong\CC_k(K_{s-1}\vee H^*)$. In particular, deleting the edges at these $p$ vertices in $K_{s-1}\vee H^*$ gives a different extremal graph.
	
	Finally, if $\HH$ is any maximizing hypergraph, then $\CC_k(\shad\HH)$ is a maximizing clique hypergraph by Lemma~\ref{lem:reduce}, hence isomorphic to $\CC_k(K_{s-1}\vee H^*)$. The vertices covered by its edges span a connected subhypergraph, so Observation~\ref{obs:strict} gives $\HH=\CC_k(\shad\HH)$.
\end{proof}

\section{Concluding remarks}\label{sec:remark}

Together, Theorems~\ref{thm:Kr}, \ref{thm:sgek} and~\ref{thm:kgs} determine, for all sufficiently large $n$, the $k$-uniform hypergraphs of maximum spectral radius whose shadow has no $K_t$ minor, or no $K_{s,t}$ minor with $2\le s\le t$, in every case in which the problem is nontrivial. In each case the extremal hypergraph is the clique hypergraph of a join of a clique with a light part, and the light part is decided by a single comparison that depends on $j=k-s+1$.

When $j\le1$, every $k$-clique of the join can use a single light vertex, the light coordinates are asymptotically equal, and the light parts are compared first by their numbers of edges. This is the comparison that governs the adjacency matrix, and the answer is the one of Zhai and Lin. When $j\ge2$, a regime with no counterpart for graphs, the light coordinates are no longer equal and the comparison is by the capacity $\mathcal E_j$. For $j\ge3$ this capacity is controlled by the maximum degree alone, through Lemma~\ref{lem:closednbhd}, and complete blocks are always optimal. For $j=2$ the minor conditions are needed, and they leave exactly one exceptional light part, the complement of the Petersen graph at $(k,s,t)=(9,8,8)$. Thus the answer for $j\ge2$ differs from the graph case: the exceptional components $F_{s,t}$ and $D_{s,t}$ of Zhai and Lin never occur.

	\section*{Declaration of generative AI and AI-assisted technologies in the manuscript preparation process}
	
	During the preparation of this work, the authors used Claude (Anthropic) for language refinement, technical editing, and computational verification of examples. The authors reviewed and edited the output as needed and take full responsibility for the content of the published article.

\end{document}